\author{Ludovica Buelli}
\address{Dipartimento di Matematica; Università di Genova;
Via Dodecaneso 35, 16146 Genova, Italia}
\email{ludovica.buelli@edu.unige.it, ludovica.buelli@hotmail.com}
\title[Locally trivial monodromy of moduli spaces]%
{Locally trivial monodromy of moduli spaces\\ of sheaves on Abelian surfaces}
\begin{document}
\begin{abstract}The aim of this work is to give a description of the locally trivial monodromy group of irreducible symplectic varieties arising from moduli spaces of semi-\linebreak[4]stable sheaves on Abelian surfaces with non-primitive Mukai vector. The outcome 
is that the locally trivial monodromy group of a singular moduli space of this type is isomorphic to the monodromy group of a smooth moduli space, extending Markman’s and Mongardi's description to the non-primitive case. As a consequence, we also prove the SYZ conjecture for any singular moduli space of this type.
\end{abstract}

\maketitle
\vspace{-5ex}

\tableofcontents
\vspace{-5ex}
\section*{Introduction}
\renewcommand{\theequation}{\Roman{equation}}

\setcounter{equation}{0}
This work takes place within the broader problem of the bimeromorphic classification of singular symplectic varieties. Among these, primitive and irreducible symplectic varieties have drawn increasing interest after several recent developments in this research area. For instance, a generalization of Bogomolov Decomposition Theorem to the setting of projective varieties with canonical singularities was provided by \cite{GKP11}, \cite{Dr18}, \cite{DG18}, \cite{GGK19} and \cite{HP17}, identifying irreducible symplectic varieties as fundamental building blocks, as well as natural generalizations of the notion of irreducible holomorphic symplectic manifolds to the singular setting. Furthermore, the recent formulation of Global Torelli Theorem for primitive symplectic varieties (\cite{BL18}) has carefully prescribed the fundamental steps to follow in order to address their classification: firstly, up to (locally trivial) deformation equivalence, then, up to bimeromorphism. 

In simple terms, the geometry of such varieties is completely encoded in the lattice and weight two Hodge structure of their second integral cohomology group. The key tool to decode such information lies in the description of those isometries that are \textit{(locally trivial) monodromy operators}. For any irreducible symplectic variety $X$, this class of isometries defines the \textit{(locally trivial) monodromy group} $\monlt(X)$ of $X$, which is a finite index subgroup of the orthogonal group $\O(\H^2(X,\Z))$ of the lattice $(\H^2(X,\Z),q_X)$ and represents the fundamental tool to detect bimeromorphism classes inside a fixed (locally trivial) deformation class.

The above-described procedure faithfully parallels the classification machinery for irreducible holomorphic symplectic manifolds - i.e. simply connected compact K\"{a}hler manifolds equipped with a unique (up to scalar) holomorphic symplectic form - developed by \cite{Ver09}, \cite{Huy97} and \cite{Mar11}. In that case, a description of the monodromy group has been achieved for any known deformation class by \cite{Mar08}, \linebreak[4]\cite{Mar10}, \cite{Mon14}, \cite{Mar18}, \cite{MR19} and \cite{Ono22}. In the singular setting, the problem has been addressed only partially, mainly due to the existence of a wide list of new examples. \\

\vspace{-0.2ex}
Moduli spaces of semistable sheaves on symplectic surfaces are a successful way to produce infinitely many (locally trivial) deformation classes of irreducible symplectic varieties and these will be the primary focus of this paper. A remarkable result by \cite{PR18} asserts that, if $S$ is either a projective K3 or an Abelian surface, $v$ is a non-primitive Mukai vector and $H$ is a $v-$generic polarization, then the moduli spaces $M_v(S,H)$ and $\Kv(S,H)$, respectively, of $H-$semistable sheaves on $S$ with Mukai vector $v$ are irreducible symplectic varieties and their deformation class is determined by the vector $v$.

The first explicit computations of the locally trivial monodromy group of a singular symplectic variety $X$ have been achieved precisely in the case of moduli spaces of sheaves, by \cite{MR19} and \cite{Ono22} for deformations of the two singular O'Grady examples, and by \cite{OPR23}, in the case in which $X$ is locally trivial deformation equivalent to any moduli space $\Mv(S,H)$ as above, with $S$ a projective K3 surface. In particular, in this case, Theorem A.1 of \cite{OPR23} provides the following identification: $$\monlt(X)=\W(\H^2(X,\Z)),$$ where $\W(\H^2(X,\Z))$ is the group of orientation preserving isometries of $\H^2(X,\Z)$ acting as $\pm\id$ on its discriminant group. \\ 

\vspace{-0.2ex}
The aim of this work is to complete the computation of the locally trivial monodromy group of moduli spaces of sheaves, by computing the remaining case in which $X$ is locally trivial deformation equivalent to $\Kv(S,H)$, with $S$ an Abelian surface. Indeed, it turns out that the techniques developed by \cite{OPR23} can be adapted to the Abelian case to achieve an analogous description, involving the index two subgroup $\Nn(\H^2(X,\Z))$ of $\W(\H^2(X,\Z))$ defined by isometries in the kernel of the character $\det\cdot \disc$. The latter is the main result of this paper.
\begin{thmA}[Theorem \ref{mainthm}, Corollary \ref{maincor}] \label{thmA1}
Let $X$ be an irreducible symplectic variety that is locally trivial deformation equivalent to a moduli space $\Kv(S,H)$, where $S$ is an Abelian surface, $v=mw$ is a Mukai vector with $m>1$ and $w$ primitive and such that $w^2>4$, and $H$ is a $v-$generic polarization. Then $$\monlt(X)=\Nn(\H^2(X,\Z)).$$
\end{thmA}
The outcome is that, regardless of the smoothness of the moduli space of sheaves, the monodromy description is the same. Indeed, in the case of an irreducible holomorphic symplectic manifold $X$ deformation equivalent to a moduli space $\Kv(S,H)$ as above, with $v$ primitive and of square greater than $4$, the combination of \cite{Mar18} and \cite{Mon14} provided the same description: \begin{equation}
    \label{marmon}\mon^2(X)=\Nn(\H^2(X,\Z)).
\end{equation}
Furthermore, Theorem \ref{thmA1} allows us to compute the index of $\Nn(\H^2(X,\Z))$ as a subgroup of the orientation preserving isometries group $\O^+(\H^2(X,\Z))$ of $\H^2(X,\Z)$ (Corollary \ref{corindex}, see also \cite[Lemma 4.1]{Mar10}). In particular, we deduce that the locally trivial monodromy group $\monlt(X)$ of $X$ is never maximal, theoretically providing an infinite number of new counterexamples to the conjectural Classic bimeromorphic Torelli Theorem (see \cite{Nam02b}).\\

\vspace{-0.2ex}
The relation between monodromy groups of smooth and singular moduli spaces of sheaves is deeper and more geometric than an equivalent lattice-theoretic description. To make it explicit, we present an equivalent formulation of Theorem \ref{thmA1}. 
 If $X$ is locally trivial deformation equivalent to a moduli space $\Kv(S,H)$, where $v=mw$, with $m>1$ and $w$ a primitive Mukai vector such that $w^2>4$, then each connected component $Z$ of its most singular locus is a smooth irreducible holomorphic symplectic manifold deformation equivalent to the moduli space $\Kw(S,H)$. Any locally trivial deformation $p\colon \XX \to T$ of $X$ induces, by restriction, a smooth deformation $p_{|\ZZ}\colon \ZZ \to T$ of $Z$. Consequently, any monodromy operator on $X$ defined by a loop $\gamma$ in the locally trivial family $p$ can be associated to the monodromy operator on $Z$ defined by the same loop $\gamma$ in the restricted smooth deformation $p_{|\ZZ}$. This association turns out to be well defined and an isomorphism on the respective monodromy groups, as stated in the following result.
\begin{thmB}[Corollary \ref{corinjmon}, Corollary \ref{maincor}]\label{thmB1}
Let $X$ be an irreducible symplectic variety locally trivial deformation equivalent to a moduli space $\Kv(S,H)$ as above and let $Z$ be a connected component of the most singular locus of $X$. Then its closed embedding $i_{Z,X}\colon Z \to X$ induces an isomorphism of groups $$i^\sharp_{Z,X}\colon \monlt(X) \longrightarrow \mon^2(Z).$$\end{thmB}


The outcome can be summarized by saying that the locally trivial monodromy group of a singular moduli space as above can completely be recovered by the monodromy group of the associated smooth moduli space and an analogue philosophy applies to any locally trivial deformation of these. In fact, the same phenomenon arose in the K3 case dealt in \cite{OPR23}, where the strategy above-mentioned is developed for a connected most singular locus.

We conclude by pointing out that the hypothesis $w^2>4$ is crucial and necessary (see Example \ref{exogsing}) and will be discussed in Section \ref{seck12}. The remaining cases $w^2=2$ and $w^2=4$ need a different approach and will be treated in a subsequent work.\\

In conclusion, we use the monodromy description given by Theorem \ref{thmB1} to provide a proof of the SYZ conjecture for any irreducible symplectic variety $X$ locally trivial deformation equivalent to a moduli space $\Kv(S,H)$ as above. According to this conjecture, nef isotropic line bundles on symplectic varieties are expected to define Lagrangian fibrations on the latter. This is precisely the content of the following Theorem. \begin{thmA}{(Theorem \ref{SYZAb})} Let $X$ be an irreducible symplectic variety locally trivial deformation equivalent to a moduli space $\Kv(S,H)$ as above. If $L$ is a nef line bundle on $X$ such that $q_X(L)=0$, then there exists a Lagrangian fibration $f\colon X \to B$ such that $L=f\Star\OO_B(1)$.
\end{thmA}
Also in this case, the outcome is analogous to that of moduli spaces of sheaves $\Mv(S,H)$, with $S$ is a K3 surface, which has been recently achieved by \cite{OO25}. 
\subsection*{Outline of the proof} We start by reducing to deal with $X=\Kv(S,H)$, where $S$ is an Abelian surface, $v=mw$ is a Mukai vector such that $m>1$ and $w$ is primitive of square $w^2>4$, and $H$ is a $v-$generic polarization, as the locally trivial monodromy group is a locally trivial deformation invariant.

The first step consists of the construction of an injective morphism \begin{equation}
    \label{iwmsharp}i_{w,m}^\sharp\colon \monlt(\Kv(S,H)) \longrightarrow \mon^2(\Kw(S,H)).
\end{equation} The latter is induced - via conjugation on isometries - by the pull-back action on the respective second integral cohomology groups of the closed embedding $$i_{w,m}\colon \Kw(S,H)\to \Kv(S,H)$$ as one of the finite connected components of the most singular locus of $\Kv(S,H)$. This is the content of Corollary \ref{corinjmon} (see also Proposition \ref{propinjPT}). Proposition \ref{istar} and Lemma \ref{lemisharp} guarantee the compatibility of the action of $i_{w,m}^\sharp$ with the lattice-theoretic descriptions of the second integral cohomology groups of the moduli spaces involved and of their respective orthogonal groups. Indeed, the Mukai vector $v$ belongs to the \textit{Mukai lattice} $\Htilde(S,\Z)$ of $S$, and the study of its orthogonal complement $v\ort$ in the latter yields the following non-trivial result, due to \cite{Yos01a} in the primitive case and to \cite{PR20} in the non-primitive case: there exists a chain of isometries - and obvious identifications - \begin{equation}
    \label{lambdaa}\begin{tikzcd}[column sep=large]
        \H^{2}(\Kv(S,H),\Z) \arrow[r, "\lambda_{\SvH}^{-1}"] & v\ort \arrow[r,equal] & w \ort \arrow[r,"\lambda_{\SwH}"] & \H^{2}(\Kw(S,H),\Z).
    \end{tikzcd}
\end{equation}Lemma \ref{lemisharp} asserts that the conjugation action of the composition (\ref{lambdaa}) coincides with $i_{w,m}^\sharp$. 

As a second step, we proceed to show the inclusion of groups $$\Nn(\H^2(\Kv(S,H),\Z))\subseteq \monlt(\Kv(S,H))$$ (Theorem \ref{thmNmon}). Paralleling \cite{Mar18}, we prove the inclusion by describing the generators of $\Nn(\H^2(\Kv(S,H),\Z))$ as monodromy operators induced by \begin{itemize}
    \item monodromy operators of the base surface $S$ 
    (Section \ref{secGmkdef} and Section \ref{secsurfacemon})
    \item pushforwards of isomorphisms of moduli spaces induced by some Fourier-Mukai equivalences on $\Db(S)$ (Section \ref{secGmkFM} and Section \ref{secsurj}).
\end{itemize} This procedure applies to the primitive case as well, and differs from the one in \cite{Mar18} only by a slight different choice of generators (Definition \ref{defgmkFM}) and the avoidance of non-projective families (Section \ref{secsurfacemon}). The compatibility proven in Lemma \ref{lemisharp} implies the equality $$i_{w,m}^\sharp(\Nn(\H^2(\Kv(S,H),\Z)))=\Nn(\H^2(\Kw(S,H),\Z)),$$ which, combined with (\ref{marmon}), forces surjectivity of $i^\sharp_{w,m}$ and simultaneously concludes the proof of both Theorem \ref{thmA1} and Theorem \ref{thmB1}, as summarized in the following commutative diagram. 
\begin{center}
      \begin{tikzcd}[ampersand replacement=\&, row sep=large]
          \monlt(\Kv(S,H))\arrow[rrr, hook, "i^{\sharp}_{w,m}", "\text{Section \ref{secinj}}" swap, dashed] \&\&\& \mon^{2}(\Kw(S,H))\arrow[d, equal, "\text{\cite{Mar18}, \cite{Mon14} }" swap]\\
          \Nn(\H^2(\Kv(S,H),\Z))\arrow[u, "\text{ Section \ref{secsurj}}" swap, " \rotatebox{90}{\(\subseteq\)}", dashed, hook]\arrow[rrr, bend right=15, "i^{\sharp}_{w,m}"]\arrow[r, "\sim" swap, "(\lambda_{\SvH}^{\sharp})^{-1}"] \& \Nn(v\ort) \arrow[r,equal]\& \Nn(w\ort)\arrow[r, "\sim" swap, "\lambda_{\SwH}^{\sharp}"] \&  \Nn(\H^2(\Kw(S,H),\Z))
      \end{tikzcd}
  \end{center}

\subsection*{Plan of the paper} Section \ref{secpreliminaries} is devoted to recall basic results concerning singular symplectic varieties, moduli spaces of sheaves and their deformation theory.

In Section \ref{secinjectivemorph} we relate the locally trivial monodromy group of a singular moduli space with the monodromy group of a smooth moduli space by means of an injective morphism of groups which arises from an explicit geometric construction.

In Section \ref{secgroupoid} we introduce a groupoid representation designed to encode the monodromy information of a moduli space in a natural way, extending \cite[Section 9]{Mar18}. This is the technical tool that will allow us to construct monodromy operators in the next Section.

In Section \ref{secmonodromy} the computation of the locally trivial monodromy group is completed, by showing surjectivity of the morphism introduced in Section \ref{secinjectivemorph}. This is the main result of the paper. Its extension to the whole locally trivial deformation class and few corollaries conclude the Section.

In Section \ref{secSYZ}, the SYZ conjecture for singular moduli spaces of sheaves on Abelian surfaces is proven, as a geometric application of the main results of this paper.

Appendix \ref{appendix} is meant to collect the lattice-theoretic material that is used throughout the paper, including the notion of orientation and the definition of the groups of isometries involved in the characterization of the monodromy groups.

\subsection*{Acknowledgements} This work is the outcome of my PhD project. I am deeply grateful to my advisor, Arvid Perego, for suggesting this problem, for carefully supervising its development and for the constant and precious support provided during the research. I wish to thank Antonio Rapagnetta and Claudio Onorati for their valuable guidance and advice, which greatly helped me to carry out this work, and Giovanni Mongardi for his bright comments and suggestions. I thank again Claudio Onorati, together with Ángel David Ríos Ortiz, for drawing my attention to an interesting application of this result. I am especially grateful to Simone Billi for his significant help in the final stage of the project, particularly for suggesting the proof of Lemma \ref{lemsimo}. Together with him, I deeply and sincerely thank Valeria Bertini, Lucas Li Bassi, Alessandro Frassineti and Filippo Papallo for the insightful and fruitful discussions and for their cheerful support. I am grateful to the Department of Mathematics of the University of Genoa - and, in particular, to the Algebraic Geometry research group - for providing a stimulating and friendly environment in which to carry out my research. 
The author is member of the INDAM-GNSAGA.

\numberwithin{equation}{section}

\setcounter{tocdepth}{1}

\section{Preliminaries}
\label{secpreliminaries}This Section is meant to collect some notions and results that will build the background for this work. In Section \ref{secsingularsymplectc} some basic material concerning primitive and irreducible symplectic varieties will be introduced, while Section \ref{ab} will be a quick overview of the theory of moduli spaces of sheaves on Abelian surfaces that will be needed throughout the paper. Eventually, in Section \ref{secdefomon}, a suitable deformation theory for primitive and irreducible symplectic varieties will be discussed, together with the definition and first properties of their locally trivial monodromy groups.

\subsection{Singular symplectic varieties}\label{secsingularsymplectc}
The first part of theory we are going to review is the one concerning symplectic varieties, with particular attention to those notions that provide the most meaningful generalization of the theory of irreducible holomorphic symplectic manifolds to the singular setting. For a more detailed discussion, we refer to \cite[Section 2, Section 3]{BL18} and \cite[Section 1.1]{OPR23}.

Let $X$ be a normal complex analytic variety, let $X_\reg$ be its smooth locus and let $j\colon X_\reg \to X$ be the corresponding open embedding. For any $0\leq p \leq \dim(X)$, we define the \textit{sheaf of reflexive $p-$forms} as $$\Omega_X^{[p]}:= j\sstar \Omega_{X_\reg}^p=(\wedge^p\Omega_X)^{\ast\ast}$$ and we call a global section $\sigma \in \H^0(X,\Omega_X^{[p]})$ a \textit{reflexive $p-$form} on $X$, which corresponds naturally to a holomorphic $p-$form on $X_\reg$. 

Throughout the paper we will also assume that $X$ admits a K\"{a}hler form $\omega$ and we will refer to \cite[Section 2.3]{BL18} and \cite[II, Section 2.1, Section 2.2]{Var89} for its precise definition and main properties. In the context of interest for this work, by \cite[Corollary 3.5]{BL18}, the torsion free part $\H^2(X,\Z)_\tf$ of $\H^2(X,\Z)$ will admit a pure weight $2$ Hodge structure, hence the class of a K\"{a}hler form $\omega$ will define an element $$[\omega] \in \H^{1,1}(X,\R)=\H^1(X,\Omega_X^{[1]})\cap \H^2(X,\R)$$ and the set of all classes obtained in this way will form an open cone in $\H^{1,1}(X,\R)$. A normal complex analytic variety $X$ admitting a K\"{a}hler form will be called a \textit{K\"{a}hler space}. If $X$ is smooth, by \cite[II, Proposition 1.3.1(ii)]{Var89}, it is a K\"{a}hler manifold with respect to the usual definition.

\begin{defn}\label{defsv}
    Let $X$ be a compact K\"{a}hler space. \begin{enumerate}
        \item A \textit{symplectic form} on $X$ is a closed reflexive $2-$form $\sigma \in \H^0(X,\Omega_X^{[2]})$ which is non-degenerate at each point of $X_\reg$.
        \item A \textit{symplectic variety} is a pair $(X,\sigma)$ where $\sigma$ is a symplectic form on $X$, such that for every resolution $\rho\colon \widetilde{X}\to X$, the form $\rho\Star(\sigma_{|X_\reg})$ extends to a holomorphic $2-$form on $X$. In particular, if the latter is a holomorphic symplectic form, the resolution $\rho$ is called a \textit{symplectic resolution}.
        \item A \textit{primitive symplectic variety} is a symplectic variety $(X,\sigma)$ such that \linebreak[4]$\H^1(X,\Ox)=0$ and $\H^0(X,\Omega_X^{[2]})=\C\sigma$.
        \item An \textit{irreducible symplectic variety} is a symplectic variety $(X,\sigma)$ such that, for every finite quasi-étale morphism $f\colon Y \to X$, the exterior algebra of reflexive forms on $Y$ is generated by $f^{[\ast]}\sigma$.
    \end{enumerate} With a slight abuse of notation, in the cases above we will refer to $X$ as a symplectic (respectively, primitive symplectic and irreducible symplectic) variety.
\end{defn}

\begin{rmk}\label{rmkPSV}
    We highlight some important features of primitive and irreducible symplectic varieties that will be useful in the following. \begin{enumerate}
        \item An irreducible symplectic variety is primitive symplectic, whilst the converse is, in general, false (see \cite[Example 1.5]{PR18}). For a more detailed discussion on the interplay of different notions of sympletic varieties, we refer to \cite{Per19}.
        \item An irreducible symplectic variety is smooth if and only if it is an irreducible holomorphic symplectic manifold. This is a consequence of Bogomolov Decomposition Theorem, together with the fact that, due to \cite[Corollary 13.3]{GGK19}, irreducible symplectic varieties are simply connected.
        \item If $X$ is a primitive symplectic variety, then $\H^2(X,\Z)_\tf$ admits a non-degenerate symmetric bilinear form $q_X$ of signature $(3,b_2(X)-3)$ (see \cite[Section 1.5, Lemma 5.7]{BL18}), which will be called \textit{Beauville-Bogomolov-Fujiki form} of $X$. We will refer to the pair $(\H^2(X,\Z)_\tf, q_X)$ as the \textit{Beauville-Bogomolov-Fujiki lattice} of $X$. This holds, in particular, for the free $\Z-$module $\H^2(X,\Z)$ in the case in which $X$ is an irreducible symplectic variety, by simple connectedness of the latter.
    \end{enumerate}
\end{rmk} The last remarkable property of symplectic varieties we include in this Subsection is related to their singular locus, which we recall is a closed subvariety of codimension at least $2$. 
\begin{prop}{(\cite[Theorem 2.3]{Kal06}, see also \cite[Theorem 3.4.(2)]{BL18})}\label{kalstratification}
    Let $X$ be a symplectic variety. Then there exists a finite stratification by closed subvarieties $$X=X_{0} \supseteq X_{1} \supseteq \cdots \supseteq X_l,$$ such that, for every $i=0,\dots m-1$, the stratum $X_{i+1}$ is the singular locus with reduced structure $(X_i^{sing})_{red}$ of $X_i$ and the normalisation of each irreducible component of $X_{i}$ is a symplectic variety.
\end{prop}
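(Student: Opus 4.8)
The plan is to follow Kaledin's Poisson-geometric approach. \emph{First}, I would upgrade the symplectic form to a Poisson structure on all of $X$: inverting the non-degenerate form $\sigma$ on $X_\reg$ produces a holomorphic bivector there, i.e.\ a Poisson bracket $\{\,\cdot\,,\cdot\,\}$ on $\mathcal{O}_{X_\reg}$, and since $X$ is normal with $X\setminus X_\reg$ of codimension $\geq 2$, restriction along $j$ identifies regular functions on $X$ with those on $X_\reg$. Hence $\{f,g\}$ is regular on all of $X$ whenever $f$ and $g$ are, and the Leibniz and Jacobi identities propagate from the dense open $X_\reg$. Thus $(X,\{\,\cdot\,,\cdot\,\})$ is a Poisson variety.

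\emph{Second}, and this is the technical heart, I would establish a singular Darboux--Weinstein local normal form: for every $x\in X$ there is an analytic (or at least formal) Poisson isomorphism between a neighbourhood of $x$ and a product $(D^{2k},\sigma_0)\times(Y,\sigma_Y)$, where $2k$ is the dimension of the symplectic leaf through $x$ and $(Y,\sigma_Y)$ is again a symplectic variety of dimension $\dim X-2k$, singular at its distinguished point exactly when $x\in X_{sing}$; moreover this splitting can be arranged to respect the singular-locus stratifications on both sides. The construction starts by checking that the symplectic leaves are smooth locally closed subvarieties, then chooses a transversal to the leaf through $x$ and splits the Poisson structure, order by order on the formal neighbourhood, into a product along leaf $\times$ transversal; the obstructions to this lie in suitable Poisson-cohomology groups that vanish because symplectic singularities are canonical, equivalently rational. \emph{I expect this step to be by far the main obstacle}, both for the formal-versus-analytic bookkeeping and, above all, for confirming that the transverse factor $Y$ is genuinely a symplectic variety and not merely a normal Poisson germ.

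\emph{Third}, the stratification then falls out formally. The symplectic leaves partition $X$ into smooth locally closed pieces; the normal form shows that the leaf through a point is open in $X$ precisely when the point lies in $X_\reg$, and, applied recursively inside each closed stratum, that $(X^{sing})_{red}$ is a union of leaves, its own reduced singular locus is again a union of leaves, and so on. Setting $X_0=X$ and $X_{i+1}=(X_i^{sing})_{red}$ therefore produces a chain of leaf-closures, which terminates by Noetherian induction since dimensions strictly drop.

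\emph{Fourth}, to see that the normalisation $\widetilde Z$ of an irreducible component $Z$ of some $X_i$ is a symplectic variety, I would induct on $\dim X$. Around any point $z\in Z$ the normal form models the pair $(X,Z)$ on $(D^{2k}\times Y,\ D^{2k}\times W)$, where $(Y,\sigma_Y)$ is a symplectic variety of strictly smaller dimension and $W$ is a stratum of its induced stratification; by the inductive hypothesis $\widetilde W$ is a symplectic variety, and a product of a polydisc with a symplectic variety is again one (normality, the reflexive symplectic form, and the extension condition on resolutions all pass to products, since resolutions and $2$-forms on a product are products). Hence $\widetilde Z$ is, near every point, such a product, and the resulting local symplectic forms agree because each is the restriction of the single leaf form carried by the ambient Poisson structure; they therefore glue to a global symplectic form on $\widetilde Z$, which is consequently a symplectic variety. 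This closes the induction and proves the statement.
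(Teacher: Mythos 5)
The paper does not prove this proposition: it is imported verbatim from \cite[Theorem 2.3]{Kal06} (see also \cite[Theorem 3.4]{BL18}) and used as a black box, so there is no in-paper argument to compare yours against. Judged on its own terms as a reconstruction of Kaledin's proof, your outline has the right Poisson-theoretic skeleton (Step 1 is correct and standard: normality plus $\mathrm{codim}\, X^{sing}\geq 2$ extends the bracket from $X_\reg$ to $\OO_X$), but the core of the argument has genuine gaps.

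First, the Darboux--Weinstein splitting you invoke is only a decomposition of the \emph{formal completion} of $X$ at $x$; Kaledin does not produce an analytic product neighbourhood, and no such convergent statement is available in this generality. Your Step 4 needs analytic local models of the pair $(X,Z)$ in order to restrict the leaf forms to $\widetilde Z$ and glue them into a single reflexive $2$-form, so as written it does not go through. Second, the assertion that the transverse factor $Y$ is ``genuinely a symplectic variety'' --- which you correctly flag as the main obstacle --- is essentially equivalent in difficulty to the claim that the normalisations of the strata are symplectic varieties: the resolution-extension condition in the definition of a symplectic variety is global and is exactly what must be verified for the slice, so the induction on dimension in Step 4 is close to circular unless this is supplied independently. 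Kaledin's actual route is different at this point: he shows that $(X^{sing})_{red}$ is a Poisson subvariety (preserved by Hamiltonian flows), that the normalisation of an irreducible Poisson subvariety inherits a Poisson bracket which is generically non-degenerate, and then deduces the extension property for resolutions of the stratum from that of $X$ using rationality of symplectic singularities --- not by gluing local product models. Third, the claim that the formal splitting is obtained by killing obstructions in Poisson cohomology via rationality is not accurate: the splitting along a leaf is produced by integrating the (formal) Hamiltonian vector fields of $2k$ leaf coordinates, and requires no cohomology vanishing; rationality enters elsewhere. In short, the strategy is recognisably Kaledin's, but Steps 2 and 4 as formulated do not constitute a proof.
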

The stratification of Proposition \ref{kalstratification} will be called \textit{the stratification of singularities of} $X$ and its smaller stratum $X_l$ will be called \textit{the most singular locus $X^{ms}$ of} $X$.

In the next Subsection we will provide a wide class of examples of irreducible symplectic varieties, which will be the main focus of this work.
\subsection{Moduli spaces of sheaves on Abelian surfaces}\label{ab} In this Subsection we will provide a brief overview of the theory of moduli spaces of sheaves on Abelian surfaces that will be needed throughout the paper. For a deeper discussion, we refer to \cite{PR18} and \cite{PR20}.

Let $S$ be an Abelian surface and let us denote by $\Htilde(S,\Z)$ its \textit{Mukai lattice}, defined by the $\Z-$module $\H\ev(S,\Z):= \oplus_{i=0}^2\H^{2i}(S,\Z)$, equipped with the \textit{Mukai pairing}, i.e. the non-degenerate integral symmetric bilinear form $\cdot$ defined by $$(r_1,\xi_1,a_1)\cdot (r_2,\xi_2,a_2):= \xi_1\cdot \xi_2 - r_1 a_2 - r_2 a_1,$$ for any $(r_i,\xi_i,a_i)\in \H\ev(S,\Z)$, for $i=1,2$. Notice that the latter is isometric to $U^{\oplus 4}$, where $U$ is the rank $2$ unimodular hyperbolic lattice. Furthermore, the Mukai lattice of $S$ inherits a pure weight two Hodge structure from the one of $S$, by setting
\begin{align}
    \label{hodgestructure} \Htilde^{2,0}(S):= \H^{2,0}(S), \phantom{++}\Htilde^{0,2}(S):= \H^{0,2}(S),\\
    \Htilde^{1,1}(S) := \H^0(S,\C) \oplus \H^{1,1}(S) \oplus \H^4(S,\C).\nonumber
\end{align}
Among elements of type $(1,1)$ with respect to the Hodge decomposition (\ref{hodgestructure}), we define a \textit{Mukai vector} as an element $v=(r,\xi,a)\in \Htilde(S,\Z)$ such that $r\geq 0$ and $\xi \in \NS(S)$ and, if $r=0$ one of the two following conditions holds: \begin{enumerate}
    \item[(a)] $\xi$ is the first Chern class of a strictly effective divisor;
    \item[(b)] $\xi=0$ and $a>0$.
\end{enumerate}
For any Mukai vector $v\in \Htilde(S,\Z)$, there always exists a coherent sheaf $F$ on $S$ such that $v(F):= \ch(F) \cdot \sqrt{\td(S)}=v.$ The latter will be called the \textit{Mukai vector of $F$}. Notice that, as $\td(S)=(1,0,0)$, the Mukai vector of a coherent sheaf on an Abelian surface is nothing but its total Chern character.
\begin{defn}{(\cite[Section 2.1.2]{PR18})} \label{defvgen} Let $v=(r,\xi,a)\in \Htilde(S,\Z)$ be a Mukai vector. An ample line bundle - or polarization - $H$ on $S$ is called \textit{$v-$generic} if it satisfies one of the following conditions: \begin{enumerate}
    \item If $r>0$, then for every $\mu_H-$semistable sheaf $E$ such that $v(E)=v$ and for every $0\neq F \subseteq E$ such that $\mu_H(E)=\mu_H(F)$, the equality $\c_1(F)/\rk(F) = \c_1(E)/\rk(E)$ holds.
    \item If $r=0$, then for every $H-$semistable sheaf $E$ such that $v(E)=v$ and for every $0\neq F \subseteq E$ such that $\chi(E)/(\c_1(E)\cdot \c_1(H))= \chi(F)/(\c_1(F)\cdot \c_1(H))$, it holds $v(F)\in \Q v$.
\end{enumerate}
\end{defn}

Let $S$ be an Abelian surface, $v\in \Htilde(S,\Z)$ a Mukai vector such that $v^2>0$ and $H$ a $v-$generic polarization.  Let us denote by $M_v(S,H)$ the moduli space of Gieseker $H$-semistable sheaves on $S$ with Mukai vector $v$ and recall that, under these assumptions, $M_v(S,H)$ is a nonempty, irreducible, normal projective variety of dimension $2km^2+2$, whose smooth locus is the moduli space $M_v^s(S,H)$ parametrizing stable sheaves (see \cite[Theorem 4.4.]{KLS06}) and which is symplectic, by \cite{Mu84}.

\begin{rmk}\label{yoshfibrisotr} We quickly review the construction of the moduli space which will be the main focus of this work, following \cite[Section 2.2.1]{PR18}, \cite{Yos99a} and \cite{Yos01a}. Let us assume that $v^2>0$ and let us fix a coherent sheaf $F_0$ on $S$ such that $v(F_0)=v$. Let us denote by $\hat{S}=\Pic^0(S)$ the dual surface of $S$, let $\PP\in \Db(S\times \hat{S})$ be the Poincaré line bundle on $S \times \hat{S}$ and let $\FM_\PP\colon \Db(S) \to \Db(\hat{S})$ be the Fourier-Mukai transform with kernel $\PP$ (see Section \ref{secpoincaré}). Then the map \begin{align*}
    a_v \colon M_v(S,H) & \to S \times \hat{S}\\
    F & \mapsto (\det(\FM_\PP(F))\otimes \det(\FM_\PP(F_0))\dual, \det(F)\otimes \det(F_0)\dual)
\end{align*} is well defined - under the canonical isomorphism $\hat{\hat{S}}\simeq S$ - and an isotrivial fibration (see \cite[Lemma 2.15]{PR18}). We will denote by $\Kv(S,H)$ the fiber $a_v^{-1}(0_S,\Os)$ of $a_v$ over the $0-$point of the group $S \times \hat{S}$ and, with a slight abuse of notation, we will refer to the latter as the \textit{moduli space of sheaves on the surface $S$}. 

We point out that the isomorphism class of $\Kv(S,H)$ does not depend on the choice of the coherent sheaf $F_0$.
\end{rmk}

\begin{rmk}\label{rmkvgener}We will now compare Definition \ref{defvgen} with Definition 2.1 of \cite{PR18}. Let $S$ be an Abelian surface and let $v=\rxia$ be a Mukai vector. If $S$ has Picard rank $1$, then the ample generator is $v-$generic with respect to both the definitions, hence we can assume that $S$ has Picard rank at least $2$. In that case, if $r\neq 0$ or $r=0$ and $a\neq 0$, the Mukai vector $v$ induces a decomposition of the ample cone of $S$ in $v-$walls and open subcones called $v-$chambers (see \cite[Section 2.1.1]{PR18}). By \cite[Lemma 2.9]{PR18}, any polarization lying in a $v-$chamber - i.e. $v-$generic according to \cite[Definition 2.1]{PR18} - is $v-$generic according to Definition \ref{defvgen}. 

On the other hand, a $v-$generic polarization may lie on a $v-$wall, but Lemma 2.10 of \cite{PR18} ensures that, for any pair of polarizations $H, H'$ belonging to the closure of the same $v-$chamber, there are identifications of moduli spaces of sheaves $M_v(S,H)=M_v(S,H')$ and $K_v(S,H)=K_v(S,H')$ (see also Remark 2.13 of \cite{PR18}). The case $(0,\xi,0)$ is eventually included by \cite[Lemma 2.12]{PR18}. 
\end{rmk}

To keep notation concise, we introduce the following definition.

\begin{defn}\label{defmktriple}Let $m,k \in \N\setminus\{0\}$ be two strictly positive integers. A triple $\SvH$ will be called an \textit{$(m,k)-$triple} if $S$ is an Abelian surface, $v$ is a Mukai vector on $S$ of the form $v=mw$, where $w$ is a primitive Mukai vector such that $w^2=2k$ and $H$ is a $v-$generic polarization on $S$.
\end{defn}
\begin{rmk} Exactly as in \cite[Remark 1.22]{OPR23}, it can easily be checked that, if $\SvH$ is an $(m,k)-$triple with $v=mw$, then $\SwH$ is a $(1,k)-$triple.
\end{rmk}

For later use, we introduce an equivalence relation designed to formalize the identifications of moduli spaces mentioned in Remark \ref{rmkvgener}.

\begin{defn}\label{defncongruent}
    Two $(m,k)-$triples $\SvHuno$ and $\SvHdue$ are said to be \textit{congruent} if $S_1=S_2=:S$, $v_1=v_2=:v$ and, for every coherent sheaf $F$ on $S$ such that $v(F)=v$, we have that $F$ is $H_1-$semistable if and only if $F$ is $H_2-$semistable.
\end{defn}

The last property yields identifications $M_v(S,H_1) = M_v(S,H_2)$ and $K_v(S,H_1) = K_v(S,H_2)$ (see \cite[Lemma 2.16]{PR18}) and we denote by \begin{equation} \label{idcongruent}
    \chi_{H_1,H_2}\colon \Kv(S,H_1) \longrightarrow \Kv(S,H_2)
\end{equation}  the identity morphism.

\begin{rmk}It is straightforward form Remark \ref{rmkvgener} that, if $H_1$ and $H_2$ are two $v-$generic polarizations lying in the closure of the same $v-$chamber, then the triples $(S,v,H_1)$ and $(S,v,H_2)$ are congruent.
\end{rmk}

The following is one of the two foundational results that lay the groundwork for this paper.

\begin{thm}{(\cite[Theorem 1.10]{PR18})} \label{pr18thm1.10} Let $m,k \in \N\setminus \{0\}$ and let $\SvH$ be an $(m,k)-$triple. If $(m,k)\neq (1,1)$, then $\Kv(S,H)$ is an irreducible symplectic variety of dimension $2km^{2}-2$.
\end{thm}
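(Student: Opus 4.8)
The plan is to follow the strategy of \cite{PR18}, running in parallel with the analogous statement for moduli spaces on K3 surfaces and reducing the non-primitive case $v=mw$ to the primitive one by the deformation-and-degeneration techniques of \cite{PR18}. First I would recall that, since $v^2=m^2w^2>0$ and $H$ is $v$-generic, the Gieseker moduli space $M_v(S,H)$ is a nonempty, irreducible, normal projective variety of dimension $v^2+2=2km^2+2$ which carries a symplectic form on its smooth locus by \cite{Mu84}; hence the Albanese-type fibration $a_v\colon M_v(S,H)\to S\times\hat S$ of Remark \ref{yoshfibrisotr} is well defined and isotrivial, and $\Kv(S,H)=a_v^{-1}(0_S,\Os)$ is a normal projective variety of dimension $2km^2+2-4=2km^2-2$, which inherits a symplectic form on its regular locus. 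So the dimension count and the existence of a symplectic form are essentially formal; the real content is to upgrade \emph{symplectic variety} to \emph{irreducible symplectic variety}, i.e.\ to verify that $\H^1(\Kv(S,H),\Ox)=0$, that $\H^0(\Kv(S,H),\Omega^{[2]})$ is one-dimensional, and that the defining condition on finite quasi-\'etale covers holds.

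The key point is that all three of these properties are invariant under locally trivial deformation, and that \cite{PR18} produces, via Mukai flops and families over suitable bases, a locally trivial deformation of $\Kv(S,H)$ to a moduli space of the same numerical type for which the statement is known; when $m=1$ one lands (by the already-established primitive case, ultimately due to \cite{Yos99a}, \cite{Yos01a}) on an irreducible holomorphic symplectic manifold deformation equivalent to a generalized Kummer variety, and when $m>1$ one reduces to a normal form in which the singular moduli space is built out of the primitive data $w$. Concretely I would: (i) invoke that $\Kv(S,H)$ is, up to the congruence identifications $\chi_{H_1,H_2}$ of \eqref{idcongruent}, independent of the $v$-generic polarization within the closure of a $v$-chamber, so one may choose $H$ conveniently; (ii) cite Kaledin--Lehn--Sorger type results (\cite{KLS06}) to control the singularities and ensure the smooth locus is exactly the stable locus $\Kv^s(S,H)$, which is dense; (iii) compute $\H^1(\Ox)=0$ from the fact that $\Kv(S,H)$ is simply connected (this is where the generalized-Kummer fibre construction, rather than the full $M_v$, is essential—$M_v$ itself has $b_1\neq 0$) together with rational-singularities vanishing; and (iv) verify the one-dimensionality of reflexive $2$-forms and the quasi-\'etale-cover condition by transporting them along the locally trivial deformation to the primitive model and invoking the known irreducible symplectic structure there.

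The main obstacle I expect is step (iv): the condition characterizing \emph{irreducible} (as opposed to merely primitive) symplectic varieties involves \emph{all} finite quasi-\'etale morphisms $f\colon Y\to \Kv(S,H)$, and controlling the reflexive exterior algebra on every such $Y$ cannot be done by a naive local computation—one genuinely needs either (a) the deformation-invariance of irreducibility for primitive symplectic varieties with rational singularities, so that it suffices to check it on one member of the locally trivial deformation class, or (b) a direct argument that the relevant \'etale fundamental group of the smooth locus is trivial, so that no nontrivial quasi-\'etale covers exist and the condition collapses to the one-dimensionality already verified. I would pursue route (a), citing \cite{BL18} for the locally trivial deformation invariance of the class of irreducible symplectic varieties and \cite{PR18} for the construction of the connecting deformation, and treat the excluded case $(m,k)=(1,1)$ separately, since there $\Kv(S,H)$ has dimension $0$ and the statement is vacuous or degenerate. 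The dimension formula $2km^2-2$ then follows immediately from the fibre-dimension subtraction above.
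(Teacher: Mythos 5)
This statement is quoted verbatim from \cite[Theorem 1.10]{PR18}; the paper you are reading gives no proof of it, so there is no internal argument to compare yours against. Measured instead against what \cite{PR18} actually does (and against the fragments of that proof which the present paper does import), your sketch is a reasonable outline of the right strategy -- deform the triple to a standard model via \cite[Theorem 1.7]{PR18}, control singularities via \cite{KLS06}, and reduce the quasi-\'etale condition to something checkable -- but you make the wrong choice at the decision point you yourself identify. The actual argument proceeds essentially by your route (b), not route (a): the central technical input is the computation $\pi_1(\Kv^s(S,H))=\{1\}$ for $(m,k)\neq(2,1)$ and $\pi_1(\Kv^s(S,H))=\Z/2\Z$ for $(m,k)=(2,1)$ (\cite[Theorem 3.6]{PR18}, which this paper quotes in Section \ref{secdefo}), so that by purity the quasi-\'etale covers are classified and the defining condition either collapses to a statement about $\Kv(S,H)$ itself or requires analyzing one explicit double cover. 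Route (a) is not available as a shortcut: the locally trivial deformation invariance of \emph{irreducibility} (as opposed to primitivity) is itself delicate and is established in \cite{OPR23} only under hypotheses (terminal singularities and simply connected smooth locus, or projectivity in the $(2,1)$ case) whose verification already amounts to the $\pi_1$ computation you were trying to avoid -- see the Proposition in Section \ref{secdefo} of this paper. Two smaller points: the case $(m,k)=(2,1)$ is \emph{not} excluded from the theorem and is genuinely singular (it is the O'Grady-type example of Remark \ref{rmksmoothcase}), so it cannot be waved away; and your formal part (dimension count, normality, symplectic form on the regular locus, $\H^1(\Kv(S,H),\Ox)=0$ from simple connectedness of the fibre of the Yoshioka fibration) is correct as stated.
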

\begin{rmk}\label{rmksmoothcase} We give a quick overview of the classification - up to deformation (see Section \ref{secdefo}) - of the moduli spaces $\Kv(S,H)$ according to the values of the pair $(m,k)$, referring to \cite[Section 1.1]{PR18} and \cite[Section 1.1]{PR20} for a more detailed discussion, including complete references. 
\begin{table}[H]
    \centering \small
    \begin{tabularx}{\textwidth}{cccc}
        \hline
        $m$ & $k$ & $\Kv(S,H)$  & References \\
        \hline
        $\geq 1$ & $\leq  0$ & empty or a PSV not ISV & \cite{Yos01a}, \cite{Mu84}, \cite{KLS06}\\
        $1$ & $1$ &  point & \cite{Yos01a}, \cite{KLS06}\\
        $1$ & $2$ &  K3 surface & \cite{Yos99b}\\
        $1$ & $\geq 3$ & $Kum^{k-1}(S)$ & \cite{Mu84}, \cite{Bea83}, \cite{Yos01a} \\
        $2$ & $1$ & ISV with symplectic resolution $OG6$ & \cite{PR10}, \cite{OG00}\\
        $\geq 2$ & $\geq 1$; $> 1$ if $m=2$ & ISV without symplectic resolution & \cite{PR18}, \cite{KLS06}\\
        \hline
    \end{tabularx}
\end{table} 
In particular, we will be interested only in $(m,k)-$triples with $k>0$. We furthermore point out that $\Kv(S,H)$ is smooth if and only if $m=1$, in which case $\Kv(S,H)$ is an irreducible holomorphic symplectic manifold.
\end{rmk}
The second fundamental result concerning moduli spaces of sheaves provides an explicit description of the lattice structure of their second integral cohomology groups, as introduced in Remark \ref{rmkPSV} (3).
\begin{thm}{(\cite[Theorem 1.6, Lemma 5.1 and Proposition 6.2]{PR20})} \label{pr20thm1.6} Let $m,k \in \N\setminus \{0\}$ and let $\SvH$ be an $(m,k)-$triple. If $(m,k)\neq (1,1)$, then there exists an injective Hodge isometry
    $$\lambda_{\SvH}\colon v\ort \longrightarrow \H^{2}(\Kv(S,H)),$$ where the lattice and Hodge structures of $v\ort$ are inherited from those of $\Htilde(S,\Z)$. Moreover, if $(m,k)\neq (1,2)$ the morphism $\lambda_{\SvH}$ is an isomorphism of $\Z-$modules.
\end{thm}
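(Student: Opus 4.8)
The morphism $\lambda_{\SvH}$ is the \emph{Mukai homomorphism} of the moduli space, and the plan is to build it first on the smooth locus, verify its elementary properties there, and then extend it and determine its image over the singular variety. The GIT construction of $M_v(S,H)$ provides a quasi-universal family on $S\times M_v^s(S,H)$; restricting it to the dense open subvariety $\Kv(S,H)_\reg=\Kv(S,H)\cap M_v^s(S,H)$ yields a quasi-universal sheaf $\mathcal E$ of some similitude $\rho$ on $S\times\Kv(S,H)_\reg$, and one sets
\[
\lambda^\circ(\alpha)\;:=\;\tfrac1\rho\Bigl[\,p_\ast\bigl(\ch(\mathcal E)\cdot p_S^\ast\sqrt{\td(S)}\cdot p_S^\ast(\alpha^\vee)\bigr)\Bigr]_{2}\ \in\ \H^2\bigl(\Kv(S,H)_\reg,\Q\bigr),\qquad \alpha\in v\ort,
\]
the bracket denoting the degree-two Künneth component. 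First I would check that $\lambda^\circ$ is independent of the chosen quasi-universal family: two such differ by twisting with the pull-back of a line bundle $L$ from the base, and the resulting correction to $\lambda^\circ(\alpha)$ is a multiple of $(v\cdot\alpha)\,\c_1(L)$, which vanishes because $\alpha\in v\ort$. One also checks integrality of $\lambda^\circ$, and since $\ch(\mathcal E)$ is of Hodge type, $\lambda^\circ$ is a morphism of weight-two Hodge structures out of $v\ort$ with the Hodge structure induced from $\Htilde(S,\Z)$.

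Next I would show, following Mukai, O'Grady and Yoshioka, that $\lambda^\circ$ intertwines the Mukai pairing on $v\ort$ with the Beauville--Bogomolov--Fujiki form of $\H^2(\Kv(S,H))$ restricted to $\Kv(S,H)_\reg$. This reduces to a Fujiki-type identity comparing the degree of the top self-intersection of $\lambda^\circ(\alpha)$ with $(\alpha\cdot\alpha)^{(v^2-2)/2}$, computed by Grothendieck--Riemann--Roch on $S\times\Kv(S,H)_\reg$; crucially the computation takes place on the smooth variety $\Kv(S,H)_\reg$, so the singularities of $\Kv(S,H)$ play no role here. Injectivity of $\lambda^\circ$ is then immediate, as $v^2=2km^2\neq0$ makes the Mukai pairing on $v\ort$ non-degenerate.

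The heart of the argument is to descend $\lambda^\circ$ to $\Kv(S,H)$ and to control the restriction homomorphism $r\colon\H^2(\Kv(S,H),\Z)\to\H^2(\Kv(S,H)_\reg,\Z)$. The closed set $\Kv(S,H)\setminus\Kv(S,H)_\reg$ has codimension at least two; by Theorem~\ref{pr18thm1.10} and Proposition~\ref{kalstratification} the variety $\Kv(S,H)$ is normal with symplectic, hence canonical and rational, singularities; and $\H^2$ together with the Beauville--Bogomolov--Fujiki lattice is a locally trivial deformation invariant (Remark~\ref{rmkPSV}). Combining these inputs with the behaviour of the relevant cohomology under removal of a closed subset of codimension $\geq 2$ shows that $r$ is injective and $\operatorname{im}(\lambda^\circ)\subseteq\operatorname{im}(r)$, so $\lambda^\circ$ transports back to a well-defined injective Hodge isometry $\lambda_{\SvH}\colon v\ort\to\H^2(\Kv(S,H),\Z)$. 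On the sublattice of algebraic classes one may alternatively invoke Le Potier's determinant line bundle construction, which is available directly on all of $M_v(S,H)$ and also supplies the $\Q$-factoriality input to the codimension argument.

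Finally, surjectivity holds precisely when $(m,k)\neq(1,2)$. For $m=1$ the moduli space is smooth (Remark~\ref{rmksmoothcase}), deformation equivalent to a generalised Kummer variety for which $b_2=7=\operatorname{rk}(v\ort)$, and surjectivity of $\lambda_{\SvH}$ is Yoshioka's theorem \cite{Yos01a}. For $m>1$ one must establish $b_2(\Kv(S,H))=\operatorname{rk}(v\ort)$ as well; the cleanest route is again to exploit the locally trivial deformation invariance of $\H^2(\Kv(S,H),\Z)$ to replace $\SvH$ by a member of the same locally trivial deformation class whose second cohomology can be computed directly — for instance one carrying a Beauville--Mukai type Lagrangian fibration, or, when available, one admitting a symplectic resolution — after which the injective map $\lambda_{\SvH}$, with source and target of equal finite rank, is forced to be an isomorphism. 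The case $(m,k)=(1,2)$ is exactly the one in which $\Kv(S,H)$ is a K3 surface with $b_2=22>7=\operatorname{rk}(v\ort)$, so $\lambda_{\SvH}$ cannot be surjective, consistently with the exclusion in the statement. The main obstacle of the whole argument is the combination of these last two steps: pushing the Mukai homomorphism, which is intrinsically defined only on the smooth locus, all the way to an \emph{isomorphism} onto the integral second cohomology of the singular moduli space.
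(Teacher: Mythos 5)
This theorem is not proved in the paper under review: it is imported verbatim from \cite{PR20} (Theorem 1.6, Lemma 5.1 and Proposition 6.2), so there is no internal proof to compare your proposal against. Measured against the strategy of the cited source, your outline is essentially the right one: the Mukai homomorphism defined from a quasi-universal family on $S\times \Kv^s(S,H)$, independence of the choice of family because $\alpha\in v\ort$, a Fujiki-type computation on the smooth locus giving the isometry property, descent through the restriction map to the singular space, and a rank count for surjectivity are all genuine ingredients of \cite{PR20}.

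Two steps are under-justified, and they are exactly where the substance of the cited paper lies. First, the injectivity (in fact bijectivity) of the restriction $\H^2(\Kv(S,H),\Z)\to\H^2(\Kv^s(S,H),\Z)$ does not follow formally from ``normal, rational singularities, complement of codimension $\geq 2$'': for \emph{integral} cohomology one needs the vanishing of the relevant local cohomology along the strictly semistable locus, and this is obtained in \cite{PR20} (Corollary 3.3, Proposition 3.5) only by using the explicit local models of the singularities from \cite{KLS06}; your sketch names the right auxiliary inputs ($\Q$-factoriality, determinant line bundles) but does not supply the argument. Second, surjectivity for $m>1$ requires $b_2(\Kv(S,H))=7=\rk(v\ort)$, and neither of your proposed shortcuts delivers it: a symplectic resolution exists only for $(m,k)=(2,1)$, and since the locally trivial deformation class fixes both $m$ and $k$ (Theorem \ref{PR18defolt}) no deformation argument can transport a Betti number computed there to any other $(m,k)$; the computation in \cite{PR20} (Proposition 6.2) is instead carried out directly. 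With these two points supplied from the reference, the rest of your argument closes.
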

\subsection{Locally trivial deformations and monodromy operators}\label{secdefomon} In this Subsection we collect some notions and results that will build the framework for a deformation theory for primitive and irreducible symplectic varieties, in the case of our interest. This will allow us to introduce locally trivial monodromy operators and study their fundamental properties.
\subsubsection{Locally trivial deformations of symplectic varieties}\label{secdefo} We start by introducing an efficient notion of deformation, designed to preserve the essential properties of a variety, in a context in which we allow singularities. For further details, we refer to \cite[Section 4]{BL18} and \cite[Section 1.2]{OPR23}.
\begin{defn}\label{defltfamily}\begin{enumerate}
    \item A \textit{locally trivial family} is a proper morphism $f\colon \XX \to T$ of complex analytic spaces such that the base $T$ is connected and, for every point $x\in \XX$, there exist open neighborhoods $V_x\subseteq \XX$ of $x$ and $V_{f(x)}\subseteq T$ of $f(x)$ and an open subset $U_x \subseteq f^{-1}(f(x))$ such that $$V_x \simeq U_x \times V_{f(x)},$$ where the isomorphism is an isomorphism of complex analytic spaces commuting with the projections over $T$.
    \item If $X$ is a complex analytic variety, a \textit{locally trivial deformation of $X$} is a locally trivial family $f\colon \XX \to T$ for which there is $t\in T$ such that $\XX_t:= f^{-1}(t) \simeq X$.
    \item A \textit{locally trivial family of primitive (resp. irreducible) symplectic varieties} is a locally trivial family whose fibers are all primitive (resp. irreducible) symplectic varieties.
    \item Two primitive (resp. irreducible) symplectic varieties are said to be \textit{locally trivial deformation equivalent} if there exists a locally trivial family of primitive (resp. irreducible) symplectic varieties having both of them as fibers.
\end{enumerate}
\end{defn}

In the following, given a complex analytic space $X$, we will deal only with \textit{small} locally trivial deformations of $X$, i.e. locally trivial families with base an analytic open neighborhood $U$ of $X$ in the base of a universal deformation of $X$.

    By \cite[Corollary 4.11]{BL18}, any small locally trivial deformation of a primitive symplectic variety is a locally trivial deformation of primitive symplectic varieties, according to Definition \ref{defltfamily} (3). In the case of irreducible symplectic varieties, some sufficient criteria are provided in \cite[Section 1.2]{OPR23} in the case of our interest. The following is a straightforward application of Proposition 1.7, Proposition 1.8 and Proposition 1.9 of \cite{OPR23}.

    \begin{prop}
        Let $m$ and $k$ be two strictly positive integers and let $\SvH$ be an $(m,k)-$triple. If $(m,k)\neq (2,1)$, any small locally trivial deformation $f\colon \XX \to T$ of $\Kv(S,H)$ is an irreducible symplectic variety. If $(m,k)=(2,1)$, the same holds, provided that $f\colon \XX \to T$ is projective and $T$ is quasi-projective.
    \end{prop}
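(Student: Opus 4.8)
The plan is to deduce the statement from the criteria of \cite[Section~1.2]{OPR23}, notably their Propositions~1.7,~1.8 and~1.9, after checking that the relevant hypotheses are met by the moduli spaces $\Kv(S,H)$; so the argument is essentially a matter of assembling known ingredients in the right order. I would first dispose of the smooth case $m=1$: there $\Kv(S,H)$ is a point if $k=1$ (a case one may safely set aside), a K3 surface if $k=2$, and a generalized Kummer variety $Kum^{k-1}(S)$ if $k\geq 3$ (Remark~\ref{rmksmoothcase}). In all of these, $\Kv(S,H)$ is an irreducible holomorphic symplectic manifold, and any small locally trivial deformation of it is again one by the classical deformation theory of such manifolds --- the K\"{a}hler property is open, the fundamental group is a topological invariant of the family, and $h^{2,0}$ remains equal to $1$. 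Hence one may assume $m\geq 2$ and $k\geq 1$, and in this range $X_0:=\Kv(S,H)$ is an irreducible symplectic variety by Theorem~\ref{pr18thm1.10}.

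For $m\geq 2$ I would argue in two stages. \emph{First}, since $X_0$ is in particular a primitive symplectic variety, \cite[Corollary~4.11]{BL18} (quoted above) shows that the small locally trivial deformation $f\colon\XX\to T$ is a locally trivial family of primitive symplectic varieties; thus every fibre $\XX_t$ is a compact K\"{a}hler space that is primitive symplectic, and, local triviality making $f$ a topological fibre bundle over the connected base $T$, one gets $\pi_1(\XX_t)\cong\pi_1(X_0)=1$ by \cite[Corollary~13.3]{GGK19}. I would also record that the $\Q$-factorial terminal character of the singularities of $X_0$ in this range (Remark~\ref{rmksmoothcase}, \cite{KLS06}) is preserved, since local triviality means that the singularities of $\XX_t$ are, locally, those of $X_0$ times a smooth factor. \emph{Second}, one upgrades ``primitive'' to ``irreducible'' on each fibre: this is exactly what Propositions~1.7,~1.8 and~1.9 of \cite{OPR23} provide --- sufficient conditions, stated for primitive symplectic varieties and their small locally trivial deformations and hence applicable verbatim to $\Kv(S,H)$, under which such a deformation is a family of irreducible symplectic varieties, the inputs required being precisely the facts just recorded. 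This yields the unconditional statement for $m\geq 2$, $(m,k)\neq(2,1)$.

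The case I expect to need genuine care is $(m,k)=(2,1)$, the $OG6$ deformation class: here $X_0$ does not have $\Q$-factorial terminal singularities but admits a symplectic resolution $\widetilde{X_0}\to X_0$, so the control of finite quasi-\'etale covers and of the reflexive Hodge theory must be routed through the resolution and its deformations. Making that routing legitimate --- via algebraicity/GAGA and relative minimal model theory for $\widetilde{X_0}$ --- is precisely why one imposes that $f$ be projective and $T$ quasi-projective; with those hypotheses, the corresponding proposition of \cite{OPR23} applies and closes the argument. Apart from this point, and the routine bookkeeping with the classification of Remark~\ref{rmksmoothcase}, the proof is a direct chain of citations.
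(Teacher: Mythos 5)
Your overall strategy coincides with the paper's: reduce to the criteria of Propositions~1.7, 1.8 and~1.9 of \cite{OPR23} and check their hypotheses for $\Kv(S,H)$. However, there is a genuine gap in \emph{which} hypotheses you check, and it is precisely the gap that prevents your argument from explaining why $(m,k)=(2,1)$ is exceptional. The inputs the paper verifies are: (i) for $(m,k)\neq(2,1)$, the moduli space has at most terminal singularities (\cite[Theorem A]{KLS06}, \cite[Corollary 1]{Nam01}) and its \emph{smooth locus} $\Kv^s(S,H)$ is simply connected (\cite[Theorem 3.6]{PR18}); (ii) for $(m,k)=(2,1)$, the singularities are canonical but not terminal (\cite{OG00}, \cite{Nam01}, \cite{Per09}) and $\pi_1(\Kv^s(S,H))=\Z/2\Z$. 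You instead invoke the simple connectedness of the whole variety $\XX_t$ via \cite[Corollary 13.3]{GGK19}. That quantity is the same ($=1$) in every case, including $(2,1)$, since every irreducible symplectic variety is simply connected; so it cannot be the hypothesis that discriminates between the cases, and an argument built on it either proves too much (the unconditional statement for $(2,1)$ as well, which is not what the criteria give) or does not engage with what Propositions~1.7--1.8 actually require. The fundamental group of the \emph{regular locus} is the relevant invariant, and its nontriviality for $(2,1)$ is exactly why only Proposition~1.9 --- the projective criterion --- is available there.

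Relatedly, your proposed explanation of the projectivity hypothesis in the $(2,1)$ case (routing through the symplectic resolution, relative MMP, GAGA) is speculation that does not match the actual mechanism: projectivity enters simply because Proposition~1.9 of \cite{OPR23} is the only one of the three criteria whose hypotheses ($\pi_1$ of the smooth locus finite, singularities canonical) are satisfied, and that criterion is stated only for projective families over quasi-projective bases. Finally, the claim that the singularities are ``$\Q$-factorial terminal'' overstates what is used (and what is true in general); the paper only needs ``at most terminal'' for $(m,k)\neq(2,1)$. Correcting the fundamental-group input to $\pi_1(\Kv^s(S,H))$ via \cite[Theorem 3.6]{PR18}, and the singularity statements accordingly, turns your outline into the paper's proof.
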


    \begin{proof}
        The first part of the statement immediately follows from Proposition 1.7 or Proposition 1.8 of \cite{OPR23}, as, under the assumption $(m,k)\neq (2,1)$, the moduli space $\Kv(S,H)$ admits at most terminal singularities, by \cite[Theorem A]{KLS06} and \cite[Corollary 1]{Nam01}, and its smooth locus $\Kv^s(S,H)$ is simply connected, by \cite[Theorem 3.6]{PR18}. If $(m,k)=(2,1)$, again by \cite[Theorem 3.6]{PR18}, it holds $\pi_1(\Kv^s(S,H))=\Z/2\Z$ and, by \cite[Theorem 1.4]{OG00}, \cite[Corollary 1]{Nam01} and \cite[Theorem 1.1]{Per09}, the moduli space $\Kv(S,H)$ has canonical singularities that are not terminal. Hence, the only criterion that can be applied in this case is Proposition 1.9 of \cite{OPR23}, for which projectivity is a requirement.
    \end{proof}
Consistently with the theory above-introduced and with a view towards bimeromorphic classification of irreducible symplectic varieties (see \cite[Theorem 6.16]{BL18}), a natural starting point is the study of the locally trivial deformation equivalence class (see Definition \ref{defltfamily} (4)) of a moduli space of sheaves on an Abelian surface, consisting of irreducible symplectic varieties. More details concerning such deformation classes are provided, for instance, by Theorem \ref{PR18defolt}.

Furthermore, as smooth deformations are locally trivial, the notions introduced so far define a consistent generalization of the classical deformation theory for irreducible holomorphic symplectic manifolds.\\

We conclude this Subsection by dealing with the behavior of the singular locus of a symplectic varieties along a locally trivial deformation, as it will be a crucial point in Section \ref{secemb}.

\begin{rmk}\label{relstrat} \begin{enumerate}
    \item We point out that, if $f \colon \XX \to T$ is a locally trivial family of symplectic varieties, then there is a natural relative stratification $$\XX= \XX_0 \supseteq \XX_1 \supseteq \cdots \supseteq \XX_l,$$ such that, for every $t\in T$, set $\XX_{i,t}:= \XX_i \cap \XX_t$ for $i=0, \dots, l$, the stratification $$\XX_t \supseteq \XX_{1,t} \supseteq \cdots \supseteq \XX_{l,t}$$ is the stratification of singularities of $\XX_t$ as in Proposition \ref{kalstratification} and the restriction $f_i\colon \XX_i \to T$ is again a locally trivial family. 
    \item We also remark that, by construction, the restriction $f_l\colon \XX_l\to T$ of $f$ defines a smooth deformation of smooth symplectic manifolds. Since connected components are preserved along locally trivial deformations, the restriction $f_l$ to each connected component of $\XX_l$ defines a smooth deformation of irreducible holomorphic symplectic manifolds.
\end{enumerate}
\end{rmk}

\subsubsection{Locally trivial monodromy operators} \label{secmon} Aim of this Subsection is to define the locally trivial monodromy group of a primitive symplectic variety, following \cite{BL18} and review its fundamental properties (see \cite[Section 1.3]{OPR23}).

As recalled in Remark \ref{rmkPSV} (3), if $X$ is a primitive symplectic variety, then its Beauville-Bogomolov-Fujiki lattice $(\H^2(X,\Z)_\tf,q_X)$ is an indefinite lattice of signature $(3,b_2(X)-3)$. By \cite[Lemma 5.7]{BL18}, the latter is a locally trivial deformation invariant, meaning that, if $X_1$ and $X_2$ are two locally trivial deformation equivalent primitive symplectic varieties, then there exists an isometry $$g\in \O(\H^2(X_1,\Z)_\tf, \H^2(X_2,\Z)_\tf)$$ (see Appendix \ref{appendix} for notations). Among isometries defined in this way, we identify the following special class:

\begin{defn}\label{defpt}Let $X$, $X_1$ and $X_2$ be three primitive symplectic varieties. \begin{enumerate}
    \item An isometry $g \in \O(\H^2(X_1,\Z)_\tf, \H^2(X_2,\Z)_\tf)$ is a \textit{locally trivial parallel transport operator from $X_1$ to $X_2$} if there exist a locally trivial family of primitive symplectic varieties $p\colon \XX \to T$, two points $t_1,t_2\in T$ such that $\XX_{t_i}\simeq X_i$, for $i=1,2$, and a continuous path $\gamma$ in $T$ from $t_1$ to $t_2$ such that $g$ is the parallel transport $\PT_p(\gamma)$ along $\gamma$ in the local system $R^2p\sstar \Z$.
    \item An isometry $g\in \O(\H^2(X,\Z)_\tf)$ is a \textit{locally trivial monodromy operator on $X$} if it is a locally trivial parallel transport operator from $X$ to itself.
\end{enumerate}
\end{defn}

If $X_1$ and $X_2$ are two primitive symplectic varieties, we will denote by $$\PT_{\lt}^2(X_1,X_2)\subseteq \O(\H^2(X_1,\Z)_\tf, \H^2(X_2,\Z)_\tf)$$ the set of locally trivial parallel transport operators from $X_1$ to $X_2$. If $X_1=X_2=:X$, we will denote by \begin{equation}
    \label{mon}\monlt(X):=\PT_{\lt}^2(X,X)\subseteq \O(\H^2(X,\Z)_\tf)
\end{equation} the set of locally trivial monodromy operators of $X$. 
\begin{rmk}\label{rmkcomposition}
    As already remarked in \cite[Lemma 1.12]{OPR23}, arguing as in the smooth case in \cite[footnote 3]{Mar11}, one can easily prove that the set $\monlt(X)$ of locally trivial monodromy operators of a primitive symplectic variety $X$ is actually a subgroup of $\O(\H^2(X,\Z)_{tf})$. 
    
    Indeed, if $f, g \in \monlt(X)$, there exist two locally trivial families $p\colon \XX \to T$ and $p'\colon \XX'\to T'$, two points $t\in T$ and $t'\in T'$ such that $\XX_t\simeq X \simeq \XX'_{t'}$ and two loops $\gamma$ in $T$ and $\gamma'$ in $T'$ centered, respectively, in $t$ and $t'$, such that $f$ is the locally trivial monodromy operator in the family $p$ associated to $\gamma$ and $g$ is the locally trivial monodromy operator in the family $p'$ associated to $\gamma'$. We can therefore define a new locally trivial deformation $p''\colon \XX'' \to T''$ of $X$ by gluing $\XX$ and $\XX'$ via the isomorphism $\XX_t\simeq X \simeq \XX'_{t'}$ and by gluing $T$ and $T'$ by means of the relation $t=t'=:t''$. The loop $\gamma'':= \gamma\ast \gamma'$ in the new reducible base is centered in $t''$, by construction, and we define $g\circ f$ as the locally trivial monodromy operator associated to $\gamma''$ in the family $p''$.
\end{rmk}
\begin{defn}\label{defmon}Let $X$ be a primitive symplectic variety. The group $\monlt(X)$ defined in (\ref{mon}) is called the \textit{locally trivial monodromy group of $X$}.
\end{defn}
\begin{rmk}
    If $X$ is smooth, then $\monlt(X)=\mon^2(X)$, according to the classical definition (see \cite[Definition 1.1]{Mar11}), as smooth deformations of $X$ are locally trivial.
\end{rmk}

In order to refine the inclusion of groups of Remark \ref{rmkcomposition}, we introduce an \textit{orientation} on $\H^2(X,\Z)_{tf}$, when $X$ is a primitive symplectic variety, referring the reader to Appendix \ref{rmkorient} for general definitions and basic facts, or to \cite[Section 4]{Mar11} and \cite[Section 4.1]{Mar08} for a more detailed discussion.

\begin{rmk}\label{exorientPSV} We recall that, if $X$ is a primitive symplectic variety, the Beauville-Bogomolov-Fujiki lattice $(\H^2(X,\Z)_{tf},q_X)$ has signature $(3,b_2(X)-3)$ (see Remark \ref{rmkPSV} (3)). Hence, in order to find an orientation for the big positive cone $$\widetilde{\CC_X}:=\{\alpha \in \H^2(X,\R) \colon q_X(\alpha)>0\}$$ of $X$, we may choose a basis of a $3-$dimensional positive space.  As explained in the proof of \cite[Lemma 1.13]{OPR23}, following \cite[Corollary 8]{Nam02a}, if $\omega \in \H^1(X,\Omega_X^{[1]})$ is a K\"ahler class and $\sigma \in \H^0(X,\Omega_X^{[2]})$ is a reflexive $2-$form, then the basis $\{\omega, \Re(\sigma), \\ \Im(\sigma)\}$ determines an orientation of $\H^2(X,\Z)_{tf}$. Moreover, such an orientation does not depend on the choice of the K\"ahler class and of the symplectic form and it is preserved by locally trivial parallel transport operators in families of primitive symplectic varieties.
\end{rmk}

Remark \ref{exorientPSV} is one of the key ingredients needed to prove the following

\begin{lem}{(\cite[Theorem 1.1 (1)]{BL18}, \cite[Lemma 1.13]{OPR23})}\label{lemmonltorpres}
    Let $X$ be a primitive symplectic variety. Then $\monlt(X)$ is a subgroup of finite order of $\O^+(\H^2(X,\Z)_{tf})$.
\end{lem}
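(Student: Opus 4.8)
The plan is to prove two things: first, that every locally trivial monodromy operator on $X$ preserves the canonical orientation of $\widetilde{\CC_X}$ described in Remark \ref{exorientPSV}, so that $\monlt(X)\subseteq \O^+(\H^2(X,\Z)_\tf)$; second, that $\monlt(X)$ is a subgroup of finite index in $\O(\H^2(X,\Z)_\tf)$, hence a fortiori of finite index in $\O^+$. Combining these with the fact that $\monlt(X)$ is indeed a subgroup (Remark \ref{rmkcomposition}) gives the claim, since the excerpt phrases the conclusion as ``subgroup of finite order of $\O^+$'', which in the lattice setting means finite \emph{index}.

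For the orientation-preserving part, let $g=\PT_p(\gamma)\in\monlt(X)$ be the parallel transport along a loop $\gamma$ in a locally trivial family $p\colon\XX\to T$ of primitive symplectic varieties with $\XX_{t_0}\simeq X$. First I would invoke the last sentence of Remark \ref{exorientPSV}: the orientation of $\H^2(\XX_t,\Z)_\tf$ determined by a choice of K\"ahler class and reflexive symplectic form on the fiber is independent of those choices and is \emph{locally constant} along the family, i.e. preserved by locally trivial parallel transport. Applying this to the loop $\gamma$ from $t_0$ back to $t_0$ shows that $g$ carries the canonical orientation of $X$ to itself, i.e. $g\in\O^+(\H^2(X,\Z)_\tf)$. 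This step is essentially a citation of \cite[Theorem 1.1(1)]{BL18} and \cite[Lemma 1.13]{OPR23}, reorganized; the only care needed is to make sure the family may be taken with connected, hence path-connected (locally path-connected complex analytic) base, so that ``parallel transport along a loop'' is well defined up to homotopy and the local system $R^2p_*\Z$ really does produce the isometry in question.

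For the finite-index part, the argument is the standard one from the smooth case (\cite[Section 1]{Mar11}), adapted by \cite{BL18} and \cite{OPR23} to primitive symplectic varieties. One considers the (coarse) moduli of marked primitive symplectic varieties locally trivial deformation equivalent to $X$: by the local triviality of small deformations (\cite[Corollary 4.11]{BL18}) and the unobstructedness / smoothness of the locally trivial deformation functor for primitive symplectic varieties, each connected component of this marked moduli space surjects via the period map onto an open subset of the period domain, and two markings in the same component differ by a locally trivial monodromy operator. Since the period domain is connected and the monodromy group acts on the set of connected components of the marked moduli space, the number of such components is the index $[\O(\H^2(X,\Z)_\tf):\monlt(X)]$ up to the orientation/discriminant-type constraints; finiteness of this number of components follows from \cite[Theorem 1.1]{BL18}. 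The main obstacle here is that this requires importing the nontrivial Global Torelli / moduli-space machinery of \cite{BL18} for primitive symplectic varieties (existence and surjectivity of the period map, Hausdorffness issues, the counting of components), so in practice the honest ``proof'' is a careful citation: finiteness of index comes from \cite[Theorem 1.1(1)]{BL18}, and orientation-preservation from the transport-invariance of the canonical orientation in Remark \ref{exorientPSV}. I would therefore present the lemma's proof as: (i) $\monlt(X)$ is a group by Remark \ref{rmkcomposition}; (ii) it lands in $\O^+$ by Remark \ref{exorientPSV}; (iii) it has finite index by \cite[Theorem 1.1]{BL18}, exactly as in \cite[Lemma 1.13]{OPR23}.
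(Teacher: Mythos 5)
Your proposal matches the paper's treatment: the paper gives no written proof of this lemma, presenting it purely as a citation of \cite[Theorem 1.1(1)]{BL18} and \cite[Lemma 1.13]{OPR23}, with Remark \ref{exorientPSV} (invariance of the canonical orientation under locally trivial parallel transport) explicitly flagged as the key ingredient — exactly the three-step citation structure you describe. Your reading of ``finite order'' as finite index is also the intended one.
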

For later use, we make few comments that will be helpful in order to give an efficient characterization of orientation preserving Hodge isometries.
\begin{rmk}\label{exorientPSV2}
    If $X$ and $Y$ are two primitive symplectic varieties and $$g\colon \H^2(X,\Z)_{tf} \to \H^2(Y,\Z)_{tf}$$ is a Hodge isometry, we may choose generators $\sigma_X \in \H^0(X,\Omega_X^{[2]})$ and $\sigma_Y \in \H^0(Y,\Omega_Y^{[2]})$ such that $g(\sigma_X)=\sigma_Y$. If $\omega_X$ is a K\"ahler class on $X$, as $g$ is an isometry, its image $g(\omega_X)$ must be orthogonal to $\sigma_Y$, namely, $g(\omega_X)\in \H^1(X,\Omega_Y^{[1]})$. Hence, we can choose two natural orientations for $X$ and $Y$, given, respectively, by $\{\omega_X, \Re(\sigma_X), \Im(\sigma_X)\}$ and $\{\omega_Y, \Re(\sigma_Y), \Im(\sigma_Y)\}$, where $\omega_Y$ is a K\"ahler class on $Y$, and deduce that $g$ preserves the given orientations if and only if the orientation of the positive cone $$\CC'_X=\{\alpha \in \H^{1,1}(X,\R)\colon \alpha\cdot \alpha>0\}$$ is preserved. According to Appendix \ref{rmkorient} (a) this happens if and only if $g$ maps the connected component $\CC_X$ of $\CC'_X$ containing $\omega_X$ to the connected component $\CC_Y$ of $\CC'_Y$ containing $\omega_Y$. 
\end{rmk}

An example of locally trivial parallel transport operator is given by pushforwards of birationalities between projective primitive symplectic varieties. In the case of irreducible holomorphic symplectic manifolds, this is the content of \cite[Theorem 2.5, Corollary 2.7]{Huy03a}, made explicit in terms of parallel transport operators by \cite[Section 3.1]{Mar11}. A generalization to the singular setting is given by a non-trivial result by \cite{BL18}.

\begin{prop}{(\cite[Theorem 6.16]{BL18})}\label{proppushforward} Let $X$ and $Y$ be two projective primitive symplectic varieties, and let $f\colon X \to Y$ be a birational morphism. Suppose that $f$ is an isomorphism in codimension $1$ and that
$f\sstar : \Pic(X)_\Q \to \Pic(Y)_\Q$ is well-defined and an isomorphism. Then the pushforward $$f\sstar\colon \H^2(X,\Z)_{tf} \longrightarrow \H^2(Y,\Z)_{tf}$$ is well defined and a locally trivial parallel transport operator.\end{prop}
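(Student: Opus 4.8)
The plan is to split the statement into two independent parts: first, that the map $f_*\colon\H^2(X,\Z)_\tf\to\H^2(Y,\Z)_\tf$ is a well-defined Hodge isometry, and second, that it lies in $\PT^2_\lt(X,Y)$. For the first part, since $f$ is an isomorphism in codimension $1$ there are open subsets $U\subseteq X$ and $V\subseteq Y$ whose complements have codimension $\geq 2$ and an isomorphism $f_{|U}\colon U\xrightarrow{\,\sim\,}V$. Because $X$ and $Y$ are primitive symplectic varieties, restriction identifies $\H^2(X,\Z)_\tf$ with a sublattice of $\H^2(U,\Z)_\tf$ carrying the same weight two Hodge structure, and similarly for $Y$; under these identifications $f_*$ is simply $(f_{|U}^{\,*})^{-1}$, hence automatically an isomorphism of Hodge structures, and the hypothesis that $f_*\colon\Pic(X)_\Q\to\Pic(Y)_\Q$ is a well-defined isomorphism makes this compatible with the algebraic parts. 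That $f_*$ is an \emph{isometry} for the Beauville--Bogomolov--Fujiki forms then follows from the Fujiki relation $q_X(\alpha)^{n}=c_X\int_X\alpha^{2n}$ (with $\dim X=2n$): the integrals $\int_X\alpha^{2n}$ and $\int_Y(f_*\alpha)^{2n}$ agree because they are both computed on $U\cong V$, and the Fujiki constant is a birational-and-locally-trivial-deformation invariant, so $c_X=c_Y$.

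The substantive part is to exhibit $f_*$ as a parallel transport operator; here I would run the deformation argument of Huybrechts (\cite{Huy03a}) in the singular category, using the locally trivial deformation theory and local Torelli theorem for primitive symplectic varieties from \cite{BL18}. Fix an ample class $h\in\Pic(Y)_\Q$ and let $h':=(f_*)^{-1}(h)\in\Pic(X)_\Q$ be its proper transform, a big and movable class of type $(1,1)$; identify the Beauville--Bogomolov--Fujiki lattices of $X$ and $Y$ via $f_*$, so that $h'$ and $h$ become the same element $\ell$ of this common lattice and the period points of $X$ and $Y$ coincide. Restricting both universal locally trivial deformations to a small neighborhood $T$ of this common period point inside $\ell^{\perp}$, one gets locally trivial families $\mathcal{X}\to T$ and $\mathcal{Y}\to T$ of projective primitive symplectic varieties along which $h'$, respectively $h$, stays of type $(1,1)$. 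For a very general $t\in T$ the Picard rank of $\mathcal{X}_t$ and of $\mathcal{Y}_t$ drops to one, generated by $h'_t$ and $h_t$ respectively; being big and movable of Picard rank one on a projective variety, these classes are ample. Moreover $\mathcal{X}_t$ and $\mathcal{Y}_t$ remain birational — the isomorphism $f_{|U}$ spreads out over $T$ using the relative stratification of Remark \ref{relstrat} — so they are birational projective primitive symplectic varieties carrying corresponding ample polarizations; comparing the $\mathrm{Proj}$ of the associated section rings (which coincide, since the varieties are normal and birational in codimension one) produces an isomorphism $g_t\colon\mathcal{X}_t\xrightarrow{\,\sim\,}\mathcal{Y}_t$ that restricts to $f_{|U}$ on the common big open set, i.e. that is ``$f_*$'' at the level of $\H^2$.

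To conclude, choose a path $\gamma$ in $T$ from the central point to $t$. Parallel transport along $\gamma$ in $R^2(\mathcal{X}\to T)_*\Z$ gives $P\colon\H^2(X,\Z)_\tf\to\H^2(\mathcal{X}_t,\Z)_\tf$, and along $\gamma^{-1}$ in $R^2(\mathcal{Y}\to T)_*\Z$ gives $Q\colon\H^2(\mathcal{Y}_t,\Z)_\tf\to\H^2(Y,\Z)_\tf$. Gluing $\mathcal{X}$ and $\mathcal{Y}$ along $g_t$ yields a single locally trivial family of primitive symplectic varieties containing $X$ and $Y$ as fibers, in which the composite $Q\circ g_{t*}\circ P$ is by construction a locally trivial parallel transport operator from $X$ to $Y$; and since this composite and $f_*$ are both induced by the identity on $U\cong V$, they are equal, so $f_*\in\PT^2_\lt(X,Y)$.

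The step I expect to be the main obstacle is the passage, at a very general $t\in T$, from ``$\mathcal{X}_t$ and $\mathcal{Y}_t$ are deformation equivalent and carry matching polarizations'' to ``$\mathcal{X}_t\cong\mathcal{Y}_t$ compatibly with $f$''. This requires either showing that the codimension-one birational correspondence itself deforms over $T$, or a separatedness/uniqueness argument for polarized families of primitive symplectic varieties — and it is precisely here that the projectivity hypothesis on $X$ and $Y$ is essential, this being the singular analogue of the classical results of Huybrechts and Namikawa. An alternative route, also delicate in the singular setting, would be to factor $f$ into a chain of flops and treat each flop separately; I would fall back on this only if the wall-avoidance statement for $h'$ proved troublesome.
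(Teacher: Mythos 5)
The paper itself offers no proof of this statement: Proposition \ref{proppushforward} is imported wholesale from \cite[Theorem 6.16]{BL18} and used as a black box, so there is nothing internal to compare your argument against. What you have written is an attempted reproof of Bakker--Lehn's theorem, and your overall strategy is indeed the right one --- it is the Huybrechts-type deformation argument that \cite{BL18} adapt to the singular setting. The first half (well-definedness of $f\sstar$ via the common big open subset, and the isometry property via the Fujiki relation) is essentially sound, modulo a small circularity: equating $c_X q_X(\alpha)^n$ with $c_Y q_Y(f\sstar\alpha)^n$ only gives $q_Y\circ f\sstar=\lambda q_X$ with $c_Y\lambda^n=c_X$; to conclude $\lambda=1$ you should invoke integrality and primitivity of the Beauville--Bogomolov--Fujiki forms together with the signature, rather than asserting birational invariance of the Fujiki constant, which is what is being proved.

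The genuine gap is exactly where you predict it, and neither of the two mechanisms you propose closes it. First, Remark \ref{relstrat} concerns the singularity stratification \emph{within a single} locally trivial family and gives no way to spread a birational correspondence between two \emph{different} families $\XX\to T$ and $\YY\to T$ sideways over $T$: a birational map between the central fibers does not deform in general (its graph, viewed as a cycle in $\XX\times_T\YY$, can degenerate into components none of which dominates both factors). Second, the comparison of section rings $\mathrm{Proj}\bigoplus_m \H^0(\XX_t,mh'_t)\simeq\mathrm{Proj}\bigoplus_m \H^0(\YY_t,mh_t)$ presupposes that $\XX_t$ and $\YY_t$ are already birational, isomorphic in codimension one, with polarizations matching under that birational map --- which is precisely the statement to be established at very general $t$. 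What actually fills this hole in \cite{BL18} is the substantive machinery of that paper: the locally trivial local Torelli theorem, the identification of the period points, the structure of the positive cone, and a degeneration-of-correspondences or inseparability argument showing that two polarized families with equal period maps and Picard rank one very general fibers agree. As written, your proof assumes the conclusion at its key step, so it does not constitute an independent argument; for the purposes of this paper the correct move is the one the text makes, namely to quote \cite[Theorem 6.16]{BL18}.
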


In order to complete the picture and to provide additional motivation for the study of the locally trivial monodromy group of a primitive - respectively, irreducible - symplectic variety, we include the Hodge-theoretic version of Global Torelli Theorem for primitive symplectic varieties, which is the main result of \cite{BL18}.

\begin{thm}{(\cite[Theorem 1.1]{BL18})}\label{thmtorelli} Let $X$ and $Y$ be two locally trivial deformation equivalent primitive symplectic varieties and assume that $b_2(X)\geq 5$. Then $X$ and $Y$ are bimeromorphic if and only if there exists a Hodge isometry $g\colon \H^2(X,\Z)_\tf \to \H^2(Y,\Z)_\tf$ that is a locally trivial parallel transport operator.\end{thm}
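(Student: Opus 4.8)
This is \cite[Theorem 1.1]{BL18}; I only outline the strategy. One implication is comparatively soft: a bimeromorphism between primitive symplectic varieties is automatically an isomorphism in codimension one (triviality of the reflexive canonical sheaf together with the negativity lemma), hence induces a Hodge isometry $\H^2(X,\Z)_\tf\to\H^2(Y,\Z)_\tf$, and this isometry is a locally trivial parallel transport operator by Proposition \ref{proppushforward} (that is, \cite[Theorem 6.16]{BL18}) in the projective case, and in general by approximation with projective models lying in the same locally trivial deformation class. The content is therefore the converse, which I sketch below.

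\textbf{Period map and local Torelli.} First I would set up a Hodge-theoretic moduli space. Locally trivial deformations of a primitive symplectic variety are unobstructed, so the base $\mathrm{Def}^{lt}(X)$ of its universal locally trivial deformation is smooth of dimension $b_2(X)-2$; here one uses the weight-two Hodge structure on $\H^2(X,\Z)_\tf$ and the Beauville--Bogomolov--Fujiki form $q_X$. The local period map, sending a small locally trivial deformation to the line $\H^{2,0}$ inside the period domain $\Omega=\{[\sigma]\in\mathbb{P}(\H^2(X,\C))\colon q_X(\sigma)=0,\ q_X(\sigma+\bar\sigma)>0\}$, is a local biholomorphism, its differential being identified, via contraction with the symplectic form, with the canonical isomorphism familiar from the smooth case. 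Gluing these charts along a connected component of the space of markings of primitive symplectic varieties locally trivial deformation equivalent to $X$ produces a (generally non-Hausdorff) complex manifold $\mathcal{M}$ equipped with a holomorphic, local-isomorphism period map $\mathcal{P}\colon\mathcal{M}\to\Omega$. The hypothesis $b_2(X)\geq5$ guarantees that $q_X$ has signature $(3,b_2(X)-3)$ with negative-definite part of rank at least two, so that $\Omega$ is connected and the lattice-theoretic transitivity of Eichler type needed below is available.

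\textbf{Surjectivity and the inseparability--bimeromorphy dictionary.} Next I would show that $\mathcal{P}$ is surjective: for a Kähler class $\omega$ on a fibre, the associated twistor family is a locally trivial deformation over $\mathbb{P}^1$ whose period image is the twistor line $\Omega\cap\mathbb{P}(\langle\omega,\Re\sigma,\Im\sigma\rangle_\C)$, and these lines sweep out $\Omega$ and link points of $\mathcal{M}$. The core of the argument, adapting Verbitsky's strategy (\cite{Ver09}) to the singular setting, is then the assertion that two points $p,q\in\mathcal{M}$ lying in the same connected component with $\mathcal{P}(p)=\mathcal{P}(q)$ are inseparable in $\mathcal{M}$, and that any two inseparable points represent bimeromorphic primitive symplectic varieties. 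Inseparability follows from local Torelli together with surjectivity and the twistor geometry (a Hausdorff reduction of $\mathcal{M}$ would be an isomorphism onto $\Omega$, which cannot account for genuinely distinct fibres over the same period). The bimeromorphy statement is proved by joining $X_p$ and $X_q$ through a one-parameter locally trivial family, making it projective via the projectivity criterion (choosing a very general nearby fibre, of Picard rank one), and running a relative minimal model program for symplectic varieties: since the canonical class is trivial all steps are flops, so the resulting bimeromorphic map is an isomorphism in codimension one, and $X_p$, $X_q$ differ only by their position in the decomposition of the movable cone into Kähler-type chambers. I expect this to be the main obstacle, since it requires the minimal model program for (possibly singular) symplectic varieties, including the cone and contraction theorems, $\Q$-factorial terminalizations, and the relevant termination results.

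\textbf{Conclusion.} Finally, given a Hodge isometry $g\colon\H^2(X,\Z)_\tf\to\H^2(Y,\Z)_\tf$ that is a locally trivial parallel transport operator, I would use $g$ to mark $Y$ compatibly with a fixed marking of $X$. Because $g$ is realized by parallel transport along a path in a locally trivial family, the two markings lie in the \emph{same} connected component of $\mathcal{M}$; because $g$ is a Hodge isometry, they define the \emph{same} point of $\Omega$. By the previous step the corresponding points of $\mathcal{M}$ are inseparable, hence $X$ and $Y$ are bimeromorphic.
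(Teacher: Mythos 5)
The paper does not prove this statement: it is imported verbatim as \cite[Theorem 1.1]{BL18}, so there is no internal argument to measure your outline against, and the only fair comparison is with Bakker--Lehn's own proof. At that level your sketch captures the right architecture (unobstructedness of locally trivial deformations, local Torelli, a marked moduli space with a local-isomorphism period map, the dictionary between inseparable points and bimeromorphic fibres via an MMP argument in the projective case), and you are right that the MMP input is the main technical burden. The one step that does not survive the passage to the singular category as you have written it is surjectivity via twistor families: a twistor deformation requires a hyperk\"ahler metric, which on a smooth irreducible holomorphic symplectic manifold comes from Yau's theorem but which is not available on a singular symplectic variety (no singular Calabi--Yau theorem of the required strength is known). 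Bakker and Lehn therefore cannot literally sweep out the period domain by twistor lines; their route to surjectivity and to linking fibres over the same period goes instead through $\Q$-factorial terminalizations, density of projective periods combined with the projectivity criterion, and the finite-index property of the monodromy group --- which is part (1) of the very same theorem, quoted in this paper as Lemma \ref{lemmonltorpres}, so your sketch would risk circularity if it leaned on it without comment. A smaller point: for the easy direction, Proposition \ref{proppushforward} as stated applies to birational \emph{morphisms} between \emph{projective} primitive symplectic varieties, so the general bimeromorphic K\"ahler case needs the codimension-one and simultaneous-deformation arguments you only allude to, not merely ``approximation by projective models''.
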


In light of Theorem \ref{thmtorelli}, in order to perform a bimeromorphic classification of a primitive (or irreducible) symplectic variety $X$ whose locally trivial deformation class is fixed, the study of its locally trivial monodromy group $\monlt(X)$ becomes crucial. The index $[\O^+(\H^2(X,\Z)_\tf)\colon \monlt(X)]$ carries already an important geometric meaning: it counts the number of distinct bimeromorphism classes corresponding to each Hodge-isometry class, in the fixed deformation class.

\subsubsection{The monodromy group of smooth moduli spaces of sheaves on Abelian surfaces}\label{secmonkummer}We conclude the Section with a brief description of the monodromy group of moduli spaces of sheaves on Abelian surfaces whose Mukai vector is primitive and whose square is strictly greater than $4$. In this case, the latter is smooth and it is an irreducible holomorphic symplectic manifold deformation equivalent to a Kummer manifold (see Remark \ref{rmksmoothcase}).

Let $\SwH$ a $(1,k)-$triple, with $k>2$ and let $X=\Kw(S,H)$. Recall that, by \cite{Yos01a} (see also Theorem \ref{pr20thm1.6}), the Beauville-Bogomolov-Fujiki lattice $\H^2(X,\Z)$ of $X$ is abstractly isometric to the even lattice $U^{\oplus 3}\oplus <-2k>$ of signature $(3,4)$. In Appendix \ref{weylgroups} is recalled the construction of the Weyl group $\W(\H^2(X,\Z))$ generated by reflections around $(\pm 2)-$vectors in $\H^2(X,\Z)$, which we will denote, for simplicity, by $\W(X)$.

\begin{ex}\label{exmonk3}
    The group $\W(X)$ plays already a remarkable role in the context of moduli spaces of sheaves on K3 surfaces, both smooth and singular. Indeed, if $\SvH$ is an $(m,k)-$triple, with $S$ a K3 surface and $k>1$, and we let $X=M_v(S,H)$ be the moduli space of Gieseker $H-$semistable sheaves on $S$ with Mukai vector $v$, then one can analogously define the group $\W(X)$ and the following equality holds: $$\monlt(X)=\W(X).$$ The latter is the principal result of \cite{Mar08} and \cite{Mar10} in the primitive case and of \cite{OPR23} in the non-primitive case. 
\end{ex}

In the primitive Abelian case, for $X=\Kw(S,H)$, the same role is played by the order two subgroup $\Nn(X)$ of $\W(X)$ given by the kernel of the character $\det\cdot \disc$ (see (\ref{defNn})).

\begin{thm}{(\cite[Theorem 1.4]{Mar18}, \cite[Theorem 2.3]{Mon14})}\label{thmmonkummer} Let $X$ be an irreducible holomorphic symplectic manifold deformation equivalent to a moduli space $\Kw(S,H)$, with $\SwH$ a $(1,k)-$triple, with $k>2$. Then $$\mon^2(X)= \Nn(X).$$
\end{thm}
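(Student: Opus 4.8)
The statement to prove is Theorem~\ref{thmmonkummer}: for $X$ an irreducible holomorphic symplectic manifold deformation equivalent to $\Kw(S,H)$ with $\SwH$ a $(1,k)$-triple and $k>2$, one has $\mon^2(X)=\Nn(X)$. This is a citation-theorem combining \cite{Mar18} and \cite{Mon14}, so my plan is to reconstruct the two-sided inclusion in the spirit of those papers. The plan is to prove $\mon^2(X)\subseteq \Nn(X)$ first (the ``easy'' direction), and then $\Nn(X)\subseteq \mon^2(X)$ (the ``hard'' direction), and finally invoke deformation invariance of the monodromy group to reduce from an arbitrary $X$ in the deformation class to $X=\Kw(S,H)$ itself.

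For the inclusion $\mon^2(X)\subseteq\Nn(X)$, I would recall that $\mon^2(X)\subseteq\O^+(\H^2(X,\Z))$ by Lemma~\ref{lemmonltorpres} (orientation is preserved along families), so it suffices to control the action on the discriminant group $A_{\H^2(X,\Z)}\cong\Z/2k\Z$. The key input is a monodromy-invariant structure beyond $\H^2$: here one uses that $\Kw(S,H)$ carries extra cohomology, and the natural move is to work with the extended Mukai lattice $\widetilde{\H}(S,\Z)$ together with the embedding $w^\perp\hookrightarrow\widetilde{\H}(S,\Z)$ of Theorem~\ref{pr20thm1.6}, which has discriminant-complement $\Z w/\Z\, w^2$, again cyclic of order $2k$. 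A monodromy operator, being a parallel transport operator in a family, acts compatibly with the full Hodge structure / Mukai-lattice package in a way that forces its action on $A_{\H^2}$ to be $\pm\id$ (this is the Weyl-group constraint $\W(X)$) and, beyond that, the further character $\det\cdot\disc$ to be trivial --- this last refinement, specific to the Abelian/Kummer case and distinguishing $\Nn(X)$ from $\W(X)$, comes from the action on $\H^*$ of the Kummer variety together with the presence of the parity/orientation data; I would cite \cite[Theorem 2.3]{Mon14} for precisely this computation. So $\mon^2(X)\subseteq\W(X)$ with the $\det\cdot\disc$ obstruction vanishing, i.e. $\mon^2(X)\subseteq\Nn(X)$.

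For the reverse inclusion $\Nn(X)\subseteq\mon^2(X)$, which is the main obstacle, the plan is to exhibit enough explicit monodromy operators to generate $\Nn(X)$, exactly as sketched in the paper's outline for the non-primitive case. First I would produce monodromy operators coming from automorphisms and deformations of the base Abelian surface $S$: monodromy of a family of Abelian surfaces acts on $\widetilde{\H}(S,\Z)$ preserving $w$, hence on $w^\perp\cong\H^2(\Kw(S,H),\Z)$, giving a subgroup of $\mon^2$. Second, and crucially, I would use Fourier--Mukai equivalences $\Phi\in\mathrm{Aut}(\Db(S\times\hat S))$ (or between $\Db(S)$ and $\Db(S')$ for isogenous $S'$) that induce isomorphisms of moduli spaces $\Kw(S,H)\xrightarrow{\sim}\K_{w'}(S',H')$; their pushforward on $\H^2$ is a parallel transport operator (one interpolates $\Phi$ in a family, or uses Proposition~\ref{proppushforward} after passing to birational models), and on the Mukai side $\Phi$ realizes a prescribed Hodge isometry of $\widetilde{\H}$. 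The combinatorial heart is then: show that the isometries of $w^\perp$ obtained from (i) surface monodromy and (ii) these Fourier--Mukai-induced isometries generate all of $\Nn(w^\perp)$. This is a lattice-theoretic generation statement --- one checks that the relevant reflections in $(-2)$-classes of $w^\perp$ (subject to the $\det\cdot\disc$ constraint) are hit --- and it is where \cite[Theorem 1.4]{Mar18} does the real work; I would follow that argument, using the slightly modified generating set indicated in the paper (Definition~\ref{defgmkFM}) so as to stay within projective families.

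Finally I would assemble the pieces: the two inclusions give $\mon^2(\Kw(S,H))=\Nn(\Kw(S,H))$; since $\mon^2$ is invariant under smooth deformation and $\Nn$ depends only on the abstract lattice $U^{\oplus 3}\oplus\langle-2k\rangle$ (hence is identified along any parallel transport operator, being characteristic), the equality propagates to every $X$ in the deformation class. I expect the orientation bookkeeping in the easy direction to be routine once Lemma~\ref{lemmonltorpres} is in hand, and I expect the genuine difficulty to lie entirely in the lattice-generation step of the hard direction --- namely verifying that Fourier--Mukai-induced isometries plus surface monodromy suffice to generate the index-two subgroup $\Nn$ rather than just some smaller group, which is exactly the subtlety that forced the $k>2$ hypothesis (for $k\le 2$ the relevant $(-2)$-classes and the structure of $w^\perp$ degenerate, see the remark around Example~\ref{exogsing}).
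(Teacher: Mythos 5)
The paper does not prove this statement: it is quoted verbatim from \cite{Mar18} and \cite{Mon14} and used as a black-box input (indeed, the remark after Theorem \ref{mainthm} explicitly warns that the main theorem is \emph{not} a generalization of Theorem \ref{thmmonkummer}, precisely because the latter is an ingredient of its proof). So there is no internal proof to compare against; what you have written is a reconstruction of the external arguments. As such, your sketch of the hard inclusion $\Nn(X)\subseteq\mon^2(X)$ is faithful to Markman's strategy, and it is in fact the same strategy that this paper adapts in Sections \ref{secgroupoid}--\ref{secmonodromy} to the non-primitive case (surface monodromy plus Fourier--Mukai-induced isomorphisms of moduli spaces, followed by a lattice-generation statement for $\Nn(w^\perp)$). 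The deformation-invariance step at the end is also standard and correct.

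The one place where your sketch underestimates the difficulty is the ``easy'' direction $\mon^2(X)\subseteq\Nn(X)$. The inclusion $\mon^2(X)\subseteq\W(X)$ does follow from monodromy-invariant structures on the full cohomology in Markman's framework, but the further cut-down from $\W(X)$ to the index-two subgroup $\Nn(X)=\ker(\det\cdot\disc)$ is not a formal consequence of ``parity/orientation data'': it is the actual content of \cite[Theorem 2.3]{Mon14}, which requires exhibiting a specific isometry in $\W(X)\setminus\Nn(X)$ and proving geometrically that it cannot be realized as a monodromy operator (via the structure of wall divisors/extremal rays on generalized Kummer manifolds). As a citation-level reconstruction your proposal is acceptable, but if you intended it as a self-contained proof, that step is a genuine gap rather than routine bookkeeping.
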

\begin{rmk}\label{indexN}
    By \cite[Lemma 4.1]{Mar10}, the index of $\Nn(X)$ inside $\O^+(\H^2(X,\Z))$ is $2^{\rho(k)}$, where $\rho(k)$ is the Euler number of $k$, counting the number of distinct primes occurring in its factorization. We deduce that, for irreducible holomorphic symplectic manifolds \textit{of Kummer type}, the monodromy group is never maximal.   
\end{rmk}
\section{An injective morphism from \texorpdfstring{$\monlt(K_v(S,H))$}{Mon2lt(Kv(S,H))}  to \texorpdfstring{$\mon^{2}(K_w(S,H))$}{Mon2(Kw(S,H))}}
\label{secinjectivemorph}In this section we relate locally trivial monodromy operators on a singular moduli space $\Kv(S,H)$ of semistable sheaves on an Abelian surface to monodromy operators on a smooth moduli space $\Kw(S,H)$ of the same kind. The key point is the study of the most singular locus of $\Kv(S,H)$ and the action on the second integral cohomology group of its closed embedding, that will be done, respectively, in Sections \ref{secemb} and \ref{secicohomology}. The latter will be used in Section \ref{secinj} to produce an injective morphism on the respective locally trivial monodromy groups, which will allow us to include $\monlt(\Kv(S,H))$ in $\mon^2(\Kw(S,H))$ as a subgroup.

\subsection{The embedding \texorpdfstring{$\Kw(S,H) \to \Kv(S,H)$}{Kw(S,H) → Kv(S,H)}}\label{secemb} Let $m$ and $k$ be two positive integers and let $\SvH$ be an $(m,k)-$triple, in the sense of Definition \ref{defmktriple}. We recall that, by Theorem \ref{pr18thm1.10}, the moduli space $\Kv(S,H)$ is an irreducible symplectic variety and, by Proposition \ref{kalstratification}, it admits a stratification of singularities $$\Kv(S,H) =X_0 \supset X_1\supset\cdots \supset X_l=\Kv(S,H)^{ms}.$$ Aim of this Subsection is to show that the most singular locus $\Kv(S,H)^{ms}$ admits a finite number of connected components, each of which is isomorphic to the associated smooth moduli space $\Kw(S,H)$, allowing us to define a natural closed embedding $$i_{w,m}\colon \Kw(S,H) \longrightarrow \Kv(S,H)$$ that deform in locally trivial families.\\

Let us start by considering the moduli space $M_v(S,H)$ of $H$-semistable sheaves on $S$ with Mukai vector $v$. As in \cite[Section 1.4.]{OPR23}, we recall the main ideas of the proof of \cite[Theorem 4.4.]{KLS06} that provide a description of the most singular locus $M_v(S,H)^{ms}$ of $M_v(S,H)$. The singular locus $M_v(S,H)^{sing}$ of $M_v(S,H)$ coincides with the variety parametrizing strictly semistable sheaves. Each point of $M_v(S,H)^{sing}$ then corresponds to the class $[F]$ of a semistable sheaf $F$ that, admitting a non-trivial Jordan-Hölder decomposition, is $S-$equivalent to a sheaf of the form $F_{1}\oplus F_{2}$, where $[F_{i}]\in M_{m_iw}(S,H)$ and $m_1+m_2=m$ (see \cite[Section 1.5]{HL97}). More precisely, it belongs to the image of the summation morphism \begin{align}\label{sum}
    \sigma_{m_1,m_2}\colon M_{m_1w}(S,H) \times M_{m_2w}(S,H) & \longrightarrow M_v(S,H) \nonumber\\
    ([F_1],[F_2]) & \longmapsto [F_1\oplus F_2],
\end{align} which is an irreducible component of the singular locus. The intersection of all such components is the locus $$\Sigma_m:= \{[E_1 \oplus\cdots \oplus E_m] \colon [E_i] \in M_w(S,H)\} \simeq \Sym^m(M_w(S,H)),$$ whose most singular locus - and hence, the most singular locus of $M_v(S,H)$ - is \begin{equation}\label{mvsingmax}
    Y := \{[E^{\oplus m}] \in M_v(S,H) \colon [E] \in M_w(S,H)\},
\end{equation}  which is naturally isomorphic to $M_w(S,H)$.\\

We now check the compatibility of this description with the fibers of the respective Yoshioka fibrations involved.

\begin{prop}\label{propKvms}Let $m$ and $k$ be two positive integers and let $\SvH$ be an $(m,k)-$triple. Then $$\Kv(S,H)^{ms}= M_v(S,H)^{ms}\cap \Kv(S,H).$$
\end{prop}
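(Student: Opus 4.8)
The plan is to prove the two inclusions separately, using the Yoshioka fibrations $a_v \colon M_v(S,H) \to S \times \hat S$ and $a_w \colon M_w(S,H) \to S \times \hat S$ together with the relative stratification of Remark \ref{relstrat}. The key structural input is that the summation morphisms $\sigma_{m_1,m_2}$ from \eqref{sum} and the diagonal embedding $M_w(S,H) \hookrightarrow M_v(S,H)$, $[E] \mapsto [E^{\oplus m}]$, are compatible with the Albanese-type maps $a_v$ and $a_w$: since $a_v$ is built from determinants and Fourier--Mukai determinants, which are additive for direct sums, one checks that $a_v([E^{\oplus m}])$ is (up to the fixed reference sheaf $F_0$) the class $m \cdot a_w([E])$ in the group $S \times \hat S$, and more generally $a_v \circ \sigma_{m_1,m_2} = a_{m_1 w} \circ \mathrm{pr}_1 \,\ast\, a_{m_2 w} \circ \mathrm{pr}_2$ under the group law. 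The first thing I would do is make this compatibility precise, choosing the reference sheaves for the various moduli spaces coherently (e.g. $F_0^{(m)} = (F_0^{(1)})^{\oplus m}$), so that $a_v([E^{\oplus m}]) = 0$ exactly when $a_w([E]) = 0$, i.e. the diagonal $Y$ of \eqref{mvsingmax} meets $\Kv(S,H) = a_v^{-1}(0)$ precisely in the diagonal copy of $\Kw(S,H) = a_w^{-1}(0)$.

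For the inclusion $\Kv(S,H)^{ms} \subseteq M_v(S,H)^{ms} \cap \Kv(S,H)$, I would invoke the relative stratification: the embedding $\Kv(S,H) \hookrightarrow M_v(S,H)$ is, locally on the base of $a_v$, a product decomposition (since $a_v$ is an isotrivial fibration), so the stratification of singularities of $M_v(S,H)$ restricts to the stratification of singularities of each fiber. Concretely, étale-locally $M_v(S,H) \simeq \Kv(S,H) \times (S \times \hat S)$ with $S \times \hat S$ smog, so $M_v(S,H)^{sing} = \Kv(S,H)^{sing} \times (S \times \hat S)$ and iterating, $M_v(S,H)^{ms} = \Kv(S,H)^{ms} \times (S \times \hat S)$ locally; intersecting with a fiber recovers $\Kv(S,H)^{ms}$. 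This gives both inclusions at once once the local product structure is in hand, so in fact the cleanest route is: (i) establish that $a_v$ is, Zariski-locally or at least étale-locally on $S \times \hat S$, a trivial product with fiber $\Kv(S,H)$ (this is the content of "isotrivial fibration" in Remark \ref{yoshfibrisotr}, combined with the fact that $S \times \hat S$ acts transitively by translation-type automorphisms on $M_v(S,H)$ covering translations on the base); (ii) deduce the equality of stratifications; (iii) intersect with the fiber over $0$.

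I expect the main obstacle to be step (i) — making the local triviality of $a_v$ rigorous enough to transport the \emph{stratification of singularities} (not just the underlying variety) from $M_v(S,H)$ to the fiber. The subtlety is that Proposition \ref{kalstratification} produces the stratification canonically (singular locus, then singular locus of the normalization of each component, etc.), so one must check this canonical construction commutes with the étale-local product with a smooth factor; this is formal but needs the normalization-of-components step to be handled, which is where one uses that $S \times \hat S$ is smooth and connected so that irreducible components of $M_v(S,H)$ correspond to those of $\Kv(S,H)$. An alternative, more hands-on route that avoids discussing the abstract stratification is to use the explicit description recalled before the Proposition: $M_v(S,H)^{ms} = Y \cong M_w(S,H)$ via $[E] \mapsto [E^{\oplus m}]$, and then simply compute, using the compatibility from the first paragraph, that $Y \cap \Kv(S,H)$ is the diagonal copy of $\Kw(S,H)$, while separately arguing (again via the local product structure, but now only needed set-theoretically) that $\Kv(S,H)^{ms}$ is cut out inside $\Kv(S,H)$ by the same conditions that cut $M_v(S,H)^{ms}$ inside $M_v(S,H)$. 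I would present the proof along these lines, leaning on \cite{KLS06} and \cite[Lemma 2.15]{PR18} for the structural facts and keeping the verification of the additivity of $a_v$ on direct sums as the one genuine computation.
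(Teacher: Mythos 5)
Your main route --- isotriviality of the Yoshioka fibration $a_v$, hence an (analytically) locally trivial product structure over $S\times\hat S$, hence the stratification of singularities of $M_v(S,H)$ restricts fiberwise to that of each fiber, then intersect with $a_v^{-1}(0_S,\Os)$ --- is exactly the paper's proof, which cites Fischer--Grauert to pass from isotrivial to locally trivial and then invokes the relative stratification of Remark \ref{relstrat} to transport the canonical stratification to the fibers. The compatibility of $a_v$ with direct sums that you develop in your first paragraph is not needed here but is precisely the computation used in the subsequent Proposition \ref{kvms}.
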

\begin{proof}
As recalled in Remark \ref{yoshfibrisotr}, the Yoshioka fibration $a_v\colon M_v(S,H) \longrightarrow S \times \hat{S}$ is an isotrivial fibration. Therefore, by Grauer-Fischer Theorem (\cite{FG65}), it is also an analytically locally trivial family, in the sense of Definition \ref{defltfamily}. Hence, by Remark \ref{relstrat}, it induces a relative stratification $$M_v(S,H)= \XX_0 \supseteq \XX_1 \supseteq \cdots \supseteq \XX_l= M_v(S,H)^{ms},$$ such that, for every $\alpha \in S\times \hat{S}$, $$a_v^{-1}(\alpha)= \XX_{0,\alpha} \supseteq \cdots \supseteq \XX_{l,\alpha}$$ is a stratification of singularities of $a_v^{-1}(\alpha)$. In particular, for $\alpha=(0,\Os)$, we get a stratification of singularities of $\Kv(S,H)$, whose last stratum $\XX_{l,0}= \Kv(S,H)^{ms}$ is precisely, by definition, \[\XX_l \cap a_v^{-1}(O,\Os)= M_v(S,H)^{ms}\cap \Kv(S,H).  \qedhere\]
\end{proof}
Proposition \ref{propKvms} yields a more explicit description of the most singular locus of $\Kv(S,H)$. In the following, for every positive integer $m\in \N\setminus\{0\}$, we will denote by $S[m] \times \hat{S}[m]$ the $m-$torsion points in $S\times\hat{S}$.
\begin{prop}\label{kvms}
    Let $m$ and $k$ be two positive integers and let $\SvH$ be an $(m,k)-$triple. Then $$\Kv(S,H)^{ms}\simeq \bigcup_{(p,L)\in S[m]\times \hat{S}[m]} a_w^{-1}(p,L).$$ 
\end{prop}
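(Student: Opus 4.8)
The plan is to reduce the statement to a computation with the two Yoshioka fibrations. By Proposition~\ref{propKvms} we already know that $\Kv(S,H)^{ms}=M_v(S,H)^{ms}\cap\Kv(S,H)$, and by~(\ref{mvsingmax}) the most singular locus $M_v(S,H)^{ms}$ is the locus $Y=\{[E^{\oplus m}]:[E]\in M_w(S,H)\}$, which the closed embedding $[E]\mapsto[E^{\oplus m}]$ identifies with $M_w(S,H)$. Putting these together, $\Kv(S,H)^{ms}$ is the image under $[E]\mapsto[E^{\oplus m}]$ of the set of classes $[E]\in M_w(S,H)$ for which $E^{\oplus m}$ lies in the fibre $\Kv(S,H)=a_v^{-1}(0_S,\Os)$ of the Yoshioka fibration. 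Everything therefore reduces to translating the condition that $E^{\oplus m}$ lie in $\Kv(S,H)$ into a condition on $a_w([E])$.

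The key step is the identity
$$a_v([E^{\oplus m}])=m\cdot a_w([E])\qquad\text{in }S\times\hat{S},$$
where $m\cdot(-)$ denotes multiplication by $m$ on the abelian surface $S\times\hat{S}$. To set it up I would first fix a coherent sheaf $E_0$ on $S$ with $v(E_0)=w$ (which exists by the discussion after Definition~\ref{defvgen}), so that $v(E_0^{\oplus m})=mw=v$, and use $E_0$ and $E_0^{\oplus m}$ as the reference sheaves in the definitions of $a_w$ and $a_v$ respectively; by Remark~\ref{yoshfibrisotr} this costs nothing, since the isomorphism classes of $\Kw(S,H)$ and $\Kv(S,H)$ do not depend on the chosen reference sheaf. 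The identity then rests on two elementary observations: the Fourier--Mukai transform $\FM_\PP$ is an exact, hence additive, functor, so $\FM_\PP(E^{\oplus m})\simeq\FM_\PP(E)^{\oplus m}$, and the determinant of a perfect complex is multiplicative on direct sums; consequently both $\det(E^{\oplus m})\otimes\det(E_0^{\oplus m})\dual$ and $\det(\FM_\PP(E^{\oplus m}))\otimes\det(\FM_\PP(E_0^{\oplus m}))\dual$ are the $m$-th tensor powers of the two degree-zero line bundles whose classes constitute $a_w([E])$. Since on an abelian surface the group law on $\Pic^0$ is the tensor product, taking the $m$-th tensor power of a degree-zero line bundle is precisely multiplication by $m$ under the canonical identifications $\Pic^0(S)=\hat{S}$ and $\Pic^0(\hat{S})\simeq S$ built into the Yoshioka maps, and this is exactly the displayed identity.

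Granting it, the proof concludes at once: $E^{\oplus m}$ lies in $\Kv(S,H)$, i.e. $a_v([E^{\oplus m}])=(0_S,\Os)$, if and only if $m\cdot a_w([E])=(0_S,\Os)$, i.e. $a_w([E])\in(S\times\hat{S})[m]=S[m]\times\hat{S}[m]$, that is, if and only if $[E]$ lies in $\bigcup_{(p,L)\in S[m]\times\hat{S}[m]}a_w^{-1}(p,L)$. Hence $[E]\mapsto[E^{\oplus m}]$ restricts to an isomorphism of this union onto $\Kv(S,H)^{ms}$, which is the assertion; moreover the union is disjoint and, $a_w$ being isotrivial, each of its $m^8$ fibres is isomorphic to $\Kw(S,H)=a_w^{-1}(0_S,\Os)$, which yields the finitely many connected components used later to build the embedding $i_{w,m}$. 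I do not expect a genuine obstacle here. The only delicate point is bookkeeping: choosing the reference sheaves for $a_v$ and $a_w$ compatibly so that the origin corresponds on both sides, and keeping track of the fact that the $m$-th tensor power of a class in $\Pic^0$ corresponds to multiplication by $m$ under the identifications implicit in the definition of the Yoshioka fibrations.
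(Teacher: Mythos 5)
Your argument is correct and follows the same route as the paper: reduce via Proposition~\ref{propKvms} and the description~(\ref{mvsingmax}) to the locus $\{[E^{\oplus m}]:[E]\in M_w(S,H)\}$, establish the compatibility $a_v([E^{\oplus m}])=m\cdot a_w([E])$ of the Yoshioka fibrations with direct sums, and conclude that the condition is exactly $m$-torsion of $a_w([E])$. The paper states this compatibility as a ``straightforward computation''; you have simply spelled out the details (additivity of $\FM_\PP$, multiplicativity of determinants, compatible choice of reference sheaves).
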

\begin{proof} From Proposition \ref{propKvms} and characterization (\ref{mvsingmax}), we get $$\Kv(S,H)^{ms}=\{[E^{\oplus m}] \in K_v(S,H) \colon [E] \in M_w(S,H)\}.$$ A straightforward computation shows that the summation map (\ref{sum}) is compatible with both the Yoshioka fibrations $a_{m_{i}w}$ for $i=1,2$ and, in particular, for every $E \in M_w(S,H)$, it holds $a_v(E^{\oplus m})=a_w(E)^{m}$, with respect to the group structure on $S\times \hat{S}$. It follows that, given $[E] \in M_w(S,H)$, the point $[E^{\oplus m}]$ belongs to $\Kv(S,H)^{ms}$ if and only if $$a_w(E)^{m}=(0,\Os) \in S\times \hat{S},$$ from which the claim follows. 
\end{proof}
\begin{cor}\label{corembedding}Let $m$ and $k$ be two positive integers and let $\SvH$ be an $(m,k)-$triple. For every $(p,L)\in S[m]\times \hat{S}[m]$, there is a closed embedding \begin{align*}
    \tau_{(p,L)}^m \colon \Kw(S,H) &\longrightarrow \Kv(S,H)^{ms}\\
    E & \longmapsto (\tau_{p,\ast}(E) \otimes L)^{\oplus m},
\end{align*} where $\tau_p\colon S \longrightarrow S$ is the automorphism given by translation by the point $p\in S$.
\end{cor}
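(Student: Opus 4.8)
The plan is to exhibit $\tau^m_{(p,L)}$ as a composition of three maps, each of which is evidently a morphism of varieties, and then to check separately that the composition is a closed embedding and that its image lies in $\Kv(S,H)^{ms}$. First I would start from the automorphism $\tau_p\colon S\to S$ (translation by the $m$-torsion point $p$); pushforward of sheaves along $\tau_p$ together with tensoring by the line bundle $L\in\widehat S[m]$ induces an isomorphism of stacks $M_w(S,H)\xrightarrow{\sim} M_{w'}(S,H')$ for a suitable Mukai vector $w'$ and polarization $H'$. Because $p$ is a translation, $\tau_{p,*}$ acts trivially on cohomology, so $w(\tau_{p,*}E)=w(E)$; tensoring by $L\in\Pic^0(S)$ also preserves the Chern character, hence preserves the Mukai vector, so in fact $w'=w$. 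One must check that $H$-semistability is preserved: translations preserve $\mu_H$-(semi)stability since $\tau_p^*H\equiv H$ numerically, and tensoring by a numerically trivial line bundle preserves Gieseker semistability; this gives a well-defined isomorphism $\Kw(S,H)\to \Kw(S,H)$ (the fiber condition is checked below), so the first step is an automorphism of $\Kw(S,H)$. The second step is the diagonal-type embedding $E\mapsto E^{\oplus m}$ into sheaves with Mukai vector $mw=v$.

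The key point is that the resulting polysemistable sheaf $(\tau_{p,*}E\otimes L)^{\oplus m}$ actually lands in the fiber $\Kv(S,H)=a_v^{-1}(0_S,\mathcal O_S)$, and not merely in $M_v(S,H)^{ms}$. Here I would invoke the computation already performed in the proof of Proposition~\ref{kvms}: the summation morphism is compatible with the Yoshioka fibrations, so $a_v\big(F^{\oplus m}\big)=a_w(F)^{m}$ in the group $S\times\widehat S$ for any $F\in M_w(S,H)$. Applying this with $F=\tau_{p,*}E\otimes L$, one computes $a_w(\tau_{p,*}E\otimes L)$ in terms of $a_w(E)$: translation by $p$ shifts the $S$-component of $a_w$ by $p$ (up to the identification $\widehat{\widehat S}\simeq S$), while tensoring by $L$ shifts the $\widehat S$-component by $L$, so $a_w(\tau_{p,*}E\otimes L)=a_w(E)+(p,L)$. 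Hence for $E\in\Kw(S,H)=a_w^{-1}(0_S,\mathcal O_S)$ we get $a_w(\tau_{p,*}E\otimes L)=(p,L)$, and therefore
\[
a_v\big((\tau_{p,*}E\otimes L)^{\oplus m}\big)=(p,L)^{m}=(mp,\,mL)=(0_S,\mathcal O_S),
\]
precisely because $(p,L)$ was chosen in the $m$-torsion $S[m]\times\widehat S[m]$. This is exactly the content of Proposition~\ref{kvms}, read backwards: that proposition identifies $\Kv(S,H)^{ms}$ with $\bigcup_{(p,L)} a_w^{-1}(p,L)$, and the map $E\mapsto(\tau_{p,*}E\otimes L)^{\oplus m}$ realizes the isomorphism $\Kw(S,H)=a_w^{-1}(0,\mathcal O_S)\xrightarrow{\sim} a_w^{-1}(p,L)$ onto the corresponding component.

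It then remains to see that $\tau^m_{(p,L)}$ is a \emph{closed immersion}. The diagonal map $M_w(S,H)\to M_v(S,H)$, $[G]\mapsto[G^{\oplus m}]$, is injective on points because the isomorphism class of a polystable sheaf determines its indecomposable summands with multiplicities (uniqueness of the Jordan–Hölder/Krull–Schmidt decomposition of the associated graded of a semistable sheaf), and the first-step automorphism $E\mapsto\tau_{p,*}E\otimes L$ is a bijection; composing, $\tau^m_{(p,L)}$ is injective. It is a morphism of projective varieties, so its image is closed and it factors as $\Kw(S,H)\to \overline{\mathrm{im}}$ through a finite bijective morphism; since the target $\Kv(S,H)^{ms}$ (more precisely each of its connected components, a copy of $\Kw(S,H)$) is normal, being a moduli space of stable sheaves, I would conclude by Zariski's main theorem that this finite bijective morphism onto its normal image is an isomorphism, hence $\tau^m_{(p,L)}$ is a closed embedding. (Alternatively one invokes the relative stratification of Remark~\ref{relstrat} together with Proposition~\ref{propKvms}, which already identifies $\Kv(S,H)^{ms}$ with a union of fibers of $a_w$ over $m$-torsion points.) The main obstacle I anticipate is purely bookkeeping: tracking the effect of $\tau_{p,*}$ and $\otimes L$ on the two components of the Yoshioka map $a_w=(\det\circ\,\mathrm{FM}_{\mathcal P},\det)$ through the double-dual identification $\widehat{\widehat S}\simeq S$, and verifying that the claimed shift by $(p,L)$ is correct with the right signs; everything else is a routine consequence of the results already established in this section.
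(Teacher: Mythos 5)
Your proof is correct and takes essentially the same route as the paper, which likewise deduces the corollary from Proposition \ref{kvms} together with the identification of the fibers $\Kw(S,H)=a_w^{-1}(0_S,\mathcal{O}_S)$ and $a_w^{-1}(p,L)$ via $E\mapsto \tau_{p,\ast}(E)\otimes L$ (deferring, as you do, the determinant bookkeeping to \cite{PR18} and \cite{Yos01a}); you merely make explicit the $m$-torsion computation and the closed-immersion check that the paper leaves implicit. The only slip is your initial claim that the first step is an automorphism of $\Kw(S,H)$: it is in fact an isomorphism onto the different fiber $a_w^{-1}(p,L)\subseteq M_w(S,H)$, as your own computation $a_w(\tau_{p,\ast}(E)\otimes L)=(p,L)$ shows, so this does not affect the argument.
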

\begin{proof}
    The claim follows from Proposition \ref{kvms} and isotriviality of the Yoshioka fibration $a_w$. In particular, the isomorphism $E \mapsto \tau_{p,\ast}(E) \otimes L$ naturally identifies the fibers $K_w(S,H)$ and $a_w^{-1}(p,L)$ for every $(p,L)\in S\times \hat{S}$ (see \cite[Section 2.2.1]{PR18} and \cite[Section 4.2]{Yos01a}).
\end{proof}
From finiteness of torsion points on Abelian surfaces, we deduce that we have a finite number of choices - namely, $m^{8}$ - to embed $\Kw(S,H)$ in $\Kv(S,H)$ as one of the connected components of its most singular locus.
\begin{rmk}We wish to point out that, as a consequence of Remark \ref{relstrat}, the closed embedding in Corollary \ref{corembedding} behaves well in locally trivial families. This means that, if $f\colon \XX \to T$ is a locally trivial deformation of a moduli space $\Kv(S,H)$ as above and $\tau^m_{(p,L)}\colon \Kw(S,H) \to \Kv(S,H)$ is a closed embedding as in Corollary \ref{corembedding}, the latter induces a relative closed embedding $\iota \colon \ZZ \hookrightarrow \XX$ of analytic varieties such that $f_{|\ZZ} \colon \ZZ \to T$ is a smooth deformation of $\Kw(S,H)$.
    
\end{rmk}

\subsection{Action on the second integral cohomology} \label{secicohomology} We now focus on the action of the closed embeddings defined in the previous Subsection on the second integral cohomology, in order to relate integral isometries - and, in particular, monodromy operators - defined on $\Kv(S,H)$ and on $\Kw(S,H)$. 

Let $m,k$ be two strictly positive integers, let $\SvH$ be an $(m,k)-$triple and set $i_{w,m}:= \tau_{(0,\Os)}^m$, where the latter is one of the closed embeddings defined in Corollary \ref{corembedding}. We now compare its pullback $$i_{w,m}\Star\colon \H^{2}(\Kv(S,H),\Z) \longrightarrow \H^{2}(\Kw(S,H),\Z)$$ with the composition \begin{center}
    \begin{tikzcd}
        \H^{2}(\Kv(S,H),\Z) \arrow[r, "\lambda_{\SvH}^{-1}"] & v\ort \arrow[r,equal] & w \ort \arrow[r,"\lambda_{\SwH}"] & \H^{2}(\Kw(S,H),\Z),
    \end{tikzcd}
\end{center}where the arrows on the sides are given by the isometries of Theorem \ref{pr20thm1.6}.  
\begin{prop}\label{istar}
    Let $m,k$ be two positive integers, with $k>1$, and let $\SvH$ be an $(m,k)-$triple. The morphism $i_{w,m}\Star\colon \H^{2}(\Kv(S,H),\Z) \to \H^{2}(\Kw(S,H),\Z)$ is a similitude of lattices satisfying $$i\Star_{w,m}= m\lambda_{\SwH} \circ \lambda_{\SvH}^{-1}.$$
\end{prop}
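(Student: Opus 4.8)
The plan is to reduce the statement to a computation on the universal families, exploiting the fact that, up to translating by a torsion point, the embedding $i_{w,m}$ is literally the $m$-th ``diagonal'' summation map $E \mapsto E^{\oplus m}$ composed with the inclusion $\Kw(S,H) \hookrightarrow \Kv(S,H)$. First I would recall that $\lambda_{\SvH}$ and $\lambda_{\SwH}$ are defined via (quasi-)universal families $\EE_v$ on $S \times \Kv(S,H)$ and $\EE_w$ on $S \times \Kw(S,H)$, through the Mukai homomorphism $\theta_v(x) = c_1\!\left(p_{K,!}\left(\EE_v \cdot p_S^\ast(x^\vee \sqrt{\td S})\right)\right)$ for $x \in v^\perp$, and dually for $\lambda_{\SvH}^{-1}$. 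The key geometric observation is that the restriction of a quasi-universal family for $v = mw$ along $\id_S \times i_{w,m}$ is, up to the twisting sheaf appearing in the definition of a quasi-universal family, isomorphic to $\EE_w^{\oplus m}$, i.e. $(\id_S \times i_{w,m})^\ast \EE_v \cong \EE_w^{\oplus m} \otimes (\text{pullback of a vector bundle from } \Kw(S,H))$. This is where the factor $m$ will enter.

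Next I would run the functoriality of the Mukai homomorphism through this identification. Since $i_{w,m}^\ast$ on $\H^2$ is computed by pulling back Chern classes, and $\theta$ is built out of a Fourier--Mukai-type integral transform involving the universal family, the compatibility $i_{w,m}^\ast \circ \lambda_{\SvH} = (\text{something}) \circ (v^\perp = w^\perp)$ amounts to chasing the projection formula through the square relating $p_{\Kv,!}$ and $p_{\Kw,!}$ via the base change $\id_S \times i_{w,m}$. The direct-sum decomposition $\EE_w^{\oplus m}$ contributes, in the $K$-theory pushforward, a factor of $m$ (additivity of $p_!$ on the $m$ summands, each giving the same class $\theta_w(x)$), while the extra vector-bundle twist from the quasi-universality only shifts things by a multiple of the class $\lambda_{\SwH}$ of $c_1$ of a line bundle pulled back from the base, which is absorbed into the standard ambiguity in the definition of $\lambda$ (or vanishes after passing to $w^\perp$ as in the proof of Theorem~\ref{pr20thm1.6}). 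Identifying $v^\perp$ with $w^\perp$ (legitimate because $v = mw$, so $v^\perp = w^\perp$ as sublattices of $\Htilde(S,\Z)$, though the induced lattice structures differ only by the scaling implicit in the Mukai pairing restricted to each — which is actually the \emph{same} pairing), I get $i_{w,m}^\ast = m\,\lambda_{\SwH} \circ \lambda_{\SvH}^{-1}$ on the nose. Finally, that $i_{w,m}^\ast$ is a similitude is then immediate: $\lambda_{\SvH}$ and $\lambda_{\SwH}$ are isometries onto $v^\perp = w^\perp$ by Theorem~\ref{pr20thm1.6}, so $i_{w,m}^\ast$ multiplies the Beauville--Bogomolov form by $m^2$, i.e. it is a similitude with multiplier $m^2$.

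The main obstacle I anticipate is the bookkeeping around quasi-universal families: on the singular moduli space $\Kv(S,H)$ a genuine universal family need not exist, only a quasi-universal one $\EE_v$ of some similitude $\rho$, and one must check that restricting along $\id_S \times i_{w,m}$ and the decomposition into $m$ summands interact correctly with $\rho$ — in particular that the twisting vector bundles match up so that the ``error term'' is precisely a class pulled back from $\Kw(S,H)$ lying in the image of $\lambda_{\SwH}$, hence harmless. A clean way to sidestep subtleties is to first verify the identity on the stable (smooth) locus $\Kv^s(S,H)$, where everything is a literal universal family and the projection-formula computation is transparent, and then invoke that $\H^2$ injects into $\H^2$ of a resolution / that the complement of the stable locus has codimension $\geq 2$, so the identity of the two maps on $\H^2(\Kv(S,H),\Z)$ follows from their agreement on the open stable part. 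The secondary point to be careful about is orientation/sign conventions in the definition of $\lambda$ (the $x^\vee$ and the $\sqrt{\td S}$), but since $\td(S) = (1,0,0)$ on an Abelian surface these simplify considerably and do not affect the multiplicative factor $m$.
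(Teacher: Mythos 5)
Your guiding intuition --- that the factor $m$ comes from the $m$ isomorphic summands of $E^{\oplus m}$ --- is the right one, and the derivation of the similitude multiplier $m^2$ from the displayed formula is fine. But the central step of your argument, restricting a (quasi-)universal family for $v$ along $\id_S\times i_{w,m}$ and then ``checking the identity on the stable locus first,'' cannot work as stated: for $m>1$ (the only nontrivial case) the image of $i_{w,m}$ consists of classes $[E^{\oplus m}]$ of strictly semistable sheaves, i.e.\ it is contained in the most singular stratum of $\Kv(S,H)$ and is therefore \emph{disjoint} from $\Kv^s(S,H)$. A quasi-universal family for the non-primitive vector $v=mw$ exists only over the stable locus, and $\lambda_{\SvH}$ is defined there and then transported to all of $\H^2(\Kv(S,H),\Z)$ via the isomorphism $R^2p^s_{\ast}\Z\to R^2p_{\ast}\Z$ induced by the open embedding of the stable locus (whose complement has codimension $\geq 2$). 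So there is no family to restrict along $i_{w,m}$, and agreement of two maps on the open stable part says nothing about their behaviour on a morphism whose source lands entirely in the closed complement.

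The paper's proof (following \cite[Proposition 1.28]{OPR23}) avoids this by factoring $i_{w,m}=f_m\circ\delta_m$, where $\delta_m\colon \Kw\to\Kw^m$ is the diagonal and $f_m\colon \Kw^m\to \Kv$ is the summation morphism $(F_1,\dots,F_m)\mapsto F_1\oplus\cdots\oplus F_m$. The compatibility one actually needs is $f_m^{\ast}\circ\lambda_{v}=\sum_{i=1}^{m}\pi_{i,m}^{\ast}\circ\lambda_{w}$, which is obtained by induction on the number of summands from \cite[Proposition 4.12(2)]{PR20}; the factor $m$ then falls out of pulling back the sum of $m$ projections along the diagonal. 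If you want to keep a universal-family flavour, the honest route is to establish that compatibility for the summation morphisms $f_p$, not for $i_{w,m}$ directly --- that is where the real work in \cite{PR20} lies, and it is precisely the input your proposal is missing.
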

\begin{proof}
The proof follows as in Proposition 1.28 of \cite{OPR23}, assuming that the surface $S$ is Abelian and replacing the moduli space $M_v(S,H)$ with $\Kv(S,H)$, under the current hypothesis on the $(m,k)-$triple $\SvH$. We sketch the main ideas and we provide the references needed to approach this specific case. 

For every $p>0$, we will use the shortened notation $K_{pw}$ for $K_{pw}(S,H)$ and $\lambda_{pw}$ for $\lambda_{(S,pw,H)}$. By \cite[Section 4.20]{PR20} and, again, by naturality of Yoshioka fibrations with respect to direct sums, for every $p>0$ we have a morphism \begin{align*}
    f_p\colon K_w^p & \longrightarrow K_{pw}\\
    (F_1,\dots, F_p) & \longmapsto F_1 \oplus \cdots \oplus F_p,
\end{align*} that fits in the following diagram: \begin{equation}
        \label{cd1} \begin{tikzcd}[column sep=large]
            (pw)\ort = w\ort \arrow[d,"(\lambda_{w}\text{,}\dots \text{,}\lambda_{w})" swap]\arrow[r,"\lambda_{pw}"] & \H^{2}(K_{pw},\Z) \arrow[d,"f_{p}^{\ast}"]\\
            \oplus_{i=1}^{p}\H^{2}(K_{pw},\Z)\arrow[r,"\sum_{i=1}^{p}\pi_{i,p}\Star" swap] & \H^{2}(K_{w}^{p},\Z),
        \end{tikzcd} 
    \end{equation} where $\pi_{i,p}\colon \Kw^{p}\longrightarrow \Kw$ is the projection onto the $i-$th factor. Proceeding by induction on $p$ as in \cite[Proposition 1.28]{OPR23} and applying Proposition 4.12 (2) of \cite{PR20}, we get commutativity of diagram (\ref{cd1}), which yields, for $p=m$, to the identity \begin{equation}
        \label{flambda}f_{m}^{\ast}(\lambda_{v}(a))=\sum_{i=1}^{m}\pi_{i,m}\Star(\lambda_{w}(a),\dots,\lambda_{w}(a)),
    \end{equation} for every $a \in v\ort=w\ort$. We now consider the diagonal morphism $\delta_m \colon \Kw \longrightarrow \Kw^{m}$ and we notice that, by definition, the equality $i_{w,m}=f_m\circ \delta_m$ holds. Therefore, by (\ref{flambda}) we deduce, for every $a \in v\ort$, \begin{align*}
        (i_{w,m}\Star\circ \lambda_{v})(a) &= (\delta_m\Star \circ f^{\ast}_{m} \circ \lambda_{v})(a)=\delta_m\Star(\sum_{i=1}^{m}\pi_{i,m}\Star(\lambda_{w}(a),\dots,\lambda_{w}(a)))=\\
        &=\sum_{i=1}^{m}\lambda_{w}(a)= m\lambda_{w}(a),
    \end{align*} concluding the proof.
\end{proof}
In the following, we will use this notation: if $f\colon A \longrightarrow B$ is an isomorphism of groups, we will denote by $f^\sharp$ the map induced on the corresponding $\Hom$ sets by conjugation, i.e. \begin{align}\label{notconj}
    f^\sharp \colon \Hom(A,A) & \longrightarrow \Hom(B,B)\\
    g & \longmapsto f \circ g \circ f^{-1}. \nonumber
\end{align} From now on, we will work under the additional hypothesis that $k>2$, for reasons that will be made clear in Section \ref{seck12}. 

Studying the conjugation action of the $\Q-$linear extension of $i\Star_{w,m}$ restricted to integral isometries, applying Proposition \ref{istar} and working verbatim as in \cite[Lemma 1.30]{OPR23}, we can prove the following 
\begin{lem}\label{lemisharp} Let $m,k$ be two positive integers, with $k>2$, and let $\SvH$ be an \linebreak[4]$(m,k)-$triple. The morphism $i_{w,m}\Star$ induces and isomorphism of groups $$i^{\sharp}_{w,m}:= (i\Star_{w,m})^{\sharp}\colon \O(\H^{2}(\Kv(S,H),\Z)) \longrightarrow \O(\H^{2}(\Kw(S,H),\Z))$$  satisfying the identity $i^{\sharp}_{w,m}=\lambda^\sharp_{\SwH}\circ (\lambda^\sharp_{\SvH})^{-1}$, under the identification $v\ort=w\ort$.
    \end{lem}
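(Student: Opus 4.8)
The plan is to upgrade the similitude $i^{*}_{w,m}$ of Proposition \ref{istar} to a genuine isomorphism of orthogonal groups by passing to rational coefficients and then checking that conjugation preserves integrality. First I would consider the $\Q$-linear extension $i^{*}_{w,m}\otimes\Q\colon \H^{2}(\Kv(S,H),\Q)\to\H^{2}(\Kw(S,H),\Q)$, which by Proposition \ref{istar} equals $m\,(\lambda_{\SwH}\otimes\Q)\circ(\lambda_{\SvH}\otimes\Q)^{-1}$ and is therefore an isomorphism of $\Q$-vector spaces, and a similitude with respect to the Beauville--Bogomolov--Fujiki forms with multiplier a power of $m$. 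For any isometry $g\in\O(\H^{2}(\Kv(S,H),\Z))$, the conjugate $i^{*}_{w,m}\circ g\circ(i^{*}_{w,m})^{-1}$ is a priori only a $\Q$-linear automorphism of $\H^{2}(\Kw(S,H),\Q)$; since conjugating a similitude by an isometry kills the multiplier, this conjugate is an \emph{isometry} of the rational quadratic space. The only thing left to verify is that it maps the integral lattice $\H^{2}(\Kw(S,H),\Z)$ to itself.

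For the integrality I would argue exactly as in \cite[Lemma 1.30]{OPR23}: using the identification $v^{\ort}=w^{\ort}$ inside $\Htilde(S,\Z)$ and the isometries of Theorem \ref{pr20thm1.6}, it suffices to show that, for every $g\in\O(v^{\ort})$, the automorphism $(\lambda^{\sharp}_{\SwH}\circ(\lambda^{\sharp}_{\SvH})^{-1})(g)$ — which by Proposition \ref{istar} is literally the restriction to $w^{\ort}$ of the conjugate of $g$ by the scaling-by-$m$ map — already preserves $w^{\ort}$. But conjugation by a \emph{scalar} is the identity on endomorphisms, so the $\Q$-linear map $i^{\sharp}_{w,m}(g)$ is, after transport through the $\lambda$'s, nothing but $g$ itself acting on $w^{\ort}=v^{\ort}$; hence it is integral, an isometry, and its inverse (being $i^{\sharp}_{w,m}(g^{-1})$) is integral too. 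This simultaneously gives that $i^{\sharp}_{w,m}$ is a well-defined group homomorphism $\O(\H^{2}(\Kv(S,H),\Z))\to\O(\H^{2}(\Kw(S,H),\Z))$ and that it satisfies the asserted identity $i^{\sharp}_{w,m}=\lambda^{\sharp}_{\SwH}\circ(\lambda^{\sharp}_{\SvH})^{-1}$; bijectivity is then immediate since the two $\lambda^{\sharp}$'s are isomorphisms of groups (here the hypothesis $k>2$, equivalently $(m,k)\neq(1,2)$, is what guarantees via Theorem \ref{pr20thm1.6} that $\lambda_{\SwH}$ and $\lambda_{\SvH}$ are isomorphisms of $\Z$-modules, not merely of rank-$7$ lattices into rank-$8$ ones).

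The main obstacle is purely bookkeeping rather than conceptual: one must be careful that $\lambda_{\SvH}$ and $\lambda_{\SwH}$ are \emph{isometries} of lattices of different discriminant (the discriminant forms of $v^{\ort}$ and $\H^{2}(\Kv(S,H),\Z)$ agree, but the rescaling by $m$ in Proposition \ref{istar} does \emph{not} respect discriminant groups), so the clean statement survives only after conjugation, where the scalar disappears. Concretely, the subtlety to nail down is that $i^{*}_{w,m}$ itself is \emph{not} an isometry (it has nontrivial similitude factor, cf. the word ``similitude'' in Proposition \ref{istar}), and therefore $i^{\sharp}_{w,m}$ must be defined and checked directly on the level of conjugated maps; I would make this explicit by writing $i^{\sharp}_{w,m}(g)=i^{*}_{w,m}\circ g\circ(i^{*}_{w,m})^{-1}$ and observing that the similitude factors of $i^{*}_{w,m}$ and $(i^{*}_{w,m})^{-1}$ cancel. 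Apart from that, the proof is a verbatim transcription of \cite[Lemma 1.30]{OPR23} with $M_v(S,H)$ replaced by $\Kv(S,H)$ and the K3 surface replaced by the Abelian surface $S$, using Proposition \ref{istar} in place of its K3 analogue; no genuinely new input is required.
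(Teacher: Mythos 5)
Your proposal is correct and follows exactly the route the paper takes: it passes to the $\Q$-linear extension, applies Proposition \ref{istar} to write $i^{*}_{w,m}=m\lambda_{\SwH}\circ\lambda_{\SvH}^{-1}$ so that the scalar $m$ cancels under conjugation, and then invokes the fact that for $k>2$ both $\lambda$'s are isometries and isomorphisms of $\Z$-modules by Theorem \ref{pr20thm1.6} to get integrality and bijectivity, which is precisely the ``work verbatim as in \cite[Lemma 1.30]{OPR23}'' argument the paper gestures at. No discrepancy to report.
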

    Arguing in a similar fashion, we can extend Lemma \ref{lemisharp} to the case in which we have two $(m,k)-$triples $\SvHuno$ and $\SvHdue$, with $v_i=mw_i$ for $i=1,2$,  and compare the cohomology action of the embeddings $i_{m,w_1}$ and $i_{m,w_2}$. 
    
    In the following, we will write $K_{v_{i}}$ and $K_{w_{i}}$ for the moduli spaces $K_{v_{i}}(S_i,H_i)$ and $K_{w_{i}}(S_i,H_i)$, respectively, for $i=1,2$. In addition, let us denote by $$i\Star_{w_i,m,\Q}\colon \H^2(K_{v_i},\Q) \longrightarrow \H^2(K_{w_i},\Q)$$ the $\Q-$linear extension of $i\Star_{w_i,m}$ and let us consider the following bijective map \begin{align*}
    i_{w_{1},w_{2},m,\Q}^{\sharp}\colon \O(\H^{2}(K_{v_{1}},\Q), \H^{2}(K_{v_{2}},\Q))&\longrightarrow  \O(\H^{2}(K_{w_{1}},\Q), \H^{2}(K_{w_{2}},\Q))\\
    g&\longmapsto  i_{w_{2},m,\Q}\Star \circ g \circ ( i_{w_{1},m,\Q}\Star)^{-1}.
\end{align*} 
    \begin{lem}\label{lemisharp2}
        Let $m,k$ be two positive integers, with $k>2$, and let $\SvHuno$ and $\SvHdue$ be two $(m,k)-$triples, with $v_i=mw_i$, for $i=1,2$. Then the bijection $i^\sharp_{w_1,w_2,m,\Q}$ restricts to a bijection $$ i_{w_{1},w_{2},m}^{\sharp}\colon \O(\H^{2}(K_{v_{1}},\Z), \H^{2}(K_{v_{2}},\Z))\longrightarrow  \O(\H^{2}(K_{w_{1}},\Z), \H^{2}(K_{w_{2}},\Z))$$ satisfying, for every $g\in \O(\H^{2}(K_{v_{1}},\Z), \H^{2}(K_{v_{2}},\Z))$,
$$i_{w_{1},w_{2},m}^{\sharp}(g)= (\lambda_{(S_{2},w_{2},H_{2})}\circ \lambda_{(S_{2},v_{2},H_{2})}^{-1})\circ g \circ (\lambda_{(S_{1},w_{1},H_{1})}\circ \lambda_{(S_{1},v_{1},H_{1})}^{-1})^{-1}.$$
    \end{lem}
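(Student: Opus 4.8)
The plan is to mimic the proof of Lemma \ref{lemisharp}, but now tracking two different triples simultaneously. First I would recall that Proposition \ref{istar} applies to each of the two triples $\SvHuno$ and $\SvHdue$ separately: since $k>1$, the morphism $i\Star_{w_i,m}$ is a similitude of lattices with $i\Star_{w_i,m}=m\,\lambda_{(S_i,w_i,H_i)}\circ \lambda_{(S_i,v_i,H_i)}^{-1}$ for $i=1,2$. Passing to $\Q$-coefficients, the $\Q$-linear extension $i\Star_{w_i,m,\Q}$ becomes an isometry up to the scalar $m^2$ (with respect to the Beauville--Bogomolov--Fujiki forms), hence the conjugation map $i^\sharp_{w_1,w_2,m,\Q}$ is a well-defined bijection between the stated sets of $\Q$-isometries; this is precisely the content of the formula
\[
i^\sharp_{w_1,w_2,m,\Q}(g)=\bigl(\lambda_{(S_2,w_2,H_2)}\circ\lambda_{(S_2,v_2,H_2)}^{-1}\bigr)\circ g\circ\bigl(\lambda_{(S_1,w_1,H_1)}\circ\lambda_{(S_1,v_1,H_1)}^{-1}\bigr)^{-1},
\]
obtained by substituting $i\Star_{w_i,m,\Q}=m\,\lambda_{(S_i,w_i,H_i)}\circ\lambda_{(S_i,v_i,H_i)}^{-1}$ and cancelling the two factors of $m$.

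The substance of the lemma is the claim that this $\Q$-linear bijection restricts to a bijection on \emph{integral} isometries. Here I would argue exactly as in \cite[Lemma 1.30]{OPR23}, invoking now the hypothesis $k>2$: by Theorem \ref{pr20thm1.6}, for $k\neq 2$ the maps $\lambda_{(S_i,v_i,H_i)}$ and $\lambda_{(S_i,w_i,H_i)}$ are isomorphisms of $\Z$-modules, so the composition $\lambda_{(S_i,w_i,H_i)}\circ\lambda_{(S_i,v_i,H_i)}^{-1}$ carries $\H^2(K_{v_i},\Z)$ isometrically (as a similitude with factor $1$, since both $\lambda$'s are isometries onto the respective $v_i^\perp=w_i^\perp$ and the identification $v_i^\perp=w_i^\perp$ is tautological) onto $\H^2(K_{w_i},\Z)$. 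Consequently, for $g\in\O(\H^2(K_{v_1},\Z),\H^2(K_{v_2},\Z))$ the composite displayed above is again a $\Z$-module isometry $\H^2(K_{w_1},\Z)\to\H^2(K_{w_2},\Z)$, and the same computation with the roles of $v$ and $w$ interchanged shows that the inverse map $i^\sharp_{w_1,w_2,m,\Q}{}^{-1}$ also preserves integrality; thus $i^\sharp_{w_1,w_2,m,\Q}$ restricts to the asserted bijection $i^\sharp_{w_1,w_2,m}$, and the formula for it is inherited verbatim from the $\Q$-linear one.

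The one point that genuinely requires care — and which I expect to be the main obstacle — is the bookkeeping of scalars: $i\Star_{w_i,m}$ itself is only a \emph{similitude} (multiplication by $m$ on the nose via the $\lambda$'s), not an isometry, so a priori conjugation by it need not send isometries to isometries unless the scalars on the two sides match. They do, because the factor $m$ appears identically on both triples (the integer $m$ is the same in both $(m,k)$-triples by hypothesis), so the two $m$'s cancel in the conjugation and what remains is genuinely an honest isometry; but this cancellation must be written out, and it is exactly why the statement is phrased for a common $m$. Once this is observed, everything else is a routine transcription of Lemma \ref{lemisharp}, replacing the single triple by the pair and $\lambda^\sharp_{\SvH}$, $\lambda^\sharp_{\SwH}$ by the corresponding conjugation-free compositions of $\lambda$'s between the two triples.
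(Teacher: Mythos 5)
Your proposal is correct and follows the same route the paper intends: the paper proves this lemma only by reference ("arguing in a similar fashion" to Lemma \ref{lemisharp}, which itself defers to \cite[Lemma 1.30]{OPR23}), namely by applying Proposition \ref{istar} to each triple, observing that $\lambda_{(S_i,w_i,H_i)}\circ\lambda_{(S_i,v_i,H_i)}^{-1}$ is an integral isometry via the tautological identification $v_i\ort=w_i\ort$ (using that the $\lambda$'s are $\Z$-module isomorphisms for $k>2$ by Theorem \ref{pr20thm1.6}), and cancelling the common factor $m$ in the conjugation. Your explicit attention to the scalar bookkeeping is exactly the point the paper leaves implicit.
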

    It is immediate to notice that, if $(S_i, v_i, H_i)=\SvH$ for $i=2$, then the morphism $i_{w_{1},w_{2},m}^{\sharp}$ described in Lemma \ref{lemisharp2} coincides with the morphism $i_{w,m}^\sharp$ of Lemma \ref{lemisharp}.
\subsection{Action on monodromy operators}\label{secinj} The next Proposition, which is the main result of the Section, shows that the morphism $i_{w_{1},w_{2},m}^{\sharp}$ introduced in Lemma \ref{lemisharp2} sends locally trivial parallel transport operators to parallel transport operators. The outcome is that its restriction to the set of locally trivial parallel transport operators provides an injection of sets.
\begin{prop}\label{propinjPT}Let $m,k$ be two positive integers, with $k>2$, and let $\SvHuno$ and $\SvHdue$ be two $(m,k)-$triples, with $v_i=mw_i$, for $i=1,2$. Then the bijection $i_{w_{1},w_{2},m}^{\sharp}$ restricts to an injective function
    $$ i_{w_{1},w_{2},m}^{\sharp}\colon \PT_{lt}^{2}(K_{v_{1}}(S_{1},H_{1}), K_{v_{2}}(S_{2},H_{2}))\longrightarrow  \PT^{2}(K_{w_{1}}(S_{1},H_{1}), K_{w_{2}}(S_{2},H_{2})).$$
\end{prop}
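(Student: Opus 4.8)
The plan is the following. Since $i_{w_1,w_2,m}^{\sharp}$ is the restriction to integral isometries of the bijection of Lemma \ref{lemisharp2}, injectivity of the asserted map is automatic, and the whole point is to check that it carries $\PT_{lt}^{2}(K_{v_1}(S_1,H_1),K_{v_2}(S_2,H_2))$ into $\PT^{2}(K_{w_1}(S_1,H_1),K_{w_2}(S_2,H_2))$. So I would fix $g\in\PT_{lt}^{2}(K_{v_1}(S_1,H_1),K_{v_2}(S_2,H_2))$, write $g=\PT_p(\gamma)$ for some locally trivial family $p\colon\XX\to T$ of irreducible symplectic varieties with $\XX_{t_i}\simeq K_{v_i}(S_i,H_i)$ and $\gamma$ a path in $T$ from $t_1$ to $t_2$, and extract from $p$ a \emph{smooth} subfamily whose monodromy along $\gamma$ realizes $i_{w_1,w_2,m}^{\sharp}(g)$.

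The extraction goes through the relative stratification of Remark \ref{relstrat}: its deepest stratum $p_l\colon\XX_l\to T$ is a smooth family restricting on each fiber to the most singular locus, and connected components are preserved along it. Via the remark following Corollary \ref{corembedding}, I would let $\ZZ\subseteq\XX_l$ be the connected component whose fiber over $t_1$ is the image of $i_{w_1,m}\colon K_{w_1}(S_1,H_1)\to K_{v_1}(S_1,H_1)$; then $q:=p_{|\ZZ}\colon\ZZ\to T$ is a smooth family of irreducible holomorphic symplectic manifolds and the inclusion $\iota\colon\ZZ\hookrightarrow\XX$ restricts over $t_1$ to $i_{w_1,m}$, so that $\iota_{t_1}^{\ast}=i_{w_1,m}^{\ast}$. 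By Proposition \ref{kvms} and Corollary \ref{corembedding}, the fiber $\ZZ_{t_2}$ is a connected component of $K_{v_2}(S_2,H_2)^{ms}$, hence there are an $m$-torsion point $(p_0,L_0)\in S_2[m]\times\hat{S}_2[m]$ and an isomorphism $\psi\colon K_{w_2}(S_2,H_2)\to\ZZ_{t_2}$ with $\iota_{t_2}\circ\psi=\tau^m_{(p_0,L_0)}$. Identifying $\ZZ_{t_2}$ with $K_{w_2}(S_2,H_2)$ through $\psi$, the operator $h:=\psi^{\ast}\circ\PT_q(\gamma)$ is, by construction, a (classical) parallel transport operator in $\PT^{2}(K_{w_1}(S_1,H_1),K_{w_2}(S_2,H_2))$.

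It then remains to identify $h$ with $i_{w_1,w_2,m}^{\sharp}(g)$. Two ingredients are involved. First, parallel transport is natural for the relative closed embedding $\iota$ over $T$ (the fiberwise pullbacks assemble into a morphism of local systems $R^2p_{\ast}\Z\to R^2q_{\ast}\Z$), whence $\PT_q(\gamma)\circ\iota_{t_1}^{\ast}=\iota_{t_2}^{\ast}\circ\PT_p(\gamma)$. Second, the embedding $\tau^m_{(p_0,L_0)}$ induces the \emph{same} pullback on $\H^2$ as $i_{w_2,m}$: writing $\tau^m_{(p_0,L_0)}=i_{w_2,m}\circ\phi$ with $\phi\colon E\mapsto\tau_{p_0,\ast}(E)\otimes L_0$ an automorphism of $K_{w_2}(S_2,H_2)$, the isometry $\phi^{\ast}$ corresponds under $\lambda_{(S_2,w_2,H_2)}$ to the action on $w_2^{\perp}$ of the Fourier--Mukai equivalence $(-\otimes L_0)\circ\tau_{p_0,\ast}$ of $\Db(S_2)$, and the latter acts as the identity on $\Htilde(S_2,\Z)$ because translations are homotopically trivial and $\ch(L_0)=1$ for $L_0\in\Pic^0(S_2)$; hence $\phi^{\ast}=\id$ and $\psi^{\ast}\circ\iota_{t_2}^{\ast}=i_{w_2,m}^{\ast}$. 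Composing the naturality identity with $\psi^{\ast}$ and using $\iota_{t_1}^{\ast}=i_{w_1,m}^{\ast}$ and $g=\PT_p(\gamma)$ gives $h\circ i_{w_1,m}^{\ast}=i_{w_2,m}^{\ast}\circ g$; inverting $i_{w_1,m}^{\ast}$ over $\Q$ (it is a similitude, by Proposition \ref{istar}) and comparing with the formulas in Lemma \ref{lemisharp2} and Proposition \ref{istar} (the scalars $m$ cancelling) yields $h=i_{w_1,w_2,m}^{\sharp}(g)$, which therefore lies in $\PT^{2}(K_{w_1}(S_1,H_1),K_{w_2}(S_2,H_2))$.

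I expect the genuinely new difficulty, compared with the K3 case of \cite{OPR23}, to be the multi-componentedness of the most singular locus: here $K_v(S,H)^{ms}$ has the $m^8$ components $a_w^{-1}(p,L)$ indexed by $m$-torsion points, whereas $M_v(S,H)^{ms}$ is connected in the K3 setting. Consequently I must follow the \emph{correct} connected component of the deepest stratum along $\gamma$ (relying on preservation of connected components in locally trivial families), and control the fact that the component reached over $t_2$ need not be the image of $i_{w_2,m}$ but only a translate of it -- which is precisely what the triviality of $\phi^{\ast}$ neutralizes. A secondary point requiring care is the precise statement of the naturality of parallel transport under a relative closed embedding, together with the bookkeeping of all identifications, so that the resulting integral map matches the $\Q$-linear conjugation defining $i_{w_1,w_2,m}^{\sharp}$.
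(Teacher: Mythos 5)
Your proof is correct and follows essentially the same route as the paper: restrict the locally trivial family to the deepest stratum of the relative stratification of Remark \ref{relstrat}, obtain a smooth family of the $K_{w_i}$'s, and use the induced morphism of local systems $R^2p_\ast\Q\to R^2q_\ast\Q$ together with Proposition \ref{istar} and Lemma \ref{lemisharp2} to identify $\PT_q(\gamma)$ with $i^{\sharp}_{w_1,w_2,m}(\PT_p(\gamma))$. The one point where you go beyond the paper---checking that the component of the deepest stratum reached over $t_2$ can be identified with $K_{w_2}(S_2,H_2)$ without affecting the cohomological comparison, because $(-\otimes L_0)\circ\tau_{p_0,\ast}$ acts trivially on $\Htilde(S_2,\Z)$---is precisely what the paper compresses into ``by isotriviality of the Yoshioka fibrations we can suppose that $K_{w_1}(S_1,H_1)$ and $K_{w_2}(S_2,H_2)$ belong to the same connected component,'' and your justification is sound; the only imprecision is that $\phi\colon E\mapsto\tau_{p_0,\ast}(E)\otimes L_0$ is an isomorphism from $K_{w_2}(S_2,H_2)$ onto the \emph{different} fiber $a_{w_2}^{-1}(p_0,L_0)$ rather than an automorphism of $K_{w_2}(S_2,H_2)$, which does not affect your conclusion.
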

\begin{proof}
    Let $p\colon \XX \to T$ be a locally trivial family of primitive symplectic varieties and let $t_1, t_2 \in T$ such that $\XX_{t_{i}}\simeq K_{v_i}(S_i,H_i)$. By Remark \ref{relstrat} (1) there exists a relative stratification $$\XX= \XX_0 \supseteq \XX_1 \supseteq \cdots \supseteq \XX_l=:\YY$$ that, restricted to each fiber $\XX_t$ of $p$, corresponds to the stratification of singularities in Proposition \ref{kalstratification}. In particular, for every $t\in T$, we have $\YY_t\simeq \XX_t^{ms}$ and, for $i=1,2$, $$\YY_{t_i}\simeq K_{v_i}(S_i,H_i)^{ms}\simeq \bigcup_{
         p\in S_{i}[m], L \in \widehat{S_{i}}[m]}a_{w_{i}}^{-1}(p,L),$$ where the last isomorphism follows from Proposition \ref{kvms}. 
By isotriviality of the Yoshioka fibrations $a_{w_i}$ for $i=1,2$, we can suppose that $K_{w_1}(S_1,H_1)$ and $K_{w_2}(S_2,H_2)$ belong to the same connected component $\ZZ$ of $\YY$. By Remark \ref{relstrat} (2), the restriction $q:=p_{|\ZZ}\colon \ZZ \to T$ of $p$ defines a smooth deformation of irreducible holomorphic symplectic manifolds such that $\ZZ_{t_i}\simeq K_{w_i}(S_i,H_i)$.

The proof now follows analogously to the K3 case (\cite[Proposition 4.7]{OPR23}), but we include it for the reader's convenience.
Let $$\PT_p\colon \Omega(T,t_1,t_2)\longrightarrow \O(\H^2(K_{v_1}(S_1,H_1),\Z), \H^2(K_{v_2}(S_2,H_2),\Z))$$ be the monodromy representation of the family $p$, sending the homotopy class of a continuous path $\gamma$ from $t_1$ to $t_2$ to the corresponding locally trivial parallel transport operator $\PT_p(\gamma)$ in the family $p$, and, in the same way, we denote by $\PT_q$ the monodromy representation of the restricted family $q$. Our claim is that the isomorphism $i_{w_1,w_2,m}^\sharp$ of Lemma \ref{lemisharp2} sends a locally trivial parallel transport operator $g=\PT_p(\gamma)$ in the family $p$ along a path $\gamma\in \Omega(T,t_1,t_2)$ in the parallel transport operator $\PT_q(\gamma)$ in the family $q$ along the same path $\gamma$. Equivalently, we claim that the diagram \begin{equation}\label{cdPT}
    \begin{tikzcd}[row sep=large]
        \Omega(T,t_1,t_2)\arrow[r, "\PT_p"] \arrow[rd, "\PT_q" swap] & \O(\H^2(K_{v_1}(S_1,H_1),\Z), \H^2(K_{v_2}(S_2,H_2),\Z))\arrow[d, "i_{w_1,w_2,m}^\sharp", "\rotatebox{90}{\(\sim\)}" swap]\\
        & \O(\H^2(K_{w_1}(S_1,H_1),\Z), \H^2(K_{w_2}(S_2,H_2),\Z))
    \end{tikzcd}
\end{equation} is commutative. 

Let $g=\PT_p(\gamma)$ be as above, and let $g_\Q$ be its $\Q-$linear extension, i.e. the locally trivial parallel transport operator $\PT^\Q_p(\gamma)$ along $\gamma$ in the local system $R^2p\sstar\Q$. By Lemma \ref{lemisharp2}, it is sufficient to show that $i_{w_1,w_2,m,\Q}^\sharp(g_\Q)$ is the parallel transport $\PT_q^\Q(\gamma)$ along $\gamma$ in the local system $R^2q\sstar\Q$, i.e. that diagram (\ref{cdPT}) commutes when replacing the morphisms with their respective $\Q-$linear extensions.

Since $p\colon \XX \to T$ is locally trivial, the embedding $\iota \colon \ZZ \to \XX$ induces a morphism of local systems $$\iota\Star_\Q\colon R^2p\sstar \Q \longrightarrow R^2q\sstar \Q$$ such that $\iota\Star_{\Q,t_i}=i\Star_{w_i,m,\Q}$. From Proposition \ref{istar} it follows that $\iota_{\Q,t_1}\Star$ is an isometry, hence $\iota_\Q\Star$ is an isomorphism of local systems. These two facts lead to the following equalities \begin{align*} \label{PTq}
    i\Star_{w_{2},m,\Q}\circ \PT_{p}(\gamma)_{\Q} \circ ( i\Star_{w_{1},m,\Q})^{-1}= \iota\Star_{\Q,t_{1}}\circ \circ \PT_{p}^{\Q}(\gamma) \circ (\iota\Star_{\Q,t_{2}})^{-1} = \PT_{q}^{\Q}(\gamma),
\end{align*} whose left hand side is exactly $i^{\sharp}_{w_{1},w_{2},m,\Q}(\PT_{p}(\gamma)_{\Q})$, by definition of $i_{w_1,w_2,m,\Q}^\sharp$.
\end{proof}
In the particular case of $\SvHuno=\SvHdue=:\SvH$ we get the following
\begin{cor}\label{corinjmon}Let $m,k$ be two positive integers, with $k>2$, and let $\SvH$ be an $(m,k)-$triple. Then the isomorphism $i^\sharp_{w,m}$ restricts to an injective morphism of groups $$i^{\sharp}_{w,m}\colon \monlt(\Kv(S,H)) \longrightarrow \mon^{2}(\Kw(S,H))$$ satisfying the identity $$i^{\sharp}_{w,m}(\PT_{p}(\gamma))=\PT_{q}(\gamma)$$ for every $\gamma \in \pi_1(T, p(\Kv(S,H)))$ and for every locally trivial deformation $p\colon \XX \to T$ of $\Kv(S,H)$ and every restriction $q$ of $p$ to any connected component of $\XX^{ms}$.
\end{cor}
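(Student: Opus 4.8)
The plan is to deduce Corollary \ref{corinjmon} almost entirely from Proposition \ref{propinjPT} by specializing to the diagonal case $\SvHuno=\SvHdue=\SvH$, and then add the small amount of extra bookkeeping that the corollary asks for beyond what the proposition states. First I would observe that, taking $(S_1,v_1,H_1)=(S_2,v_2,H_2)=\SvH$ in Proposition \ref{propinjPT}, the bijection $i^\sharp_{w_1,w_2,m}$ becomes exactly the isomorphism $i^\sharp_{w,m}$ of Lemma \ref{lemisharp} (this identification was already noted in the line following Lemma \ref{lemisharp2}), and it restricts to an injective function
\[
i^\sharp_{w,m}\colon \PT^2_{lt}(\Kv(S,H),\Kv(S,H))\longrightarrow \PT^2(\Kw(S,H),\Kw(S,H)),
\]
i.e. an injection $\monlt(\Kv(S,H))\to \mon^2(\Kw(S,H))$ of the underlying sets, by definition (\ref{mon}) of the locally trivial monodromy group and its smooth analogue. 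So the content still to be verified is that this injection is a \emph{group homomorphism} and that it satisfies the path-wise identity $i^\sharp_{w,m}(\PT_p(\gamma))=\PT_q(\gamma)$.

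The homomorphism property is the easy part: $i^\sharp_{w,m}$ is, by construction (see (\ref{notconj})), conjugation by the fixed isomorphism $i\Star_{w,m}$, and conjugation $g\mapsto i\Star_{w,m}\circ g\circ (i\Star_{w,m})^{-1}$ is a group isomorphism $\O(\H^2(\Kv(S,H),\Z))\to\O(\H^2(\Kw(S,H),\Z))$ by Lemma \ref{lemisharp}; hence its restriction to the subgroup $\monlt(\Kv(S,H))$ — which lands in the subgroup $\mon^2(\Kw(S,H))$ by Proposition \ref{propinjPT} — is a group homomorphism, and being injective (again Proposition \ref{propinjPT}) it is an injective morphism of groups. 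One should also remark that $\monlt(\Kv(S,H))$ really is a subgroup of $\O(\H^2(\Kv(S,H),\Z))$ and $\mon^2(\Kw(S,H))$ a subgroup of $\O(\H^2(\Kw(S,H),\Z))$, as recorded in Remark \ref{rmkcomposition}, so that speaking of a morphism of groups makes sense; here one uses that $\Kw(S,H)$ is simply connected (Remark \ref{rmkPSV} (2)), so $\H^2(\Kw(S,H),\Z)$ is torsion-free.

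The path-wise identity is precisely the commutativity of diagram (\ref{cdPT}) in the proof of Proposition \ref{propinjPT}, specialized to $t_1=t_2=:t$ with $\XX_t\simeq\Kv(S,H)$, $\gamma$ a loop based at $t$, and $q=p_{|\ZZ}$ the restriction of $p$ to a connected component $\ZZ$ of $\XX^{ms}$ (which by Remark \ref{relstrat} (2) is a smooth deformation of $\Kw(S,H)$, using that every connected component of $\Kv(S,H)^{ms}$ is isomorphic to $\Kw(S,H)$ via the embeddings of Corollary \ref{corembedding}, and that connected components are preserved along locally trivial deformations). Thus for every $\gamma\in\pi_1(T,t)$ one reads off $i^\sharp_{w,m}(\PT_p(\gamma))=\PT_q(\gamma)$ directly from that diagram; since by definition $\monlt(\Kv(S,H))$ is generated — indeed exhausted — by the operators $\PT_p(\gamma)$ over all such $(p,\gamma)$, this identity holds on all of $\monlt(\Kv(S,H))$ and simultaneously re-proves that the image lies in $\mon^2(\Kw(S,H))$.

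The only genuinely non-formal input is Proposition \ref{propinjPT} itself — in particular the fact, established there, that the relative most singular locus $\YY=\XX_l$ of a locally trivial family restricts fiberwise to the stratification of singularities (Remark \ref{relstrat} (1)) and that $\iota\Star_\Q\colon R^2p\sstar\Q\to R^2q\sstar\Q$ is an isomorphism of local systems because its stalk $i\Star_{w,m,\Q}$ at $t$ is a similitude by Proposition \ref{istar}. I expect the main obstacle, were one to reprove this from scratch, to be exactly this compatibility of the relative stratification with parallel transport; but since Proposition \ref{propinjPT} is already in hand, the proof of the corollary is a short specialization plus the observation that conjugation is a group isomorphism. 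Hence the argument is complete.
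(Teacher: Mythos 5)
Your proposal is correct and follows exactly the route the paper intends: the corollary is stated immediately after Proposition \ref{propinjPT} as its diagonal specialization $\SvHuno=\SvHdue=\SvH$, with the group-homomorphism property coming for free from the fact that $i^\sharp_{w,m}$ is conjugation by the fixed isomorphism $i\Star_{w,m}$ and the path-wise identity being the commutativity of diagram (\ref{cdPT}). Your additional bookkeeping (Remark \ref{rmkcomposition} for the subgroup structure, Remark \ref{relstrat} and Corollary \ref{corembedding} for the restriction to any connected component of $\XX^{ms}$) is exactly what the paper leaves implicit.
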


We conclude the Subsection by extending the last result to any irreducible symplectic variety $X$ locally trivial deformation equivalent to a moduli space $\Kv(S,H)$ for some $(m,k)-$triple $\SvH$ with $k>2$.

\begin{cor}\label{corinjmondefo}
    Let $X$ be an irreducible symplectic variety, which is locally trivial deformation equivalent to $\Kv(S,H)$, where $\SvH$ is an $(m,k)-$triple with $k>2$, and let $Z$ be a connected component of the most singular locus of $X$. Then $Z$ is an irreducible holomorphic symplectic manifold deformation equivalent to $\Kw(S,H)$ and its closed embedding $i_{Z,X}\colon Z \to X$ induces an injective morphism $$i^\sharp_{Z,X}\colon \monlt(X) \longrightarrow \mon^2(Z).$$
\end{cor}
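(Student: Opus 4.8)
The plan is to reduce the general statement to Corollary \ref{corinjmon} by transporting everything along a locally trivial deformation connecting $X$ to the model moduli space $\Kv(S,H)$, using the relative stratification of Remark \ref{relstrat} to control the behaviour of the most singular locus. First I would invoke the hypothesis that $X$ is locally trivial deformation equivalent to $\Kv(S,H)$: there is a locally trivial family $p\colon \XX \to T$ of irreducible symplectic varieties with two points $t_0, t_1 \in T$ such that $\XX_{t_0} \simeq X$ and $\XX_{t_1} \simeq \Kv(S,H)$, and a continuous path in $T$ joining them. By Remark \ref{relstrat} (1) this family carries a relative stratification $\XX = \XX_0 \supseteq \cdots \supseteq \XX_l = \YY$ whose restriction to each fiber is the stratification of singularities of that fiber; in particular $\YY_{t_0} \simeq X^{ms}$ and $\YY_{t_1} \simeq \Kv(S,H)^{ms}$. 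By Remark \ref{relstrat} (2) the restriction $p_{|\YY}\colon \YY \to T$ is a smooth family whose restriction to each connected component is a smooth deformation of irreducible holomorphic symplectic manifolds, and connected components are locally constant along $T$.

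Next I would identify the connected component $Z$ of $X^{ms}$ with a component of $\Kv(S,H)^{ms}$. Since connected components of $\YY$ are preserved along the (connected) base $T$, the component of $\YY$ containing $Z = \YY_{t_0}$-component also meets $\YY_{t_1} \simeq \Kv(S,H)^{ms}$; by Proposition \ref{kvms} and the isotriviality of the Yoshioka fibration (cf.\ the argument in Proposition \ref{propinjPT}, where one moves freely among the $m^8$ components $a_w^{-1}(p,L)$ via the translations of Corollary \ref{corembedding}), this component of $\YY_{t_1}$ is isomorphic to $\Kw(S,H)$. Hence $Z$ is deformation equivalent, through the smooth family $q := p_{|\ZZ}\colon \ZZ \to T$ on that component $\ZZ$, to $\Kw(S,H)$; in particular $Z$ is an irreducible holomorphic symplectic manifold of Kummer type. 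This already yields the first assertion of the Corollary.

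For the induced morphism, I would combine the parallel transport $g_v := \PT_p(\gamma) \in \PT^2_{lt}(\Kv(S,H), X)$ along $\gamma$ with its restriction $g_w := \PT_q(\gamma) \in \PT^2(\Kw(S,H), Z)$ on the smooth family $\ZZ$; these are isometries, and conjugation by them identifies $\O(\H^2(X,\Z))$ with $\O(\H^2(\Kv(S,H),\Z))$ and $\O(\H^2(Z,\Z))$ with $\O(\H^2(\Kw(S,H),\Z))$, carrying $\monlt$ to $\monlt$ and $\mon^2$ to $\mon^2$ respectively (parallel transport operators conjugate monodromy groups, exactly as in Remark \ref{rmkcomposition}). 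The embedding $i_{Z,X}\colon Z \hookrightarrow X$ is, under these identifications, the restriction of the relative embedding $\iota\colon \ZZ \hookrightarrow \XX$ to the fiber over $t_0$, while $i_{w,m}\colon \Kw(S,H)\hookrightarrow \Kv(S,H)$ is its restriction over $t_1$; since $\iota$ is a relative embedding in a locally trivial family, the pullbacks $i_{Z,X}^\ast$ and $i_{w,m}^\ast$ on $\H^2$ are intertwined by $g_v$ and $g_w$. I would then define $i^\sharp_{Z,X} := g_w^\sharp \circ i^\sharp_{w,m} \circ (g_v^\sharp)^{-1}$ and check it agrees with conjugation by $i_{Z,X}^\ast$ (using Proposition \ref{istar} to see $i_{Z,X}^\ast$ is a similitude, so conjugation by it is well defined on integral isometries, verbatim as in Lemma \ref{lemisharp}). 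Injectivity of $i^\sharp_{Z,X}$ then follows from injectivity of $i^\sharp_{w,m}$ in Corollary \ref{corinjmon} together with the fact that $g_v^\sharp, g_w^\sharp$ are bijections.

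The main obstacle I anticipate is the bookkeeping in the second step: making rigorous the claim that the component $Z$ of $X^{ms}$ corresponds, through $\ZZ$, to the \emph{smooth} model $\Kw(S,H)$ rather than to some a priori different component, and that this identification is compatible with the relative embedding $\iota$. This requires using that all $m^8$ components of $\Kv(S,H)^{ms}$ are abstractly isomorphic to $\Kw(S,H)$ (Corollary \ref{corembedding}) and that the induced cohomology action $i_{w,m}^\ast = \tau_{(p,L)}^{m\,\ast}$ is independent of $(p,L)$ — since the translations $\tau_{p,\ast}(\,\cdot\,)\otimes L$ act trivially on $\H^2$ of the Kummer-type fiber — so that Corollary \ref{corinjmon} applies to whichever component $\ZZ$ turns out to contain $Z$. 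Once this independence is in hand the rest is a routine diagram chase of the type already carried out in Proposition \ref{propinjPT}.
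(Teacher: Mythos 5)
Your proposal is correct and follows essentially the same route as the paper: reduce to the model triple via the relative stratification of Remark \ref{relstrat}, identify $Z$ with a component of $\Kv(S,H)^{ms}$ isomorphic to $\Kw(S,H)$ via isotriviality of the Yoshioka fibration, and transfer Corollary \ref{corinjmon} along parallel transport. The only cosmetic difference is that the paper defines $i^\sharp_{Z,X}(g)$ intrinsically as the parallel transport of the restricted smooth family over the same loop and then verifies the commutative square with $i^\sharp_{w,m}$, whereas you define it directly as the conjugate $g_w^\sharp\circ i^\sharp_{w,m}\circ(g_v^\sharp)^{-1}$ and check agreement with conjugation by $i_{Z,X}^\ast$ afterwards; these are equivalent.
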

\begin{proof} Let $p\colon \XX \to T$ a locally trivial deformation of irreducible symplectic varieties and let $t_1,t_2\in T$ such that $\XX_{t_1}\simeq X$ and $\XX_{t_2}\simeq \Kv(S,H)$. 

The first assertion was already essentially contained in the proof of Proposition \ref{propinjPT} and it is a straightforward consequence of Remark \ref{relstrat} (2). In fact, for any connected component $\ZZ$ of the most singular locus of $\XX$, the restriction $q:=p_{|\ZZ}\colon \ZZ \to T$ defines a smooth deformation of irreducible holomorphic symplectic manifolds such that $\ZZ_{t_1}$ is isomorphic to a connected component $Z$ of $X^{ms}$ and $\ZZ_{t_2}\simeq \Kw(S,H)$.

For the second assumption, being the locally trivial monodromy group a locally trivial deformation invariant, it is sufficient to define the morphism $i^\sharp_{Z,X}$ in such a way that its action on locally trivial monodromy operators is natural with respect to the action of the morphism $i_{w,m}^\sharp$ described in Corollary \ref{corinjmon}. 

For every $g\in \monlt(X)$, there exists a locally trivial family $p'\colon \XX'\to T'$, a point $t'\in T'$ such that $\XX'_{t'}\simeq X$ and a loop $\gamma$ centered in $t'$ such that $g=\PT_{p'}(\gamma)$. As before, we let $q'\colon \ZZ'\to T'$ be the smooth deformation given by restriction of $p'$ to the connected component $\ZZ'$ of $(\XX')^{ms}$ such that $\ZZ'_{t'} \simeq Z\simeq  \ZZ_{t_1}$. We set $$i^\sharp_{Z,X}(g):=\PT_{q'}(\gamma)\in \mon^2(Z)$$ and we observe that this assignment is well defined and compatible with the composition law defined in Remark \ref{rmkcomposition} and, by construction, it satisfies the following identity \begin{equation}\label{isharp1}
    i^\sharp_{Z,X}(g)\circ i_{Z,X}\Star = i_{Z,X}\Star \circ g \colon \H^2(X,\Z) \to \H^2(Z,\Z),
\end{equation} where $i_{Z,X}$ is the closed embedding of $Z$ in $X$. Moreover, for every smooth path $\delta$ in $T$ from $t_1$ to $t_2$, the locally trivial parallel transport operators $\PT_p(\delta)$ and $\PT_q(\delta)$ in the two families defined above fit in the following commutation identity by construction: \begin{equation}\label{isharp2}
    i\Star_{w,m}\circ \PT_p(\delta) = \PT_q(\delta)\circ i\Star_{Z,X}\colon \H^2(X,\Z) \to \H^2(\Kw(S,H),\Z).
\end{equation} Putting identities (\ref{isharp1}) and (\ref{isharp2}) together, we deduce that $i^\sharp_{Z,X}$ fits in the following commutative diagram: \begin{equation}\label{cdmonX} \begin{tikzcd}
        \monlt(X) \arrow[d, "\PT_p(\delta)^\sharp" swap] \arrow[r, "i^\sharp_{Z,X}"] & \mon^2(Z)\arrow[d, "\PT_q(\delta)^\sharp"]\\
        \monlt(\Kv(S,H)) \arrow[r, "i^\sharp_{w,m}"] & \mon^2(\Kw(S,H)),
    \end{tikzcd}
\end{equation}where we are using notation (\ref{notconj}) for the isomorphisms $\PT_p(\delta)$ and $\PT_q(\delta)$. Since $i^\sharp_{w,m}$ is injective, the claim follows.
\end{proof}

As a consequence of Remark \ref{indexN}, we get the following
\begin{cor}\label{cormaximal} Let $X$ be an irreducible symplectic variety locally trivial deformation equivalent to $\Kv(S,H)$, where $\SvH$ is an $(m,k)-$triple with $k>2$. Then its locally trivial monodromy group $\monlt(X)$
is always a proper subgroup of $\O^+(\H^2(X,\Z))$.    
\end{cor}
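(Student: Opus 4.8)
The plan is to combine the injection produced in Corollary \ref{corinjmondefo} with the index computation recorded in Remark \ref{indexN}. Fix a connected component $Z$ of the most singular locus $X^{ms}$. By Corollary \ref{corinjmondefo}, $Z$ is an irreducible holomorphic symplectic manifold deformation equivalent to $\Kw(S,H)$, and the closed embedding $i_{Z,X}\colon Z \hookrightarrow X$ induces an injective group homomorphism $i^{\sharp}_{Z,X}\colon \monlt(X) \hookrightarrow \mon^{2}(Z)$. Since $w$ is primitive with $w^{2}=2k>4$, both Theorem \ref{thmmonkummer} and Remark \ref{indexN} apply to $Z$: the former gives $\mon^{2}(Z)=\Nn(\H^{2}(Z,\Z))$, and the latter gives $[\O^{+}(\H^{2}(Z,\Z)):\Nn(\H^{2}(Z,\Z))]=2^{\rho(k)}$. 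So $i^{\sharp}_{Z,X}$ embeds $\monlt(X)$ into a subgroup of $\O^{+}(\H^{2}(Z,\Z))$ of index $2^{\rho(k)}$.

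To convert this into a statement about the index of $\monlt(X)$ in $\O^{+}(\H^{2}(X,\Z))$, I would observe that $i^{\sharp}_{Z,X}$ is the restriction of a group isomorphism of the full orthogonal groups $\O(\H^{2}(X,\Z))\to\O(\H^{2}(Z,\Z))$ which maps $\O^{+}(\H^{2}(X,\Z))$ isomorphically onto $\O^{+}(\H^{2}(Z,\Z))$. Indeed, unwinding the commutative square (\ref{cdmonX}) used to construct $i^{\sharp}_{Z,X}$, one has $i^{\sharp}_{Z,X}=(\PT_{q}(\delta)^{\sharp})^{-1}\circ i^{\sharp}_{w,m}\circ \PT_{p}(\delta)^{\sharp}$: the two outer factors are conjugations by orientation-preserving parallel transport isometries (Remark \ref{exorientPSV}), while $i^{\sharp}_{w,m}$ is, by Lemma \ref{lemisharp}, conjugation by the lattice isometry $\lambda_{\SwH}\circ\lambda_{\SvH}^{-1}$ (the scalar $m$ that appears in $i\Star_{w,m}$ of Proposition \ref{istar} disappears under conjugation). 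Each of the three factors therefore extends to the full orthogonal group and preserves the orientation-preserving index-two subgroup, hence so does their composite. Consequently
\[
[\,\O^{+}(\H^{2}(X,\Z)):\monlt(X)\,]\;=\;[\,\O^{+}(\H^{2}(Z,\Z)):i^{\sharp}_{Z,X}(\monlt(X))\,]\;\geq\;[\,\O^{+}(\H^{2}(Z,\Z)):\Nn(\H^{2}(Z,\Z))\,]\;=\;2^{\rho(k)},
\]
and since $k>2$ the integer $k$ has at least one prime divisor, so $\rho(k)\geq 1$ and $2^{\rho(k)}\geq 2$. Hence $\monlt(X)$ has index at least two in $\O^{+}(\H^{2}(X,\Z))$, and in particular it is a proper subgroup.

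The argument is essentially formal once Corollary \ref{corinjmondefo}, Theorem \ref{thmmonkummer} and Remark \ref{indexN} are in place, so I do not anticipate a substantial obstacle. The only point worth a line of justification is the claim that conjugation by an \emph{arbitrary} lattice isometry — not necessarily orientation preserving — still carries $\O^{+}$ onto $\O^{+}$: this holds because an orientation-reversing isometry feeds its sign into the orientation character twice, once through the isometry itself and once through its inverse, so the two contributions cancel and the conjugation acts trivially on the quotient character $\O/\O^{+}$. Everything else reduces to bookkeeping with subgroup indices, which can be carried out at the level of cardinalities and therefore needs no a priori finiteness input.
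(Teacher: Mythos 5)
Your argument is correct and follows exactly the route the paper intends: Corollary \ref{cormaximal} is stated there as an immediate consequence of the injection $i^{\sharp}_{Z,X}\colon \monlt(X)\hookrightarrow \mon^{2}(Z)=\Nn(\H^{2}(Z,\Z))$ from Corollary \ref{corinjmondefo} together with the index bound $2^{\rho(k)}\geq 2$ of Remark \ref{indexN}. You merely spell out the (correct) observation, left implicit in the paper, that the conjugation isomorphism underlying $i^{\sharp}_{Z,X}$ carries $\O^{+}$ onto $\O^{+}$, so properness transfers back to $\O^{+}(\H^{2}(X,\Z))$.
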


In Example \ref{k=1}, we will see that the statement in Corollary \ref{cormaximal} is no longer true for $k\leq 2$, due to the existence of singular moduli spaces of sheaves on Abelian surfaces with maximal locally trivial monodromy group. More generally, in the next Section, we will provide some evidence of the fact that the hypothesis $k>2$ is crucial for the previous constructions. 

\subsection{The cases $k=1$ and $k=2$}\label{seck12} We conclude the Section giving some motivations for the hypothesis $k>2$ on the $(m,k)-$triples considered along Sections \ref{secicohomology}-\ref{secinj}.

We recall that, by Corollary \ref{corembedding}, for any $(m,k)-$triple $\SvH$ with $v=mw$ a primitive Mukai vector and $m,k$ two any positive integers, we can embed $\Kw(S,H)$ in $ \Kv(S,H)$ as a connected component of its most singular locus. This still holds, in particular, for $k=1,2$, but in these cases the associated primitive moduli spaces $\Kw(S,H)$ and the corresponding maps $\lambda_w\colon w\ort \to \H^2(\Kw(S,H),\Z)$ have an exceptional behavior.
\begin{rmk}[The case $k=1$]\label{k=1} If $k=1$, the moduli space $\Kw(S,H)$ is isomorphic to a point and the map $\lambda_w$ is the $0-$map. An analogue of Proposition \ref{istar} still holds, leading to the trivial statement $i_{w,m}\Star=0$, which suggests that no information about the locally trivial monodromy group of $\Kv(S,H)$ can be recovered from the monodromy group of its most singular locus (which is, in fact, trivial), as motivated by the following example.
\end{rmk} 
    \begin{ex}\label{exogsing} If $(m,k)=(2,1)$, by \cite[Theorem 1.6(2)]{PR10}, the moduli space $\Kv(S,H)$ is an irreducible symplectic variety that admits a symplectic resolution $\widetilde{\Kv(S,H)}$ deformation equivalent to the O'Grady's $6-$dimensional example $K_6$ (see \cite[Theorem 1.4]{OG00}). The locally trivial monodromy group of $\Kv(S,H)$ has been proven to be maximal, by \cite[Proposition 4.14, Corollary 4.12]{MR19}: $$\monlt(\Kv(S,H))\simeq \O^+(\H^2(\Kv(S,H),\Z)).$$ In particular, by Remark \ref{k=1}, the only admissible group morphism $$\monlt(\Kv(S,H)) \longrightarrow \mon^2(\Kw(S,H))\simeq \{0\}$$ is the $0-$map.\end{ex}
\begin{rmk}
    [The case $k=2$] \label{k=2} If $k=2$, then $\Kw(S,H)$ is a $K3$ surface and, in particular, it is a Kummer surface, by \cite[Theorem 3.2]{Yos99b}. Hence, by \cite{Bor72}, $$\mon^2(\Kw(S,H))\simeq \O^+(\H^2(\Kw(S,H),\Z)),$$ but there is no natural way of comparing the orthogonal groups $\O(\H^2(\Kv(S,H),\Z))$ and $\O(\H^2(\Kw(S,H),\Z))$ using the isometries provided by Theorem \ref{pr20thm1.6}, as explained in the following. Proposition \ref{istar} provides the identity $$i\Star_{w,m}= m\lambda_{\SwH} \circ \lambda_{\SvH}^{-1},$$ but in this case, invertibility of $\lambda_{\SwH}$ fails. In fact, by Theorem \ref{pr20thm1.6}, the isometry $\lambda_{\SwH}\colon w\ort \to \H^2(\Kw(S,H))$ is just injective and cannot be surjective, as $\rk(w\ort)=7$ and $\rk(\H^2(\Kw(S,H)))=22$.
\end{rmk}

Remarks \ref{k=1} and \ref{k=2} suggest that a description of the locally trivial monodromy group of moduli spaces $\Kv(S,H)$ given by $(m,k)-$triples with $k\in\{1,2\}$ may be achieved using different techniques. 

Furthermore, the explicit description of the monodromy group of smooth moduli spaces of sheaves on Abelian surfaces given by Theorem \ref{thmmonkummer} only works for $(1,k)-$triples with $k>2$, i.e. for moduli spaces \textit{of Kummer type}, and this result will play a fundamental role in the completion of the computation of the locally trivial monodromy group in the non primitive case. For this reasons, the cases $k=1$ and $k=2$ will be excluded from the present work.
\section{A groupoid representation}
\label{secgroupoid}This Section is devoted to the development of the technical tools  that will provide an embedding of the group $\Nn(\Kv(S,H))$, described in Section \ref{secmonkummer}, in the locally trivial monodromy group $\monlt(\Kv(S,H))$ in a natural way, allowing us to complete the computation of the latter, regardless of the choice of the $(m,k)-$triple $\SvH$. In Sections \ref{secGmkdef} and \ref{secGmkFM} we will define a groupoid $\Gmk$ of $(m,k)-$triples, whose morphisms come from deformations of the triple itself, or from Fourier-Mukai equivalences. In Section \ref{secrepr} we will construct two different representations of $\Gmk$ with values in groupoids of free $\Z-$modules, designed to encode the monodromy information of the moduli spaces. 

We wish to point out that this Section parallels Section 2 of \cite{OPR23}, to which we refer for further details. The aim of this part of the work is, indeed, to review the main results and constructions, check their compatibility with the Abelian case and replace them with a suitable analogue if any difference occurs.\\ 

Let us start with a quick overview of the main definitions and constructions concerning groupoids that we will use in the following, referring the reader to the lecture notes \cite{Hig71} for a deeper discussion of this theory. 

\begin{defn}\label{defgroupoid}A \textit{groupoid} $\GG$ is a small category whose morphisms are all isomorphisms. \begin{enumerate}
    \item If $\GG$ is a groupoid and $x$ is an object of $\GG$, we define its \textit{isotropy group} as $$\Aut_\GG(x):= \Hom_\GG(x,x).$$ If $F\colon \GG \to \HH$ is a functor between two groupoids, for every $x\in Ob(\GG)$ we will denote its action on the respective isotropy groups as $$F_x\colon \Aut_\GG(x) \to \Aut_\HH(F(x)).$$
    \item If $\GG$ is a groupoid, a \textit{representation of $\GG$} is a functor $F\colon \GG \to \AA$, where $\AA$ is a suitable groupoid of $\Z-$modules.
    \item If $\GG$ and $\HH$ are two groupoids having the same objects, we define the \textit{free product of $\GG$ and $\HH$} as the groupoid $\GG \ast \HH$ whose objects are the objects of $\GG$ (and $\HH$) and whose morphisms are defined in the following way: if $x,y \in \GG\ast \HH$ are two objects, a morphism $f\in \Hom_{\GG\ast \HH}(x,y)$ is a formal combination (with usual cancellation properties) $$f=f_1\ast \cdots \ast f_l,$$ where $f_i$ is a morphism from $x_i \in Ob(\GG\ast \HH)$ to $x_{i+1}\in Ob(\GG\ast \HH)$ either in $\GG$ or $\HH$ for every $i=1,\dots, l$ and such that $x_1=x$ and $x_{l+1}=y$.
\end{enumerate}
    
\end{defn}

\subsection{Deformations of $(m,k)-$triples and their groupoid}\label{secGmkdef}
The content of this section can be seen as a categorical translation of the main ideas and tools that are involved in the following result.
\begin{thm}{(\cite[Theorem 1.7]{PR18})}\label{PR18defolt} Let $m,k \in \N\setminus\{0\}$ and let $\SvHuno$ and $\SvHdue$ be two $(m,k)-$triples. Then $K_{v_1}(S_1,H_1)$ and $K_{v_2}(S_2,H_2)$ are locally trivial deformation equivalent.\end{thm}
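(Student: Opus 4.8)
The plan is to prove Theorem~\ref{PR18defolt} by connecting any two $(m,k)$-triples through a chain of ``elementary'' moves, each of which manifestly preserves the locally trivial deformation class of the associated moduli space $K_v(S,H)$. The statement is a deformation-invariance result, so the strategy is essentially to exhibit enough deformations: first deforming the polarization within a fixed surface, then deforming the Mukai vector, and finally deforming the surface itself in a suitable family of Abelian surfaces carrying a relative polarization and a relative Mukai vector.

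First I would reduce the dependence on $H$. Given $(S,v,H_1)$ and $(S,v,H_2)$ with the same surface and Mukai vector, one shows $K_v(S,H_1)$ and $K_v(S,H_2)$ are locally trivial deformation equivalent: if the two $v$-generic polarizations lie in the closures of $v$-chambers, one can connect them by a path in the ample cone crossing finitely many $v$-walls, and at each wall-crossing the relative moduli space over a small arc provides a locally trivial family (this uses the wall-crossing analysis of \cite{PR18}, in particular that the moduli spaces on either side of a wall, or on the wall, are identified or related by flops that are isomorphisms in codimension one — combined with the fact that the Yoshioka fibration behaves well and Proposition~\ref{proppushforward}). So up to locally trivial deformation equivalence, $K_v(S,H)$ depends only on $(S,v)$, and in fact only on $(S,w)$ since $v=mw$ with $m$ and $k=w^2/2$ fixed.

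Next I would deform the surface together with the Mukai vector. The key observation is that a primitive Mukai vector $w=(r,\xi,a)$ of square $2k$ on an Abelian surface $S$ is determined, up to the action of the monodromy/deformation of the pair $(S,\mathrm{NS}(S))$, by its self-intersection and divisibility type; using the action of the Mukai lattice one can move $w$ to a standard form (e.g. one with $r=1$, realizing $K_w$ as a Kummer-type variety, or some other convenient normal form) over a connected parameter space of Abelian surfaces. Concretely, one builds a smooth connected family $\mathcal{S}\to B$ of Abelian surfaces with a section giving a relative polarization $\mathcal{H}$ and a flat section of the relative Mukai local system giving $v$, chosen $v$-generic fiberwise (genericity is an open condition, so one may need to shrink $B$ or argue along a path), and forms the relative moduli space $\mathcal{M}\to B$, whose restriction to the subspace cutting out the relative Yoshioka fibration over zero gives a relative $\mathcal{K}_v\to B$. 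By \cite[Theorem~4.4]{KLS06} and the deformation theory recalled in Section~\ref{secdefo}, this is a locally trivial family, so all fibers are locally trivial deformation equivalent. Chaining these moves — normalize $H$, then deform $(S,w)$ to a standard representative, for both triples — connects $K_{v_1}(S_1,H_1)$ to $K_{v_2}(S_2,H_2)$.

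The main obstacle I expect is the middle step: producing a \emph{connected} parameter space over which both the surface and a $v$-generic Mukai vector vary nicely, and in particular controlling genericity of the polarization along the whole family. Genericity can fail on the $v$-walls, so one cannot simply take a naive product family; one must either carefully choose paths avoiding bad loci, or invoke the wall-and-chamber structure relative to $B$ and the identifications of Remark~\ref{rmkvgener} to patch the relative moduli space across walls. A secondary technical point is verifying that the relative moduli space is genuinely locally trivial (not merely flat) — this is where the local structure of the moduli space near strictly semistable points, uniform along the family, enters, and where one leans on the singularity-type results (terminal, or canonical in the $(2,1)$ case) and their deformation-invariance. Once these are in place, the theorem follows by composing the elementary deformations.
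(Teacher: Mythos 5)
This theorem is quoted from \cite{PR18} and not reproved in the paper, but the structure of the original proof is recorded in Remark \ref{rmkhomgmknonempty}: one connects any two $(m,k)$-triples using deformation paths, identifications of congruent triples, tensorizations $\L$ by line bundles, \emph{and} the Fourier--Mukai equivalence $\FM_\PP$ with the Poincar\'e kernel. Your proposal omits the last two ingredients, and this is a genuine gap, not a stylistic difference. A deformation of an $(m,k)$-triple in the sense of Definition \ref{defdefotripla} moves only the $\H^2$-component of the Mukai vector: $v_t=(r,\c_1(\LL_t),a)$ has constant rank $r$ and constant degree $a$, and the induced parallel transport in $R\ev f\sstar\Z$ acts as the identity on $\H^0\oplus\H^4$. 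Consequently the ``action of the Mukai lattice'' you invoke to move $w$ to a standard form such as $(1,0,-k)$ is not realized by any family of Abelian surfaces: two primitive Mukai vectors of the same square but different ranks (say $(1,0,-k)$ and $(0,\xi,a)$ with $\xi^2=2k$, or ranks $1$ and $2$) are simply not conjugate under surface monodromy, so your chain of moves cannot connect the corresponding triples. Normalizing the rank and Euler characteristic is precisely what $\FM_\PP$ (which exchanges $\H^0$ and $\H^4$, sending $(r,\xi,a)$ to $(a,-\hat\xi,r)$) and tensorization by line bundles (which shears $\xi$ and $a$ when $r\neq 0$) are needed for; both induce isomorphisms of the moduli spaces by Lemmas \ref{lemtensor} and \ref{lemFM}, hence preserve the locally trivial deformation class.

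Your first step (independence of the polarization) and your concerns about fiberwise $v$-genericity over a connected base are in line with what \cite{PR18} actually does (openness of genericity as in Remark \ref{vgenopen}, and identification of moduli spaces for polarizations in the closure of a common $v$-chamber as in Remark \ref{rmkvgener}), so that part is sound in outline. But without the derived-equivalence moves the argument cannot close: you should add a reduction, via $\L$ and $\FM_\PP$, of an arbitrary primitive $w$ to a fixed normal form (e.g.\ $(1,0,-k)$) before comparing the two triples by deformation of the surface.
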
 Indeed, one of the key points of Theorem \ref{PR18defolt} is the construction of a deformation of a moduli space of sheaves induced by a deformation of the base surface, which we will recall in the following. 

Let $\SvH$ be an $(m,k)-$triple, with $v=(r,\xi,a)$, and let $L\in \Pic(S)$ be a line bundle such that $\c_1(L)=\xi$. If $f\colon \XX \to T$ is a morphism and $\LL \in \Pic(\XX)$, we will denote $\LL_t:=\LL_{|\XX_t}$.

\begin{defn}\label{defdefotripla}
    Let $\SvH$ be an $(m,k)-$triple as above and $T$ a smooth and connected algebraic variety. A \textit{deformation of $\SvH$ along $T$} is a triple $(f\colon \SS \to T, \LL, \HH)$, where \begin{enumerate}
        \item $f\colon \SS \to T$ is a smooth, projective deformation of $S$, with $0\in T$ the point such that $\SS_0\simeq S$;
        \item $\LL$ is a line bundle on $\SS$ such that $\LL_0\simeq L$;
        \item  $\HH$ is a line bundle on $\SS$ such that $\HH_t$ is a $v_t-$generic polarization on $\SS_t$ for every $t\in T$ and such that $\HH_0\simeq H$,
    \end{enumerate} where, for every $t \in T$, we set $v_t:= (r,\c_1(\LL_t),a)$.
\end{defn}
\begin{rmk}\label{vgenopen}For later use, we remark that $v-$genericity is an open property in the Zariski topology, as shown in Proposition 2.14 of \cite{PR18}. In fact, up to replace the base $T$ with the complement of the Zariski closed subset $$Z:= \{t \in T \text{ such that }\HH_t \text{ is not }v_t-\text{generic}\},$$ we can find a deformation as in Definition \ref{defdefotripla} such that property (3) is satisfied. Notice that non-emptiness of $T\setminus Z$ is a consequence of the assumption that $(S,v,H)$ is an $(m,k)-$triple and $\HH_0\simeq H$.
\end{rmk}

\begin{defn}\label{defndefopath}
    Let $\SvHuno$ and $\SvHdue$ be two $(m,k)-$triples. A \textit{deformation path from $\SvHuno$ to $\SvHdue$} is a $6-$tuple $$\alpha:= (f\colon \SS \to T, \LL, \HH, t_1,t_2,\gamma),$$ where \begin{enumerate}
        \item the triple $(f\colon \SS \to T, \LL, \HH)$ is a deformation of both $\SvHuno$ and $\SvHdue$;
        \item for $i=1,2$, the point $t_i\in T$ is such that $(\SS_{t_i},v_{t_i},\HH_{t_i})= (S_i,v_i,H_i)$;
        \item $\gamma$ is a continuous path in $T$ from $t_1$ to $t_2$.
    \end{enumerate}
\end{defn}

We will now explain, with a few remarks, how deformation paths provide a tool to produce locally trivial monodromy operators in a natural way. We start by recalling the following
\begin{defn}\label{defnpttilde}Let $S_1$ and $S_2$ be two Abelian surfaces. An isometry $g\colon \Htilde(\SS_{t_1},\Z) \to \Htilde(\SS_{t_2},\Z)$ is a parallel trasport operator if there exists a smooth family $f\colon \SS \to T$ of Abelian surfaces, $t_1, t_2\in T$ such that $\SS_{t_i}\simeq S_i$ for $i=1,2$ and a continuous path $\gamma$ in $T$ from $t_1$ to $t_2$ such that $g$ is the parallel transport $\PT_f(\gamma)$ along $\gamma$ inside the local system $R\ev f\sstar \Z= \bigoplus_{i=0}^2R^{2i}f\sstar \Z$.
\end{defn}

\begin{rmk}\label{rmkrelmod}
    If $\SvH$ is an $(m,k)-$triple and $(f\colon \SS \to T, \LL, \HH)$ is a deformation of $\SvH$ along $T$, as explained in Section 2.3 of \cite{PR18}, it induces a locally trivial deformation of the moduli space $\Kv(S,H)$ as follows: we consider the relative moduli space of semistable sheaves $\phi\colon \MM \to T$, so that for every $t\in T$ we have $\MM_t=M_{v_t}(\SS_t,\HH_t)$, and we let $\widehat{\SS} \to T$ be the dual family of $\SS \to T$, so that $\widehat{\SS}_t$ is the dual surface of $\SS_t$ for every $t\in T$. By \cite[Remark 2.19]{PR18}, a relative Yoshioka fibration $$a\colon \MM \to \SS \times_T \widehat{\SS}$$ is defined and its restriction on each fiber $\MM_t$ of $\phi$ coincides with the Yoshioka fibration $a_{v_t}\colon M_{v_t}(\SS_t,\HH_t) \to \SS_t \times \widehat{\SS_t}$ defined in Remark \ref{yoshfibrisotr}. If we set $$Z:= \{(0_{\SS_t},\OO_{\SS_t})\in \SS_t \times_T \widehat{\SS}_t |\text{ } t \in T\} \subseteq \SS_t \times \widehat{\SS_t},$$ the restriction of $\phi$ to $\KK:= a^{-1}(Z)$ provides us a morphism $p\colon \KK \to T$, whose fiber over $t \in T$ is $\KK_t= K_{v_t}(\SS_t,\HH_t)$. By Lemma 2.21 of \cite{PR18}, the family $p\colon \KK \to T$ is a locally trivial deformation of $\Kv(S,H)$ along $T$.
\end{rmk}

In light of Remark \ref{rmkrelmod}, if $\alpha=(f\colon \SS \to T, \LL, \HH, t_1,t_2,\gamma)$ is a deformation path between two $(m,k)-$triples $\SvHuno$ and $\SvHdue$ and $p\colon \KK \to T$ is the relative moduli space associated to the deformation $(f\colon \SS \to T, \LL, \HH)$, then $\alpha$ induces two natural locally trivial parallel transport operators: \begin{enumerate}
    \item[(a)] the parallel transport operator \begin{equation}
        \label{palpha}p_\alpha:= \PT_f(\gamma)\colon \Htilde(S_1,\Z)\longrightarrow \Htilde(S_2,\Z)
    \end{equation}  along $\gamma$ in the local system $R\ev f\sstar\Z$; 
    \item[(b)] the locally trivial parallel transport operator  \begin{equation}
        \label{galpha}g_\alpha := \PT_p(\gamma)\colon \H^2(K_{v_1}(S_1,H_1)) \longrightarrow \H^{2}(K_{v_2}(S_2,H_2))
    \end{equation} along $\gamma$ in the local system $R^2p\sstar\Z$.
\end{enumerate}
\begin{rmk}\label{rmklocsystem} In order to clarify the relation between the two locally trivial parallel transport operators $p_\alpha$ and $g_\alpha$, we compare the two local systems involved. \begin{enumerate}
    \item Due to condition (2) in Definition \ref{defdefotripla}, the local system $R\ev f\sstar \Z$ admits a flat section $\v$ such that, for every $t\in T$, we have $\v_t= v_t\in \Htilde(\SS_t,\Z)$. This allows us to consider the sub-local system $$\v\ort \subseteq R \ev f\sstar \Z.$$
    \item If $\SvH$ is an $(m,k)-$triple with $k>2$, then the isometry $\lambda_{\SvH}\colon v\ort \to \H^2(\Kv(S,H),\Z)$ in Theorem \ref{pr20thm1.6} extends to an isometry of local systems $$\lambda \colon \v\ort \longrightarrow R^2p\sstar\Z.$$ Indeed, the relative moduli space of stable sheaves $p^s\colon \KK^s \to T$ induced by $(f\colon \SS \to T, \LL, \HH)$ naturally includes in $p\colon \KK \to T$ via a relative open embedding, inducing an isomorphism $\iota\colon R^2p^s\sstar \Z \to R^2 p \sstar \Z$ of local systems, by \cite[Corollary 3.3(2), Proposition 3.5(3)]{PR20}. By Proposition 4.4(2) and Definition 4.6(2) of \cite{PR20}, a quasi-universal family of $\MM^s \to T$ induces an isomorphism $\lambda^s\colon v\ort \to R^2p^s\sstar \Z$ of local systems such that the composition $$\lambda:= \iota \circ \lambda^s \colon v\ort \longrightarrow R^2 p\sstar \Z$$ coincides, over every $t \in T$, with $\lambda_{(\SS_t,v_t, \HH_t)}$. In other words, the isometry $\lambda_{\SvH}$ behaves well in deformations of $(m,k)-$triples.
    \item By definition, the parallel transport operator $p_\alpha$ is constant along $\v$, i.e. \linebreak[4]$p_\alpha(v_{t_1})=v_{t_2}$ for every $t_1, t_2 \in T$, hence its restriction $p_{\alpha|v_{t_1}\ort}$ defines a parallel transport operator in the local system $\v\ort$, which is isomorphic, due to point (2), to $R^2p\sstar \Z$. Since $g_\alpha$ and $p_{\alpha|\v\ort}$ are parallel transport operators over the same path in isomorphic local systems, we deduce that $p_\alpha$ uniquely determines $g_\alpha$.
\end{enumerate}   
\end{rmk}

From now on, we will consider only $(m,k)-$triples with $k>2$. In light of the last Remark and in order to give a well defined composition law for deformation paths, we introduce the following 

\begin{defn}\label{defequivalence}
    Let $\SvHuno$ and $\SvHdue$ be two $(m,k)-$triples. Two deformation paths $\alpha$ and $\alpha'$ from $\SvHuno$ to $\SvHdue$ are \textit{equivalent} if $p_\alpha = p_\alpha'$. We will denote the equivalence class of $\alpha$ by $\overline{\alpha}$.
\end{defn}

We immediately notice that, by point (3) of Remark \ref{rmklocsystem}, if two deformation paths $\alpha$ and $\alpha'$ are equivalent, then also $g_\alpha=g_{\alpha'}$.\\

Let $\SvHuno$, $\SvHdue$ and $(S_3,v_3,H_3)$ be three $(m,k)-$triples and let $\alpha= (f\colon \SS \to T,\LL,\HH,t_1,t_2,\gamma)$ be a deformation path from $\SvHuno$ to $\SvHdue$ and $\alpha'=(f'\colon \SS' \to T',\LL',\HH',t_1',t_2',\gamma')$ a deformation path from $\SvHdue$ to $(S_3,v_3,H_3)$.

\begin{defn}
    The \textit{concatenation of $\alpha$ with $\alpha'$} is the $6-$tuple $$\alpha\star\alpha':= (f''\colon \SS'' \to T'',\LL'',\HH'',t_1'',t_2'',\gamma''),$$ where \begin{itemize}
        \item $T''$ is obtained by gluing $T$ and $T'$ along $t_2$ and $t_1'$;
        \item $\SS''$ is obtained by gluing $\SS$ and $\SS'$ along $\SS_{t_2}$ and $\SS'_{t_1'}$;
        \item $f''$ is obtained by gluing $f$ and $f'$ along $\SS_{t_2}$ and $\SS'_{t_1'}$;
        \item $\LL''$ is obtained by gluing $\LL$ and $\LL'$ along $\SS_{t_2}$ and $\SS'_{t_1'}$;
        \item $\HH''$ is obtained by gluing $\HH$ and $\HH'$ along $\SS_{t_2}$ and $\SS'_{t_1'}$;
        \item $t_1''$ is the image of $t_1$ in $T''$;
        \item $t_2''$ is the image of $t_3$ in $T''$;
        \item $\gamma''$ is the concatenation of the image of the path $\gamma$ in $T''$ with the image of the path $\gamma'$ in $T''$.
    \end{itemize}
\end{defn}

\begin{rmk}\label{rmkconcat}We remark that, in general, the concatenation $\alpha\star\alpha'$ does not define a deformation path in the sense of Definition \ref{defndefopath}, since the base $T''$ obtained by gluing might not be smooth. Nonetheless, if we set \begin{center}
    $p_{\alpha \star\alpha'}:= p_{\alpha'}\circ p_\alpha$ and $g_{\alpha \star\alpha'}:= g_{\alpha'}\circ g_\alpha$,
\end{center} we get two well defined locally trivial parallel transport operators, by Remark \ref{rmkcomposition}. We can then extend the equivalence relation in Definition \ref{defequivalence} to concatenations of $(m,k)-$triples and notice that, if $\alpha$ is equivalent to $\beta$ and $\alpha'$ is equivalent to $\beta'$, then $\alpha\star\alpha'$ is equivalent to $\beta\star\beta'$. As before, we will denote the equivalence class of $\alpha\star\alpha'$ with $\overline{\alpha\star\alpha'}$.
\end{rmk}
Thanks to the previous definitions and remarks, we are now able to introduce the first of the two groupoids needed to define the groupoid $\Gmkdef$ of deformations of $(m,k)-$triples.
\begin{defn}Let $m,k\in \N\setminus\{0\}$, with $k>2$. The groupoid $\widetilde{\GG}^{m,k}_{def}$ is defined as follows: \begin{itemize}
    \item the objects of $\widetilde{\GG}^{m,k}_{def}$ are the $(m,k)-$triples;
    \item if $\SvHuno$ and $\SvHdue$ are two $(m,k)-$triples, an \textit{elementary morphism} between them is an equivalence class of deformation paths from $\SvHuno$ to $\SvHdue$;
    \item if $\SvHuno$ and $\SvHdue$ are two $(m,k)-$triples, a morphism in $\widetilde{\GG}^{m,k}_{def}$ from $\SvHuno$ to $\SvHdue$ is a formal concatenation $$\overline{\alpha_1} \ast \cdots \ast \overline{\alpha_l} $$ of elementary morphisms and their formal inverses, subject to usual cancellation rules, prescribed by \begin{equation}\label{concat}
        \overline{\alpha_i} \ast \overline{\alpha_{i+1}}:= \overline{\alpha_i\star \alpha_{i+1}},
    \end{equation} for every $i=1,\dots,l-1$.
\end{itemize}
\end{defn}

\begin{rmk}
    By Remark \ref{rmkconcat}, we get the good definition of the composition law (\ref{concat}) and an explicit description of \begin{itemize}
        \item the \textit{identity morphism} $$\id_{\SvH}:= \overline{(S \to \{p\}, L,H, p,p, k_p)},$$ where $k_p$ is the constant path in $p$, for any object $\SvH$ in $\widetilde{\GG}^{m,k}_{def}$;
        \item the \textit{formal inverse} of any elementary morphism $\overline{\alpha}= \overline{\defopath}$, defined by $\overline{\alpha}^{-1}:= \overline{\alpha^{-1}}$, where $$\alpha^{-1}:= (f\colon \SS \to T, \LL, \HH, t_2,t_1,  \gamma^{-1}).$$
    \end{itemize}  
\end{rmk}

We now complete the setting with the groupoid $\Pmk$ of congruent $(m,k)-$triples, according to Definition \ref{defncongruent}.

\begin{defn}
    Let $m,k\in \N\setminus\{0\}$, with $k>2$. The groupoid $\Pmk$ is defined as follows: \begin{itemize}
        \item the objects of $\Pmk$ are the $(m,k)-$triples;
        \item if $\SvHuno$ and $\SvHdue$ are two $(m,k)-$triples, we set $$\Hom_{\Pmk}(\SvHuno,\SvHdue):= \{\chi_{H_1,H_2}\}$$ as in (\ref{idcongruent}), if $\SvHuno$ and $\SvHdue$ are congruent, and otherwise $$\Hom_{\Pmk}(\SvHuno,\SvHdue):=\emptyset.$$
    \end{itemize}
\end{defn}
We have now collected all the material needed to define the groupoid $\Gmkdef$.

\begin{defn}
     Let $m,k\in \N\setminus\{0\}$, with $k>2$. We define the groupoid $$\Gmkdef:= \widetilde{\GG}^{m,k}_{def} \ast \Pmk$$ as the free product of $\widetilde{\GG}^{m,k}_{def}$ and $\Pmk$, according to Definition \ref{defgroupoid} (3).
\end{defn}

\subsection{Fourier-Mukai equivalences and their groupoid} \label{secGmkFM} In this Section we will introduce the groupoid $\GmkFM$, whose morphisms are defined by some Fourier-Mukai equivalences on $\Db(S)$ inducing isomorphisms on the moduli spaces $\Kv(S,H)$. 
\subsubsection{Tensorization with line bundles} Let $S$ be an Abelian surface, let $L\in \Pic(S)$ be a line bundle and let us consider the derived equivalence \begin{align} \label{L}
    \L\colon \Db(S)  \longrightarrow \Db(S), \phantom{++}
    F  \longmapsto F \otimes L.
\end{align} If $F$ is a sheaf on $S$ with Mukai vector $v(F)=v$, then \begin{equation}
    \label{vtensor}v(\L(F))=v(F)\cdot \ch(L)=:v_L
\end{equation} and the equivalence (\ref{L}) induces a morphism $$\L\colon \Kv(S,H) \longrightarrow K_{v_L}(S,H).$$ The next result provides a sufficient criterion for the morphism $\L$ to be an isomorphism.
\begin{lem}{\cite[Lemma 2.24]{PR18}}\label{lemtensor} Let $S$ be an Abelian surface, $v=(r,\xi,a)$ a Mukai vector and $H$ an ample line bundle on $S$.
\begin{enumerate}
    \item For any $d\in \Z$, the morphism $\d\HFM\colon \Kv(S,H) \to K_{v_{dH}}(S,H)$ is an isomorphism.
    \item If $r>0$ and $H$ is $v-$generic, the morphism $\L\colon \Kv(S,H) \to K_{v_L}(S,H)$ is an isomorphism.
\end{enumerate}
\end{lem}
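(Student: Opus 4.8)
\emph{Strategy.} The plan is to realise $\L$ and its would-be inverse as honest morphisms of moduli spaces and check they are mutually inverse. The functor $(-)\otimes L$ is an auto-equivalence of $\mathrm{Coh}(S)$ with inverse $(-)\otimes L^{-1}$, and it multiplies Mukai vectors by $\ch(L)$; so once we know that $(-)\otimes L$ sends $H$-semistable sheaves of Mukai vector $v$ to $H$-semistable sheaves of Mukai vector $v_L$ and that $(-)\otimes L^{-1}$ sends $H$-semistable sheaves of Mukai vector $v_L$ back to $H$-semistable ones of Mukai vector $v$, tensoring flat families by the pullback of $L^{\pm1}$ produces morphisms of moduli spaces $M_v(S,H)\to M_{v_L}(S,H)$ and $M_{v_L}(S,H)\to M_v(S,H)$. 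These are induced functorially from the two auto-equivalences, whose composites in either order are the identity functor, so the two morphisms are mutually inverse and hence isomorphisms; restricting to the fibres $\Kv(S,H)=a_v^{-1}(0)$ and $K_{v_L}(S,H)=a_{v_L}^{-1}(0)$ of the Yoshioka fibrations, which $(-)\otimes L^{\pm1}$ respects (this is already used when $\L$ is declared a morphism $\Kv(S,H)\to K_{v_L}(S,H)$), we obtain mutually inverse isomorphisms $\Kv(S,H)\leftrightarrow K_{v_L}(S,H)$. Thus in both parts it is enough to prove the two semistability statements.

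\emph{Part (1).} For $L=\mathcal{O}_S(dH)$ the argument is formal: for any sheaf $F$ and any $n\in\Z$, $\chi(F\otimes\mathcal{O}_S(dH)\otimes\mathcal{O}_S(nH))=\chi(F\otimes\mathcal{O}_S((n+d)H))$, so the reduced Hilbert polynomial of $F\otimes\mathcal{O}_S(dH)$ with respect to $H$ is the shift $n\mapsto p_F(n+d)$ of that of $F$. Since the subsheaf correspondence $F\subseteq E\mapsto F\otimes\mathcal{O}_S(dH)\subseteq E\otimes\mathcal{O}_S(dH)$ is a bijection and a constant shift of the variable preserves the comparison of reduced Hilbert polynomials for $n\gg0$, the sheaf $E$ is Gieseker $H$-(semi)stable iff $E\otimes\mathcal{O}_S(dH)$ is, and the same holds for $(-)\otimes\mathcal{O}_S(-dH)$. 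No assumption on $r$ or on $v$-genericity is used, and the strategy above concludes; equivalently, applying part (1) with $d$ and with $-d$ exhibits the two morphisms as mutual inverses directly.

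\emph{Part (2).} Assume $r>0$ and $H$ $v$-generic. First, $(-)\otimes L$ preserves $\mu_H$-semistability of every positive-rank sheaf, because $\c_1(F\otimes L)=\c_1(F)+\rk(F)\c_1(L)$ gives $\mu_H(F\otimes L)=\mu_H(F)+\c_1(L)\cdot H$, the same translation for a sheaf and for all its subsheaves. Let $E$ be Gieseker $H$-semistable with $v(E)=v$; then $E$, hence $E\otimes L$, is $\mu_H$-semistable and torsion-free. If $E\otimes L$ were not Gieseker $H$-semistable, pick a non-zero proper subsheaf $G\subsetneq E\otimes L$ with $p_G>p_{E\otimes L}$ for $n\gg0$ and write $G=F\otimes L$ with $F=G\otimes L^{-1}\subseteq E$, $\rk(F)>0$. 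On the abelian surface $S$, $p_F(n)=\tfrac{H^2}{2}n^2+\mu_H(F)n+\chi(F)/\rk(F)$; comparing coefficients, $p_G>p_{E\otimes L}$ together with $\mu_H$-semistability of $E\otimes L$ forces $\mu_H(G)=\mu_H(E\otimes L)$, i.e.\ $\mu_H(F)=\mu_H(E)$, and then $\chi(G)/\rk(G)>\chi(E\otimes L)/\rk(E\otimes L)$. By $v$-genericity (Definition~\ref{defvgen}, case $r>0$), $\mu_H(F)=\mu_H(E)$ forces $\c_1(F)/\rk(F)=\c_1(E)/\rk(E)$. Since $\td(S)$ is trivial and $\ch((-)\otimes L)=\ch(-)\ch(L)$, one computes
\[
\frac{\chi(F\otimes L)}{\rk(F)}-\frac{\chi(E\otimes L)}{\rk(E)}=\frac{\chi(F)}{\rk(F)}-\frac{\chi(E)}{\rk(E)}+\bigl(\tfrac{\c_1(F)}{\rk(F)}-\tfrac{\c_1(E)}{\rk(E)}\bigr)\cdot\c_1(L)=\frac{\chi(F)}{\rk(F)}-\frac{\chi(E)}{\rk(E)},
\]
whose right-hand side is $\le0$ since $E$ is Gieseker $H$-semistable and $\mu_H(F)=\mu_H(E)$, contradicting $\chi(G)/\rk(G)>\chi(E\otimes L)/\rk(E\otimes L)$. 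Hence $(-)\otimes L$ preserves Gieseker $H$-semistability of sheaves of Mukai vector $v$; applying the defining condition of $v$-genericity to $E\otimes L^{-1}$ and $F\otimes L^{-1}$ shows $H$ is $v_L$-generic (here $v_L=mw_L$ with $w_L=w\cdot\ch(L)$ primitive of square $w^2$, and $\rk(v_L)=r>0$), so the same argument with $L^{-1}$ in place of $L$ shows $(-)\otimes L^{-1}$ preserves Gieseker $H$-semistability of sheaves of Mukai vector $v_L$. The strategy above then concludes.

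\emph{Main obstacle.} The only real work is in part (2): one must prevent $(-)\otimes L$ from turning a subsheaf that is harmless for $\mu_H$-stability into a Gieseker-destabilizing one, which is precisely where $v$-genericity enters, as it rules out subsheaves $F\subseteq E$ with $\mu_H(F)=\mu_H(E)$ but $\c_1(F)/\rk(F)\ne\c_1(E)/\rk(E)$ --- the only ones whose invariant $\chi/\rk$ can change under $\otimes L$; without this hypothesis $M_v(S,H)$ and $M_{v_L}(S,H)$ genuinely differ. The remaining points are bookkeeping: verifying that $H$ stays $v_L$-generic so the reverse implication runs, and phrasing the passage from functors of sheaves to morphisms of the (coarse, non-fine for non-primitive $v$) moduli spaces, which amounts to the functoriality of the induced morphism in the auto-equivalence together with the fact that $(-)\otimes L^{\pm1}$ respects the Yoshioka fibrations.
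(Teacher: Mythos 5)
Your proof is correct, and since the paper gives no argument of its own for this lemma (it is quoted directly from [PR18, Lemma 2.24]), your write-up supplies the standard proof one would expect there: shift-invariance of reduced Hilbert polynomials for part (1), and for part (2) the observation that $v$-genericity forces $\c_1(F)/\rk(F)=\c_1(E)/\rk(E)$ for slope-equal subsheaves, which is exactly what makes $\chi/\rk$ comparisons invariant under $\otimes L$. The only point you take on faith --- that $(-)\otimes L^{\pm1}$ is compatible with the Yoshioka fibrations so that the isomorphism of $M_v$ with $M_{v_L}$ restricts to the fibres over the origin --- is legitimately absorbed into the statement, which already posits $\L$ as a morphism $\Kv(S,H)\to K_{v_L}(S,H)$.
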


\subsubsection{The Poincaré line bundle as kernel}\label{secpoincaré} Let $S$ be an Abelian surface, let $\PP \in \Db(S\times \hat{S})$ be the Poincaré line bundle on $S$. The Fourier-Mukai transform \begin{align*}
    \FM_{\PP}\colon \Db(S)  \longrightarrow \Db(\hat{S}), \phantom{++}
    F  \longmapsto R\pi_{\hat{S}\ast}(\pi_{S}\Star(F)\overset{L}{\otimes}\PP),
\end{align*}where $\pi_S\colon S \times \hat{S} \to S$ and $\pi_{\hat{S}}\colon S \times \hat{S} \to \hat{S}$ are the two natural projections, is known to be an equivalence (\cite{Mu81}, see also \cite[Proposition 9.19]{Huy03b}). For any $L \in \Pic(S)$, we set $\hat{L}:= \det(\FM_\PP(L))^{-1}\in \Pic(\hat{S})$ and, if $\c_1(L)=\xi$, we set $\hat{\xi}:= \c_1(\hat{L})$. If $(r,\xi,a)$ is a Mukai vector on $S$, then the cohomological action of $\FM_\PP$ (see \cite[Lemma 9.23]{Huy03b}) provides the equality
\begin{equation}\label{vtilde}
    v(\FM_\PP(r,\xi,a))= (a,-\hat{\xi},r) =: \tilde{v},
\end{equation} where the latter is a Mukai vector on $\hat{S}$. 

We now consider the derived duality operation \begin{align*}
    D_S\colon \Db(S)  \longrightarrow \Db(S), \phantom{++}
    F  \longmapsto F\dual,
\end{align*} which is a contravariant autoequivalence, as $S$ is smooth. 
Its cohomological action, restricted to $\Htilde(S,\Z)$, provides an involution and isometry satisfying \begin{equation}\label{vduale}
    v(D_S(r,\xi,a))=(r,-\xi,a),
\end{equation} for any Mukai vector $v=\rxia$ on $S$. Combining the two previous derived equivalences we can define 
\begin{align*}
    \FM_\PP\dual := D_{\hat{S}}\circ \FM_\PP\colon \Db(S) \longrightarrow \Db(\hat{S}) , \phantom{++}
    F  \longmapsto (R\pi_{\hat{S}\ast}(\pi_{S}\Star(F)\overset{L}{\otimes}\PP))\dual
\end{align*} and get, by (\ref{vtilde}) and (\ref{vduale}), for any Mukai vector $v=\rxia$, \begin{equation}\label{vcap}
    v(\FM_\PP\dual(r,\xi,a))= (a,\hat{\xi},r) =: \hat{v}.
\end{equation}
Again, we turn our attention to the morphisms induced on the corresponding moduli spaces by the previous derived functors. For this purpose, we recall that, if $H$ is an ample line bundle on $S$, then $\hat{H}$ is an ample line bundle on $\hat{S}$ (see \cite[Proposition 3.11]{Mu81}).
\begin{lem}{(\cite{Yos01b},\cite{PR18})}\label{lemFM}
    Let $S$ an Abelian surface, $H$ an ample line bundle on $S$ with $\c_1(H)=:h$ and $n,a \in \Z$.
    \begin{enumerate}
        \item Suppose that $\NS(S)=\Z h$ and $v=(r,nh,a)$, with $r>0$. Then there exists an integer $n_0\gg 0$ such that for every $n>n_0$ the Fourier-Mukai equivalence $\FM_\PP$ induces an isomorphism $$\FM_{\PP,v}\colon \Kv(S,H) \longrightarrow K_{\tilde{v}}(\hat{S},\hat{H}).$$
        \item Set $v=(0,\xi,a)$ and suppose that $H$ is $v-$generic and $\hat{H}$ is $\tilde{v}-$generic. Then there exists an integer $a_0\gg 0 $ such that for every $a>a_0$ the Fourier-Mukai equivalence $\FM_{\PP,v}$ induces an isomorphism $$\FM_\PP\colon \Kv(S,H)\longrightarrow K_{\tilde{v}}(\hat{S},\hat{H}).$$
        \item In any of the two cases above, the Fourier-Mukai equivalence $\FM_{\PP,v}\dual$ induces an isomorphism $$\FM_{\PP,v}\dual \colon \Kv(S,H)\longrightarrow K_{\hat{v}}(\hat{S},\hat{H}).$$
    \end{enumerate}
\end{lem}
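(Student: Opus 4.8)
The plan is to treat Lemma \ref{lemFM} as a careful repackaging of results of Yoshioka \cite{Yos01b} and Perego--Rapagnetta \cite{PR18}, isolating the three mechanisms behind it: a uniform weak-index-theorem property, preservation of the appropriate notion of semistability, and compatibility with the Yoshioka fibrations. I would first recall the general principle that a Fourier--Mukai equivalence $\FM_{\PP}\colon\Db(S)\to\Db(\hat{S})$ induces a morphism of moduli spaces as soon as it carries the family of $H$-semistable sheaves with Mukai vector $v$, up to a fixed shift, into genuine semistable sheaves. The substantial point is therefore to show that for $n\gg 0$ in case (1) — resp.\ $a\gg 0$ in case (2) — there is a single integer $i$ (in fact $i=0$, with the normalisations of the excerpt) such that every $H$-semistable sheaf $F$ with $v(F)=v$ satisfies $\mathrm{WIT}_i$ with respect to $\FM_{\PP}$, so that $\widehat{F}:=\FM_{\PP}(F)$ is an honest sheaf; the cohomological action recorded in (\ref{vtilde}) then forces $v(\widehat{F})=\tilde{v}$. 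Concretely, cohomology and base change identify $\mathrm{WIT}_0$ for $F$ with the vanishing of $\H^1(S,F\otimes L)$ and $\H^2(S,F\otimes L)$ for every $L\in\Pic^0(S)$; by Serre duality (with $K_S=\Os$) together with a slope vanishing theorem, these hold for the whole bounded family of $H$-semistable sheaves with Mukai vector $v$ once $n$ — resp.\ the Euler characteristic $a$, for the torsion sheaves supported on curves in $|\xi|$ in case (2) — exceeds an explicit threshold. This boundedness-plus-vanishing input is exactly Yoshioka's argument.

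The second step is to verify that $\widehat{F}$ is $\hat{H}$-semistable. Since $\FM_{\PP}$ is an exact equivalence onto the shifted heart, every destabilising subsheaf of $\widehat{F}$ is the image of a sub- or quotient object of $F$; in case (1), where $\NS(S)=\Z h$ (so that $\hat{S}$ again has Picard rank one and every ample class is automatically generic), this is Yoshioka's stability-preservation theorem in the Picard rank one situation, while in case (2) this is precisely where the hypotheses that $H$ is $v$-generic and $\hat{H}$ is $\tilde{v}$-generic are used, to translate the inequalities between reduced Hilbert polynomials faithfully back and forth. Functoriality of $F\mapsto\widehat{F}$ together with the quasi-inverse equivalence then show that this assignment is bijective on $S$-equivalence classes, hence yields an isomorphism $M_v(S,H)\simeq M_{\tilde{v}}(\hat{S},\hat{H})$. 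To descend to the fibres of the Albanese maps, I would invoke the compatibility of $\FM_{\PP}$ with the Yoshioka fibrations $a_v$ and $a_{\tilde{v}}$ of Remark \ref{yoshfibrisotr}, as worked out in \cite[Section~4.2]{Yos01a} (using also that $\hat{H}$ is ample, by \cite{Mu81}): up to the canonical identifications the square built from $\FM_{\PP,v}$ and the two fibrations commutes, so $\FM_{\PP,v}$ restricts to an isomorphism over the origin, that is, to an isomorphism $\Kv(S,H)\simeq K_{\tilde{v}}(\hat{S},\hat{H})$.

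Part (3) is then formal: the contravariant derived autoequivalence $D_{\hat{S}}$, which up to shift exchanges the relevant hearts, preserves $\hat{H}$-semistability — duality reverses inclusions and negates the relevant degrees — and acts on Mukai vectors by negating the $\NS$-component, as recorded in (\ref{vduale}). Composing the isomorphism of parts (1)--(2) with the one induced by $D_{\hat{S}}$, and using $\FM_{\PP}\dual=D_{\hat{S}}\circ\FM_{\PP}$ together with (\ref{vcap}), gives the desired isomorphism $\FM_{\PP,v}\dual\colon\Kv(S,H)\to K_{\hat{v}}(\hat{S},\hat{H})$, again compatibly with the Albanese fibres. I expect the whole difficulty to be concentrated in the first step: the uniform $\mathrm{WIT}_i$ property and the attendant preservation of the correct stability notion are the real input, and rather than reprove them I would cite the precise statements of \cite{Yos01b} and \cite{PR18}, with the only residual work being to match their hypotheses with those of the lemma and to track the compatibility with the Yoshioka fibrations, so that one obtains isomorphisms of the $K$'s and not merely of the ambient moduli spaces $M$.
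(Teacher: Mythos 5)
Your proposal is correct and follows essentially the same route as the paper, whose proof consists of citing Propositions 2.29, 2.33 and Lemma 2.28 of \cite{PR18} (and Theorem 3.18 of \cite{Yos01b}) exactly as in \cite[Lemma 2.15]{OPR23}. What you add is an accurate unpacking of what those references actually establish (uniform $\mathrm{WIT}_0$, preservation of semistability under the genericity hypotheses, compatibility with the Yoshioka fibrations, and the duality step for part (3)), before deferring to the same citations.
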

\begin{proof}
    Exactly as in \cite[Lemma 2.15]{OPR23}, the claim is a straightforward application of Proposition 2.29, Proposition 2.33 and Lemma 2.28 of \cite{PR18}. See \cite{Yos01b}, Theorem 3.18 for a more general statement.
\end{proof}
\subsubsection{A relative Poincaré line bundle as kernel in the elliptic case}\label{secelliptic} In the following, we will let $p\colon S \to E$ be an elliptic Abelian surface, with $E$ an elliptic curve, and we will let $s\colon E \to S$ be a $0-$section. We will denote by $f:=\c_1(p\Star(\OO_E(1)))\in \NS(S)$ the class of a fiber of $p$ and by $e:=\c_1(s(E))\in \NS(S)$ the class of the section $s$. Then, one can easily check that $e^2=0=f^2$ and $e\cdot f=1$, so that, if we assume that $\NS(S)=\brakett{e,f}$, then $\NS(S)$ is isometric to the unimodular rank $2$ hyperbolic lattice $U$.\\

Let $H\in \Pic(S)$ such that $h:= \c_1(H)=e + tf$ and we assume $t\gg 0$, so that, by Nakai-Moishezon criterion, the class $h$ is ample. Moreover, it can be checked (see Lemma 2.12 of \cite{PR18}) that the polarization $H$ is generic with respect to the Mukai vector $(0,f,0)$. As explained in \cite[Section 4]{Bri98}, the moduli space $M_{(0,f,0)}(S,H)$ is fine and parametrizes stable sheaves $F$ of pure dimension $1$, supported on the fibers of $p$, where their restriction has rank $1$ and degree $\c_1(F)\cdot f=0$. Furthermore, it is equipped by a fibration $M_{(0,f,0)}(S,H) \to E$ induced by $p$, whose fibers are isomorphic to the fibers of $p$ by \cite[Theorem 7]{At57} and, indeed, \begin{equation}\label{isoS}
    M_{(0,f,0)}(S,H) \simeq S
\end{equation} as elliptic surfaces (see \cite[Section 4.2]{Bri98}). By Theorem 1.2 of \cite{Bri98}, there exist tautological sheaves $\PP$ on $M_{(0,f,0)}(S,H) \times S$ - regarded as coherent sheaves on $S\times S$ via the isomorphism (\ref{isoS}) - such that the Fourier-Mukai transform $\FM_\PP \colon \Db(S) \longrightarrow \Db(S)$ with kernel $\PP$ is an equivalence. We choose one of such tautological sheaves - often called \textit{relative Poincaré line bundles} - and we denote it by $\EE$, to avoid confusion with the actual Poincaré line bundle $\PP$ used in Section \ref{secpoincaré}. We consider the Fourier-Mukai equivalence \begin{align*}
    \FM_\EE \colon \Db(S)  \longrightarrow \Db(S), \phantom{++}
    F \longmapsto (R\pi_{2\ast}(\pi_{1}\Star(F)\overset{L}{\otimes}\EE))[1],
\end{align*} where $\pi\colon S\times S \to S$ is the projection on the $i-$th factor for $i=1,2$, and where the shift $[1]$ is motivated by Lemma 3.9 and Definition 2.10 of \cite{Yos01a}.

\begin{lem}{(\cite[Theorem 3.15, Proposition 4.9]{Yos01a})}\label{lemyosh} Let $S$ be an elliptic Abelian surface as above and let $H \in \Pic(S)$ such that $h=e+tf$, with $t\gg 0$. Then $\FM_\EE$ induces an isomorphism \begin{equation}
    \label{vell}\FM_{\EE} \colon K_{(m,0,-mk)}(S,H) \longrightarrow K_{(0,m(e+kf),m)}(S,H)
\end{equation} for every $m,k>0$.
\end{lem}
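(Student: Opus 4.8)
The plan is to derive the isomorphism in \eqref{vell} by combining the cohomological action of the Fourier–Mukai functor $\FM_\EE$ with Yoshioka's general results on how such equivalences act on moduli spaces. First I would compute the effect of $\FM_\EE$ on Mukai vectors. Working in $\Htilde(S,\Z)=\H\ev(S,\Z)$ with the sublattice $\NS(S)=\brakett{e,f}\cong U$, I would determine $v(\FM_\EE(G))$ for the generating classes using the known kernel $\EE$ (a relative Poincaré sheaf, so that on fibres it restricts to the classical Poincaré bundle) together with the shift $[1]$, which accounts for a sign on the Mukai vector as in Lemma~3.9 and Definition~2.10 of \cite{Yos01a}. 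The target claim is that $\FM_\EE$ sends the class $(m,0,-mk)$ to $(0,m(e+kf),m)$; I would verify this by linearity from the two basic computations $v(\FM_\EE(1,0,0))$ and $v(\FM_\EE(0,0,1))$ (and, if needed, $v(\FM_\EE(0,e,0))$, $v(\FM_\EE(0,f,0))$), checking that the Mukai pairing is preserved, so that in particular $(m,0,-mk)^2=2m^2k=(0,m(e+kf),m)^2$ as a sanity check.

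Next I would promote this cohomological statement to a statement about moduli spaces. By Yoshioka's theorem (\cite[Theorem 3.15]{Yos01a}), once one knows that $H=e+tf$ with $t\gg 0$ is generic for the relevant Mukai vectors and that the corresponding moduli space on the ``other side'' is also cut out by a generic polarization, the equivalence $\FM_\EE$ induces an isomorphism $M_{(m,0,-mk)}(S,H)\to M_{(0,m(e+kf),m)}(S,H)$ of the full Gieseker moduli spaces, because $\FM_\EE$ takes $H$-semistable sheaves with the first Mukai vector to $H$-semistable sheaves with the second (here the $t\gg 0$ hypothesis and Lemma~2.12 of \cite{PR18} guarantee genericity; one also uses that the Mukai vector $(0,m(e+kf),m)$ has positive rank-zero part so that semistability is preserved under the transform, by \cite[Proposition 4.9]{Yos01a}). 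The passage from $M_v$ to $K_v$ is then obtained by restricting along the Yoshioka fibrations: I would check that $\FM_\EE$ is compatible with the Albanese-type maps $a_v$ appearing in Remark~\ref{yoshfibrisotr}, i.e. that the square relating $a_{(m,0,-mk)}$ and $a_{(0,m(e+kf),m)}$ via $\FM_\EE$ and the induced map on $S\times\hat S$ commutes up to a fixed translation, so that the fibre over the origin maps to the fibre over the origin (possibly after composing with a translation automorphism, which is harmless since those are already built into the identifications of Corollary~\ref{corembedding}). This yields the desired isomorphism on the $K_v$'s.

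The main obstacle I anticipate is twofold: first, getting the signs and the shift $[1]$ exactly right in the Mukai-vector computation, since a careless treatment produces $(0,m(e-kf),-m)$ or similar and breaks the statement — this is where I would lean most heavily on the precise conventions of \cite[Section 2, Section 3]{Yos01a}. Second, and more substantively, verifying the genericity and stability-preservation hypotheses: one must ensure that for $t\gg 0$ the polarization $h=e+tf$ is simultaneously generic for $(m,0,-mk)$ and for $(0,m(e+kf),m)$, and that $\FM_\EE$ (with its shift) takes semistable sheaves to semistable sheaves in this specific range — this is precisely the content of \cite[Theorem 3.15, Proposition 4.9]{Yos01a}, so the proof reduces to quoting those results once the numerics are in place. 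I would therefore structure the write-up as: (i) the Mukai-vector computation; (ii) invoking Yoshioka for the isomorphism of Gieseker moduli spaces; (iii) the fibration-compatibility check to descend to $K_v$.
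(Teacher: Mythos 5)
Your proposal is correct and follows essentially the same route as the paper, which gives no independent proof of this lemma but simply quotes \cite[Theorem 3.15, Proposition 4.9]{Yos01a} (the former handling the isomorphism of Gieseker moduli spaces under the Fourier--Mukai transform, the latter the compatibility with the Yoshioka fibrations needed to descend to the fibres $K_v$). Your three-step outline — Mukai-vector computation with the shift $[1]$, invocation of Yoshioka for stability preservation under the genericity of $h=e+tf$ with $t\gg 0$, and the fibration-compatibility check — is exactly the content packed into that citation.
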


We can now introduce the groupoid $\GmkFM$.

\begin{defn}\label{defgmkFM}Let $m,k\in \N\setminus\{0\}$, with $k>2$. We define the groupoid $\GmkFM$ as follows: \begin{itemize}
    \item the objects of $\GmkFM$ are the $(m,k)-$triples;
    \item if $\SvHuno$ and $\SvHdue$ are two $(m,k)-$triples, an \textit{elementary morphism} between them is one of the following: \begin{itemize}
        \item the equivalence $\L$ if $\SvHuno=(S,v,H)$ and $\SvHdue=(S,v_L,H)$ are as in Lemma \ref{lemtensor};
        \item the Fourier-Mukai equivalence $\FM_\PP$ if $\SvHuno=\SvH$ and \linebreak[4]$\SvHdue=(\hat{S},\tilde{v},\hat{H})$ verify the hypotheses of Lemma \ref{lemFM} (1) or (2);
        \item the Fourier-Mukai equivalence $\FM_\PP\dual$ if $\SvHuno=\SvH$ and \linebreak[4]$\SvHdue=(\hat{S},\hat{v},\hat{H})$ verify the hypotheses of Lemma \ref{lemFM} (3)
        ;
        \item the Fourier-Mukai equivalence $\FM_\EE$ if $\SvHuno=(S, (m,0,-mk),H)$ and $\SvHdue=(S,(0, m(e+kf),m),H)$ verify the hypotheses of \linebreak[4]Lemma \ref{lemyosh};
    \end{itemize}
    \item if $\SvHuno$ and $\SvHdue$ are two $(m,k)-$triples, a morphism in $\GmkFM$ from $\SvHuno$ to $\SvHdue$ is a formal concatenation of elementary morphisms and their formal inverses, subject to usual cancellation rules.
\end{itemize}
\end{defn}

\subsection{The groupoid $\Gmk$ and its representations}\label{secrepr}
Thanks to the notions introduced in Sections \ref{secGmkdef} and \ref{secGmkFM}, we are finally in the position to define the groupoid $\Gmk$ of $(m,k)-$triples.

\begin{defn}
    Let $m,k\in \N\setminus\{0\}$, with $k>2$. We define the groupoid $$\Gmk:= \Gmkdef\ast \GmkFM$$ as the free product of the groupoids $\Gmkdef$ and $\GmkFM$. 
\end{defn}

\begin{rmk}\label{rmkhomgmknonempty}
    In the proof of \cite[Theorem 1.7]{PR18}, it is shown that, if $\SvHuno$ and $\SvHdue$ are two $(m,k)-$triples, then one can always construct a path from $\SvHuno$ to $\SvHdue$ only using the following elementary morphisms in $\Gmk$: \begin{itemize}
        \item equivalence classes of deformation paths of $(m,k)-$triples, inducing locally trivial deformations of moduli spaces;
        \item the identity morphism $\chi_{H_1,H_2}$ of $\Pmk$, inducing identifications of moduli spaces;
        \item derived equivalences of the form $\L$, where $L$ is a suitable line bundle;
        \item the Fourier-Mukai equivalence $FM_\PP$,
    \end{itemize}where the last two morphisms induce isomorphisms of moduli spaces. Consequently, for any pair of $(m,k)-$triples as above, we get $$\Hom_{\Gmk}(\SvHuno,\SvHdue)\neq \emptyset.$$ 
\end{rmk}
\subsubsection{The $\HHtilde-$representation $\Phitildemk$ of $\Gmk$} We start by recalling that, if $S$ is an Abelian surface, then its Mukai lattice $\Htilde(S,\Z)$ is isometric to $U^{\oplus 4}$, where $U$ is the unimodular rank $2$ hyperbolic lattice. Motivated by this, in the following we will let $\Lambdatilde$ be an even unimodular lattice  of signature $(4,4)$, which is isometric, by Milnor's Theorem (\cite{Mil58}), to the lattice $U^{\oplus 4}$, and we will endow it with an orientation $\epsilon$, as defined in Appendix \ref{rmkorient} (b). We will use this data to define a new groupoid of $\Z-$modules, as follows:
\begin{defn}\label{defHtildemk}Let $m,k\in \N\setminus\{0\}$, with $k>2$. We define the groupoid $\Htildemk$ as follows: \begin{itemize}
    \item the objects are triples $(\Lambdatilde, v, \epsilon)$, where $\Lambdatilde$ is an even unimodular lattice of signature $(4,4)$, $v \in \Lambdatilde$ is of the form $v=mw$, where $w\in \Lambdatilde$ is primitive and $w^2=2k$, and $\epsilon$ is an orientation on $\Lambdatilde$;
    \item if $(\Lambdatilde_1,v_1,\epsilon_1)$ and $(\Lambdatilde_2,v_2,\epsilon_2)$ are two objects in $\Htildemk$, then we set $$\Hom_{\Htildemk}((\Lambdatilde_1,v_1,\epsilon_1),(\Lambdatilde_2,v_2,\epsilon_2)):= \{g\in \O(\Lambdatilde_1,\Lambdatilde_2) \colon g(v_1)=v_2\}.$$
    \end{itemize}
\end{defn}
We notice that morphisms in $\Htildemk$ are not necessarily orientation preserving, in the sense of Appendix \ref{rmkorient}.

\begin{ex}\label{exorientS}As previously remarked, if $S$ is an Abelian surface, then its Mukai lattice $\Htilde(S,\Z)$ is an even unimodular lattice of signature $(4,4)$. We now define an orientation on $\Htilde(S,\Z)$ starting from an orientation on $\H^2(S,\Z)$, which is an even unimodular lattice of signature $(3,3)$. \begin{enumerate}
    \item We point out that, from the description of the period domain and of the period map of Abelian surfaces (\cite[Theorem II]{Shi78}) we get that any holomorphic $2-$form $\sigma \in \H^0(S,\Omega_S^2)$ satisfies $\sigma^2=0$, $\sigma\cdot \overline{\sigma}>0$ and $\sigma \perp \H^{1,1}(S)$, from which we deduce that $\Re(\sigma)$ and $\Im(\sigma)$ span a positive definite real subspace of $\H^2(S,\R)$, orthogonal to $\R\omega$, where $\omega$ is any K\"ahler form, which is again positive definite by Hodge Index Theorem. We conclude that the basis $\{\omega, \Re(\sigma), \Im(\sigma)\}$ defines an orientation on $\H^2(S,\Z)$, which, as in Remark \ref{exorientPSV}, does not depend on the choice of the symplectic form and of the K\"ahler form. 
    \item Finally, we recall that $\H^2(S,\Z)$ naturally embeds in $\Htilde(S,\Z)$ and its orthogonal complement, with respect to the Mukai pairing, coincides with $\H^0(S,\Z)\oplus \H^4(S,\Z)$. Hence, we can naturally identify $\O(\H^2(S,\Z))$ with the subgroup of $\O(\Htilde(S,\Z))$ of isometries acting as the identity on $\H^0(S,\Z)\oplus \H^4(S,\Z)$, and use this to extend any orientation of $\H^2(S,\Z)$ to an orientation of $\Htilde(S,\Z)$. Indeed, by adding the $2-$vector $(1,0,-1)\in \Htilde(S,\Z)$, orthogonal to $\H^2(S,\Z)$, to the previous orientation, we get an orientation $$\{\omega, \Re(\sigma), \Im(\sigma), (1,0,-1)\}$$ of $\Htilde(S,\Z)$, which we will denote by $\epsilon_S$.
\end{enumerate}
\end{ex}
The last Example allows us to define the representation $\Phitildemkdef\colon \Gmkdef \to \Htildemk$.
\begin{defn}Let $m,k\in \N\setminus\{0\}$, with $k>2$. We define the representation $$\Phitildemkdef\colon \Gmkdef \longrightarrow \Htildemk$$
    is defined as follows: \begin{itemize}
        \item if $\SvH$ is an object in $\Gmkdef$, then we set $\Phitildemkdef(\SvH):=(\Htilde(S,\Z),v,\epsilon_S)$, where $\epsilon_S$ is the orientation defined in Example \ref{exorientS};
        \item if $\SvHuno$ and $\SvHdue$ are two objects in $\Gmkdef$ and $\alpha=\defopath$ is a deformation path from $\SvHuno$ to $\SvHdue$, then $$\Phitildemkdef(\overline{\alpha}):=p_\alpha$$ is the parallel transport operator in the local system $R\ev f\sstar \Z$ along the path $\gamma$ (as in (\ref{palpha}));
        \item if $\SvHuno$ and $\SvHdue$ are congruent, then we set $$\Phitildemkdef(\chi_{H_1,H_2}):= \id_{\Htilde(S,\Z)}.$$
    \end{itemize}
\end{defn}
\begin{rmk}\label{rmkimphitildedef} Let $\alpha =\defopath$ be a deformation path from \linebreak[4]$(S_1,v_1,H_1)$ to $\SvHdue$. \begin{enumerate}
    \item As already noticed in Remark \ref{rmklocsystem} (1), the Mukai vectors $v_1$ and $v_2$ belong to the same flat section of the local system $R\ev f\sstar \Z$. Hence, the parallel transport $p_\alpha$ maps $v_1$ to $v_2$ and the representation $\Phitildemkdef$ is well defined.
    \item Moreover if $\epsilon_{S_i}=\{\omega_i, \Re(\sigma_i),\Im(\sigma_i), (1,0,-1)\}$ are the orientations for \linebreak[4]$\Htilde(S_i,\Z)$ defined in Example \ref{exorientS}, for $i=1,2$, then $p_\alpha$ maps $\epsilon_{S_1}$ to $\epsilon_{S_2}$. Indeed, the classes $\omega_1$ and $\omega_2$  extend to flat sections of the same local system and the same holds for $\sigma_1$ and $\sigma_2$. As we will discuss in more detail in Section \ref{secmonAb}, $$\mon^2(S)\subseteq \O^+(\H^2(S,\Z))$$ (by \cite{Shi78}, see also \cite{MR19}), from which we get that $p_\alpha$ is constant along such local systems. This implies, together with the fact that the vector $(1,0, -1)$ is constant along any locally trivial deformation, that morphisms in the image of $\Phitildemkdef$ are always orientation preserving.
\end{enumerate}\end{rmk}

We shall now proceed to define the representation $\PhitildemkFM\colon \GmkFM \to \Htildemk$.
\begin{defn}Let $m,k\in \N\setminus\{0\}$, with $k>2$. We define the representation $$\PhitildemkFM\colon \GmkFM \longrightarrow \Htildemk$$
    is defined as follows: \begin{itemize}
        \item if $\SvH$ is an object in $\GmkFM$, then we set $\PhitildemkFM(\SvH):=(\Htilde(S,\Z),v,\epsilon_S)$;
        \item if $\SvHuno$ and $\SvHdue$ are two objects in $\GmkFM$ and $\phi\colon \Db(S_1) \to \Db(S_2)$ is a morphism in $\GmkFM$ from $\SvHuno$ to $\SvHdue$, then $$\PhitildemkFM(\phi):=\phi^\H$$ is the isometry induced by $\phi$ on the respective Mukai lattices.
    \end{itemize}
\end{defn}

\begin{rmk}\label{rmkimphitildeFM}As recalled in Proposition 5.44 and Corollary 9.43 of \cite{Huy03b}, any Fourier-Mukai equivalence between Abelian surfaces induces an isomorphism on the respective integral cohomologies, isometric with respect to the Mukai pairing, which is also parity preserving. Hence, if $\phi= \L$, $\FM_\PP$ or $\FM_\EE$, then it induces an isometry $\phi^\H$ on the respective Mukai lattices, preserving the Mukai vectors by (\ref{vtensor}), (\ref{vtilde}) and (\ref{vell}). The claim follows for $\phi=\FM_\PP\dual$, as derived duality induces, via the Chern character, an isomorphism on the even cohomology prescribed by (\ref{vduale}), and by (\ref{vcap}). Hence, the representation $\PhitildemkFM$ is again well defined.
\end{rmk}

\begin{rmk}\label{rmkFMnotorient}
    As we will explain in more detail in Section \ref{seclambda}, we remark that morphisms in the image of $\PhitildemkFM$ are not necessarily orientation preserving.
\end{rmk}

\begin{defn}Let $m,k\in \N\setminus\{0\}$, with $k>2$. We define the representation $$\Phitildemk\colon \Gmk \longrightarrow \Htildemk$$ as the unique (see \cite[Remark 2.31]{OPR23}) representation restricting to $\Phitildemkdef$ on $\Gmkdef$ and to $\PhitildemkFM$ on $\GmkFM$.\end{defn}
    
\subsubsection{The $\AAk-$representation $\ptmk$ of $\Gmk$}\label{secpt}We now turn our attention to lattices of the same isometry class of $\HdueKv$, where $\SvH$ is an $(m,k)-$triple, with $k>2$. We recall that, by Theorem \ref{pr20thm1.6}, there is an isometry $\HdueKv\simeq v\ort$. The latter, being the orthogonal complement of the lattice generated by $v=mw$, with $w^2=2k>0$, in $\HtildeSZ\simeq U^{\oplus 4}$, is isometric to the even lattice
$$U^{\oplus 3} \oplus \brakett{-2k}$$ of rank $7$ and signature $(3,4)$.

\begin{defn}
    For any $k>2$, we define the groupoid $\AAk$ as follows: \begin{itemize}
        \item the objects of $\AAk$ are even lattices $\Lambda$ of signature $(3,4)$, isometric to the lattice $U^{\oplus 3} \oplus \brakett{-2k}$.
        \item if $\Lambda_1$ and $\Lambda_2$ are two objects, then we set $$\Hom_{\AAk}(\Lambda_1,\Lambda_2):= \O(\Lambda_1,\Lambda_2).$$
    \end{itemize}
\end{defn}
As in the previous Section, we start by defining the $\AAk-$representation for $\Gmkdef$.
\begin{defn}Let $m,k\in \N\setminus\{0\}$, with $k>2$. We define the representation $$\ptmkdef\colon \Gmkdef \longrightarrow \AAk$$ as follows: \begin{itemize}
    \item if $\SvH$ is an object in $\Gmkdef$, then we set $$\ptmkdef(\SvH):= \HdueKv;$$
    \item if $\SvHuno$ and $\SvHdue$ are two objects in $\Gmkdef$ and $\alpha= \defopath$ is a deformation path from $\SvHuno$ to $\SvHdue$, then $$\ptmkdef(\overline{\alpha}):= g_\alpha$$ is the locally trivial parallel transport in the local system $R^2p\sstar\Z$ along the path $\gamma$ (as in (\ref{galpha}));
    \item if $\SvHuno$ and $\SvHdue$ are congruent, then we set $$\ptmkdef(\chi_{H_1,H_2}):= \id_{\HdueKv}.$$
\end{itemize}
\end{defn}
We now define the $\AAk-$representation for $\GmkFM$.
\begin{defn}Let $m,k\in \N\setminus\{0\}$, with $k>2$. We define the representation $$\ptmkFM\colon \GmkFM \longrightarrow \AAk$$ as follows: \begin{itemize}
    \item if $\SvH$ is an object in $\GmkFM$, then we set $$\ptmkFM(\SvH):= \HdueKv;$$
    \item if $\SvHuno$ and $\SvHdue$ are two objects in $\GmkFM$ and $\phi\colon \Db(S_1) \to \Db(S_2)$ is an elementary morphism from $\SvHuno$ to $\SvHdue$, then, by definition, it induces an isomorphism $$\phi_v\colon \KvSHuno \longrightarrow \KvSHdue$$ of moduli spaces. We then set $$\ptmkFM(\phi):= \phi_{v\ast}\colon \HdueKvuno \longrightarrow \HdueKvdue;$$
    \item if $\phi= \phi_1\ast \cdots \ast \phi_l\in \Hom_{\GmkFM}(\SvHuno,\SvHdue)$ is a composition of elementary morphisms, we define $\ptmkFM(\phi)$ as the composition $$\ptmkFM(\phi_l)\circ \cdots \circ \ptmkFM(\phi_1)$$ of the corresponding isometries.
\end{itemize}
\end{defn}

Combining the two previous Defintions, as in the case of $\Phitildemk$, we get the following

\begin{defn}
    Let $m,k\in \N\setminus\{0\}$, with $k>2$. We define the representation $$\ptmk\colon \Gmk \longrightarrow \AAk$$ as the unique representation restricting to $\ptmkdef$ on $\Gmkdef$ and to $\ptmkFM$ on $\GmkFM$.
\end{defn}
The first straightforward, but nonetheless crucial, property satisfied by the representation just defined is stated in the following 
\begin{prop}
    Let $m,k\in \N\setminus\{0\}$, with $k>2$ and let $A_1:= \SvHuno$ and $A_2:= \SvHdue$ be two objects in $\Gmk$. Then there is an inclusion of sets $$\ptmk(\Hom_{\Gmk}(A_1,A_2))\subseteq \PT_{\lt}(\KvSHuno,\KvSHdue).$$
\end{prop}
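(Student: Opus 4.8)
The plan is to exploit the free-product structure $\Gmk=\Gmkdef\ast\GmkFM$ and reduce the statement to its elementary morphisms. Any $\phi\in\Hom_{\Gmk}(A_1,A_2)$ is a finite concatenation of elementary morphisms of $\Gmkdef$ and $\GmkFM$ and of their formal inverses, and, by the composition law defining $\ptmk$, the isometry $\ptmk(\phi)$ is the composition of the $\ptmk$-images of these generators and of their inverses. Hence the first step is to record that the set of locally trivial parallel transport operators is stable under composition and under inversion: composition is handled exactly by the gluing-of-families argument of Remark \ref{rmkcomposition}, carried out between possibly different fibres rather than a single $X$, and inversion is obtained by traversing the path in the opposite direction inside the same family. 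With this in hand it suffices to verify the claim for each of the four types of elementary morphisms.

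Second, I would dispatch the generators coming from $\Gmkdef$. If $\overline{\alpha}$ is the equivalence class of a deformation path $\alpha=\defopath$, then by Remark \ref{rmkrelmod} the associated deformation $(f\colon\SS\to T,\LL,\HH)$ yields a locally trivial deformation $p\colon\KK\to T$ of $\Kv(S,H)$ with fibres $K_{v_1}(S_1,H_1)$ and $K_{v_2}(S_2,H_2)$ over $t_1$ and $t_2$; by definition $\ptmkdef(\overline{\alpha})=g_\alpha=\PT_p(\gamma)$ (see (\ref{galpha})), which is a locally trivial parallel transport operator essentially by construction. For the generators $\chi_{H_1,H_2}$ of $\Pmk$ between congruent triples, $\ptmkdef(\chi_{H_1,H_2})$ is the identity, which is realised as parallel transport along a constant path in a trivial family, hence again lies in the target set.

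Third, and this is the only place where something more than a formal manipulation is needed, come the generators of $\GmkFM$: the derived equivalences $\L$, $\FM_\PP$, $\FM_\PP\dual$ and $\FM_\EE$ under the hypotheses of Lemmas \ref{lemtensor}, \ref{lemFM} and \ref{lemyosh}. Each of these induces an isomorphism $\phi_v\colon K_{v_1}(S_1,H_1)\to K_{v_2}(S_2,H_2)$ of projective irreducible symplectic varieties, and $\ptmkFM(\phi)=\phi_{v\ast}$ is the induced map on the second integral cohomology groups. One therefore has to know that the pushforward along an isomorphism of primitive symplectic varieties is a locally trivial parallel transport operator; for this I would appeal to Proposition \ref{proppushforward}, observing that an isomorphism is in particular a birational morphism which is an isomorphism in codimension $1$ and induces an isomorphism of Picard groups, so $\phi_{v\ast}$ is a locally trivial parallel transport operator. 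Alternatively one may exhibit $\phi_{v\ast}$ (or its inverse) directly as the parallel transport of the trivial family $K_{v_2}(S_2,H_2)\times\Delta\to\Delta$ with the central fibre identified with $K_{v_1}(S_1,H_1)$ through $\phi_v^{-1}$.

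The only genuine difficulty I foresee is bookkeeping rather than geometry: one has to be sure that the reduction to generators is compatible with the cancellation rules of the free product and with the composition law of $\ptmk$, so that $\ptmk(\phi)$ really equals the composite of the (now-checked) $\ptmk$-images of the generators and of their inverses. But this is precisely the well-definedness of the representation $\ptmk$, which is already in place; combining it with the three steps above gives the stated inclusion.
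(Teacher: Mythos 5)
Your argument is correct and follows the same route as the paper: the deformation-type generators land in $\PT_{\lt}$ by definition, the Fourier--Mukai generators do so via Proposition \ref{proppushforward} applied to the induced isomorphisms of moduli spaces, and closure under composition and inversion (Remark \ref{rmkcomposition}) handles general morphisms of the free product. The paper's proof is just a terser version of exactly these steps.
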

\begin{proof}
    First of all, the inclusion $$\ptmkdef(\Hom_{\Gmkdef}(A_1,A_2))\subseteq \PT_{\lt}(\KvSHuno,\KvSHdue)$$ holds by definition. Moreover, since any elementary morphism in $\GmkFM$ induces isomorphisms of moduli spaces,  by Proposition \ref{proppushforward}, $$\ptmkFM(\Hom_{\GmkFM}(A_1,A_2))\subseteq \PT_{\lt}(\KvSHuno,\KvSHdue)$$ holds, concluding the proof.
\end{proof}

According to the notation introduced in Definition \ref{defgroupoid} (1), we can therefore relate, via the representation $\ptmk$, the isotropy group of an $(m,k)-$triple with the locally trivial monodromy group of the moduli space associated to the same triple:
\begin{cor}\label{corimpt}
     Let $m,k\in \N\setminus\{0\}$, with $k>2$ and let $\SvH$ be an object in $\Gmk$. Then $$\Im(\ptmk_{\SvH}\colon \Aut_{\Gmk}(\SvH)\longrightarrow \Aut_{\AAk}(\H^{2}(\Kv(S,H),\Z))) \subseteq \monlt(\Kv(S,H)).$$
\end{cor}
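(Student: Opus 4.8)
The plan is to deduce this statement as an immediate consequence of the previous Proposition, specialized to the case $A_1 = A_2 = \SvH$, together with the functoriality of the representation $\ptmk$. First I would observe that, taking $A_1 = A_2 = \SvH$ in the Proposition, we obtain the inclusion of sets
$$\ptmk\bigl(\Hom_{\Gmk}(\SvH,\SvH)\bigr) \subseteq \PT_{\lt}^2\bigl(\Kv(S,H),\Kv(S,H)\bigr) = \monlt(\Kv(S,H)).$$
By Definition \ref{defgroupoid} (1), the set $\Hom_{\Gmk}(\SvH,\SvH)$ is precisely the isotropy group $\Aut_{\Gmk}(\SvH)$, and the action of the functor $\ptmk$ on this isotropy group is by definition the group homomorphism $\ptmk_{\SvH}\colon \Aut_{\Gmk}(\SvH) \to \Aut_{\AAk}(\H^2(\Kv(S,H),\Z))$. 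Hence the left-hand side above is exactly the image $\Im(\ptmk_{\SvH})$, and the displayed inclusion is the claim.

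The only point that requires a word of justification is that the image actually lands in $\O(\H^2(\Kv(S,H),\Z))$ rather than merely in $\Hom$, but this is automatic: by construction $\ptmk$ takes values in the groupoid $\AAk$, whose morphisms are lattice isometries, so $\ptmk_{\SvH}$ takes values in $\O(\H^2(\Kv(S,H),\Z)) = \Aut_{\AAk}(\H^2(\Kv(S,H),\Z))$. Combining this with the set-theoretic inclusion into $\PT_{\lt}^2(\Kv(S,H),\Kv(S,H))$, and recalling the definition $\monlt(\Kv(S,H)) = \PT_{\lt}^2(\Kv(S,H),\Kv(S,H))$ from (\ref{mon}), the result follows.

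There is essentially no obstacle here: the content is entirely packaged into the preceding Proposition (which in turn rests on the definitions of $\ptmkdef$ and $\ptmkFM$ and on Proposition \ref{proppushforward}), and this Corollary is a bookkeeping statement translating that Proposition into the language of isotropy groups. The one thing worth emphasizing in the write-up is that $\Aut_{\Gmk}(\SvH)$ is genuinely a group — which holds because $\Gmk$ is a groupoid, so every morphism is invertible — and that $\ptmk_{\SvH}$ is therefore a genuine group homomorphism whose image is a subgroup of $\monlt(\Kv(S,H))$; this is what makes the Corollary useful for the subsequent strategy of realizing generators of $\Nn(\H^2(\Kv(S,H),\Z))$ as monodromy operators.
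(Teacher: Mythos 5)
Your proposal is correct and matches the paper exactly: the paper states Corollary \ref{corimpt} as an immediate specialization of the preceding Proposition to $A_1=A_2=\SvH$, using that $\Hom_{\Gmk}(\SvH,\SvH)=\Aut_{\Gmk}(\SvH)$ and $\PT_{\lt}^2(\Kv(S,H),\Kv(S,H))=\monlt(\Kv(S,H))$ by definition (\ref{mon}). Your additional remarks on functoriality and the group structure of the image are accurate bookkeeping that the paper leaves implicit.
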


\subsubsection{Relation between the two representations $\Phitildemk$ and $\ptmk$.}\label{seclambda} In order to compare the two representations \begin{center}
    $\Phitildemk\colon \Gmk \to \Htildemk$ and $\ptmk\colon \Gmk \to \AAk$ 
\end{center}we shall now connect them by means of a functor $\Psimk\colon \Htildemk \to \AAk$.

\begin{defn}\label{defpsimk}
    For any $m,k\in \N\setminus\{0\}$, with $k>2$, we define the functor $$\Psimk\colon \Htildemk \to \AAk$$ as follows: \begin{itemize}
        \item if $(\Lambdatilde, v, \epsilon)$ is an object in $\Htildemk$, we set $$\Psimk(\Lambdatilde, v, \epsilon):= v\ort;$$
        \item if $(\Lambdatilde_1, v_1, \epsilon_1)$ and $(\Lambdatilde_2, v_2, \epsilon_2)$ are two objects in $\Htildemk$ and $g\colon \Lambdatilde_1 \to \Lambdatilde_2$ is an isometry such that $g(v_1)=v_2$, then we set $$\Psimk(g):= (-1)^{\ori(g)}g_{|v_1\ort}\colon v_1\ort \longrightarrow v_2\ort,$$ where $\ori \colon \O(\Lambdatilde_1,\Lambdatilde_2)\to \Z/2\Z$ is the orientation character (\ref{orientmap}). 
    \end{itemize}
\end{defn}

We are finally in the position to compare the two $\AAk-$representations \begin{center}
    $\Phimk:= \Psimk \circ \Phitildemk\colon \Gmk \longrightarrow \AAk$
\end{center} and $\ptmk$. For this purpose, we will translate naturality of the isometries $$\lambda_{\SvH}\colon v\ort \longrightarrow \HdueKv$$ of Theorem \ref{pr20thm1.6} into the existence of an isomorphism of functors $$\lambda\colon \Phimk \longrightarrow \ptmk.$$ The proof of this result, which is the main of the Section, will be addressed in two parts, involving, respectively, the $\AAk-$representations of $\Gmkdef$ and of $\GmkFM$.

Let us start by setting $$\Phimkdef:= \Psimk\circ \Phitildemkdef \colon \Gmkdef \longrightarrow \AAk.$$
\begin{prop}\label{lambdadef}
    For any $m,k\in \N\setminus\{0\}$, with $k>2$, there exists an isomorphism of functors $$\lambda_{\defo}\colon \Phimkdef \longrightarrow \ptmkdef.$$
\end{prop}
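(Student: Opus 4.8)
The plan is to take as the component of $\lambda_{\defo}$ at a given $(m,k)$-triple $\SvH$ precisely the isometry $\lambda_{\SvH}\colon v\ort \to \H^2(\Kv(S,H),\Z)$ of Theorem \ref{pr20thm1.6}. Since $\Phimkdef(\SvH)=\Psimk(\Htilde(S,\Z),v,\epsilon_S)=v\ort$ and $\ptmkdef(\SvH)=\H^2(\Kv(S,H),\Z)$, and since $k>2$ forces $\lambda_{\SvH}$ to be an isomorphism of $\Z$-modules, this is a well-defined isomorphism in $\AAk$. To upgrade this assignment to an isomorphism of functors it suffices to check the naturality square for a generating family of morphisms of $\Gmkdef$, namely for equivalence classes $\overline{\alpha}$ of deformation paths, for the congruence identities $\chi_{H_1,H_2}$ of $\Pmk$, and for their formal inverses; compatibility with formal inversion is automatic once the square commutes, and compatibility with the concatenation relation (\ref{concat}) follows because, by Remark \ref{rmkconcat}, both $\Phimkdef$ and $\ptmkdef$ send $\overline{\alpha}\ast\overline{\alpha'}$ to the composition of the two images (so adjacent naturality squares glue).

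First I would dispose of the congruence identities. For two congruent triples $(S,v,H_1)$ and $(S,v,H_2)$ one has $\Phitildemkdef(\chi_{H_1,H_2})=\id_{\Htilde(S,\Z)}$, whose orientation character vanishes, so $\Phimkdef(\chi_{H_1,H_2})=\Psimk(\id_{\Htilde(S,\Z)})=\id_{v\ort}$, while $\ptmkdef(\chi_{H_1,H_2})=\id_{\H^2(\Kv(S,H),\Z)}$; the naturality square thus reduces to the equality $\lambda_{(S,v,H_1)}=\lambda_{(S,v,H_2)}$ under the identification $\chi_{H_1,H_2}$, which holds because congruent triples have identical moduli functors, so $\lambda_{\SvH}$, built from a (relative) quasi-universal family, may be computed using the same data for $H_1$ and $H_2$ (cf.\ the construction recalled in Remark \ref{rmklocsystem}(2) and \cite{PR20}). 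Then I would turn to a deformation path $\alpha=\defopath$ from $(S_1,v_1,H_1)$ to $(S_2,v_2,H_2)$. By Remark \ref{rmkimphitildedef}(2) the operator $p_\alpha=\Phitildemkdef(\overline{\alpha})$ is orientation preserving, hence $\ori(p_\alpha)=0$ and $\Phimkdef(\overline{\alpha})=\Psimk(p_\alpha)=p_\alpha|_{v_1\ort}\colon v_1\ort\to v_2\ort$; by Remark \ref{rmklocsystem}(1),(3) this is exactly the parallel transport of $\gamma$ in the sub-local system $\v\ort\subseteq R\ev f\sstar\Z$. On the other side, $\ptmkdef(\overline{\alpha})=g_\alpha$ is the parallel transport of $\gamma$ in $R^2p\sstar\Z$. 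Remark \ref{rmklocsystem}(2) provides an isometry of local systems $\lambda\colon \v\ort\to R^2p\sstar\Z$ over the base $T$ of the deformation underlying $\alpha$, restricting to $\lambda_{(\SS_t,v_t,\HH_t)}$ fibrewise; being a morphism of local systems it commutes with parallel transport along $\gamma$, which is precisely the naturality square
\begin{equation*}
\lambda_{(S_2,v_2,H_2)}\circ \Phimkdef(\overline{\alpha})\;=\;g_\alpha\circ\lambda_{(S_1,v_1,H_1)}\;=\;\ptmkdef(\overline{\alpha})\circ\lambda_{(S_1,v_1,H_1)}.
\end{equation*}

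The main obstacle — in fact the only input that is not formal bookkeeping in the free-product groupoid $\Gmkdef$ — is the globalization of the isometries $\lambda_{\SvH}$ into an isometry of local systems along a deformation of the $(m,k)$-triple. This is exactly the content of Remark \ref{rmklocsystem}(2), which itself rests on \cite[Corollary 3.3(2), Proposition 3.5(3), Proposition 4.4(2), Definition 4.6(2)]{PR20} together with the hypothesis $k>2$; once this is granted, naturality is the tautology that a morphism of local systems intertwines the two parallel transports, and the extension of $\lambda_{\defo}$ to all of $\Gmkdef$ is forced by functoriality.
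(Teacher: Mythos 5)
Your proposal is correct and follows essentially the same route as the paper's proof: the components of $\lambda_{\defo}$ are the isometries $\lambda_{\SvH}$ of Theorem \ref{pr20thm1.6}, the congruence case is handled by the identification of moduli spaces, and the deformation-path case reduces to the fact that the local-system isometry of Remark \ref{rmklocsystem}(2) intertwines the two parallel transports, with Remark \ref{rmkimphitildedef}(2) disposing of the sign from $\Psimk$.
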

\begin{proof}
    For $i=1,2$, let $A_i:=\SvHi$ be two objects in $\Gmkdef$. By Theorem \ref{pr20thm1.6}, for $i=1,2$, there exists an isometry $$\lambda_{A_i}\colon v_i\ort \longrightarrow \HdueKvi.$$ In order to show that the assignment $\lambda(A_i):= \lambda_{A_i}$ defines an isomorphism of functors $\Phimkdef \to \ptmkdef$, we need to show that the following diagram \begin{equation} \label{cdnat}
         \begin{tikzcd}
            v_{1}\ort \arrow[d,"\Phimkdef(h)" swap] \arrow[r,"\lambda_{A_{1}}"]& \H^{2}(K_{v_{1}}(S_{1},H_{1}),\Z)\arrow[d,"\ptmkdef(h)"]\\
            v_{2}\ort \arrow[r,"\lambda_{A_{2}}"]& \H^{2}(K_{v_{1}}(S_{2},H_{2}),\Z).
        \end{tikzcd}
    \end{equation} is commutative, for any morphism $h\in \Hom_{\Gmkdef}(A_1,A_2)$.

    \textit{Case 1.} Let $h=\overline{\alpha}$ be the elementary morphism given by the class of a deformation path $\alpha=\defopath$ from $A_1$ to $A_2$. In this case, $$\Phimk_{def}(\alpha)=\Psimk(\Phitildemk_{def}(\alpha))= (-1)^{\ori(p_{\alpha})}p_{\alpha}|_{v_{1}\ort}=p_{\alpha}|_{v_{1}\ort},$$ as the parallel transport operator $p_\alpha$ in the local system $R\ev f\sstar \Z$ (see (\ref{palpha}) is orientation preserving by Remark \ref{rmkimphitildedef} (b). On the other hand, if $p\colon \KK \to T$ is the relative moduli space induced by the deformation $(f\colon \SS \to T, \LL,\HH)$, then $$\ptmkdef(h)=g_\alpha$$ is the locally trivial parallel transport operator in the local system $R^2p\sstar \Z$ (see (\ref{galpha})). By Remark \ref{rmklocsystem} (1), there exists a flat section $\v$ of $R\ev f\sstar\Z$ such that $\v_{t_i}=v_i$ for $i=1,2$ and, by Remark \ref{rmklocsystem} (2), the isometries $\lambda_{A_i}$, for $i=1,2$, fit in an isomorphism of local systems $$\lambda_{\v}\colon \v\ort \longrightarrow R^2p\sstar\Z.$$ We deduce then that $g_\alpha$ and $p_{\alpha|v_1\ort}$ are locally trivial parallel transport operators along the same path inside local systems that are isomorphic via $\lambda_\v$, by Remark \ref{rmklocsystem} (3), from which commutativity of diagram (\ref{cdnat}) follows.

    \textit{Case 2.} Suppose that $A_1$ and $A_2$ are congruent and that $h=\chi_{H_1,H_2}$. In this case, commutativity of diagram (\ref{cdnat}) is automatic, as \begin{center}
        $\Phimk_{def}(\chi_{H_{1},H_{2}})=\id_{v\ort}$ and $\ptmk_{def}(\chi_{H_{1},H_{2}})=\id_{\H^{2}(\Kv(S,H_{1}),\Z)}$
    \end{center} and we have an identification $\H^{2}(\Kv(S,H_{1}),\Z) =\H^{2}(\Kv(S,H_{2}),\Z)$ and an identification $\lambda_{A_{1}}=\lambda_{A_{2}}$.
\end{proof}

We now set $$\PhimkFM:= \Psimk\circ \PhitildemkFM \colon \GmkFM \longrightarrow \AAk.$$
\begin{prop}\label{lambdaFM}
    For any $m,k\in \N\setminus\{0\}$, with $k>2$, there exists an isomorphism of functors $$\lambda_{\FM}\colon \PhimkFM \longrightarrow \ptmkFM.$$
\end{prop}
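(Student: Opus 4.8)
The plan is to define $\lambda_\FM$ on objects exactly as in Proposition \ref{lambdadef}: for an $(m,k)-$triple $A=\SvH$ one has $\PhimkFM(A)=v\ort$ and $\ptmkFM(A)=\H^{2}(\Kv(S,H),\Z)$, and we put $\lambda_\FM(A):=\lambda_{\SvH}$, the isometry of Theorem \ref{pr20thm1.6}. Since every morphism of $\GmkFM$ is a formal concatenation of elementary morphisms and their inverses, and $\PhimkFM$, $\ptmkFM$ are functors, it suffices to show that for every \emph{elementary} morphism $\phi\colon\Db(S_1)\to\Db(S_2)$ from $A_1=\SvHuno$ to $A_2=\SvHdue$ the square
\begin{center}
\begin{tikzcd}[column sep=large]
v_1\ort \arrow[d,"\PhimkFM(\phi)" swap] \arrow[r,"\lambda_{A_1}"] & \H^{2}(K_{v_1}(S_1,H_1),\Z) \arrow[d,"\ptmkFM(\phi)"]\\
v_2\ort \arrow[r,"\lambda_{A_2}"] & \H^{2}(K_{v_2}(S_2,H_2),\Z)
\end{tikzcd}
\end{center}
commutes, i.e.\ that $\phi_{v\ast}\circ\lambda_{A_1}=\lambda_{A_2}\circ(-1)^{\ori(\phi^\H)}\,\phi^\H|_{v_1\ort}$, using Definition \ref{defpsimk} and the equality $\ptmkFM(\phi)=\phi_{v\ast}$.

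First I would recall that $\lambda_{\SvH}$ is the Mukai-type homomorphism $v\ort\to\H^{2}(\Kv(S,H),\Z)$ built from a quasi-universal family $\mathcal{U}$ on $\Kv(S,H)\times S$ through the K\"unneth components of $\ch(\mathcal{U})\cdot\sqrt{\td(S)}$ (see \cite[Section 4]{PR20}, following \cite{Yos01a}). The isomorphism $\phi_v\colon K_{v_1}(S_1,H_1)\to K_{v_2}(S_2,H_2)$ attached to $\phi$ by Lemma \ref{lemtensor}, Lemma \ref{lemFM} and Lemma \ref{lemyosh} carries a quasi-universal family for $v_1$ to one for $v_2$ after applying $\phi$ in the $S-$factor; following the Chern character through this identification, and using the projection formula together with the compatibility of the cohomological action $\phi^\H$ with the K\"unneth decomposition (Remark \ref{rmkimphitildeFM}), one obtains $\phi_{v\ast}\circ\lambda_{A_1}=\pm\,\lambda_{A_2}\circ\phi^\H|_{v_1\ort}$. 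The sign is precisely the failure of $\phi^\H$ to preserve the orientation $\epsilon_S$ of Example \ref{exorientS} (the phenomenon anticipated in Remark \ref{rmkFMnotorient}), which is exactly why the functor $\Psimk$ of Definition \ref{defpsimk} carries the twist $(-1)^{\ori(\cdot)}$; matching the two signs closes the argument. The construction being parallel to the K3 setting, I would follow the proof of the corresponding statement in \cite[Section 2]{OPR23} as a template and only record the changes needed over an Abelian surface.

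It then remains to verify the square on the four families of elementary generators of $\GmkFM$. For $\phi=\L$ the cohomological action is multiplication by $\ch(L)$ (formula (\ref{vtensor})), which is orientation preserving, and the compatibility with $\lambda$ reduces to the behaviour of quasi-universal families under $\otimes L$. For the Poincar\'e-kernel transforms $\FM_\PP$ and $\FM_\PP\dual$ one uses the cohomological formulas (\ref{vtilde}), (\ref{vduale}), (\ref{vcap}) together with the descriptions of $\FM_\PP^\H$ and of the duality $D_{\hat{S}}$ recalled in Remark \ref{rmkimphitildeFM}, tracking the orientation behaviour of each. For the relative Poincar\'e transform $\FM_\EE$ in the elliptic situation one uses (\ref{vell}) and the explicit quasi-universal family produced in \cite[Section 3, Section 4]{Yos01a}; this morphism has no counterpart in \cite{OPR23}, so it is the case I would write out in most detail.

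The hard part will be the orientation bookkeeping: for each elementary $\phi$ one must compute $\ori(\phi^\H)\in\Z/2\Z$ with respect to $\epsilon_S$ and check that it coincides with the sign by which $\phi_{v\ast}$ differs from the naive restriction $\phi^\H|_{v\ort}$. Since $v\ort$ has odd rank $7$, multiplication by $-1$ reverses the orientation of a positive three-dimensional subspace, hence of $\H^{1,1}$, so this sign is not a harmless normalisation: it is what will force the monodromy operators constructed in Section \ref{secmonodromy} to land in $\Nn$ rather than in all of $\W$. Pinning it down amounts to comparing $\epsilon_S$ with the orientation it induces on $\H^{2}(\Kv(S,H),\Z)$ through the Hodge isometry $\lambda_{\SvH}$, in the spirit of Remark \ref{exorientPSV2}.
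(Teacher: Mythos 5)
Your proposal is correct and follows essentially the same route as the paper: reduce to the four elementary generators, establish $\phi_{v\ast}\circ\lambda_{A_1}=\pm\,\lambda_{A_2}\circ\phi^\H|_{v_1\ort}$ from the behaviour of (quasi-)universal families, compute $\ori(\phi^\H)$ separately, and check the two signs agree. The only difference is one of packaging: the paper obtains the $\pm$ sign directly from Yoshioka's Propositions 2.4 and 2.5 (after verifying that the kernels are strongly simple and that the sheaves satisfy $WIT_0$), and carries out the orientation bookkeeping you correctly flag as the hard part via an Abelian-surface adaptation of the Huybrechts--Stellari criterion (Lemma \ref{lemmaorient} and Corollary \ref{cororientFM}), rather than by the direct cone comparison you sketch.
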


As already noticed in Remark \ref{rmkFMnotorient}, morphisms in the image of $\PhitildemkFM$ are not necessarily orientation preserving. In order to prove the commutativity of a diagram as (\ref{cdnat}) for elementary morphisms in $\GmkFM$, we need to introduce some criteria to decide whether an isometry induced by a Fourier-Mukai equivalence is orientation preserving. The following is an adaptation of \cite[Proposition 5.3]{HS05} to the case in which the surface $S$ is Abelian.

\begin{lem}\label{lemmaorient}
Let $S_1$ and $S_2$ be two Abelian surfaces and let $\phi\colon \Htilde(S_1,\Z) \to \Htilde(S_2,\Z)$ be a Hodge isometry. Let $\omega \in \H^{1,1}(S_1,\R)$ be a K\"ahler class on $S_1$ and let $\CC_{S_i}\subseteq \H^{1,1}(S_i,\R)$ be the connected component of the cone of positive forms containing the K\"ahler cone of $S_i$, for $i=1,2$. Set \begin{center}
    $r:= \phi(0,0,1)_0, \phantom{.} \chi := \phi(1,0,0)_0,   \phantom{.} \chi_\omega:= \phi(0, \omega, -\omega^2/2)_0 \in \H^0(S_2,\Z)$
\end{center}
   and set $u_0:= (-r,0,\chi)$, $u_1:= (0,-r\omega, r\omega^2/2 + \chi_\omega) \in \Htilde(S_1,\Z).$ 
   \begin{enumerate}
       \item[(a)] If $r\neq 0$, then $\phi$ is orientation preserving if and only if $$\Big(\frac{\chi_\omega}{r} + \frac{\omega^2}{2}\Big)\phi(u_0)_2 - \Big(\frac{\chi}{r} - \frac{\omega^2}{2}\Big)\phi(u_1)_2 \in \CC_{S_2};$$
       \item[(b)] if $r= 0$, then $\phi$ is orientation preserving if and only if $$\chi \phi(0, \omega, -\omega^2/2)_2 - \chi_\omega \phi(1,0,0)_2 - \frac{\omega^2}{2}(\phi(u_0)_2-\phi(u_1)_2)\in \CC_{S_2}.$$
   \end{enumerate}
\end{lem}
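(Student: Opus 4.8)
\emph{Proof sketch.}
The plan is to translate orientation-preservation into a statement about the two connected components of a period-type domain attached to the $(1,1)$-part of the Mukai lattice, and then to reduce it to an explicit computation with the degree decomposition, following the strategy of \cite[Proposition~5.3]{HS05} while rewriting the bookkeeping for the Abelian setting, where $\td(S)=(1,0,0)$, the lattice $\Htilde(S_i,\Z)$ has signature $(4,4)$, and the reference orientation is $\epsilon_{S_i}=\{\omega_i,\Re(\sigma_i),\Im(\sigma_i),(1,0,-1)\}$ of Example~\ref{exorientS}, with $\sigma_i$ a holomorphic symplectic form and $\omega_i$ a K\"ahler class on $S_i$.

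First I would pass from the positive $4$-space to a positive $2$-space. Being a Hodge isometry, $\phi$ maps $\Htilde^{2,0}(S_1)=H^{2,0}(S_1)$ isomorphically onto $\Htilde^{2,0}(S_2)$, so $\phi(\sigma_1)=\mu\,\sigma_2$ for some $\mu\in\C^{*}$; a one-line computation then shows that $\phi$ sends the ordered basis $(\Re(\sigma_1),\Im(\sigma_1))$ to $(\Re(\mu\sigma_2),\Im(\mu\sigma_2))$, which has determinant $|\mu|^{2}>0$ relative to $(\Re(\sigma_2),\Im(\sigma_2))$, so this piece of the orientation is automatically respected. Since $\phi$ also respects the orthogonal Hodge splitting $\Htilde(S_i,\R)=\brakett{\Re(\sigma_i),\Im(\sigma_i)}\perp\Htilde^{1,1}(S_i,\R)$, it follows that $\phi$ is orientation preserving if and only if it carries the oriented positive $2$-plane $Q_1:=\brakett{\omega_1,(1,0,-1)}$ of $\Htilde^{1,1}(S_1,\R)$ into the same connected component of the space of oriented positive $2$-planes of the signature-$(2,4)$ space $\Htilde^{1,1}(S_2,\R)$ as $Q_2:=\brakett{\omega_2,(1,0,-1)}$.

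Next I would set up a period-domain criterion for this. Oriented positive $2$-planes of a real quadratic space $V$ of signature $(2,n)$ are encoded by the domain $\{\,\C\varphi\subset V\otimes\C\ \colon\ \varphi^{2}=0,\ \varphi\bar\varphi>0\,\}$, which has exactly two connected components, exchanged by complex conjugation, via $\brakett{x,y}\mapsto\C(x+\mathrm{i}\,y)$. One checks that, up to a sign which is the same for $i=1,2$ and hence irrelevant, $Q_i$ corresponds to the class of $\varphi_i:=\exp(\mathrm{i}\,\omega_i)=(1,\mathrm{i}\,\omega_i,-\omega_i^{2}/2)$, which satisfies $\varphi_i^{2}=0$ and $\varphi_i\bar\varphi_i=2\omega_i^{2}>0$. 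Thus $\phi$ is orientation preserving exactly when $\phi(\varphi_1)$ and $\varphi_2$ define the same component of the period domain of $\Htilde^{1,1}(S_2,\R)$. Using that $(1,0,0)$ and $(0,0,1)$ span a hyperbolic plane orthogonal to $H^{1,1}(S_2,\R)$, a class $\psi$ with nonzero degree-$0$ component $\psi_0$ lies in the same component as $\varphi_2$ if and only if, after rescaling so that $\psi_0=1$, the imaginary part of its degree-$2$ component lies in $\CC_{S_2}$; equivalently, clearing the positive factor $|\psi_0|^{2}$, if and only if $\Im\big(\overline{\psi_0}\cdot\psi_2\big)\in\CC_{S_2}$.

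Finally I would run the computation. Expanding $\phi(\varphi_1)=\phi(1,0,0)-\tfrac{\omega^{2}}{2}\phi(0,0,1)+\mathrm{i}\big(\phi(0,\omega,-\tfrac{\omega^{2}}{2})+\tfrac{\omega^{2}}{2}\phi(0,0,1)\big)$ and reading off its degree-$0$ part gives $\phi(\varphi_1)_0=\big(\chi-\tfrac{r\omega^{2}}{2}\big)+\mathrm{i}\big(\chi_\omega+\tfrac{r\omega^{2}}{2}\big)$. When $r\neq 0$ I would rewrite $\Im\big(\overline{\phi(\varphi_1)_0}\cdot\phi(\varphi_1)_2\big)$ by collecting its terms along the degree-$2$ parts of the vectors $u_0$ and $u_1$ of the statement --- note $\phi(u_0)_0=\phi(u_1)_0=0$, directly from the definitions of $r,\chi,\chi_\omega$ --- obtaining precisely the combination displayed in (a). When $r=0$ the class $\phi(0,0,1)$ has vanishing degree-$0$ part, so $\phi(\varphi_1)$ must instead be normalized by its degree-$4$ component, and the symmetric computation produces the combination in (b). The main obstacle is not the linear algebra, which is routine, but the consistent tracking of the signs coming from the ordering of the basis in $\epsilon_{S_i}$, from the scalar $\mu$, and from the $\Re/\Im$ conventions: a single sign error would interchange ``orientation preserving'' and ``orientation reversing''. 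That these ambiguous signs cancel is precisely because $\varphi_1$ and $\varphi_2$ are defined by the same formula and compared by the same criterion.
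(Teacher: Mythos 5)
Your proposal follows essentially the same route as the paper's proof (itself an adaptation of \cite[Proposition 5.3]{HS05}): reduce orientation-preservation to the oriented positive $2$-plane attached to $\exp(\mathrm{i}\omega)$, observe that the normalizing scalar is $\lambda=\phi_\C(\exp(\mathrm{i}\omega))_0=\chi-\tfrac{\omega^2}{2}r+\mathrm{i}\big(\chi_\omega+\tfrac{\omega^2}{2}r\big)$, and test whether $\Im\big(\overline{\lambda}\,\phi_\C(\exp(\mathrm{i}\omega))\big)_2$ lies in $\CC_{S_2}$ --- your criterion $\Im(\overline{\psi_0}\cdot\psi_2)\in\CC_{S_2}$ is literally the paper's $\abs{\lambda}^2a\in\CC_{S_2}$. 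The only slip is in case (b): when $r=0$ the normalization is still by the degree-zero part $\lambda=\chi+\mathrm{i}\chi_\omega$, which cannot vanish (otherwise $\phi_\C(\exp(\mathrm{i}\omega))_2$ would span a positive-definite $2$-plane inside $\H^{1,1}(S_2,\R)$, which has signature $(1,3)$), so there is no passage to a degree-four normalization; the dichotomy $r\neq 0$ versus $r=0$ only governs whether one may divide by $r$ when repackaging $\Im\big(\overline{\lambda}\,\phi_\C(\exp(\mathrm{i}\omega))\big)_2$ in terms of $\phi(u_0)_2$ and $\phi(u_1)_2$.
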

\begin{proof}We refer to \cite[Section 5]{HS05} for a detailed proof in the original case in which the surfaces involved are K3 surfaces and we sketch here the idea of the proof in the Abelian case, omitting computations. As $\phi$ is a Hodge isometry, arguing as in Remark \ref{exorientPSV2}, we get that the orientation of $\Htilde(S_1,\Z)$ defined by $\{\omega, \Re(\sigma),\Im(\sigma), (1,0,-1)\}$ - where $\sigma$ is a holomorphic symplectic form on $S_1$ - is preserved by $\phi$ if and only if the orientation of the positive plane $\brakett{\omega, (1,0,-1)}= \brakett{(1,\omega, -\omega^2/2)}$ is preserved. The latter is completely encoded in the complex line spanned by $\exp(i\omega)$, whose image via the $\C-$linear extension $\phi_\C$ of $\phi$ must be of the form $$\phi_\C(\exp(i\omega))=\lambda(\exp(b+ia)),$$ with $\lambda \in \C\Star$ and $a,b\in \H^{1,1}(S_2,\R)$. We write $\phi_\C(\exp(i\omega))$ as a $\Z[i]-$linear combination of $\phi(1,0,0)$, $\phi(0,0,1)$ and $\phi(0, \omega, -\omega^2/2)$ and we use it to compute the scalar $$\lambda=\phi_\C(\exp(i\omega))_0= \chi- \frac{\omega^2}{2}r+ i(\chi_\omega + \frac{\omega^2}{2}r)$$ and the real $(1,1)-$form \begin{align*}
    \abs{\lambda}^2a &= \Im(\overline{\lambda}(\exp(b+ia)))_2 = \Im(\overline{\lambda} \phi_\C(\exp(i\omega)))_2 =\\
    &= \Bigg\{\begin{array}{ll}
      \Big(\frac{\chi_\omega}{r} + \frac{\omega^2}{2}\Big)\phi(u_0)_2 - \Big(\frac{\chi}{r} - \frac{\omega^2}{2}\Big)\phi(u_1)_2   & \text{if }r\neq 0 \\
       \chi \phi(0, \omega, -\omega^2/2)_2 - \chi_\omega \phi(1,0,0)_2 - \frac{\omega^2}{2}(\phi(u_0)_2-\phi(u_1)_2)  & \text{if }r= 0.
    \end{array}
\end{align*}The latter is, up to multiplication with the positive scalar $\abs{\lambda}^2$, the class \linebreak[4]$a\in \H^{1,1}(S_2,\R)$, which determines the orientation given by $\{\Re(\lambda\exp(b+ia), \linebreak[4]\Im(\lambda\exp(b+ia)\}$, or, equivalently the orientation defined by the real and imaginary part of $\exp(ia)$. If $\omega'$ is a K\"ahler class on $S_2$, then the real and imaginary part of $\exp(ia)$ induce the same orientation as the natural one given by $\exp(i\omega')$ if and only if $a$ and $\omega'$ belong to the same connected component of the cone of positive classes in $\H^{1,1}(S_2,\R)$, from which the claim follows.    
\end{proof}

\begin{cor}\label{cororientFM}Let $\SvHuno$ and $\SvHdue$ two objects in $\GmkFM$, let $\phi\colon \Db(S_1) \to \Db(S_2)$ an elementary morphism and let $\phi^\H\colon \Htilde(S_1,\Z)\to \Htilde(S_2,\Z)$ be the induced isometry on the respective Mukai lattices. \begin{enumerate}
    \item If $\phi= \L, \FM_\PP$ or $\FM_\EE$, then $\ori(\phi^\H)=0$;
    \item If $\phi=\FM_\PP\dual$, then $\ori(\phi^\H)=1$.
\end{enumerate} 
    
\end{cor}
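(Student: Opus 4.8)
The plan is to reduce part (2) to part (1) together with a one‑line computation, and then to prove part (1) one elementary morphism at a time. Since the orientation character $\ori$ is a homomorphism, it is additive along compositions, so from $\FM_\PP\dual=D_{\hat{S}}\circ \FM_\PP$ we get $\ori((\FM_\PP\dual)^\H)=\ori(D_{\hat{S}}^\H)+\ori(\FM_\PP^\H)$. Granting $\ori(\FM_\PP^\H)=0$ from part (1), it remains to compute $\ori(D_{\hat{S}}^\H)$. By (\ref{vduale}) the isometry $D_{\hat{S}}^\H$ sends $(r,\xi,a)$ to $(r,-\xi,a)$, i.e.\ it is the identity on $\H^0(\hat{S},\R)\oplus\H^4(\hat{S},\R)$ and minus the identity on $\H^2(\hat{S},\R)$; choosing a maximal positive definite subspace of $\Htilde(\hat{S},\R)$ of the form $W_0\oplus W_2$ with $W_0$ one‑dimensional inside $\H^0\oplus\H^4$ and $W_2$ three‑dimensional inside $\H^2$, the restriction of $D_{\hat{S}}^\H$ to it has determinant $1\cdot(-1)^3=-1$. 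Hence $D_{\hat{S}}^\H$ reverses orientation, $\ori(D_{\hat{S}}^\H)=1$, so part (2) will follow once $\ori(\FM_\PP^\H)=0$ is known.

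For part (1) I would treat the three kinds of elementary morphisms separately. The tensorization $\phi=\L$ is immediate: by (\ref{vtensor}) the induced isometry $\L^\H$ equals the $B$‑field shift $\exp(\c_1(L))$, and $t\mapsto\exp(t\,\c_1(L))$ is a continuous path of isometries of $\Htilde(S,\R)$ joining the identity to $\L^\H$; therefore $\L^\H$ is connected to the identity inside $\O(\Htilde(S,\R))$, hence orientation preserving, and $\ori(\L^\H)=0$. (One may equally run this through Lemma \ref{lemmaorient}(b): for $\phi=\exp(\c_1(L))$ the distinguished class reduces to the Kähler class $\omega$ itself, which lies in $\CC_S$ by definition.) For $\phi=\FM_\PP$ I would apply the criterion of Lemma \ref{lemmaorient}(a) to the Hodge isometry $\FM_\PP^\H$ (see Remark \ref{rmkimphitildeFM}), whose explicit form $\FM_\PP^\H(r,\xi,a)=(a,-\hat{\xi},r)$ is recorded in (\ref{vtilde}). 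Taking $\omega=\c_1(H)$ one computes $r=\FM_\PP^\H(0,0,1)_0=1$, $\chi=\FM_\PP^\H(1,0,0)_0=0$ and $\chi_\omega=\FM_\PP^\H(0,\omega,-\omega^2/2)_0=-\omega^2/2$, so that $u_0=(-1,0,0)$ and $u_1=(0,-\omega,0)$, whence $\FM_\PP^\H(u_0)_2=0$ and $\FM_\PP^\H(u_1)_2=\c_1(\hat{H})$. The distinguished class of Lemma \ref{lemmaorient}(a) then collapses to $\frac{\omega^2}{2}\,\c_1(\hat{H})$, a positive multiple of the ample class $\c_1(\hat{H})$ (recall $\hat{H}$ is ample by \cite[Proposition 3.11]{Mu81}), hence an element of $\CC_{\hat{S}}$. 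Therefore $\ori(\FM_\PP^\H)=0$, which, together with the previous paragraph, also settles part (2).

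Finally, for $\phi=\FM_\EE$ I would first observe that the shift $[1]$ built into the definition of $\FM_\EE$ acts on $\Htilde(S,\Z)$ as $-\id$, which is orientation preserving because its determinant on any maximal positive definite subspace is $(-1)^4=1$; so it suffices to analyse the underlying unshifted Fourier-Mukai transform. Its cohomological action is pinned down by \cite{Bri98}, \cite{Yos01a} and by the identification $M_{(0,f,0)}(S,H)\simeq S$ of elliptic surfaces used to construct $\EE$ --- in particular it sends the point class $(0,0,1)$ to the Mukai vector $(0,f,0)$ of a rank‑one, degree‑zero sheaf on a fibre of $p$, consistently with (\ref{vell}) --- and one then feeds the resulting data into Lemma \ref{lemmaorient}(b) with the ample class $\omega=e+tf$, $t\gg0$, and checks that the distinguished class lands in $\CC_S$. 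I expect this case to be the main obstacle: the real work is to write down the cohomological action of $\FM_\EE$ unambiguously, keeping careful track both of the shift and of the above identification of moduli spaces; once that action is explicit, the orientation check via Lemma \ref{lemmaorient}(b) is routine, exactly as for $\FM_\PP$.
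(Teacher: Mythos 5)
Your overall strategy coincides with the paper's: both arguments rest on the explicit cohomological actions of the elementary morphisms and on the orientation criterion of Lemma \ref{lemmaorient}, and your computations for $\L$ and $\FM_\PP$ are correct, reproducing exactly the classes $\omega\in\CC_S$ and $(\omega^2/2)\hat{\omega}\in\CC_{\hat{S}}$ that the paper obtains. Two of your steps take a genuinely different and arguably cleaner route. For part (2) you factor $\FM_\PP\dual=D_{\hat{S}}\circ\FM_\PP$, use additivity of $\ori$ and compute $\ori(D_{\hat{S}}^\H)=1$ directly from the determinant $1\cdot(-1)^{3}=-1$ on a maximal positive subspace split between $\H^0\oplus\H^4$ and $\H^2$; the paper instead re-runs case (a) of Lemma \ref{lemmaorient} for $\FM_\PP\dual$ and finds the opposite class $-(\omega^2/2)\hat{\omega}\notin\CC_{\hat{S}}$. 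For $\L$ you connect $\exp(\c_1(L))$ to the identity through the path $t\mapsto\exp(t\,\c_1(L))$ in $\O(\Htilde(S,\R))$, avoiding the criterion altogether. Both shortcuts are sound, and your factorization of $\FM_\PP\dual$ has the advantage of isolating the orientation-reversing feature in the duality $D_{\hat{S}}$ alone.

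The one substantive gap is the case $\phi=\FM_\EE$, which you correctly single out as the main obstacle but do not actually carry out: you never write down the cohomological action of $\FM_\EE$ nor verify that the distinguished class of Lemma \ref{lemmaorient}(b) lies in $\CC_S$. This is precisely where the paper does the remaining work: it invokes \cite[Theorem 5.3]{Bri98} together with \cite[Section 3.2, (3.14)]{Yos01a} to make $\FM_\EE^\H$ explicit, checks $\FM_\EE^\H(0,0,1)_0=0$ so that case (b) applies (consistent with your observation that $(0,0,1)$ goes to $\pm(0,f,0)$, so your reduction modulo the shift $[1]$, which acts as $-\id$ and is orientation preserving in signature $(4,4)$, is fine), and then computes the distinguished class to be $(\omega\cdot f)\bigl(e+l\tfrac{\omega^2}{2}f\bigr)$ for a suitable $l>0$, which lies in $\CC_S$ because $\omega=e+tf$ is ample. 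Until that computation is done, the corollary is unproved for $\FM_\EE$; everything else in your argument stands.
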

\begin{proof}
We recall that any Fourier-Mukai equivalence $\phi\colon \Db(S_1) \to \Db(S_2)$ induces an isomorphism of weight two Hodge structures as defined in (\ref{hodgestructure}) (see \cite[Proposition 5.39]{Huy03b}), so we can apply Lemma \ref{lemmaorient}.

If $\phi=\L$, $\SvHuno=\SvH$ and $\SvHdue=(S,v_L, H)$ then, for any $\rxia \in \Htilde(S,\Z)$, $$\L^\H\rxia= (r,\xi + \c_1(L), a+ \xi\c_1(L)+ r\c_1(L)^2/2).$$ We get $\L^\H(0,0,1)_0=0$, hence we need to check the condition of case (b) of Lemma \ref{lemmaorient}, which is shown equivalent to $\omega\in\CC_S$, where $\omega$ is a K\"ahler class.

If $\phi=\FM_\PP$, $\SvHuno=\SvH$ and $\SvHdue=(\hat{S}, \tilde{v}, \hat{H})$, then by (\ref{vtilde}) we get $\FM_\PP^\H(0,0,1)_0= 1$. Condition of case (a) turns out to be $(\omega^2/2)\hat{\omega}\in \CC_{\hat{S}}$, which is satisfied, as $\omega \in \CC_S$. Analogously, the Fourier-Mukai equivalence $\phi=\FM_\PP\dual$ falls into case (a) and produces the opposite class $-(\omega^2/2)\hat{\omega} \notin \CC_{\hat{S}}$.

Lastly, if $\phi=\FM_\EE$, $\SvHuno=\SvH$ and $\SvHdue=(S,\FM_\EE^\H(v),H)$, with $\NS(S)=\brakett{e,f}$ as in Lemma \ref{lemyosh}, by \cite[Theorem 5.3]{Bri98} we get \linebreak[4]$\phi^\H(0,0,1)_0=0$. We therefore proceed to check condition (b), by using \cite[Theorem 5.3]{Bri98} again and \cite[Section 3.2, (3.14)]{Yos01a} for Mukai vectors of positive rank, which equals to $$(\omega \cdot f)(e + l\frac{\omega^2}{2}f) \in \CC_S,$$ for a suitable $l>0$, which is satisfied, as $\omega \in \CC_S$.
\end{proof}

\begin{proof}[Proof of Proposition \ref{lambdaFM}] As in the previous case, for $i=1,2$, let $A_i:=\SvHi$ be two objects in $\Gmkdef$ and let us show that the following diagram \begin{equation} \label{cdnat2}
         \begin{tikzcd}
            v_{1}\ort \arrow[d,"\Phimk_{\FM}(\phi)" swap] \arrow[r,"\lambda_{A_{1}}"]& \H^{2}(K_{v_{1}}(S_{1},H_{1}),\Z)\arrow[d,"\ptmkFM(\phi)"]\\
            v_{2}\ort \arrow[r,"\lambda_{A_{2}}"]& \H^{2}(K_{v_{1}}(S_{2},H_{2}),\Z).
        \end{tikzcd}
    \end{equation} is commutative, for any elementary morphism $\phi\in \Hom_{\Gmkdef}(A_1,A_2)$, i.e. for $\phi=\L, \FM_\PP, \FM_\PP\dual$ and $\FM_\EE$. 
    A straightforward computation shows that the respective kernels of such Fourier-Mukai equivalences satisfy the hypotheses (2.1) and (2.2) of \cite[Section 2.1]{Yos01a}, namely they are all flat and strongly simple on both factors. Moreover, any $E \in \KvSHuno$ satisfies the $WIT_0$ property with respect to $\phi$ (see \cite[Definition 2.25, Lemma 2.28]{PR18} and \cite[Lemma 3.9]{Yos01a}). 

    If $\phi= \L, \FM_\PP$ or $\FM_\EE$, Proposition 2.4 of \cite{Yos01a} produces the equality $$\lambda_{A_2}^{-1}\circ \Phi_{v_1,\ast}\circ \lambda_{A_1}=\phi^\H_{|v_1\ort},$$ which is exactly the commutativity condition for diagram (\ref{cdnat2}), by Corollary \ref{cororientFM} (1). If $\phi=\FM_\PP\dual$, Proposition 2.5 of \cite{Yos01a} provides the identity $$\lambda_{A_2}^{-1}\circ \Phi_{v_1,\ast}\circ \lambda_{A_1}=-\phi^\H_{|v_1\ort},$$ which concludes the proof, as $\phi^\H$ is orientation reversing by Corollary \ref{cororientFM} (2). 
\end{proof}

By defining $$\lambda\colon \Phimk \longrightarrow \ptmk$$ as the unique natural transformation of functors restricting to $\lambdadef$ on $\Gmkdef$ and to $\lambda_{FM}$ on $\GmkFM$, Proposition \ref{lambdadef} and Proposition \ref{lambdaFM} yield the following

\begin{cor}\label{corlambda}
    For any $m,k\in \N\setminus\{0\}$, with $k>2$, there exists an isomorphism of functors $$\lambda\colon \Phimk \longrightarrow \ptmk.$$
\end{cor}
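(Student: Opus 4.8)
The plan is to glue the two isomorphisms of functors furnished by Proposition \ref{lambdadef} and Proposition \ref{lambdaFM}, exploiting that $\Gmk=\Gmkdef\ast\GmkFM$ is the free product of two groupoids sharing the same objects. First I would observe that for every object $\SvH$ — which is simultaneously an object of $\Gmkdef$, of $\GmkFM$ and of $\Gmk$ — the object-components $(\lambda_{\defo})_{\SvH}$ and $(\lambda_{\FM})_{\SvH}$ are, by construction, one and the same morphism of $\AAk$, namely the canonical isometry $\lambda_{\SvH}\colon v\ort \to \H^2(\Kv(S,H),\Z)$ of Theorem \ref{pr20thm1.6}; moreover, since $k>2$ forces $(m,k)\neq(1,2)$, that isometry is an isomorphism in $\AAk$. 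Thus a single well-defined family $\lambda:=(\lambda_{\SvH})_{\SvH}$ of isomorphisms is available, indexed by the common object set.

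Next I would verify naturality with respect to an arbitrary morphism of $\Gmk$. By definition of the free product, such a morphism is a finite formal concatenation $h_1\ast\cdots\ast h_l$ of morphisms, and formal inverses of morphisms, coming from $\Gmkdef$ or from $\GmkFM$; since $\Phimk$ and $\ptmk$ are functors restricting respectively to $\Phimkdef,\PhimkFM$ and to $\ptmkdef,\ptmkFM$, the naturality square attached to $h_1\ast\cdots\ast h_l$ decomposes, by functoriality, into the horizontal pasting of the naturality squares attached to the individual factors. Each such square commutes — for a factor in $\Gmkdef$ by Proposition \ref{lambdadef}, for a factor in $\GmkFM$ by Proposition \ref{lambdaFM}, and for a formal inverse by inverting the relevant commuting square — hence the pasted square commutes as well. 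This shows that $\lambda$ is a natural transformation $\Phimk\Rightarrow\ptmk$, and the universal property of the free product of groupoids (cf. \cite[Remark 2.31]{OPR23}) guarantees that it is the unique one restricting to $\lambda_{\defo}$ on $\Gmkdef$ and to $\lambda_{\FM}$ on $\GmkFM$.

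Finally, since every component $\lambda_{\SvH}$ is an isomorphism in $\AAk$, the natural transformation $\lambda$ is automatically an isomorphism of functors, which is the assertion. I do not expect any genuine obstacle: the substantive work is already carried out in Propositions \ref{lambdadef} and \ref{lambdaFM}, and the only point requiring a moment's care is the compatibility of $\lambda_{\defo}$ and $\lambda_{\FM}$ on objects, which is immediate because both are assembled from the single isometry of Theorem \ref{pr20thm1.6}.
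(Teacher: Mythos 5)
Your proposal is correct and follows essentially the same route as the paper: the paper likewise defines $\lambda$ as the unique natural transformation restricting to $\lambda_{\defo}$ on $\Gmkdef$ and to $\lambda_{\FM}$ on $\GmkFM$, with Propositions \ref{lambdadef} and \ref{lambdaFM} supplying all the substance. Your additional checks (agreement of the object-components with $\lambda_{\SvH}$, pasting of naturality squares along concatenations in the free product, invertibility of each component) are exactly the details the paper leaves implicit.
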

\section{The locally trivial monodromy group}
\label{secmonodromy}In this Section we conclude the computation of the locally trivial monodromy group of moduli spaces of the form $\Kv(S,H)$, where $\SvH$ is  an $(m,k)-$triple, by showing that the injective morphism $$i^\sharp_{w,m}\colon \monlt(\KvSH) \longrightarrow \mon^2(\KwSH)$$ of Corollary \ref{corinjmon} is indeed surjective. In Section \ref{secsurfacemon} we describe locally trivial monodromy operators induced by monodromy operators of the Abelian surface $S$ as morphisms in the image of the representation $\Phitildemkdef$. In Section \ref{secsurj} we include the group $\Nn(v\ort)$ of Theorem \ref{thmmonkummer} in $\monlt(\KvSH)$ by showing that its generators belong to the image of the representation $\Phimk$ and using the isomorphism of functors $\lambda$ defined in Section \ref{seclambda}. In Section \ref{secmainresults} we show that the previous inclusion is indeed an equality, by using the morphism $i^\sharp_{w,m}$ as a constraint. From this, we deduce surjectivity of $i^\sharp_{w,m}$ too and we extend the result to the whole locally trivial deformation class of $\KvSH$, refining Corollary \ref{corinjmondefo}. 

\subsection{Locally trivial monodromy operators of surface type}\label{secsurfacemon}
In this section we show that monodromy operators in projective families of polarized Abelian surfaces can be lifted to some moduli spaces of sheaves $\KvSH$ on the same surface. We will translate the lifting process in terms of action of the representation $\Phitildemkdef$, providing an injective morphism $$\mon^2(S)\longrightarrow \monlt(\KvSH)$$ of groups, under the isomorphism of functors $\lambda$. The locally trivial monodromy operators in the image of this map will be called \textit{of surface type}.
\subsubsection{The monodromy group of Abelian surfaces}\label{secmonAb}
We start by recalling that, as a consequence of \cite[Theorem 1 and 2]{Shi78}, if $S$ is an Abelian surface, then \begin{equation}
    \label{monab}\mon^2(S)=\SO^+(\H^2(S,\Z)),
\end{equation}where the latter is the group of orientation preserving isometries of $\H^2(S,\Z)$ of determinant $1$ (see Appendix \ref{det} and \ref{rmkorient}). In \cite[Section 3]{MR19}, a finer result is proven, showing that the monodromy group of an Abelian surface can be generated by monodromy operators in a finite number of projective families:
\begin{thm}{(\cite[Corollary 3.4, Corollary 3.6]{MR19})}\label{thmmonab} Let $S$ be an elliptic Abelian surface with $\NS(S)\simeq U^{\oplus 2}$. There exist four ample classes $h_1,\dots, h_4 \in \NS(S)$ such that 
   \begin{equation}
       \label{monab2}\mon^2(S)= \brakett{\SO^+(\H^2(S,\Z))_{h_i}\colon i=0,\dots,4}.
   \end{equation} Moreover, for any $i=0,\dots, 4$, there exists a projective family $f_i\colon \colon \SS_i \to T_i$ of polarized Abelian surfaces such that \begin{equation}\label{monfi}\SO^+(\H^2(S,\Z))_{h_i}=\mon^2_{f_i}(S),\end{equation} where the latter is the group generated by monodromy operators in the family $f_i$.
\end{thm}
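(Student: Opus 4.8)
The plan is to split the statement into a lattice-theoretic generation claim and a geometric realisation claim, using Shioda's computation as the bridge. By \cite{Shi78} (see \eqref{monab}) we already know $\mon^2(S)=\SO^+(\H^2(S,\Z))$, and $\H^2(S,\Z)$, being even unimodular of signature $(3,3)$, is isometric to $U^{\oplus 3}$. So it is enough to produce finitely many ample classes $h_0,\dots,h_4\in\NS(S)$ such that the subgroups $\SO^+(\H^2(S,\Z))_{h_i}$ generate $\SO^+(\H^2(S,\Z))$, and, for each such class $h$, to exhibit a projective family $f_h\colon\SS_h\to T_h$ of $h$-polarized Abelian surfaces with $S$ as a fibre whose monodromy group is exactly $\SO^+(\H^2(S,\Z))_h$.

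For the geometric realisation I would fix a primitive ample class $h$ and take $T_h$ to be (a smooth quasi-projective model of) the moduli space of $h$-polarized Abelian surfaces, equipped with enough level structure so that the universal family descends to a genuine projective family $f_h\colon\SS_h\to T_h$ containing $S$; the polarization makes $f_h$ a projective morphism and $T_h$ is quasi-projective, as required. The period map identifies the relevant connected component of $T_h$ with an arithmetic quotient of one component of the type IV domain attached to the lattice $h^\perp\subseteq\H^2(S,\Z)$ of signature $(2,3)$, and surjectivity of the period map together with the Torelli theorem for polarized Abelian surfaces --- equivalently, the description of the moduli of principally polarized Abelian surfaces by $\mathrm{Sp}_4(\Z)$ combined with the exceptional isogeny $\mathrm{Sp}_4\sim\mathrm{Spin}$ relating $\H^2=\wedge^2\H^1$ to a quadratic lattice of signature $(2,3)$ --- shows that the image of $\pi_1(T_h)$ in $\O(\H^2(S,\Z))$ consists exactly of the isometries fixing $h$ and acting trivially on the discriminant group; a direct check then identifies this image with $\SO^+(\H^2(S,\Z))_h$. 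For the generation claim I would exploit that $\NS(S)\simeq U^{\oplus 2}$ carries an abundance of ample classes: choosing the $h_i$ explicitly in terms of the fibre and section classes $e,f$ of the elliptic fibration and their analogues in the second hyperbolic summand, one reduces to checking that a set of generators of $\SO^+(U^{\oplus 3})$ --- Eichler transvections and reflections in $(\pm 2)$-vectors, which suffice by Eichler's criterion and the presence of enough orthogonal hyperbolic planes --- each lie in one of the stabilizers $\SO^+(U^{\oplus 3})_{h_i}$. Combining the two claims with the composition of monodromy operators (Remark \ref{rmkcomposition}) yields \eqref{monab2} and \eqref{monfi}.

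The delicate point is the geometric realisation: showing that the monodromy of $f_h$ is the \emph{full} stabilizer $\SO^+(\H^2(S,\Z))_h$, and not merely a finite-index subgroup. This forces one to control the connectedness of the moduli space of $h$-polarized Abelian surfaces, to pin down the precise arithmetic group that appears (distinguishing the orthogonal group of $h^\perp$, its stable subgroup, and the stabilizer of $h$ inside $\O^+(\H^2(S,\Z))$), and to handle the level structure needed to rigidify the moduli problem without shrinking the relevant monodromy image --- all while keeping the families strictly projective, which is exactly what upgrades Shioda's (a priori non-algebraic) argument to the statement needed here.
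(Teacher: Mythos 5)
This statement is not proved in the paper: it is quoted from \cite{MR19} (Corollaries 3.4 and 3.6), and what the paper supplies afterwards is only a recollection of the construction behind it, namely Remark \ref{rmkprojfamilies} (the families $f_{2d}\colon \AA_{2d}\to T_{2d}$ obtained from an open subset of the Hilbert scheme of Abelian surfaces embedded in $\proj^{9d-1}$ by $\abs{3H}$) and the choice of classes (\ref{hi}) whose stabilizers generate $\SO^+(\H^2(S,\Z))$ by Eichler's criterion. Your treatment of the generation half is essentially the same as that of \cite{MR19}: explicit ample classes written in terms of $e$ and $f$, plus Eichler transvections and the presence of orthogonal hyperbolic planes. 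That part is fine.

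The geometric realisation half, however, has a genuine gap. You propose to rigidify the moduli problem of $h$-polarized Abelian surfaces by adding level structure so as to obtain a universal projective family over a quasi-projective base. But passing to a level cover replaces $\pi_1$ of the moduli space by the corresponding congruence subgroup, so the monodromy of the resulting family is a \emph{proper} finite-index subgroup of $\SO^+(\H^2(S,\Z))_h$, not the full stabilizer --- and the whole point of (\ref{monfi}) is that a single projective family realizes the full stabilizer. You flag this yourself as the delicate point, but the construction you describe does not resolve it, and as stated it fails. The construction recalled in Remark \ref{rmkprojfamilies} avoids the problem entirely: the base $T_{2d}$ is a connected open subset of a Hilbert scheme, which maps onto the moduli stack of polarized Abelian surfaces with connected ($\mathrm{PGL}$-orbit) fibres, so no monodromy is lost and no level structure is ever introduced. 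If you want to salvage your route you would need an argument that the monodromy of the coarse (non-fine) moduli space is still realized by an actual family --- which is exactly what the Hilbert-scheme construction provides.
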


Theorem \ref{thmmonab} allows us to consider only monodromy operators in projective families, which is a requirement in order to induce deformations of moduli spaces of the form $\KvSH$, using the theory introduced in the previous Sections. 

For later use, we give some more details concerning the projective families $f_i\colon \SS_i \to T_i$ and the ample classes $h_i$, for $i=0,\dots,4$, involved in Theorem \ref{thmmonab}. For a more precise discussion, we refer to \cite[Section 3]{MR19}.

\begin{rmk}\label{rmkprojfamilies}
    Let $(S,h)$ be a \textit{polarized Abelian surface of degree $2d \in \N\setminus\{0\}$}, i.e. a pair made up of an Abelian surface $S$ and an indivisible class $h\in \H^2(S,\Z)\cap \H^{1,1}(S)$ represented by an ample line bundle $H$ on $S$ such that $H^2=2d$. Any such polarized Abelian surface can be embedded in a fixed projective space $\proj^N$ of dimension $N=9d-1$ by means of the linear system $\abs{3H}$ associated to the ample line bundle $3H$. As a consequence, there exists a Zariski open subset $T_{2d}$ of the Hilbert scheme parametrizing closed subschemes of $\proj^N$ with Hilbert Polynomial $P(x)=9dx^2$, such that, for any $t \in T_{2d}$, the corresponding subscheme $S_t\subseteq\proj^N$ is an Abelian surface equipped with an ample line bundle $H_t$ such that $(S_t, \c_1(H_t))$ is a polarized Abelian surface of degree $2d$. Furthermore, for any polarized Abelian surface $(S,h)$ of degree $2d$ there exists $t\in T_{2d}$ such that $(S,h)\simeq (S_t, \c_1(H_t))$. By base change over $T_{2d}$ of the universal family of the above mentioned Hilbert scheme, we get a smooth and proper family $$f_{2d}\colon \AA_{2d}\longrightarrow T_{2d}.$$ By \cite[Remark 3.1]{MR19}, the base $T_{2d}$ is a smooth, connected, quasi-projective variety of dimension $(N+1)^2+2$. Moreover, as the moduli space of polarized Abelian surfaces has dimension $3$ (see \cite[Remark 8.10.4]{BL04}) and by its algebraic construction (see \cite[Sections 8.7, 8.10]{BL04}), we get that the ample line bundles $H_t$, for $t\in T_{2d}$, glue together in a relatively ample line bundle $\HH_{2d}$ on $\AA_{2d}$.
\end{rmk}

In light of Remark \ref{rmkprojfamilies}, we can explicitly describe the families of identity (\ref{monfi}) in Theorem \ref{thmmonab} as $$f_i:= f_{2d_i}\colon \AA_{2d_i}\longrightarrow T_{2d_i},$$ where $H_{i}^2=2d_i$ for $i=0,\dots, 4$. 

In order to identify the ample classes $h_i\in \NS(S)$, let $S\overset{p}{\to} E$ be an elliptic Abelian surface with $\NS(S)\simeq \brakett{e,f}$, where $e$ and $f$ denote, respectively, the class of a \linebreak[4]$0-$section and the class of a fiber, as in Section \ref{secelliptic}. Let us consider three postive integers $r,s,p>0$ and define the following ample classes \begin{equation}\label{hi}
    h_1=e+rf, \phantom{+} h_2=e+(r-1)f, \phantom{+} h_3=se+pf, \phantom{+} h_4=(s-1)e+pf.
\end{equation}

Then, the same proof of \cite[Lemma 3.5]{MR19}, which is an application of Eichler's criterion (\cite[Lemma 3.5]{GHS10}
) by using Eichler's transvections (\cite[Lemma 3.2, Proposition 3.3 (iii)]{GHS09}
), shows that the classes $h_1, \dots, h_4$ are such that their stabilizers satisfy \begin{equation}\label{genso+}
    \SO^+(\H^2(S,\Z))= \brakett{\SO^+(\H^2(S,\Z))_{h_i}\colon i=0,\dots,4}.
\end{equation} Equalities (\ref{monab}) and (\ref{genso+}) together yield equality (\ref{monab2}) of Theorem \ref{thmmonab}. \begin{rmk}\label{rmkmonfi}
    We conclude by pointing out that, by \cite[Corollary 3.4]{MR19}, equality (\ref{monfi}) holds for any projective deformation $f\colon \SS \to T$ of any polarized Abelian surface $(S,h)$.
\end{rmk}
 
\subsubsection{Lift of the monodromy of an Abelian surface to moduli spaces of sheaves}We will now show that the monodromy operators induced by the projective families introduced in the previous Subsection can be lifted to locally trivial monodromy operators on a moduli space $\KvSH$ for a suitable $(m,k)-$triple $\SvH$. This will be proven in two special cases, which will be the only two needed in the next Section.
 \begin{rmk}
     We recall that the monodromy group $\mon^2(S)$ of an Abelian surface $S$ is, by definition, a subgroup of $\O(\H^2(S,\Z))$, and that the latter can be identified as the subgroup of $\O(\Htilde(S,\Z))$ of isometries acting as the identity on $\H^0(S,\Z) \oplus \H^4(S,\Z)$. Another remarkable subgroup of $\O(\Htilde{(S,\Z)})$ is given by the following: let $\SvH$ be an $(m,k)-$triple and let us consider the homomorphism of groups $$\PhitildemkdefSvH\colon \Aut_{\Gmkdef}\SvH \longrightarrow \Aut_{\Htildemk}(\Htilde(S,\Z),v,\epsilon_S)$$ induced by the action of the representation $\Phitildemkdef$ on the respective isotropy groups (see Definition \ref{defgroupoid} (1)). By Definition \ref{defHtildemk} we have an inclusion of groups $$\Im(\PhitildemkdefSvH)\subseteq \O(\Htilde(S,\Z))_v,$$ where the latter is the subgroup of isometries fixing the vector $v$.
     
     On the other hand, by Corollary \ref{corimpt}, the representation $\ptmkdefSvH$ provides us an inclusion of groups $$\Im(\ptmkdefSvH)\subseteq \monlt(\KvSH).$$ The interplay between the two representations above, studied in Section \ref{seclambda}, will allow us to compare the groups $\mon^2(S)$ and $\monlt(\KvSH)$.
 \end{rmk}

\begin{lem}\label{lemsurfacemon1} Let $S$ be an Abelian surface and let $v=(m,0,-mk)$ be a Mukai vector, with $m>1$ and $k>2$. Let $h\in \NS(S)$ be the class of a $v-$generic polarization on $S$ and  $f\colon \SS \to T$ a projective deformation of the polarized Abelian surface $(S,h)$ as in Remark \ref{rmkprojfamilies}. Then there exists a $v-$generic polarization $H$ on $S$ such that $h=\c_1(H)$ and $$\mon^2_f(S)\subseteq \Im(\PhitildemkdefSvH).$$
\end{lem}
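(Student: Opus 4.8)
The plan is to realize every element of $\mon^2_f(S)$ as the image under $\Phitildemkdef$ of an automorphism of the triple $\SvH$ in $\Gmkdef$ arising from a deformation path, and then to invoke \cite{MR19} to see that restricting the base of $f$ to the $v$-generic locus does not shrink its monodromy group. The structural point that makes this work is that the Mukai vector $v=(m,0,-mk)=mw$, with $w=(1,0,-k)$ primitive and $w^2=2k>4$, has \emph{vanishing degree-two component}. Thus, for any smooth family $g\colon\YY\to T$ of Abelian surfaces, choosing the structure sheaf $\LL=\OO_{\YY}$ as the line bundle in Definition \ref{defdefotripla}(2) gives $v_t=(m,\c_1(\OO_{\YY_t}),-mk)=v$ for every $t$; equivalently, $v$ extends to a flat section of $R\ev g\sstar\Z$ supported in degrees $0$ and $4$ (the local systems $R^0g\sstar\Z$ and $R^4g\sstar\Z$ being trivial). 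To pin down $H$: since $h$ is the class of a $v$-generic polarization and $v$-genericity of a polarization depends only on its numerical class (Definition \ref{defvgen}), any line bundle with first Chern class $h$ is $v$-generic, and I take $H$ to be the line bundle on $S\simeq\SS_{t_0}$ obtained by restricting the relative polarization $\HH$ of $f$ to the central fibre, so that $\c_1(H)=h$ and $\SvH$ is an $(m,k)$-triple with $k>2$, hence an object of $\Gmkdef$.

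Write $f\colon\SS\to T$ with relatively ample line bundle $\HH$ and $t_0\in T$ such that $(\SS_{t_0},\HH_{t_0})\simeq(S,H)$. Let $Z\subseteq T$ be the locus of points where $\HH_t$ fails to be $v_t=v$-generic; by Remark \ref{vgenopen} it is Zariski closed, and $t_0\notin Z$ because $H$ is $v$-generic. Set $T':=T\setminus Z$; since the base in Remark \ref{rmkprojfamilies} is smooth, connected and quasi-projective, so is $T'$. Then $(f_{|T'}\colon\SS_{|T'}\to T',\ \OO_{\SS_{|T'}},\ \HH_{|T'})$ is a deformation of $\SvH$ along $T'$ in the sense of Definition \ref{defdefotripla}, so that for every loop $\gamma$ in $T'$ based at $t_0$ the $6$-tuple $\alpha_\gamma:=(f_{|T'},\OO_{\SS_{|T'}},\HH_{|T'},t_0,t_0,\gamma)$ is a deformation path from $\SvH$ to itself, defining an element $\overline{\alpha_\gamma}\in\Aut_{\Gmkdef}\SvH$, with $\Phitildemkdef(\overline{\alpha_\gamma})=p_{\alpha_\gamma}$ the parallel transport of $\gamma$ in $R\ev f\sstar\Z$. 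Because parallel transport respects the cohomological grading and $R^0f\sstar\Z$, $R^4f\sstar\Z$ are trivial (the fibres being connected complex surfaces, with complex orientation preserved), $p_{\alpha_\gamma}$ acts as the identity on $\H^0(S,\Z)\oplus\H^4(S,\Z)$ and as the monodromy operator $\PT_{f_{|T'}}(\gamma)\in\mon^2_{f_{|T'}}(S)$ on $\H^2(S,\Z)$; under the identification of $\O(\H^2(S,\Z))$ with the stabiliser of $\H^0(S,\Z)\oplus\H^4(S,\Z)$ inside $\O(\Htilde(S,\Z))$, the isometry $p_{\alpha_\gamma}$ \emph{is} this monodromy operator. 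Letting $\gamma$ range over $\pi_1(T',t_0)$ then yields $\mon^2_{f_{|T'}}(S)\subseteq\Im(\PhitildemkdefSvH)$.

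It remains to check that passing from $T$ to $T'$ costs no monodromy, i.e.\ that $\mon^2_f(S)\subseteq\mon^2_{f_{|T'}}(S)$, and this is the crux. The restricted family $f_{|T'}$ is still a projective deformation of the polarized Abelian surface $(S,h)$ with connected quasi-projective base, so by Remark \ref{rmkmonfi} (that is, \cite[Corollary 3.4]{MR19}) $\mon^2_{f_{|T'}}(S)=\SO^+(\H^2(S,\Z))_h$, and the same remark applied to $f$ gives $\mon^2_f(S)=\SO^+(\H^2(S,\Z))_h$ as well; hence the two coincide. Together with the previous paragraph this gives $\mon^2_f(S)\subseteq\Im(\PhitildemkdefSvH)$, which is the assertion. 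The substantive input is therefore \cite[Corollary 3.4]{MR19}: the full stabiliser $\SO^+(\H^2(S,\Z))_h$ is already realised by \emph{every} projective deformation of $(S,h)$, so that shrinking the base of $f$ to the dense open $v$-generic locus loses nothing; everything else is the routine bookkeeping of identifying $\H^2$-monodromy operators of $S$ with their extensions, by the identity on $\H^0\oplus\H^4$, to the Mukai lattice. The only place where genuine care is needed is ensuring that the restricted family still satisfies all three conditions of Definition \ref{defdefotripla} simultaneously, which is precisely why the vanishing of the degree-two part of $v$ (making $\LL=\OO_{\SS}$ an admissible choice with $v_t\equiv v$) is what unlocks the argument.
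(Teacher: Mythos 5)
Your proposal is correct and coincides with the paper's proof in all of its constructions: the choice $\LL=\OO_{\SS}$ exploiting that $v=(m,0,-mk)$ has no degree-two component, the definition of $H$ as the restriction of the relative polarization, the removal of the Zariski-closed non-generic locus $Z$ via Remark \ref{vgenopen}, and the identification of the resulting parallel transport operators with $(\id,g,\id)$ on the Mukai lattice. The one point where you diverge is the step you rightly single out as the crux, namely why passing from $T$ to $T'=T\setminus Z$ loses no monodromy. The paper argues topologically: since $Z$ is a proper Zariski-closed subset of the smooth connected base, every loop in $T$ based at $t_0$ is homotopic to a loop in $T'$ (as in the proof of \cite[Lemma 3.6]{OPR23}), so every monodromy operator of $f$ is literally realized by a deformation path over $T'$. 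You instead invoke Remark \ref{rmkmonfi} (i.e.\ \cite[Corollary 3.4]{MR19}) twice, identifying both $\mon^2_f(S)$ and $\mon^2_{f_{|T'}}(S)$ with $\SO^+(\H^2(S,\Z))_h$. This is legitimate under the paper's stated scope of that remark, and it buys a shorter argument; but be aware that "any projective deformation" there cannot be read literally (a constant family has trivial monodromy), so your step really rests on the fact that \cite[Corollary 3.4]{MR19} covers restrictions of the Hilbert-scheme families to nonempty Zariski open subsets of the base --- which is proved there by essentially the same homotopy argument the paper uses directly. The paper's route is thus more self-contained, while yours outsources the codimension-two argument to the cited corollary.
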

\begin{proof} The claim will follow as soon as we show that the smooth and projective deformation $f\colon \SS \to T$ can be used to produce a deformation path centered in the $(m,k)-$triple $(S,v,H)$, with $H$ a suitable $v-$generic polarization. Notice that, as $v$ is of the form $(m,0,-mk)$, it shall be constant along the deformation.

By Remark \ref{rmkprojfamilies}, there exists an ample line bundle $\HH$ on $\SS$ such that, for every $t\in T$, its restriction $H_t:= \HH_t$ is an ample line bundle on $\SS_t$ such that $(\SS_t,\c_1(H_t))$ is a polarized Abelian surface of degree $h^2$. Moreover, there exists a point $t_0$ such that $(\SS_{t_0}, \c_1(H_{t_0}))\simeq (S,h)$. Set $H:= H_{t_0}$ and observe that, as the notion of $v-$genericity of a polarization only depends on its first Chern class, the ample line bundle $H$ is $v-$generic. Hence, by Remark \ref{vgenopen}, the points $t\in T$ such that $H_t$ is not $v-$generic define a Zariski closed proper subset $Z$ of $T$. As in the proof of \cite[Lemma 3.6]{OPR23}, we get that for any loop $\gamma$ in $T$ centered in $t_0$, there exists a loop $\gamma'$ in $T':=T\setminus Z$ centered in $t_0$ and homotopic to $\gamma$. Hence, up to replacing $T$ with $T'$, we can assume that $\HH_t$ is $v-$generic for every $t \in T$.  

Consequently, if $g\in \mon^2(S)$ is the monodromy operator in the family $f$ along a path $\gamma$ centered in $t_0$, then the deformation path $$\alpha=(f\colon \SS \to T, \Os, \HH, t_0,t_0,\gamma)$$ defines a a morphism $\overline{\alpha}$ in $\Gmkdef$ such that $(\id, g, \id) =\PhitildemkdefSvH(\overline{\alpha})$, concluding the proof.\end{proof}
For later use, we provide an analogue of Lemma \ref{lemsurfacemon1} for another special class of $(m,k)-$triples.
\begin{lem}\label{lemsurfacemon2} Let $(S,h)$ be a polarized Abelian surface such that $\NS(S)=\Z h$, let $v=(r,mh,a)$ be a Mukai vector, and let $f\colon \SS \to T$ be a projective deformation of $(S,h)$ as in Remark \ref{rmkprojfamilies}.  Then there exists a $v-$generic polarization $H$ on $S$ such that $h=\c_1(H)$ and  $$\mon^2_f(S)\subseteq \Im(\PhitildemkdefSvH).$$
\end{lem}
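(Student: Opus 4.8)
The plan is to mimic the argument of Lemma \ref{lemsurfacemon1}, reducing the statement to the construction of a deformation path in $\Gmkdef$ centered at a single $(m,k)$-triple $\SvH$, whose associated surface monodromy realizes $\mon^2_f(S)$. The key difference from the previous case is that here the Mukai vector $v=(r,mh,a)$ is \emph{not} constant along an arbitrary deformation of $(S,h)$: its middle component $mh$ varies with the polarization class. The point is that this is not an obstruction, because a deformation of a \emph{polarized} Abelian surface comes equipped with a flat section of the $H^2$-local system specializing to $h$, namely $\c_1(\HH_t)$, and we simply bundle a line bundle $\LL$ with $\c_1(\LL_t)=m\,\c_1(\HH_t)$ into the data of the deformation path. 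This is exactly what Definition \ref{defdefotripla} allows, with $L$ a line bundle on $S$ with $\c_1(L)=mh$ and with $v_t=(r,m\,\c_1(\HH_t),a)$ varying accordingly.

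First I would invoke Remark \ref{rmkprojfamilies} to produce, from the projective deformation $f\colon \SS\to T$ of $(S,h)$, a relatively ample line bundle $\HH$ on $\SS$ with $\HH_t$ defining a polarization of degree $h^2$ on $\SS_t$ for every $t$, and a point $t_0\in T$ with $(\SS_{t_0},\c_1(H_{t_0}))\simeq(S,h)$. Set $H:=H_{t_0}$ and $\LL:=\HH^{\otimes m}$, so $\c_1(\LL_{t_0})=mh$; since $\NS(S)=\Z h$, the polarization $H$ is $v$-generic by Remark \ref{rmkvgener} (the ample generator is always generic in Picard rank one). Next, exactly as in Lemma \ref{lemsurfacemon1}, I would use Remark \ref{vgenopen}: the locus $Z\subseteq T$ where $\HH_t$ fails to be $v_t$-generic is Zariski closed and proper (non-empty complement since $t_0\notin Z$), and arguing as in \cite[Lemma 3.6]{OPR23} any loop in $T$ centered at $t_0$ is homotopic to a loop in $T':=T\setminus Z$, so we may assume $\HH_t$ is $v_t$-generic for all $t\in T$.

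With this in hand, for any loop $\gamma$ in $T$ centered at $t_0$ representing a monodromy operator $g\in\mon^2_f(S)$ of the surface, the tuple $\alpha=(f\colon\SS\to T,\LL,\HH,t_0,t_0,\gamma)$ is a deformation path in $\Gmkdef$ from $\SvH$ to itself, and by construction $\PhitildemkdefSvH(\overline{\alpha})=(\id,g,\id)$, where $g$ acts on $\H^2(S,\Z)\subseteq\Htilde(S,\Z)$ and fixes $\H^0\oplus\H^4$ pointwise; note $g(v)=v$ since $g$ fixes $h$ hence $mh$, and $r,a$ are untouched. This shows $g\in\Im(\PhitildemkdefSvH)$ and hence $\mon^2_f(S)\subseteq\Im(\PhitildemkdefSvH)$. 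The only subtlety — the ``main obstacle'', though a mild one — is checking that the flat section $\v$ of $R\ev f\sstar\Z$ with $\v_{t_0}=v$ genuinely exists and is compatible with the deformation datum $\LL$, i.e. that $\v_t=(r,\c_1(\LL_t),a)$ for all $t$; this follows because $\c_1(\LL_t)=m\,\c_1(\HH_t)$ is the restriction of the global class $m\,\c_1(\HH)\in\H^2(\SS,\Z)$, which is a flat section of $R^2f\sstar\Z$ by construction, so that genericity of the polarization together with Remark \ref{rmkimphitildedef} guarantees $\overline{\alpha}$ is a well-defined morphism of $\Gmkdef$ with the asserted image.
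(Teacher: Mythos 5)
Your proposal is correct and coincides with the paper's own argument: the paper proves this lemma in one line by running the proof of Lemma \ref{lemsurfacemon1} verbatim with the deformation path $\alpha=(f\colon \SS \to T, \HH^{\otimes m}, \HH, \gamma, t_0,t_0)$, i.e.\ exactly your choice $\LL=\HH^{\otimes m}$. The extra details you supply (the flat section $(r,m\,\c_1(\HH_t),a)$ of $R\ev f\sstar\Z$, $v$-genericity of the ample generator in Picard rank one, and the shrinking of $T$ via Remark \ref{vgenopen}) are precisely the points implicitly carried over from Lemma \ref{lemsurfacemon1}.
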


\begin{proof}
    The proof runs exactly as the proof of Lemma \ref{lemsurfacemon1}, by using the deformation path 
        $\alpha=(f\colon \SS \to T, \HH^{\otimes m}, \HH, \gamma, t_0,t_0)$.
\end{proof}
 \begin{cor}\label{corsurfacemon}
     Let $\SvH$ be an $(m,k)-$triple and $f\colon \SS \to T$ a projective family as in Lemma \ref{lemsurfacemon1} or in Lemma \ref{lemsurfacemon2}. Then there exists an injective morphism $$\mon^2_f(S) \hookrightarrow \Im(\ptmkdefSvH)\subseteq \monlt(\KvSH).$$
 \end{cor}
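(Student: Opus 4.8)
The plan is to assemble the map out of the two tools already in place: Lemma \ref{lemsurfacemon1} (resp. Lemma \ref{lemsurfacemon2}), which lifts $\mon^2_f(S)$ into the image of $\PhitildemkdefSvH$, and the isomorphism of functors $\lambda\colon \Phimk\to\ptmk$ of Corollary \ref{corlambda}, which transports this information to $\ptmkdefSvH$ and hence, via Corollary \ref{corimpt}, into $\monlt(\KvSH)$. First I would fix $g\in\mon^2_f(S)$ and, using $\mon^2(S)=\SO^+(\H^2(S,\Z))$ from (\ref{monab}), regard it as the isometry $\widetilde g\in\O(\Htilde(S,\Z))$ acting as $g$ on $\H^2(S,\Z)$ and as the identity on $\H^0(S,\Z)\oplus\H^4(S,\Z)$; since $g$ fixes the polarization class whenever that class occurs in $v$, the isometry $\widetilde g$ fixes $v$. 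By the relevant lemma there is $\overline\alpha\in\Aut_{\Gmkdef}\SvH$ with $\PhitildemkdefSvH(\overline\alpha)=\widetilde g$, and evaluating the natural transformation $\lambda$ on $\overline\alpha$ (an element of $\Aut_{\Gmkdef}\SvH\subseteq\Aut_{\Gmk}\SvH$) yields
\[
\ptmkdefSvH(\overline\alpha)=\lambda_{\SvH}\circ\Psimk(\widetilde g)\circ\lambda_{\SvH}^{-1}=(-1)^{\ori(\widetilde g)}\,\lambda_{\SvH}\circ\widetilde g_{|v\ort}\circ\lambda_{\SvH}^{-1}.
\]

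The key observation is then that $\ori(\widetilde g)=0$: since $g$ preserves the orientation $\{\omega,\Re(\sigma),\Im(\sigma)\}$ of $\H^2(S,\Z)$ (as $g\in\O^+(\H^2(S,\Z))$) and $\widetilde g$ fixes the positive vector $(1,0,-1)$, the isometry $\widetilde g$ preserves the orientation $\epsilon_S$ of Example \ref{exorientS}. Consequently the assignment $g\mapsto\lambda_{\SvH}\circ\widetilde g_{|v\ort}\circ\lambda_{\SvH}^{-1}$ does not depend on the chosen $\overline\alpha$ (it equals $\ptmkdefSvH(\overline\alpha)$ for any such $\overline\alpha$), and it takes values in $\Im(\ptmkdefSvH)\subseteq\Im(\ptmkSvH)\subseteq\monlt(\KvSH)$, the last inclusion being Corollary \ref{corimpt}. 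That this assignment is a group homomorphism follows because it is the composite of $g\mapsto\widetilde g$ (a homomorphism $\mon^2(S)\to\O(\Htilde(S,\Z))_v$), of restriction to $v\ort$ (a homomorphism on the subgroup fixing $v$), and of conjugation by the fixed isometry $\lambda_{\SvH}$.

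It remains to verify injectivity, for which it suffices that $g\mapsto\widetilde g_{|v\ort}$ be injective on $\mon^2_f(S)$. In the setting of Lemma \ref{lemsurfacemon1}, where $v=(m,0,-mk)$, a one-line Mukai-pairing computation gives $\{0\}\oplus\H^2(S,\Z)\oplus\{0\}\subseteq v\ort$, so $\widetilde g_{|v\ort}=\id$ forces $g=\id$. In the setting of Lemma \ref{lemsurfacemon2}, where $v=(r,mh,a)$ with $\NS(S)=\Z h$, one has $\{0\}\oplus h\ort\oplus\{0\}\subseteq v\ort$, hence $g$ restricts to the identity on $h\ort$; since $g\in\mon^2_f(S)$ also fixes $h$ (the polarization being part of the family, cf. Remark \ref{rmkmonfi}) and $h^2\neq 0$, we get $g\otimes\Q=\id$, so $g=\id$. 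This gives the claimed injective morphism. The step requiring the most care is the vanishing of the orientation character $\ori(\widetilde g)$: without it the displayed formula would control $\ptmkdefSvH(\overline\alpha)$ only up to a sign, and one would not directly obtain a homomorphism into $\monlt(\KvSH)$.
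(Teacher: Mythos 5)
Your proposal is correct and follows essentially the same route as the paper: lift $g$ into $\Im(\PhitildemkdefSvH)$ via Lemma \ref{lemsurfacemon1} or \ref{lemsurfacemon2}, observe that the lifted isometry is orientation preserving so that $\Psimk$ acts as plain restriction to $v\ort$, and transport through the natural transformation $\lambda_{\defo}$ into $\Im(\ptmkdefSvH)\subseteq\monlt(\KvSH)$ using Corollary \ref{corimpt}. The only difference is that you re-derive the vanishing of $\ori(\widetilde g)$ directly (where the paper cites Remark \ref{rmkimphitildedef}~(2)) and you spell out the injectivity of $g\mapsto\widetilde g_{|v\ort}$ in both cases, a verification the paper leaves implicit.
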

 \begin{proof} Lemma \ref{lemsurfacemon1} and Lemma \ref{lemsurfacemon2} provide an inclusion of groups $$\mon^2_f(S)\subseteq \Im(\PhitildemkdefSvH).$$ As the latter is made of orientation preserving isometries by Remark \ref{rmkimphitildedef} (2), the morphism $\Psimk$ defined in \ref{defpsimk} provides an injective morphism \begin{align*}
     \mon^2_f(S) & \hookrightarrow \Im(\PhimkdefSvH)\\
     g & \mapsto (\id, g, \id)_{|v\ort}.
 \end{align*}The claim follows by applying the isomorphism of functors $\lambdadef \colon \Phimkdef \to \ptmkdef$ defined in Proposition \ref{lambdadef}.\end{proof}
We conclude the Section by showing that, in the elliptic case, the classes $h_1,\dots, h_4$ defined in (\ref{hi}) and generating $\mon^2(S)$ (see Theorem \ref{thmmonab}) can be chosen to belong to the same $v-$chamber. As a consequence, we get that only one $(m,k)-$triple $\SvH$ is needed to generate the whole monodromy group $\mon^2(S)$ using morphisms in $\Im(\PhitildemkdefSvH)$. 

In the following, we will assume that $S$ is an elliptic Abelian surface with $\NS(S)\simeq \brakett{e,f}$, where $e$ and $f$ are, respectively, the class of a $0-$section and the class of a fiber, as in Section \ref{secelliptic}.
\begin{prop}\label{propsurfacemon}
    Let $S$ be an elliptic Abelian surface with $\NS(S)\simeq \brakett{e,f}$ and let $v=(m,0,-mk)$. Then there exists an integer $t\gg 0$ and a $v-$generic polarization $H$ such that $\c_1(H)=e+tf$ and $$\mon^2(S) \subseteq \Im(\PhitildemkdefSvH).$$
\end{prop}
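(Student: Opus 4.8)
The plan is to present $\mon^2(S)$ as generated by the monodromy operators of finitely many projective families of polarized Abelian surfaces (Theorem \ref{thmmonab}), to lift each such family into the groupoid $\Gmkdef$ by means of Lemma \ref{lemsurfacemon1}, and then to realize all the resulting liftings as automorphisms of a single $(m,k)$-triple $(S,v,H)$ with $\c_1(H)=e+tf$, by conjugating with the congruence morphisms $\chi_{H_i,H}$ of $\Pmk\subseteq\Gmkdef$. The whole argument reduces to arranging that the generating polarizations all lie in the closure of one common $v$-chamber, namely the one running into the fibre ray $\R_{>0}f$; this is the only genuinely geometric ingredient.

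First I would analyse the $v$-walls. Since $v=(m,0,-mk)$ has positive rank, the ample cone $\operatorname{Amp}(S)=\{ae+bf\colon a,b>0\}$ is cut into finitely many $v$-walls and $v$-chambers (\cite[Section 2.1.1]{PR18}), and each $v$-wall has the form $\zeta\ort\cap\operatorname{Amp}(S)$ for some primitive $\zeta\in\NS(S)$ with $\zeta^2<0$ and $-\zeta^2$ bounded above by a constant $C=C(m,k)$. Writing $\zeta=ce+df$ gives $\zeta^2=2cd$ with $c,d$ nonzero integers of opposite sign, so $|c|,|d|\le C/2$; and since $(ae+bf)\cdot(ce+df)=ad+bc$, the wall $\zeta\ort$ meets $\operatorname{Amp}(S)$ along the ray of slope $b/a=|d|/|c|\in[1/N,N]$, where $N:=\max(1,C/2)$. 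Hence the open subcone $\mathcal C:=\{ae+bf\colon a>0,\ b>Na\}$ contains no $v$-wall; it therefore sits inside a single $v$-chamber, and all of its classes are $v$-generic (Remark \ref{rmkvgener}).

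Next I would make compatible choices. The classes in (\ref{hi}) have slopes $r$, $r-1$, $p/s$ and $p/(s-1)$, so taking the positive integers $r,s,p$ large enough, concretely with $r,s\ge 2$ (so that all four classes are ample), $r>N+1$ and $p>Ns$, forces every such slope to exceed $N$; then $h_1,\dots,h_4\in\mathcal C$, hence they lie in one $v$-chamber and are $v$-generic, while by Theorem \ref{thmmonab} their stabilizers still generate $\mon^2(S)$. Fixing an integer $t\gg 0$ with $t>N$ and a line bundle $H$ with $\c_1(H)=e+tf$, the class $\c_1(H)$ again lies in $\mathcal C$, so $H$ is $v$-generic and the $(m,k)$-triples $(S,v,H)$ and $(S,v,H_i)$, where $H_i$ is any polarization with $\c_1(H_i)=h_i$, are pairwise congruent (Remark \ref{rmkvgener} and the remark following Definition \ref{defncongruent}).

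Finally I would perform the lift. Applying Lemma \ref{lemsurfacemon1} to the family $f_i$ deforming $(S,h_i)$ shows that $\mon^2_{f_i}(S)$ lies in the image of the homomorphism induced by $\Phitildemkdef$ on $\Aut_{\Gmkdef}(S,v,H_i)$, while Theorem \ref{thmmonab} together with Remark \ref{rmkmonfi} gives $\mon^2(S)=\brakett{\mon^2_{f_i}(S)\colon i=1,\dots,4}$. Congruence supplies mutually inverse morphisms $\chi_{H,H_i}\colon(S,v,H)\to(S,v,H_i)$ and $\chi_{H_i,H}\colon(S,v,H_i)\to(S,v,H)$ in $\Gmkdef$ on which $\Phitildemkdef$ takes the value $\id_{\Htilde(S,\Z)}$; since passing from $\bar\alpha\in\Aut_{\Gmkdef}(S,v,H_i)$ to $\chi_{H,H_i}\ast\bar\alpha\ast\chi_{H_i,H}\in\Aut_{\Gmkdef}(S,v,H)$ does not change its $\Phitildemkdef$-image, we obtain $\mon^2_{f_i}(S)\subseteq\Im(\PhitildemkdefSvH)$ for every $i$, and hence $\mon^2(S)\subseteq\Im(\PhitildemkdefSvH)$, since the image of the group homomorphism $\PhitildemkdefSvH$ is a subgroup. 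I expect the real obstacle to be the wall estimate in the second paragraph: one must be sure that every $v$-wall keeps a positive distance in slope from the fibre ray, so that a full $v$-chamber persists near $f$ and can simultaneously contain all of $h_1,\dots,h_4$; the rest is formal bookkeeping inside $\Gmk$.
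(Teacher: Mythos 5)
Your proposal is correct and follows essentially the same route as the paper: generate $\mon^2(S)$ via Theorem \ref{thmmonab}, lift each family through Lemma \ref{lemsurfacemon1} to $\Im(\Phitildemk_{\defo,(S,v,H_i)})$, and transport everything to a single triple $(S,v,H)$ with $\c_1(H)=e+tf$ using the congruence morphisms $\chi_{H,H_i}$, which are sent to the identity by $\Phitildemkdef$. The only difference is that you prove the key fact that $h_1,\dots,h_4$ and $e+tf$ can be placed in one common $v$-chamber near the fibre class $f$ by a direct slope estimate on the $v$-walls, whereas the paper simply invokes \cite[Lemma 2.38]{PR18} for this.
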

\begin{proof}By Theorem \ref{thmmonab}, there exist four ample classes $h_1,\dots, h_4$ on $S$ such that $$\mon^2(S)=\brakett{\mon^2_{f_1}(S),\dots, \mon^2_{f_4}(S)},$$ where $f_i:= f_{h_i^2}$ are projective deformations of the polarized Abelian surfaces $(S,h_i)$, for $i=1,\dots, 4$, as in Remark \ref{rmkprojfamilies}, and by Lemma \ref{lemsurfacemon1}, for any $i=1,\dots, 4$, we have an inclusion of groups $$\mon^2_{f_i}(S)\subseteq \Im(\Phitildemk_{\defo, (S,v,H_i)}),$$ where $H_i$ is a $v-$generic polarization such that $\c_1(H_i)=h_i$. As in (\ref{hi}), we can choose three positive integers $r,s,p>0$ such that $$
    h_1=e+rf, \phantom{+} h_2=e+(r-1)f, \phantom{+} h_3=se+pf, \phantom{+} h_4=(s-1)e+pf.$$ By Lemma 2.38 (see also Definition 2.37) of \cite{PR18}, if we choose those integers such that $r$ and the quotient $p/s$ are big enough, then all the classes $h_i$ belong to the unique $v-$chamber whose closure contains the class $f$. We can then choose a polarization $H$ on $S$ such that its class $h:=\c_1(H)$ lies in that $v-$chamber. Consequently, the $(m,k)-$triple $\SvH$ is congruent to $(S,v,H_i)$ for every $i=1,\dots,4$ and we get a collection of isomorphisms $$\chi_{H,H_i}^\sharp\colon \Aut_{\Gmkdef}\SvH \longrightarrow \Aut_{\Gmkdef}(S,v,H_i)$$ that induce, via $\Phitildemkdef$, the following identifications $$\Im(\Phitildemk_{\defo,(S,v,H_i)})= \Phitildemkdef(\chi_{H,H_i})^\sharp(\Im(\PhitildemkdefSvH))= \Im(\PhitildemkdefSvH),$$ where the last equality follows from the fact that $\Phitildemkdef(\chi_{H,H_i})$ is the identity morphism.
\end{proof}
By combining Proposition \ref{propsurfacemon} and Corollary \ref{corsurfacemon}, we get the following
\begin{cor}
    Let $S$ be an elliptic Abelian surface with $\NS(S)\simeq \brakett{e,f}$ and let $v=(m,0,-mk)$. If $H$ is a $v-$generic polarization on $S$ whose class is contained in the unique $v-$chamber whose closure contains the class $f$, then there is an injective morphism of groups $$\mon^2(S)\hookrightarrow \monlt(\KvSH).$$
\end{cor}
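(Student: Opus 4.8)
The plan is to deduce this by combining Proposition \ref{propsurfacemon} with the argument used for Corollary \ref{corsurfacemon}, the essential point being that the hypothesis on $H$ forces a \emph{single} $(m,k)$-triple $\SvH$ to capture the whole surface monodromy. First I would invoke Proposition \ref{propsurfacemon}: since the class of $H$ lies in the unique $v$-chamber whose closure contains $f$, the four classes $h_1,\dots,h_4$ of Theorem \ref{thmmonab} can be placed in that same chamber, so that the triples $(S,v,H_i)$ are all congruent to $\SvH$ and the identification morphisms $\chi_{H,H_i}$ act as the identity through $\Phitildemkdef$; this gives the inclusion of groups $\mon^2(S)=\brakett{\mon^2_{f_i}(S)\colon i=1,\dots,4}\subseteq \Im(\PhitildemkdefSvH)$.

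Next I would push this inclusion forward through the functor $\Psimk$ of Definition \ref{defpsimk}. By Remark \ref{rmkimphitildedef} (2) every isometry in $\Im(\PhitildemkdefSvH)$ is orientation preserving, hence $\Psimk$ acts on it without the sign $(-1)^{\ori}$, i.e.\ by honest restriction to $v\ort$; thus $g\mapsto (\id,g,\id)_{|v\ort}$ is a \emph{group} homomorphism $\mon^2(S)\to \Im(\PhimkdefSvH)$. It is injective: an isometry of $\Htilde(S,\Z)$ of the form $(\id,g,\id)$ restricts to $g$ on $\H^2(S,\Z)\subseteq v\ort$ — recall $v=(m,0,-mk)$ lies in $\H^0(S,\Z)\oplus\H^4(S,\Z)$, which is orthogonal to $\H^2(S,\Z)$ — so it is recovered from its restriction to $v\ort$. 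Finally, applying the isomorphism of functors $\lambdadef\colon \Phimkdef\to\ptmkdef$ of Proposition \ref{lambdadef} conjugates this into an injective homomorphism $\mon^2(S)\hookrightarrow \Im(\ptmkdefSvH)$, and the latter sits inside $\monlt(\KvSH)$ by Corollary \ref{corimpt}. Composing the three arrows yields the asserted injection.

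I do not expect a genuine obstacle at this stage: all the geometric content — the existence of a $v$-chamber simultaneously containing $f$ and a generating set $h_1,\dots,h_4$ of $\mon^2(S)$, together with the orientation computations for the surface-type deformation paths — has already been carried out in Proposition \ref{propsurfacemon} and the earlier parts of Section \ref{secsurfacemon}. The only mild point requiring care is checking that $\Psimk$ is injective on the relevant image, and this is immediate by the remark above, since the isometries in question act trivially on $\H^0(S,\Z)\oplus\H^4(S,\Z)$ and are therefore determined by their restriction to $v\ort\supseteq \H^2(S,\Z)$.
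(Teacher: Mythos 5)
Your proposal is correct and follows exactly the route the paper takes: the paper derives this corollary by combining Proposition \ref{propsurfacemon} (which places $\mon^2(S)$ inside $\Im(\PhitildemkdefSvH)$ for this single triple, using congruence of the triples $(S,v,H_i)$) with the argument of Corollary \ref{corsurfacemon} (pushing through $\Psimk$, which acts by plain restriction on orientation-preserving isometries, and then applying $\lambdadef$ and Corollary \ref{corimpt}). Your added observation that injectivity follows because $\H^2(S,\Z)\subseteq v\ort$ determines the isometry $(\id,g,\id)$ is a correct and welcome elaboration of a point the paper leaves implicit.
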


\subsection{The group $\Nn(v\ort)$ as subgroup of $\monlt(K_v(S,H))$}\label{secsurj} 
We start by recalling that, by Theorem \ref{thmmonkummer}, if $(S,w,H)$ is a $(1,k)-$triple with $k>2$, then $$\mon^2(\KwSH)= \Nn(\KwSH)\simeq \Nn(w\ort),$$ where $\Nn(\KwSH)$ is the index $2$ subgroup of the Weyl group of reflections defined in Appendix \ref{weylgroups} and the last isomorphism is given by conjugation via the isometry $\lambda_{(S,w,H)}$ of Theorem \ref{pr20thm1.6}. The main result of this section is the following
\begin{thm} \label{thmNmon} Let $\SvH$ be an $(m,k)-$triple, with $k>2$. Then $$\Nn(\KvSH)\subseteq \monlt(\KvSH).$$
\end{thm}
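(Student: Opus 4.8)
The plan is to realize every element of $\Nn(\KvSH)$ inside the image of the representation $\ptmk$ restricted to the isotropy group of $\SvH$, which lands in $\monlt(\KvSH)$ by Corollary \ref{corimpt}. Since both $\monlt$ and the group $\Nn$ are locally trivial deformation invariants and any two $(m,k)-$triples are linked by a morphism of $\Gmk$ (Remark \ref{rmkhomgmknonempty}), I would first fix a convenient triple: $S$ an elliptic Abelian surface with $\NS(S)\simeq\brakett{e,f}\cong U$, $v=(m,0,-mk)$, and $H$ a $v-$generic polarization whose class lies in the unique $v-$chamber whose closure contains $f$, as in Proposition \ref{propsurfacemon}. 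Using the isomorphism of functors $\lambda\colon\Phimk\to\ptmk$ of Corollary \ref{corlambda} together with the fact that $\lambda_{\SvH}$ is an isometry ---hence carries $\Nn(v\ort)$ isomorphically onto $\Nn(\KvSH)$--- the statement reduces to the lattice-theoretic inclusion
\[
\Nn(v\ort)\subseteq\Im\bigl(\Phimk_{\SvH}\colon\Aut_{\Gmk}(\SvH)\longrightarrow\O(v\ort)\bigr),
\]
where $\Phimk=\Psimk\circ\Phitildemk$ and, for the chosen $v$, one computes $v\ort=\H^2(S,\Z)\oplus\Z\cdot(1,0,k)\cong U^{\oplus 3}\oplus\brakett{-2k}$.

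Next I would exhibit the two advertised families of elements of this image. For the first, Proposition \ref{propsurfacemon} gives $\mon^2(S)=\SO^+(\H^2(S,\Z))\subseteq\Im(\PhitildemkdefSvH)$; these operators are orientation preserving (Remark \ref{rmkimphitildedef}), so $\Psimk$ carries them to their genuine restrictions to $v\ort$, and hence the tautological action of $\SO^+(\H^2(S,\Z))$ on the summand $\H^2(S,\Z)\subseteq v\ort$ (trivial on $\Z\cdot(1,0,k)$) lies in $\Im(\Phimk_{\SvH})$. For the second, the elementary morphisms of $\GmkFM$ collected in Definition \ref{defgmkFM} ---tensorization $\L$, the Fourier-Mukai transforms $\FM_\PP$ and $\FM_\PP\dual$, and the elliptic transform $\FM_\EE$ carrying $\SvH$ to $(S,(0,m(e+kf),m),H)$ by Lemma \ref{lemyosh}--- can be concatenated into loops at $\SvH$, closing up by further Fourier-Mukai morphisms, by the identifications $\chi_{H_1,H_2}$, and by surface-type deformation paths at the auxiliary triples permitted by Lemmas \ref{lemsurfacemon1} and \ref{lemsurfacemon2}; under $\Phimk$ these become the corresponding Mukai-lattice isometries restricted to $v\ort$, twisted by the orientation sign $(-1)^{\ori(\cdot)}$ pinned down in Corollary \ref{cororientFM} (trivial for $\L$, $\FM_\PP$ and $\FM_\EE$, nontrivial for $\FM_\PP\dual$). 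These yield isometries mixing the $\brakett{-2k}$ summand with $\H^2(S,\Z)$, which $\SO^+(\H^2(S,\Z))$ alone cannot produce.

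Finally, the heart of the argument is the purely group-theoretic verification that the subgroup of $\O(v\ort)$ generated by the operators just listed is exactly $\Nn(v\ort)$, i.e. precisely the kernel of the character $\det\cdot\disc$ on the Weyl group $\W(v\ort)$. Paralleling \cite{Mar18} and \cite{Mon14}, I would check that a well-chosen finite list of surface monodromies ---handled through Eichler's criterion and Eichler transvections exactly as in \cite{MR19} and Section \ref{secmonAb}--- together with the Fourier-Mukai operators above exhausts $\Nn(v\ort)$. I expect this step to be the main obstacle: one must confirm that the slightly different, deliberately projective choice of Fourier-Mukai generators in Definition \ref{defgmkFM} still suffices (this is where the argument departs from \cite{Mar18}), and one must track carefully the orientation signs introduced by $\Psimk$ via Corollary \ref{cororientFM} so that the generated group is neither too small nor leaves $\Nn(v\ort)$; once this bookkeeping is settled, the conclusion follows formally from the translation through $\lambda$ described above.
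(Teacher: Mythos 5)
Your reduction to the lattice-theoretic inclusion $\Nn(v\ort)\subseteq\Im(\Phimk_{\SvH})$, the choice of the elliptic triple with $v=(m,0,-mk)$ and $H$ in the $v$-chamber whose closure contains $f$, and the two families of operators (surface monodromy plus Fourier--Mukai) match the paper's architecture exactly. However, the step you defer as ``the heart of the argument'' and ``the main obstacle'' is precisely where the paper's substantive new content lies, and your proposal does not supply it; this is a genuine gap rather than a routine bookkeeping exercise.

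Concretely, the paper does not perform an open-ended verification that some list of operators generates $\Nn(v\ort)$; it works with a specific generator decomposition. First, $\Nn(v\ort)=\brakett{\SO^+(\Htilde(S,\Z))_v,\ R_s\circ R_{s_1}}$ with $s=(1,0,1)$, $s_1=(1,0,-1)$, where $R_s\circ R_{s_1}$ acts as $-D_S^\H$ (Remark \ref{rmkgenN2}, imported from \cite{Mar18}). For the stabilizer, Markman's generators are $\SO^+(\H^2(S,\Z))$ together with products $R_{u_1}\circ R_{u_2}$ of reflections around $(-2)$-vectors $u_i=(1,\xi_i,k)$ with $\xi_i$ primitive (Lemma \ref{lemrefl1}); the decisive new ingredient is Lemma \ref{lemsimo}, a Nikulin-style primitive-embedding and extension argument producing $g\in\SO^+(\H^2(S,\Z))$ with $g(\xi_i)=\beta_i-f$ and $\beta_i\in\NS(S)\ort$ primitive. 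Only after this normalization does $\FM_\EE$ carry the $u_i$ to vectors of the form $(0,e-kf-\beta_i,0)$, and only then can one deform to a Picard rank one surface and realize $R_{u_1}\circ R_{u_2}$ as a surface monodromy operator in the stabilizer of the polarization (Lemma \ref{lemrefl3}). Without this normalization your generators have no mechanism for producing the reflection products, and it is not formal that the operators you list generate all of $\Nn(v\ort)$ and nothing more. Likewise, the remaining generator is not obtained by generic concatenation but by the explicit loop $p\HFM\ast(\FM_\PP^{-1}\circ\FM_\PP\dual)\ast p\HFM$, whose total orientation character equals $1$, so that $\Psimk$ turns it into $-D^\H_{S|v\ort}=R_s\circ R_{s_1}$ (Proposition \ref{propdual}); your proposal gestures at the orientation signs of Corollary \ref{cororientFM} but does not exhibit this composition.
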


The proof of Theorem \ref{thmNmon} will be addressed by showing that the generators of the group $\Nn(\KvSH)$ are isometries belonging to the image of the group morphism $\ptmk_{\SvH}\colon \Aut_{\Gmk}\SvH\to \monlt(\KvSH)$ (see Corollary \ref{corimpt}), induced by the representation $\ptmk$ defined in Section \ref{secpt}, for a suitable $(m,k)-$triple $\SvH$. As the Hodge isometry $\lambda_{\SvH}\colon v\ort \to \H^2(\KvSH)$ naturally conjugates $\Nn(v\ort)$ to $\Nn(\KvSH)$, the proof of Theorem \ref{thmNmon} reduces to showing the following inclusion of groups: \begin{equation}
    \label{Nnimphi} \Nn(v\ort)\subseteq \Im(\Phimk_{\SvH}).
\end{equation} 
Paralleling Markman's proof of \cite[Theorem 1.4]{Mar18} in the primitive case, up to conjugation with a morphism $\Phimk(\eta)$, for any $\eta\in \Hom_{\Gmk}(\SvH, \SvHuno)$ and $\SvHuno\in \Gmk$, we can assume without loss of generality that $\SvH\in \Gmk$ is an $(m,k)-$triple with Mukai vector $v=m(1,0,-k)$. 
\begin{rmk}\label{rmkgenN2}
    As recalled in Remark \ref{rmkgenN} (2), the group $\Nn(v\ort)$ is generated by $\SO^+(\Htilde(S,\Z))_v$ and by the involution $R_{s}\circ R_{s_1}\colon v\ort \to v\ort$ given by the composition of the two reflections around the $(-2)-$vector $s=(1,0,1)$ and the $(+2)-$vector $s_1=(1,0,-1)$, which sends the vector $m(1,0,k)$ to its opposite and acts as the identity on $\lambda_{\SvH}(\H^2(S,\Z))$. More precisely, as its action on the discriminant $A_{v\ort}$ is trivial (see Corollary \ref{corextid}), the latter corresponds to the restriction of the product $R_s\circ R_{s_1}\colon \Htilde(S,\Z)\to \Htilde(S,\Z)$ of the two above-defined reflections on $\Htilde(S,\Z)$, whose action is precisely $-D_{S}^\H$, where $D_S^\H$ is the cohomological action of duality  defined in (\ref{vduale}), and which belongs to $\O(\Htilde(S,\Z))_v$ by definition.
\end{rmk}
Inclusion (\ref{Nnimphi}) will be proven on each of the generators described in Remark \ref{rmkgenN2}. The first and most technical part is given by the following:
\begin{prop}\label{propso+v}Let $m\geq 1$ and $k>2$ two positive integers and let $\SvH$ an $(m,k)-$triple, with $S$ an elliptic Abelian surface, $v=m(1,0,-k)$ and $h:=\c_1(H)=e+tf$, with $t\gg 0$, as in Proposition \ref{propsurfacemon}. Then $$\SO^+(\Htilde(S,\Z))_v\subseteq \Im(\Phimk_{\SvH}).$$
\end{prop}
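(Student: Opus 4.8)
The plan is to reduce the statement to a generation problem for $\SO^+(\Htilde(S,\Z))_v$ and to realise each generator as the image under $\Phitildemk$ of a loop in $\Gmk$ based at $\SvH$, following the strategy used for K3 surfaces in \cite{OPR23}. First I would remark that it suffices to prove $\SO^+(\Htilde(S,\Z))_v\subseteq\Im(\Phitildemk_{\SvH})$: since $\Phimk=\Psimk\circ\Phitildemk$ and $\Psimk$ sends an orientation preserving isometry $g$ to $g_{|v\ort}$, any $g\in\SO^+(\Htilde(S,\Z))_v$ realised as $\Phitildemk(\gamma)$ satisfies $g_{|v\ort}=\Phimk(\gamma)\in\Im(\Phimk_{\SvH})$.

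Next, recall that $\Htilde(S,\Z)\simeq U^{\oplus4}$ and that $v=m(1,0,-k)$ lies in the hyperbolic summand $U_0:=\brakett{(1,0,0),(0,0,1)}$; hence $v\ort=w\ort\simeq U^{\oplus3}\oplus\brakett{-2k}$ contains a copy of $U^{\oplus2}$, and restriction to $v\ort$ identifies $\O(\Htilde(S,\Z))_v$ with the group of isometries of $v\ort$ acting trivially on $A_{v\ort}$, compatibly with the determinant and with the orientation character; in particular $\SO^+(\Htilde(S,\Z))_v$ corresponds to the stable subgroup of $\O^+(v\ort)$. By Eichler's criterion (cf. \cite{GHS09}, already used around $(\ref{genso+})$) this subgroup is generated by Eichler transvections $E_{u,a}$ with $u\in v\ort$ primitive isotropic and $a\in u\ort\cap v\ort$. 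Thus it is enough to realise each such transvection --- viewed as the isometry of $\Htilde(S,\Z)$ fixing $v$ and restricting to $E_{u,a}$ on $v\ort$ --- as $\Phitildemk(\gamma)$ for a suitable $\gamma\in\Aut_{\Gmk}\SvH$.

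If $u,a\in\H^2(S,\Z)\subseteq v\ort$ this is immediate: $E_{u,a}$ fixes $U_0$ pointwise, hence fixes $v$, and restricts on $\H^2(S,\Z)\simeq U^{\oplus3}$ to an Eichler transvection; these generate $\SO^+(\H^2(S,\Z))=\mon^2(S)$ (by \cite{Shi78} and Eichler's criterion), and $\mon^2(S)\subseteq\Im(\PhitildemkdefSvH)$ by Proposition \ref{propsurfacemon}, which applies precisely because $S$ is elliptic and $H$ lies in the distinguished $v$-chamber. For the remaining transvections, whose defining vectors involve the $\brakett{(1,0,k)}$-direction of $v\ort$, I would conjugate the surface transvections by cohomological actions of the Fourier--Mukai morphisms available in $\Gmk$: tensorisation $\L$ by a line bundle (Lemma \ref{lemtensor}) acts on $\Htilde(S,\Z)$ by $\exp(\c_1(L))$, which shears $U_0$ into $\H^2(S,\Z)$ and back, while $\FM_\EE$ (Lemma \ref{lemyosh}) acts, by \cite[Theorem 5.3]{Bri98} and \cite[Section 3.2]{Yos01a}, through an isometry mixing the classes $e,f$ with the rank and Euler-characteristic directions. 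Concretely, for a morphism $\eta\in\Hom_{\Gmk}(\SvH,\SvHdue)$ built from $\L$, $\FM_\PP$ and $\FM_\EE$ --- with $\SvHdue$ again an $(m,k)$-triple on a surface for which a surface-type lifting result (Proposition \ref{propsurfacemon}, Lemma \ref{lemsurfacemon2}, or their rank-zero analogue) holds --- and a surface-type monodromy loop $\rho$ at $\SvHdue$, the composite $\eta^{-1}\circ\rho\circ\eta$ is a loop at $\SvH$ with $\Phitildemk(\eta^{-1}\circ\rho\circ\eta)=\psi^{-1}\circ\Phitildemk(\rho)\circ\psi$ where $\psi=\Phitildemk(\eta)$; this is again an Eichler transvection, but one whose defining vectors now carry nonzero $U_0$-components. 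Letting $L$, the elliptic data and $\rho$ vary should produce, together with the transvections of the previous case, a generating set of the stable subgroup of $\O^+(v\ort)$, by a second appeal to Eichler's criterion.

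The main obstacle is this last point: the purely lattice-theoretic verification that the conjugates of surface-monodromy subgroups by the Fourier--Mukai cohomology actions coming from Lemmas \ref{lemtensor}, \ref{lemFM} and \ref{lemyosh} really do generate the full stable subgroup of $\O^+(v\ort)=\SO^+(\Htilde(S,\Z))_v$ --- equivalently, that no Fourier--Mukai morphism outside $\Gmk$ is needed. One must in addition carry the orientation character through the computation, since cohomological Fourier--Mukai actions need not be orientation preserving (Corollary \ref{cororientFM}); but as $\Psimk$ twists restriction to $v\ort$ by the orientation character and the target group is $\SO^+$, the signs reconcile, and the remaining bookkeeping --- that $H$ stays $v$-generic along every deformation path used and that each intermediate $(m,k)$-triple is legitimate --- is routine.
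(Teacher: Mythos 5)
Your overall architecture --- reduce to a generation statement for $\SO^+(\Htilde(S,\Z))_v$, realise the generators lying in $\SO^+(\H^2(S,\Z))$ by surface monodromy via Proposition \ref{propsurfacemon}, and obtain the remaining generators by conjugating surface monodromy with the cohomological actions of Fourier--Mukai morphisms of $\Gmk$ --- is exactly the architecture of the paper's proof, and your preliminary reduction from $\Phimk$ to $\Phitildemk$ through $\Psimk$ is correct. The problem is that you stop precisely where the actual work begins: you yourself flag as ``the main obstacle'' the verification that the conjugates of surface-monodromy subgroups by the available Fourier--Mukai actions generate all of $\SO^+(\Htilde(S,\Z))_v$, and you offer no argument for it. That verification is the mathematical content of the proposition. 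The paper closes this gap in two steps, neither of which appears in your proposal. First, instead of Eichler transvections it uses Markman's generation result (Lemma \ref{lemrefl1}): $\SO^+(\Htilde(S,\Z))_v$ is generated by $\SO^+(\H^2(S,\Z))$ together with products $R_{u_1}\circ R_{u_2}$ of reflections around $(-2)$-vectors $u_i=(1,\xi_i,k)$; it then proves the new lattice-theoretic Lemma \ref{lemsimo} (via Nikulin's theory of primitive embeddings) showing that the $\xi_i$ may be normalised to the form $\beta_i-f$ with $\beta_i\in\NS(S)\ort$, at the cost of conjugating by an element of $\SO^+(\H^2(S,\Z))$ already known to lie in the image. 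Second, Lemma \ref{lemrefl3} computes that $\FM_\EE^\H$ sends $(1,\beta_i-f,k)$ to the degree-two class $(0,e-kf-\beta_i,0)$, so that after conjugating by $\FM_\EE$ and deforming to a Picard-rank-one triple the product of reflections becomes an element of $\SO^+(\H^2(S',\Z))_{h'}$, i.e.\ genuine surface monodromy covered by Lemma \ref{lemsurfacemon2}. You gesture at both ingredients but identify neither the normal form of the vectors for which the $\FM_\EE$-conjugation works nor a proof that every generator can be brought into that form.

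A secondary point: your generating set (Eichler transvections with isotropic direction) is a legitimate alternative starting point, since $v\ort$ contains $U^{\oplus 2}$ and restriction identifies $\O(\Htilde(S,\Z))_v$ with the stable orthogonal group of $v\ort$. But transvections are unipotent rather than products of two reflections, and it is not clear that an individual transvection with nonzero component in the $(1,0,k)$-direction admits a geometric realisation by the specific morphisms available in $\Gmk$; the paper's choice of generators is made precisely so that their images under $\FM_\EE^\H$ land in $\H^2$. Switching to transvections would therefore force you to redo, not bypass, the analysis supplied by Lemmas \ref{lemsimo} and \ref{lemrefl3}.
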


For this purpose, we introduce the following result:
\begin{prop}
    \label{lemrefl2}Let $S$ be an elliptic Abelian surface with $\NS(S)\simeq\brakett{e,f}$ and let $v=m(1,0,-k)\in \Htilde(S,\Z)$ a Mukai vector. The stabilizer $\SO^+(\Htilde(S,\Z))_v$ is generated by $\SO^+(\H^2(S,\Z))$ and the products $R_{t_1}\circ R_{t_2}$ of reflections around $(-2)-$vectors \linebreak[4] $t_i=(1,\beta_i-f,k)$, with $\beta_i\in \NS(S)\ort$ a primitive class, for $i=1,2$.
\end{prop}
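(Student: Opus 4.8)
The plan is to transport everything to the lattice $M:=v\ort$. Write $\Htilde(S,\Z)=U_{0}\oplus\H^{2}(S,\Z)$ with $U_{0}:=\H^{0}(S,\Z)\oplus\H^{4}(S,\Z)\cong U$, so that $v=mw$ with $w=(1,0,-k)$ primitive of square $2k$, $v\ort=w\ort=\brakett{s}\oplus\H^{2}(S,\Z)=:M$ where $s:=(1,0,k)$, $s^{2}=-2k$, and $M\cong U^{\oplus 3}\oplus\brakett{-2k}$. Restriction to $M$ defines a homomorphism $\SO^{+}(\Htilde(S,\Z))_{v}\to\SO^{+}(M)$ which is injective (since $\brakett{v}\oplus M$ has finite index in $\Htilde(S,\Z)$) and whose image is exactly the subgroup $\widetilde{\SO}^{+}(M)$ of isometries acting trivially on the discriminant group $A_{M}\cong\Z/2k$ (surjectivity onto it follows by extending over the gluing $\Htilde(S,\Z)\supseteq\brakett{v}\oplus M$, which uses only triviality on $A_{M}$). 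Under this isomorphism $\SO^{+}(\H^{2}(S,\Z))$ — the isometries of $\Htilde(S,\Z)$ restricting to $\id$ on $U_{0}$ — corresponds to the stabilizer of $s$; and each $R_{t_{i}}$ fixes $v$ (because $v\cdot t_{i}=0$, a one-line Mukai-pairing check) and restricts to the reflection of $M$ in the $(-2)$-vector $t_{i}$, which, written in the decomposition above, is $t_{i}=s-f+\beta_{i}$. Since a reflection in a $(-2)$-class is orientation preserving of determinant $-1$, the product $R_{t_{1}}R_{t_{2}}$ lies in $\SO^{+}(\Htilde(S,\Z))_{v}$. Thus it suffices to prove that $\widetilde{\SO}^{+}(M)$ is generated by $\mathrm{Stab}_{\widetilde{\SO}^{+}(M)}(s)$ together with the restrictions of the products $R_{t_{1}}R_{t_{2}}$.

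\textbf{Identifying the products with Eichler transvections.} The key point is the identity: if $t_{1},t_{2}$ are $(-2)$-vectors with $\delta:=t_{1}-t_{2}$ isotropic — equivalently $t_{1}\cdot t_{2}=-2$, and then automatically $t_{1}\cdot\delta=0$ — then $R_{t_{1}}\circ R_{t_{2}}$ equals the Eichler transvection
$$E_{\delta,t_{1}}(x)=x-(x\cdot t_{1})\,\delta+(x\cdot\delta)\,t_{1}+(x\cdot\delta)\,\delta ,$$
as one checks by a short computation (it is the general transvection $E_{\delta,a}(x)=x-(x\cdot a)\delta+(x\cdot\delta)a-\tfrac{1}{2}(a\cdot a)(x\cdot\delta)\delta$ with $a=t_{1}$, using $a^{2}=-2$). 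In our case $\delta=\beta_{1}-\beta_{2}\in\NS(S)\ort$, and $\delta^{2}=0\iff\beta_{1}\cdot\beta_{2}=2k-2$; for \emph{any} primitive isotropic $\delta\in\NS(S)\ort$ one may choose a primitive $\beta_{1}\in\delta\ort\cap\NS(S)\ort$ with $\beta_{1}^{2}=2k-2$ (possible because $\delta\ort\cap\NS(S)\ort$ contains a hyperbolic plane, hence represents $2k-2\ge 2$) and set $\beta_{2}:=\beta_{1}-\delta$; then $E_{\delta,\,s-f+\beta_{1}}=R_{t_{1}}R_{t_{2}}$ belongs to the group $G$ generated by the asserted generators. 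On the other hand $\mathrm{Stab}_{\widetilde{\SO}^{+}(M)}(s)\cong\SO^{+}(\H^{2}(S,\Z))=\mon^{2}(S)$ is, by Eichler's theorem for the unimodular lattice $\H^{2}(S,\Z)\cong U^{\oplus 3}$ (cf. \cite[Lemma 3.5]{GHS10}, \cite[Proposition 3.3]{GHS09}), generated by all Eichler transvections $E_{e,a}$ with $e\in\H^{2}(S,\Z)$ primitive isotropic and $a\in e\ort\cap\H^{2}(S,\Z)$, and it acts transitively on the primitive isotropic vectors of $\H^{2}(S,\Z)$.

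\textbf{Bootstrapping and conclusion.} Fix a primitive isotropic $e\in\NS(S)\ort\subseteq M$. By the above, $G$ contains $E_{e,a}$ for every $a\in e\ort\cap\H^{2}(S,\Z)$ (these lie in $\SO^{+}(\H^{2}(S,\Z))\subseteq G$) and also $E_{e,\,s-f+\beta_{1}}$ for a suitable $\beta_{1}$; since $f-\beta_{1}\in e\ort\cap\H^{2}(S,\Z)$ and $E_{e,a}E_{e,a'}=E_{e,a+a'}$, the product $E_{e,s}=E_{e,\,s-f+\beta_{1}}\cdot E_{e,\,f-\beta_{1}}$ lies in $G$, and hence $E_{e,a}\in G$ for all $a\in e\ort\cap M=(e\ort\cap\H^{2}(S,\Z))\oplus\Z s$. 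Conjugating by an element of $\SO^{+}(\H^{2}(S,\Z))\subseteq G$ carrying $e$ to a hyperbolic partner $f'$ (with $e\cdot f'=1$, which exists by transitivity) yields likewise $E_{f',b}\in G$ for all $b\in f'\ort\cap M$. As $(e,f')$ spans a copy of $U$ in $M$ and $M$ contains a further $U^{\oplus 2}$, Eichler's theorem applied to $M$ shows that these transvections generate $\widetilde{\SO}^{+}(M)$; therefore $G=\widetilde{\SO}^{+}(M)=\SO^{+}(\Htilde(S,\Z))_{v}$.

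\textbf{Main obstacle.} The step I expect to require the most care is the bootstrapping in the last paragraph: producing all of $\widetilde{\SO}^{+}(M)$ from the rather rigid family $E_{\delta,\,s-f+\beta_{1}}$, and keeping track of determinant, spinor norm and discriminant action so as to land precisely in $\widetilde{\SO}^{+}(M)$ rather than in a larger group. Checking the exact hypotheses under which Eichler's theorem applies to $M$, and the representability of $2k-2$ in the relevant sublattices of $\NS(S)\ort$ (the one place where $k>2$ enters comfortably), is routine but must be done.
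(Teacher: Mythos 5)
Your proof is correct in substance but takes a genuinely different route from the paper's. The paper deduces the proposition from Markman's generation result (Lemma \ref{lemrefl1}): its generators $R_{u_1}\circ R_{u_2}$, with $u_i=(1,\xi_i,k)$ and $\xi_i\in\H^2(S,\Z)$ an arbitrary primitive class of square $2k-2$, are conjugated into the required shape by a single isometry $g\in\SO^+(\H^2(S,\Z))$ sending $\xi_1,\xi_2$ \emph{simultaneously} to $\beta_1-f,\beta_2-f$; producing that $g$ is the content of Lemma \ref{lemsimo}, where all the work (Nikulin's embedding and extension theory, and the bookkeeping of determinant and orientation) is concentrated. You bypass both Lemma \ref{lemrefl1} and Lemma \ref{lemsimo}: after restricting to $v\ort\cong U^{\oplus 3}\oplus\brakett{-2k}$ you recognize the products $R_{t_1}\circ R_{t_2}$ with $t_1\cdot t_2=-2$ as Eichler transvections $E_{\delta,t_1}$ with $\delta=\beta_1-\beta_2$ isotropic in $\NS(S)\ort$ (the identity $R_{t_1}R_{t_2}=E_{\delta,t_1}$ checks out), and you bootstrap to all transvections $E_{e,a}$, $E_{f',b}$ for a hyperbolic pair $e,f'$, concluding by Eichler's generation theorem. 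What your route buys is independence from \cite[Lemmas 5.3, 5.4]{Mar18} and from the delicate simultaneous-transport lemma; what it costs is the stronger classical input that the subgroup of $\SO^+$ of a lattice splitting off $U\oplus U$ acting trivially on the discriminant is \emph{generated} by Eichler transvections (the paper only quotes transitivity and the transvection calculus from \cite{GHS09}, \cite{GHS10}), which is available but must be cited precisely. The two details you flag are indeed the ones to nail down: $\beta_2=\beta_1-\delta$ must also be primitive for $t_2$ to be an admissible generator (immediate for the explicit choice $\delta=e_2$, $\beta_1=e_3+(k-1)f_3$ in $\NS(S)\ort\cong U^{\oplus 2}$), and the image of the restriction $\SO^+(\Htilde(S,\Z))_v\to\SO^+(v\ort)$ must be identified with the kernel of the discriminant action on $A_{v\ort}\cong\Z/2k\Z$, since that is exactly the group your transvections generate; both verifications go through as you indicate.
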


The starting point for the proof of Proposition \ref{lemrefl2} is the following 
\begin{lem}{\cite[Lemma 5.3 (1), (3), Lemma 5.4]{Mar18}}\label{lemrefl1} Let $S$ be an Abelian surface and let $v=m(1,0,-k)\in \Htilde(S,\Z)$ a Mukai vector. The stabilizer $\SO^+(\Htilde(S,\Z))_v$ is generated by $\SO^+(\H^2(S,\Z))$ and the products $R_{t_1}\circ R_{t_2}$ of reflections around $(-2)-$vectors $u_i=(1,\xi_i,k)$, with $\xi_i\in \H^2(S,\Z)$ a primitive class, for $i=1,2$.
\end{lem}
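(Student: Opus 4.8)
The statement is a purely lattice-theoretic fact about $\Lambda := \Htilde(S,\Z) \cong U^{\oplus 4}$, and the plan is to prove it by exploiting that $w := (1,0,-k)$ sits entirely in the hyperbolic plane $U_0 := \H^0(S,\Z)\oplus \H^4(S,\Z)$. Writing $e_0 = (1,0,0)$ and $f_0 = (0,0,1)$, one has the orthogonal splitting $\Lambda = U_0 \oplus \H^2(S,\Z)$ with $\H^2(S,\Z) \cong U^{\oplus 3}$, together with the relations $w = e_0 - k f_0$, $w^2 = 2k$, $f_0^2 = 0$ and $w\cdot f_0 = -1$; in particular $\langle w, f_0\rangle = U_0$. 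Since $v = mw$ and the isometries act linearly on a torsion-free module, $\SO^+(\Lambda)_v = \SO^+(\Lambda)_w$, so I would first reduce to the stabilizer of the primitive vector $w$.

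The inclusion $\langle \SO^+(\H^2(S,\Z)), R_{u_1}\circ R_{u_2}\rangle \subseteq \SO^+(\Lambda)_w$ is the easy direction, which I would dispatch by direct computation. A short calculation with the Mukai pairing gives $w\cdot u_i = 0$ for $u_i = (1,\xi_i,k)$, so each reflection $R_{u_i}$ fixes $w$; moreover $u_i^2 = \xi_i^2 - 2k = -2 < 0$, so $R_{u_i}$ fixes a maximal positive-definite subspace contained in $u_i\ort$ and is therefore orientation preserving, while $\det R_{u_i} = -1$. Hence any product $R_{u_1}\circ R_{u_2}$ lies in $\SO^+(\Lambda)_w$, and $\SO^+(\H^2(S,\Z))$ lies there as well since it fixes $U_0$ pointwise and preserves determinant and orientation on $\Lambda$.

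For the hard inclusion I would argue by a transitivity reduction. Let $G$ be the subgroup generated by the stated generators. The key claim is that $G$ acts transitively on the set $X := \{u \in \Lambda : u^2 = 0,\ u\cdot w = -1,\ u \text{ primitive}\}$, which contains $f_0$ and is preserved by $\SO^+(\Lambda)_w$. Granting this, given $\phi \in \SO^+(\Lambda)_w$ there is $g \in G$ with $g(\phi(f_0)) = f_0$; then $g\phi$ fixes both $w$ and $f_0$, hence fixes $U_0 = \langle w, f_0\rangle$ pointwise and restricts to an isometry of $\H^2(S,\Z) = U_0\ort$. As $g\phi$ is the identity on the signature $(1,1)$ piece $U_0$, its determinant and its action on the orientation of a positive subspace are computed entirely on $\H^2(S,\Z)$, so $g\phi \in \SO^+(\H^2(S,\Z)) \subseteq G$ and therefore $\phi \in G$.

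It remains to prove the transitivity of $G$ on $X$, which is the technical heart of the argument and the step I expect to be the main obstacle (carried out in \cite{Mar18}). Writing $u = (r,\eta,s)\in X$, the defining conditions read $s = rk + 1$ and $\eta^2 = 2rs$, so it suffices to send an arbitrary $u$ to $f_0 = (0,0,1)$ using the generators. The mechanism is a Euclidean-type reduction of the $\H^0$-coordinate $r$: since $R_{u_i}(u) = u + (u\cdot u_i)u_i$ changes $r$ by $u\cdot u_i = \eta\cdot\xi_i - 2rk - 1$, and since $\SO^+(\H^2(S,\Z)) = \mon^2(S)$ (see~(\ref{monab})) acts — by Eichler's criterion, $\H^2(S,\Z)$ containing $U^{\oplus 2}$ — transitively enough on the $\H^2$-coordinate to prescribe the relevant pairings $\eta\cdot\xi_i$, one can successively decrease $|r|$ to $0$ and then clear the $\H^2$-coordinate. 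The argument relies on the existence of primitive classes $\xi_i \in \H^2(S,\Z)$ with $\xi_i^2 = 2k-2$, which is guaranteed in $U^{\oplus 3}$ since $2k-2$ is even; controlling the interplay between these reflections and the $\SO^+(\H^2(S,\Z))$-action so that the reduction always terminates is the delicate point.
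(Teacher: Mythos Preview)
The paper does not prove this lemma; it is quoted verbatim from \cite[Lemma 5.3 (1), (3), Lemma 5.4]{Mar18} and used as a black box. So there is no ``paper's own proof'' to compare against beyond the citation. Your outline is in fact the strategy Markman uses in \cite{Mar18}: reduce to the primitive vector $w$, verify the easy inclusion by direct computation, and for the reverse inclusion show that the proposed subgroup $G$ acts transitively on the set of primitive isotropic $u$ with $u\cdot w=-1$, then peel off an element of $\SO^+(\H^2(S,\Z))$ once both $w$ and $f_0$ are fixed. The Euclidean-type reduction of the $\H^0$-coordinate, with Eichler transvections in $\H^2(S,\Z)\cong U^{\oplus 3}$ supplying the needed control on pairings, is exactly the mechanism in \cite[Lemma 5.4]{Mar18}.

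One point you glossed over: your generators are \emph{products} $R_{u_1}\circ R_{u_2}$, but the reduction step as you phrased it applies a single reflection $R_{u_i}$ to the current isotropic vector $u$. To stay inside $G$ you must pair each ``working'' reflection with a second one that fixes $u$; concretely, choose $u_2=(1,\xi_2,k)$ with $\xi_2$ primitive, $\xi_2^2=2k-2$, and $\eta\cdot\xi_2=2rk+1$, so that $u\cdot u_2=0$ and hence $R_{u_1}R_{u_2}(u)=R_{u_1}(u)$. The existence of such $\xi_2$ (for $\eta\neq 0$) is again an Eichler-type fact in $U^{\oplus 3}$, and the case $\eta=0$ forces $r=0$, $u=f_0$ already. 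With this adjustment your sketch is complete and matches Markman's argument.
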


The key technical tool used to refine Lemma \ref{lemrefl1} is the following result, whose proof was suggested by Simone Billi, which I thank again for the precious help.

\begin{lem}\label{lemsimo}
    Let $S$ be an elliptic Abelian surface with $\NS(S)=\brakett{e,f}$ and let $\xi_1, \xi_2\in \H^2(S,\Z)$ be two primitive elements such that $\xi_1^2=\xi_2^2=2k-2$. Then there exist two primitive elements $\beta_1, \beta_2 \in \NS(S)\ort$ and an isometry $g \in \SO^+(\H^2(S,\Z))$ such that $g(\xi_i)=\beta_i-f$, for $i=1,2$.
\end{lem}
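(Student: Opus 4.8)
The plan is to use Eichler's criterion (\cite[Lemma 3.5]{GHS10}), which asserts that two primitive vectors of the same square and the same image in the discriminant group of a lattice containing two orthogonal copies of $U$ are related by an isometry in the stable orthogonal group $\widetilde{\O}$. First I would set up the lattice: $\H^2(S,\Z)\simeq U^{\oplus 3}$ since $\NS(S)=\brakett{e,f}\simeq U$ and the transcendental part is $U^{\oplus 2}$; in particular $\NS(S)\ort \simeq U^{\oplus 2}$ contains two orthogonal hyperbolic planes, which is exactly the hypothesis needed to apply Eichler's criterion there.

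The core step is to produce, for each $i$, a primitive vector of the form $\beta_i - f$ with $\beta_i \in \NS(S)\ort$ primitive and with the correct square. Compute $(\beta_i - f)^2 = \beta_i^2 - 2\beta_i\cdot f = \beta_i^2$ since $f \in \NS(S)$ is orthogonal to $\beta_i$; so I need $\beta_i^2 = 2k-2 = \xi_i^2$. Since $\NS(S)\ort\simeq U^{\oplus 2}$ is an even indefinite lattice of rank $4$, it represents $2k-2$ by a primitive vector (for $k>2$ this is $\geq 4>0$, and even indefinite lattices of rank $\geq 3$ represent every value of the correct parity primitively). I then need $\beta_i - f$ to be primitive in $\H^2(S,\Z)$: because $f$ is primitive and is not a multiple of anything in $\NS(S)\ort$, and $\beta_i$ is primitive in $\NS(S)\ort$, the sum $\beta_i - f$ is primitive (any common divisor $d$ would divide both the $\NS(S)$-component $-f$ and the $\NS(S)\ort$-component $\beta_i$). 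Moreover $\beta_i - f$ has the same image as $\xi_i$ in the discriminant group: $\H^2(S,\Z)$ is unimodular, so the discriminant group is trivial and this condition is vacuous. Hence by Eichler's criterion there is $g_i \in \widetilde{\O}(\H^2(S,\Z))$ with $g_i(\xi_i) = \beta_i - f$; one then wants a single $g$ working for both $i=1,2$ simultaneously.

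To get a \emph{simultaneous} $g$ I would argue as follows: first apply Eichler to move $\xi_1 \mapsto \beta_1 - f$ by some $g_1$; the image $g_1(\xi_2)$ still has square $2k-2$ and lies in $\H^2(S,\Z)$. Now I need an isometry fixing $\beta_1 - f$ and sending $g_1(\xi_2)$ to some $\beta_2 - f$ with $\beta_2\in \NS(S)\ort$ primitive. This is a relative version of the same statement: in the orthogonal complement $(\beta_1-f)\ort$, which still has signature $(2,3)$ and (by a lattice computation) still contains two orthogonal copies of $U$ — the saturation argument: $(\beta_1-f)\ort$ has rank $5$ and is indefinite, so it contains $U^{\oplus 2}$ — one applies Eichler's criterion again to realize $g_1(\xi_2)$ as $\beta_2 - f$ for a suitable primitive $\beta_2\in \NS(S)\ort\cap(\beta_1-f)\ort$, provided $\beta_2-f$ lies in $(\beta_1-f)\ort$, i.e. provided $(\beta_2-f)\cdot(\beta_1-f)= \beta_1\cdot\beta_2$ equals the already-fixed value $g_1(\xi_2)\cdot(\beta_1-f)$; there is enough freedom in $\NS(S)\ort\simeq U^{\oplus 2}$ to arrange $\beta_1\cdot\beta_2$ to be any prescribed integer. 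Composing, $g:= g_2\circ g_1$ does the job, and $g\in \widetilde{\O}\subseteq \SO^+$ (stable isometries have determinant $1$ and are orientation-preserving on a unimodular lattice of this signature — or one adjusts by an explicit reflection in $(\beta_1-f)\ort\cap(\beta_2-f)\ort$ to fix the spinor norm and determinant, which is harmless since it fixes both target vectors).

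The main obstacle I anticipate is the bookkeeping in the second application of Eichler's criterion: ensuring that $(\beta_1-f)\ort$ genuinely contains $U^{\oplus 2}$ (so Eichler applies) and that the inner product $\beta_1\cdot \beta_2$ can be freely prescribed inside $\NS(S)\ort$ while keeping $\beta_2$ primitive and $\beta_2 - f$ primitive and of the right square — this is a finite but slightly delicate diophantine adjustment in the rank $4$ lattice $U^{\oplus 2}$. A clean alternative that sidesteps the simultaneous-$g$ issue would be to set up everything inside the fixed sublattice $\langle f\rangle \oplus \NS(S)\ort$ first, observe $\xi_i$ need not lie there, and instead first move \emph{both} $\xi_i$ into $\langle f, e\rangle^{\perp}\oplus\langle f \rangle$-type position by a single isometry using that $U^{\oplus 3}$ has a transitive enough isometry group on configurations of two primitive vectors with prescribed Gram matrix — but verifying the relevant configuration lemma reduces again to Eichler, so the two-step approach above is the most economical.
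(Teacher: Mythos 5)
Your global strategy -- put a target pair of the form $(\beta_1-f,\beta_2-f)$ with $\beta_i\in\NS(S)\ort$ in ``standard position'' and then move $(\xi_1,\xi_2)$ onto it by an isometry, correcting determinant and orientation at the end by auxiliary isometries supported on an orthogonal complement -- is the same as the paper's, and your first step (Eichler's criterion moves $\xi_1$ to $\beta_1-f$, since $\H^2(S,\Z)\simeq U^{\oplus 3}$ is unimodular and the Eichler transvections already lie in $\SO^+$) is sound. The gap is in the second, ``simultaneous'' step. First, when $\xi_1\cdot\xi_2\neq 0$ the vector $g_1(\xi_2)$ does not lie in $(\beta_1-f)\ort$, so Eichler's criterion inside $(\beta_1-f)\ort$ simply does not apply to it; what you actually need is transitivity of the stabilizer of $\beta_1-f$ on vectors with prescribed square \emph{and} prescribed pairing with $\beta_1-f$, which is a statement about orbits of rank-two sublattices, not of single primitive vectors. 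Second, and more seriously, even the correct relative statement is false at the level of generality you invoke: the $\O(L)$-orbit of the pair $(\xi_1,\xi_2)$ is not determined by its Gram matrix. The saturation $\overline{\brakett{\xi_1,\xi_2}}$ is an invariant. Concretely, take $k=3$ and $\xi_1=e_1+f_1+e_2+f_2$, $\xi_2=e_1+f_1-e_2-f_2$ in $U^{\oplus 3}$: these are primitive of square $4$ with $\xi_1\cdot\xi_2=0$, but $(\xi_1+\xi_2)/2\in L$, so any admissible target pair must satisfy $\bigl((\beta_1-f)+(\beta_2-f)\bigr)/2\in L$, i.e.\ $\beta_1\equiv\beta_2 \pmod{2L}$ -- a divisibility condition that your ``arrange $\beta_1\cdot\beta_2$ to be any prescribed integer'' step never imposes. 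Your closing remark that the needed configuration lemma ``reduces again to Eichler'' is exactly where this is swept under the rug: matching glue data for rank-two sublattices is not an Eichler-type statement.

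The paper's route is designed around precisely this point: it works with the saturation $S_1=\overline{\brakett{\xi_1,\xi_2}}$ as a primitive sublattice, constructs an isometry of $S_1$ onto a primitive sublattice $S_2$ containing $\beta_1-f$ and $\beta_2-f$ (with explicit $\beta_i$ realizing the required Gram matrix in $\NS(S)\ort$), and then extends this isometry to all of $L$ using Nikulin's theory of primitive embeddings -- the key inputs being $K:=S\ort\simeq U\oplus S(-1)$ and the surjectivity of $\disc\colon\O(K)\to\O(A_K)$, after which determinant and orientation are fixed by isometries of $K$ acting trivially on $A_K$ (this last part matches your suggestion). So the correct framework for the step you are missing is Nikulin's extension criterion applied to the rank-two saturation, not a second application of Eichler's criterion to a single vector; to repair your argument you would have to prove a transitivity statement for pairs that carries the saturation data along, at which point you have essentially reconstructed the paper's proof.
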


The proof of Lemma \ref{lemsimo} will be subdivided in two steps, namely Lemma \ref{lemlattice1} and Lemma \ref{lemlattice2}. In the following, we will let $S$ be an elliptic Abelian surface with $\NS(S)=\brakett{e,f} =:U_1$ and we will set $$L:= \H^2(S,\Z)\simeq U^{\oplus 3}=U_1\overset{\perp}{\oplus}U^{\oplus 2}.$$ Let $\xi_1, \xi_2\in L$ be two primitive elements such that $\xi_1^2=\xi_2^2=2k-2$, $\xi_2\neq \pm\xi_1$ and let us denote by $S_1:= \overline{<\xi_1,\xi_2>}$ the saturation of $\brakett{\xi_1,\xi_2}$ into $L$ (see Appendix \ref{secprimitiveemb}).

\begin{lem}\label{lemlattice1} There exist two primitive elements $\beta_1, \beta_2 \in U_1\ort$, a primitive sublattice $S_2$ of $L$ such that $\beta_1,\beta_2 \in S_2$ and an isometry $\pphi\colon S_1 \to S_2$ such that $\pphi(\xi_i)=\beta_i-f$ for $i=1,2$.
\end{lem}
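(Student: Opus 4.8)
The plan is to move $S_1$ into a position inside $L$ that is adapted to the isotropic vector $f$: first I would embed $S_1$ primitively into the orthogonal complement $U_1\ort\simeq U^{\oplus 2}$, and then I would twist that embedding by $f$ so that $\xi_1,\xi_2$ acquire the prescribed shape $\beta_i-f$.

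The first thing to record is the constraint that the conclusion forces. Since $\beta_i\in U_1\ort$ is orthogonal to $f\in U_1$ and $f^2=0$, one has $(\beta_i-f)^2=\beta_i^2$ and $(\beta_1-f)\cdot(\beta_2-f)=\beta_1\cdot\beta_2$. Hence for an isometry $\varphi$ with $\varphi(\xi_i)=\beta_i-f$ to exist it is necessary that $\beta_i^2=\xi_i^2=2k-2$ and $\beta_1\cdot\beta_2=\xi_1\cdot\xi_2$, and conversely, under this normalisation, $\xi_i\mapsto\beta_i-f$ already defines an isometry on $\brakett{\xi_1,\xi_2}$. So the problem reduces to choosing $\beta_1,\beta_2$ suitably inside $U_1\ort$ and to propagating the isometry from $\brakett{\xi_1,\xi_2}$ to its saturation $S_1$.

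For the first of these: $S_1$ has rank $2$, it is even (being a primitive sublattice of the even lattice $L$), and it contains the vector $\xi_1$ of positive square $2k-2>0$, so its signature is $(2,0)$ or $(1,1)$ and, in particular, $\ell(A_{S_1})\le\rk(S_1)=2$. Since $U_1\ort\simeq U^{\oplus 2}$ is unimodular of signature $(2,2)$, Nikulin's existence criterion for primitive embeddings yields a primitive embedding $\psi\colon S_1\hookrightarrow U_1\ort$ (in the borderline case $\ell(A_{S_1})=2$, or when $S_1$ is definite, one either checks the discriminant-form condition of Nikulin's theorem by hand, or, more concretely, realises $\xi_1$ as a primitive vector of square $2k-2$ in one hyperbolic summand of $U_1\ort$ and uses the other summand to place $\xi_2$). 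I then set $\beta_i:=\psi(\xi_i)$: since $\xi_i$ is primitive in $S_1$ (it is primitive in $L$ and $S_1$ is primitive in $L$) and $\psi$ is a primitive embedding, $\beta_i$ is primitive in $U_1\ort$, and automatically $\beta_i^2=2k-2$ and $\beta_1\cdot\beta_2=\xi_1\cdot\xi_2$. For the second, I would pick a homomorphism $\chi\colon S_1\to\Z$ with $\chi(\xi_1)=\chi(\xi_2)=1$ and define $\varphi(x):=\psi(x)-\chi(x)f$. Because $\psi(x),\psi(y)\in U_1\ort$ are orthogonal to $f$ and $f^2=0$, one has $\varphi(x)\cdot\varphi(y)=\psi(x)\cdot\psi(y)=x\cdot y$, so $\varphi$ is an isometric embedding, and $\varphi(\xi_i)=\beta_i-\chi(\xi_i)f=\beta_i-f$. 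Finally I put $S_2:=\psi(S_1)\oplus\brakett{f}$: this is a primitive sublattice of $L$, being the orthogonal direct sum of $\psi(S_1)$, primitive in the direct summand $U_1\ort$ of $L$, and $\brakett{f}$, primitive in the direct summand $U_1$; it contains $\beta_1,\beta_2$ and the image of $\varphi$, so $\varphi\colon S_1\to S_2$ is the required isometry.

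The step I expect to be the genuine obstacle is the existence of the homomorphism $\chi$ with $\chi(\xi_i)=1$ — equivalently, extending the ``degree one'' functional $\xi_i\mapsto 1$ from $\brakett{\xi_1,\xi_2}$ to the saturation $S_1$, equivalently still, showing that the image of $\xi_1$ in $S_1/\overline{\brakett{\xi_1-\xi_2}}$ is primitive. This can fail for a pair of primitive vectors of square $2k-2$ whose span is not saturated (for instance when $S_1$ contains a glue class of the form $\tfrac1n(\xi_1+\xi_2)$ with $n$ not dividing $2$), so this is precisely where the hypotheses inherited from Lemma~\ref{lemrefl1} must enter: I expect one can reduce, without loss of generality, to the case in which $\brakett{\xi_1,\xi_2}$ is already primitive in $L$, so that $S_1=\brakett{\xi_1,\xi_2}$ and $\chi$ exists trivially — once this reduction is granted, the lattice construction above is routine and Lemma~\ref{lemlattice1} follows.
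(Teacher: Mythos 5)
Your construction is, in substance, the same as the paper's: there the classes are chosen explicitly, $\beta_1=e_2+(k-1)f_2$ and $\beta_2=lf_2+(k-1)e_3+f_3$ with $l=\xi_1\cdot\xi_2$ (so that the Gram matrices match, which is exactly your normalisation), and the map is then defined on generators $x_i=\lambda_i\xi_1+\mu_i\xi_2$ of the saturation $S_1$ by $x_i\mapsto\lambda_i\beta_1+\mu_i\beta_2-(\lambda_i+\mu_i)f$ --- precisely your $\psi(x)-\chi(x)f$ with $\chi(\lambda\xi_1+\mu\xi_2)=\lambda+\mu$. So, up to replacing Nikulin's embedding theorem by an explicit choice of $\beta_1,\beta_2$, you have reproduced the paper's argument.

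The problem is the step you yourself single out, and it is a genuine gap, not a formality. The reduction you propose (``assume without loss of generality that $\langle\xi_1,\xi_2\rangle$ is saturated'') cannot simply be granted: replacing $\xi_1,\xi_2$ by generators of $S_1$ changes the two vectors that are required to land on classes of the shape $\beta-f$, so nothing is gained, and the pairs produced by Lemma~\ref{lemrefl1} carry no saturation hypothesis. Worse, the functional $\chi$ can genuinely fail to exist: take $\xi_{1,2}=2e_2\pm\bigl(e_3+(k-1)f_3\bigr)$. These are primitive of square $2k-2$ with $\xi_1\neq\pm\xi_2$, and $e_2=\tfrac14(\xi_1+\xi_2)$ lies in $S_1$, so any $\chi\colon S_1\to\Z$ with $\chi(\xi_1)=\chi(\xi_2)=1$ would have $\chi(e_2)=\tfrac12$. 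In fact for this pair no isometric embedding $\pphi$ of $S_1$ into $L$ with $\pphi(\xi_i)=\beta_i-f$ and $\beta_i\in U_1\ort$ can exist at all, since $\pphi(e_2)=\tfrac14(\beta_1+\beta_2)-\tfrac12 f$ would pair non-integrally with $e$; so the obstruction is intrinsic to the statement, not to your method. (Note also that this $S_1$ is degenerate --- $e_2$ lies in its radical --- so the appeal to Nikulin's embedding theorem in your first step needs care as well.) The paper's own proof passes over exactly this point: the integrality of $y_i=\lambda_i\beta_1+\mu_i\beta_2-(\lambda_i+\mu_i)f$ for rational $\lambda_i,\mu_i$ is asserted, and with those explicit $\beta_i$ it holds only when $\lambda_i,\mu_i\in\Z$. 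As written, then, your proposal does not close the lemma: one must either restrict upstream the class of pairs $(\xi_1,\xi_2)$ fed into it, or give a different argument in the non-saturated case.
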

\begin{proof} Let us set $\xi_1\cdot \xi_2=: l \in \Z$ and let $e_2,f_2,e_3,f_3 \in L$ be the isotropic generators of the two orthogonal copies $U^{\oplus2}$ of the hyperbolic plane, orthogonal to $U_1$, so that \begin{center}
    $e_i^2=f_i^2=e\cdot e_i=f\cdot e_i = f\cdot e_i=f\cdot f_i =0$ and $e_i\cdot f_i=1$ for $i=2,3$.
\end{center}
Let us set $$\beta_1:=e_2 + (k-1)f_2\textit{, }\phantom{+}\beta_2:= l f_2+ (k-1)e_3 + f_3 \in L$$ and notice that $\beta_1,\beta_2 \in U_1\ort$, $\beta_i^2=\xi_i^2=2k-2$ and $\beta_1\cdot \beta_2=l=\xi_1\cdot \xi_2$.  Let $x_i=\lambda_i \xi_1 + \mu_i \xi_2$, with $\lambda_i, \mu_i \in \Q$, for $i=1,2$, be two generators of $S_1$, set $$y_i:= \lambda_i \beta_1 + \mu_i \beta_2 - (\lambda_i+\mu_i)f\in L,$$ for $i=1,2$, and let $S_2:= <y_1,y_2>$. Then, a direct computation shows that the map \begin{align*}
    \pphi\colon S_1 &\longrightarrow S_2\\
    x_i & \longmapsto y_i
\end{align*} is an isometry between primitive sublattices of $L$ satisfying $\pphi(\xi_i)=\beta_i-f$ for $i=1,2$. In particular, the lattice $S_2$ is the saturation of $\brakett{\beta_1-f,\beta_2-f}$ inside $L$.\end{proof}

In particular, both $S_1$ and $S_2$ are abstractly isometric to a primitive sublattice $S\subseteq L$, which satisfies the following properties:
\begin{lem}\label{lemlattice2}Let $S$ be a primitive sublattice of $L$ as above and let $K:= S\ort$ its orthogonal complement in $L$. Then the following holds: \begin{enumerate}
    \item There exists an isomorphism of groups $\gamma\colon A_S \simeq A_K$ such that $\overline{q_S}=-\overline{q_K}\circ \gamma$.
    \item The lattice $K$ is unique up to isomorphism and the natural  map $\disc\colon \O(K) \to \O(A_K)$ is surjective.
    \item In particular, $K \simeq U \oplus S(-1)$, where $(S(-1),q_{S(-1)}):=(S,-q_{S})$.
\end{enumerate}
\end{lem}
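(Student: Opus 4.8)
The statement is a standard application of Nikulin's theory of discriminant forms for primitive embeddings into unimodular even lattices, so the plan is to invoke the relevant results from \cite{Nik79} (which I assume are among the lattice-theoretic facts collected in the Appendix). First I would prove (1): since $L=\H^2(S,\Z)\simeq U^{\oplus 3}$ is even and unimodular, for a primitive sublattice $S\subseteq L$ with orthogonal complement $K=S\ort$ one has that the natural composition $S\oplus K\hookrightarrow L=L\dual\twoheadrightarrow S\dual$ induces an isomorphism $A_K\xrightarrow{\ \sim\ }A_S$ identifying the discriminant quadratic forms up to sign; this is exactly \cite[Proposition 1.6.1]{Nik79}. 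Concretely, one takes the graph of this isomorphism to be the image of $L$ inside $A_S\oplus A_K$, and the fact that $L$ is unimodular forces this image to be the graph of an anti-isometry $\gamma\colon A_S\to A_K$ with $\overline{q_S}=-\overline{q_K}\circ\gamma$. This gives (1) with no further computation.

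For (2), I would use Nikulin's uniqueness criterion. The lattice $S$ has rank $2$ and signature $(1,1)$ (its generators span a hyperbolic-type plane, being the saturation of two classes of square $2k-2$ with $\beta_i\in\NS(S)\ort$ inside $L$), so $K=S\ort$ has signature $(2,3)$, hence $\operatorname{rk}(K)=5 > \ell(A_K)+1$ where $\ell(A_K)=\ell(A_S)\le 2$ is the minimal number of generators of the discriminant group; moreover $K$ is indefinite. By \cite[Corollary 1.13.3]{Nik79} (the uniqueness part of Nikulin's theorem on primitive embeddings), an even indefinite lattice with $\operatorname{rk}(K)\ge \ell(A_K)+2$ is determined up to isometry by its signature and discriminant form, and the natural map $\disc\colon\O(K)\to\O(A_K)$ is surjective. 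Since the discriminant form of $K$ is prescribed by (1) as $-\overline{q_S}$ transported via $\gamma$, and its signature is fixed, $K$ is unique up to isomorphism and the surjectivity of $\disc$ follows. The only thing to check carefully here is the numerical inequality $\operatorname{rk}(K)\ge\ell(A_K)+2$, which holds comfortably since $\operatorname{rk}(K)=5$ and $\ell(A_K)\le\operatorname{rk}(S)=2$.

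For (3), I would simply exhibit the lattice $U\oplus S(-1)$ and check it does the job: it is even, indefinite of signature $(1+1,\,1+\operatorname{rk}(S)-1)=(2,3)$, and its discriminant form is $q_U\oplus q_{S(-1)}=0\oplus(-\overline{q_S})=-\overline{q_S}$, which matches the discriminant form of $K$ computed in (1). By the uniqueness statement of part (2), $K\simeq U\oplus S(-1)$. I expect the main (really the only) obstacle to be bookkeeping: one must confirm that the sublattice $S$ produced in Lemma \ref{lemlattice1} genuinely has signature $(1,1)$ and that $\ell(A_S)\le 2$, so that the hypotheses of Nikulin's uniqueness theorem apply; once that is in place, parts (1)--(3) are immediate consequences of the cited results and require no computation.
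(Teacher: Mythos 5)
Your proposal follows essentially the same route as the paper's proof: part (1) is Nikulin's anti-isometry of discriminant forms for a primitive sublattice of an even unimodular lattice (Theorem \ref{thmantiso}), part (2) is Nikulin's uniqueness-and-surjectivity criterion for indefinite even lattices (Proposition \ref{propunique}), and part (3) follows by matching the signature and discriminant form of $U\oplus S(-1)$ against those of $K$ and invoking the uniqueness from (2). The one thing you must correct is the numerology. Here $L=\H^2(S,\Z)\simeq U^{\oplus 3}$ has rank $6$ and signature $(3,3)$, and the primitive sublattice $S$ has rank $2$; hence $K=S\ort$ has rank $4$, not $5$, and its signature cannot be $(2,3)$. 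The hypothesis of Proposition \ref{propunique} still holds, but only just: $l(A_K)=l(A_S)\leq \rk(S)=2$ gives $\rk(K)=4\geq l(A_K)+2$ with equality in the worst case, rather than with room to spare as you claim. Also, your assertion that $S$ has signature $(1,1)$ is not justified (it could a priori be $(2,0)$, depending on $\xi_1\cdot\xi_2$), but this is immaterial: whatever the signature $(s_+,s_-)$ of the rank-$2$ lattice $S$, the lattice $U\oplus S(-1)$ has signature $(1+s_-,1+s_+)=(3-s_+,3-s_-)$, which is exactly the signature of $K$ inside the $(3,3)$-lattice $L$. With these corrections the argument is sound and coincides with the paper's.
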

\begin{proof}
    Part (1) is a straightforward application of Theorem \ref{thmantiso}, with $\gamma:=\gamma_{ST}$. Part (2) follows from Proposition \ref{propunique}. Indeed, as $K$ is a rank $4$ sublattice of $L$ and $sgn(L)=(3,3)$, it is indefinite. Moreover, by part (1), it holds $l(A_K)=l(A_S)\leq 2$, hence $\rk(K)=4 \leq l(A_K)+2$. Part (3) follows from the uniqueness up to isomorphism of $K$ shown in part (2), by noticing that $sgn(K)=sgn(U \oplus S(-1))$ and $$\overline{q_{U\oplus S(-1)}}=\overline{q_{S(-1)}}=-\overline{q_S}=\overline{q_K},$$ where the last equality follows from part (1).
\end{proof}
The two previous Lemmas allow us to address the proof of Lemma \ref{lemsimo}.
\begin{proof}[Proof of Lemma \ref{lemsimo}] With the notations above introduced, we consider the isometry $\pphi\colon S_1\to S_2$, whose existence is guaranteed by Lemma \ref{lemlattice1} and which satisfies $\pphi(\xi_i)=\beta_i-f$ for two suitable elements $\beta_i\in U_1\ort$, for $i=1,2$.

\textit{Extension of $\pphi$ to $\Tilde{\pphi}\in \O(L)$.} Let us denote by $K_i:=S_i\ort$, respectively, the orthogonal complements of $S_i$ in $L$ for $i=1,2$. By Lemma \ref{lemlattice2} (2), the latter are both isomorphic to $K$ and, by Lemma \ref{lemlattice2} (1), there exist two isomorphisms $\gamma_i\colon A_{S_i}\to A_{K_i}$ reversing the disriminant quadratic form, for $i=1,2$. Let us consider the isomorphism $$\overline{\psi}:= \gamma_2\circ \overline{\pphi}\circ \gamma_1^{-1}\colon A_{K_1}\to A_{K_2},$$ where $\overline{\pphi}:= \disc(\pphi)$ is induced by $\pphi$ on the discriminant groups. Notice that $\overline{\psi}\in \O(A_{K_1},A_{K_2})$, as $\overline{\pphi}$ preserves the discriminant form and both $\gamma_1$ and $\gamma_2$ reverse it. By Lemma \ref{lemlattice2} (2) we deduce the surjectivity of the dicriminant map $\disc\colon \O(K_1,K_2)\to \O(A_{K_1},A_{K_2})$, so that there exists an isometry $\psi \in \O(K_1,K_2)$ such that $\disc(\psi)=\overline{\psi}$. Consequently, by Proposition \ref{propext}, there exists an isometry $\Tilde{\pphi}\in \O(L)$ such that $\Tilde{\pphi}|_{S_1}=\pphi$.

\textit{Determinant:} By Lemma \ref{lemlattice2} (3), it holds $K_2\simeq U\oplus S(-1)$. We consider the isometry $\eta\in \O(K_2)$ interchanging the two isotropic generators of $U$ and acting as the identity on $S(-1)$. We notice that $\det(\eta)=-1$ and $\disc(\eta)=\id_{A_{K_2}}$. Hence, by Corollary \ref{corextid}, we can extend $\eta$ to $\Tilde{\eta}:=\eta \oplus \id_{S_2}\in \O(L)$ satisfying $\det(\Tilde{\eta})=\det(\eta)=-1$. Consequently, if $\det(\Tilde{\pphi})=-1$, we can replace $\Tilde{\pphi}$ with $\Tilde{\eta}\circ \Tilde{\pphi} \in \SO(L)$.

\textit{Orientation:} In conclusion, we can work in a similar fashion to make $\Tilde{\pphi}$ orientation preserving. Again by using the characterization of Lemma \ref{lemlattice2} (3), we can define an isometry $\theta:= -\id_{U}\oplus \id_{S(-1)}\in \O(K_2)$ such that $\ori(\theta)=1$, as $sgn(U)=(1,1)$, $\det(\theta)=1$, as $\rk(U)=2$, and $\disc(\theta)=\id_{A_{K_2}}$. By Corollary \ref{corextid}, we can extend $\theta$ to $\Tilde{\theta}:= \theta \oplus \id_{S_2}$ such that $\ori(\Tilde{\theta})=\ori(\theta)=1$ and $\det(\Tilde{\theta})=\det(\theta)=1$. Hence, the claim follows by setting $g:=\Tilde{\theta}^{\ori(\Tilde{\pphi})}\circ \Tilde{\pphi} \in \SO^+(L)$. \end{proof}

The proof of Proposition \ref{lemrefl2} now reduces to a combined application of Lemma \ref{lemrefl1} and Lemma \ref{lemsimo}.

\begin{proof}[Proof of Proposition \ref{lemrefl2}]In order to prove the claim, we need to show that any product $R_{u_1} \circ R_{u_2}$ of reflections around $(-2)-$vectors $u_i=(1,\xi_i,k)$, with $\xi_i\in \H^2(S,\Z)$ a primitive class, for $i=1,2$, as in Lemma \ref{lemrefl1} can be conjugated into the product $R_{t_1}\circ R_{t_2}$ of reflections around $(-2)-$vectors $t_i=(1,\beta_i-f,k)$, with $\beta_i\in \NS(S)\ort$ a primitive class, for $i=1,2$, as in Proposition \ref{lemrefl2}, via an isometry $\Tilde{g}\in SO^+(\Htilde(S,\Z))_v$ defined as $\tilde{g}:=\id_{\H^0(S,\Z)}\oplus g \oplus \id_{\H^4(S,\Z)}$, with $g\in \SO^+(\H^2(S,\Z))$.

Let $u_i=(1,\xi_i,k)$, with $\xi_i\in \H^2(S,\Z)$ primitive, for $i=1,2$, be two $(-2)-$vectors as above, so that $\xi_i^2=2k-2$ for $i=1,2$. By Lemma \ref{lemsimo}, there exist two elements $\beta_1,\beta_2 \in \NS(S)\ort$ and an isometry $g\in \SO^+(\H^2(S,\Z))$ such that $g(\xi_i)=\beta_i-f$. By extending $g$ to $\tilde{g} \in SO^+(\Htilde(S,\Z))_v$ via the identity as above, we get $\tilde{g}(u_i)=t_i$ for $i=1,2$, and a straightforward application of the identity  \begin{equation}\label{rifless}
        R_{\tilde{g}(u)}=\tilde{g}\circ R_u \circ \tilde{g}^{-1} \phantom{++} \text{for any }u\in \Htilde(S,\Z)
    \end{equation} leads to $$\tilde{g}\circ R_{u_1}\circ R_{u_2} \circ \tilde{g}^{-1} = R_{\tilde{g}(u_1)}\circ R_{\tilde{g}(u_2)}= R_{t_1}\circ R_{t_2},$$ concluding the proof. \end{proof}
    
The last result allows us to reduce the first part of the proof of Proposition \ref{propso+v} to the following
\begin{lem}\label{lemrefl3}
    Let $\SvH$ be an $(m,k)-$triple as above, with $\c_1(H)=e+tf$, with $t\gg0$, and let $t_i=(1,\xi_i,k) \in \Htilde(S,\Z)$ be a $(-2)-$vector such that $\xi_i=\beta_i -f$, with $\beta_i\in \brakett{e,f}\ort$ for $i=1,2$. Then $$R_{t_1}\circ R_{t_2}\in \Im(\Phitildemk_{\SvH}).$$ 
\end{lem}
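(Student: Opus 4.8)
The goal is to realize the product $R_{t_1}\circ R_{t_2}$ of two reflections around $(-2)$-vectors $t_i=(1,\beta_i-f,k)$ as a morphism in the image of $\Phitildemk_{\SvH}$, i.e.\ as coming from a composition of deformation paths and Fourier--Mukai equivalences inside the groupoid $\Gmk$. The natural strategy, following \cite[Section 5]{Mar18}, is to exhibit each such reflection product as conjugate --- via morphisms already known to lie in $\Im(\Phitildemk_{\SvH})$ --- to an isometry that is manifestly induced either by a monodromy operator of the surface $S$ (hence in the image of $\Phitildemkdef$, by Proposition \ref{propsurfacemon}) or by one of the elementary Fourier--Mukai morphisms $\L$, $\FM_\PP$, $\FM_\PP\dual$, $\FM_\EE$ (hence in the image of $\PhitildemkFM$). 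Since $\Phitildemk$ is a functor, it suffices to produce such a factorization at the level of the groupoid $\Gmk$ and check that its image under $\Phitildemk$ is $R_{t_1}\circ R_{t_2}$.

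\textbf{Key steps.} First I would use the fact that $\beta_1,\beta_2\in\brakett{e,f}\ort$ to isolate the action of $R_{t_1}\circ R_{t_2}$: since the $t_i$ differ from $(1,0,k)$ only in the $\brakett{e,f}\ort$-part (up to the fixed $-f$ shift), the product $R_{t_1}\circ R_{t_2}$ preserves $v=m(1,0,-k)$ and acts on a sublattice controlled by the transcendental-type summand $\brakett{e,f}\ort\subseteq\H^2(S,\Z)$. Second, I would apply the Fourier--Mukai equivalence $\FM_\EE$ of Lemma \ref{lemyosh}, whose cohomological action $\FM_\EE^\H$ sends $v=m(1,0,-k)$ to the fiber-type vector $(0,m(e+kf),m)$; the point of the shift by $-f$ in the definition of the $t_i$ is precisely that, under $\FM_\EE^\H$ (computed via \cite[Theorem 5.3]{Bri98} and \cite[Section 3.2]{Yos01a}), the vectors $(1,\beta_i-f,k)$ are carried to vectors whose reflections become reflections of \emph{surface type}, i.e.\ reflections realizable by monodromy operators of $S$ acting on $\brakett{e,f}\ort$. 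Concretely, I would compute $\FM_\EE^\H(t_i)$ and verify that $\FM_\EE^\H\circ(R_{t_1}\circ R_{t_2})\circ(\FM_\EE^\H)^{-1}$ equals $R_{\FM_\EE^\H(t_1)}\circ R_{\FM_\EE^\H(t_2)}$ (using \eqref{rifless}), and that this latter isometry lies in the image of the surface monodromy $\SO^+(\H^2(S,\Z))$, hence in $\Im(\Phitildemkdef)$ by Proposition \ref{propsurfacemon}. Third, I would conclude by composition in $\Gmk$: writing $\phi$ for the morphism $\FM_\EE$ (possibly precomposed with identifications $\chi_{H,H'}$ and tensorizations to stay within congruent $(m,k)$-triples, as in Lemma \ref{lemtensor} and Remark \ref{rmkvgener}), and $\overline{\alpha}$ for the deformation path realizing the relevant surface monodromy operator, the composite $\phi^{-1}\ast\overline{\alpha}\ast\phi$ is a morphism in $\Aut_{\Gmk}(\SvH)$ whose image under $\Phitildemk$ is $R_{t_1}\circ R_{t_2}$.

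\textbf{Main obstacle.} The delicate point is the explicit computation of the cohomological action $\FM_\EE^\H$ on the $(-2)$-vectors $t_i=(1,\beta_i-f,k)$, and the verification that the resulting reflections are genuinely of surface type --- i.e.\ that $\FM_\EE^\H(t_i)$ has the form making $R_{\FM_\EE^\H(t_i)}$ lie in (the image of) $\SO^+(\H^2(S,\Z))_v\subseteq\O(\Htilde(S,\Z))_{v'}$, where $v'=(0,m(e+kf),m)$. This requires pinning down the Mukai-lattice action of the relative Poincaré sheaf $\EE$ from \cite[Theorem 5.3]{Bri98}, keeping careful track of the shift $[1]$ in the definition of $\FM_\EE$ and of the sign conventions, and it is exactly here that the hypothesis $t\gg0$ on $\c_1(H)=e+tf$ is used, to guarantee that $H$ is $v$-generic and that $\FM_\EE$ induces the isomorphism of moduli spaces of Lemma \ref{lemyosh}. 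I also need the elliptic structure $\NS(S)\simeq\brakett{e,f}$ and the hypothesis $\beta_i\in\brakett{e,f}\ort$ to ensure the orthogonality that makes the conjugated reflections act trivially on $v'$ and on the polarization class, so that the surface monodromy operator realizing them genuinely deforms the $(m,k)$-triple. Once these computations are carried out, functoriality of $\Phitildemk$ together with \eqref{rifless} closes the argument with no further difficulty.
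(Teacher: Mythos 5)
Your overall skeleton matches the paper's: conjugate $R_{t_1}\circ R_{t_2}$ by the cohomological action of $\FM_\EE$, observe that the images of the $t_i$ are classes of the form $(0,a_i,0)$ with $a_i=e-kf-\beta_i\in\H^2(S,\Z)$, and then realize the resulting product of reflections by surface monodromy lifted through $\Phitildemkdef$. However, there is a genuine gap at the last step. You claim that the orthogonality hypotheses "make the conjugated reflections act trivially on $v'$ \emph{and on the polarization class}", and on that basis you invoke surface monodromy of $S$ itself (via Proposition \ref{propsurfacemon}). This is false on the elliptic surface: with $h=\c_1(H)=e+tf$ and $a_i=e-kf-\beta_i$ one computes $a_i\cdot h=t-k\neq 0$ for $t\gg0$, so $R_{a_1}\circ R_{a_2}$ does \emph{not} stabilize $h$. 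Since the only surface monodromy operators that lift to deformations of the triple are those arising from projective families of \emph{polarized} surfaces --- which by Remark \ref{rmkmonfi} generate only the stabilizer $\SO^+(\H^2(S,\Z))_{h}$ --- your argument cannot realize $R_{a_1}\circ R_{a_2}$ on $S$. Moreover, Proposition \ref{propsurfacemon} is stated for the triple with Mukai vector $(m,0,-mk)$, whereas after the $\FM_\EE$ conjugation you are sitting at the triple with vector $\overline{v}=(0,m(e+kf),m)$, to which it does not apply.

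The paper closes exactly this gap by inserting an additional elementary morphism: a deformation path $\overline{\alpha}$ from $(S,\overline{v},H)$ to a triple $(S',v',H')$ with $\NS(S')=\Z h'$ and $v'=(0,mh',m)$. After transporting, the classes $b_i:=\Phitildemkdef(\overline{\alpha})(a_i)$ satisfy $b_i\cdot h'=\tfrac{1}{m}\,b_i\cdot v'=\tfrac{1}{m}\,t_i\cdot v=0$, because on the Picard-rank-one surface orthogonality to the Mukai vector \emph{forces} orthogonality to the polarization. Only then does $R_{b_1}\circ R_{b_2}$ land in $\SO^+(\H^2(S',\Z))_{h'}=\mon^2_{f}(S')$, and Lemma \ref{lemsurfacemon2} (not Proposition \ref{propsurfacemon}) applies to lift it. You should add this intermediate deformation to a Picard-rank-one triple; without it the reduction to surface monodromy does not go through.
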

\begin{proof}Under the current assumptions, Lemma \ref{lemyosh} guarantees that the derived equivalence $FM_\EE$ introduced in Section \ref{secelliptic} induces a morphism $$\FM_{\EE} \in \Hom_{\GmkFM}(\SvH, (S,\overline{v},H)),$$ where we set $\overline{v}:= (0, m(e+kf),m)$. Arguing verbatim as in the proof of \cite[Proposition 4.9]{MR19}, we get a description of the cohomological action of $\FM_\EE$ on the whole Mukai lattice $\Htilde(S,\Z)$. Consequently, for $i=1,2$, it holds $$\FM_\EE^\H(t_i)=\FM_\EE^\H(1,\beta_i-f,k)=(0,e-kf-\beta_i,0).$$ Setting $a_i:= e- kf-\beta_i$, for $i=1,2$,  by identity (\ref{rifless}) we get $$R_{t_1}\circ R_{t_2}=(\FM_\EE^{H})^{-1} \circ R_{(0,a_1,0)}\circ R_{(0,a_2,0)}\circ \FM_\EE^\H.$$ Hence, we can reduce the proof to showing that \begin{equation}\label{claim1}
    R_{(0,a_1,0)}\circ R_{(0,a_2,0)} \in \Im(\Phitildemk_{(S,\overline{v},H)}).
\end{equation}Let us consider a deformation path $\alpha=\defopath$, where \begin{center}
    $(\SS_{t_1},\LL_{t_1},\HH_{t_1})=(S,m(e+kf),H)$ and $(\SS_{t_2},\LL_{t_2},\HH_{t_2})=(S', mh', H')$, 
\end{center}with $H'\in \Pic(S')$, $h'=\c_1(H')$ and $\NS(S')=\Z h'$, and set $v':= v_{t_2}$. Then, up to conjugation with $\Phitildemkdef(\overline{\alpha})$, the statement in (\ref{claim1}) is equivalent to \begin{equation}
    \label{claim2} R_{(0,b_1,0)}\circ R_{(0,b_2,0)} \in \Im(\Phitildemk_{(S',v',H')}),
\end{equation} where $(0,b_i,0)=\Phitildemkdef(\overline{\alpha})(0,a_i,0)$ - as $\Phitildemkdef(\overline{\alpha})$ is a graded isomorphism of groups by definition - for $i=1,2$. Moreover, the latter satisfies the following properties: \begin{enumerate}
    \item $b_i^2=a_i^2=t_i^2=-2$;
    \item $b_i \cdot h'= t_i\cdot v=0$
\end{enumerate} for $i=1,2$. Hence, by identifying $R_{(0,b_i,0)}$ with the reflection $R_{b_i}$ in $\H^2(S',\Z)$ under the natural embedding  $\O(\H^2(S',\Z))\hookrightarrow \O(\Htilde(S',\Z))$, we get that $R_{b_1}\circ R_{b_2}\in \SO^+(\H^{2}(S',\Z))$, by point (1) and equality (\ref{char}), and that $$R_{b_1}\circ R_{b_2}\in \SO^+(\H^{2}(S',\Z))_{h'},$$ by point (2). Hence, by Remark \ref{rmkmonfi} and Lemma \ref{lemsurfacemon2}, we deduce $$R_{b_1}\circ R_{b_2} \in \Im(\Phitildemk_{\defo,(S',v',H')}),$$ proving claim (\ref{claim2}) and concluding the proof.
\end{proof}
We are finally in the position to quickly address the proof of Proposition \ref{propso+v}.
\begin{proof}[Proof of Proposition \ref{propso+v}]
    In order to show that $\SO^+(\Htilde(S,\Z))_v\subseteq \Im(\Phimk_{\SvH})$, by Lemma \ref{lemrefl1} and Proposition \ref{lemrefl2} it is enough to prove the inclusion $\SO^+(\H^2(S,\Z))\linebreak[4]\subseteq \Im(\Phimk_{\SvH})$ and that $R_{t_1}\circ R_{t_2} \in \Im(\Phimk_{\SvH})$ for any $t_i=(1,\beta_i-f,k)$, with $\beta_i\in \brakett{e,f}\ort$, as in Proposition \ref{lemrefl2}. The first assertion follows from Proposition \ref{propsurfacemon} and the fact that the representation $\Psimk$ acts trivially - namely, just as restriction to $v\ort$ - on orientation preserving isometries. The second part of the claim follows analogously from Lemma \ref{lemrefl3}. More precisely, the latter shows that the product of such reflections can be written as a composition of orientation preserving isometries of the form $\Phitildemk(\eta)$ for a suitable morphism $\eta \in \Hom_{\Gmk}(\SvHuno,\SvHdue)$, so the representation $\Psimk$ acts trivially on $R_{t_1}\circ R_{t_2}$ again, providing an element of $\Im(\Phimk_{\SvH})$.
\end{proof}

The last ingredient needed to complete the proof of Theorem \ref{thmNmon} is the following:
\begin{prop}\label{propdual}
    Let $m\geq 1$ and $k>2$ be two positive integers and let $\SvH$ be an $(m,k)-$triple, with $v=m(1,0,-k)$. Then $$R_s\circ R_{s_1}\in \Im(\PhimkSvH),$$ where $s=(1,0,1),\phantom{.} s_1=(1,0,-1)\in v\ort$ are as in Remark \ref{rmkgenN2}.
\end{prop}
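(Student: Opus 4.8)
The plan is to identify $R_s\circ R_{s_1}$, regarded as an element of $\O(v\ort)$, with the value of the functor $\Psimk$ on the cohomological dualization $D_S^\H$, and then to realize $D_S^\H$ --- up to an already available factor --- as the cohomological action of a loop in $\Gmk$ based at $\SvH$ and built out of Fourier--Mukai equivalences with Poincaré-bundle kernels.

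\emph{Lattice-theoretic reduction.} By Remark \ref{rmkgenN2}, the involution $R_s\circ R_{s_1}$ of $\Htilde(S,\Z)$ equals $-D_S^\H$. As the $\H^2$-component of $v=m(1,0,-k)$ vanishes, $D_S^\H$ fixes $v$, so it is a morphism in $\Htildemk$, and the computation of Example \ref{exorientS} shows that it reverses the orientation $\epsilon_S$, i.e.\ $\ori(D_S^\H)=1$. Hence
\[
\Psimk(D_S^\H)=(-1)^{\ori(D_S^\H)}\,(D_S^\H)_{|v\ort}=-\,(D_S^\H)_{|v\ort},
\]
which is precisely $R_s\circ R_{s_1}$ seen in $\O(v\ort)$. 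Since $\Phimk_{\SvH}=\Psimk\circ\Phitildemk_{\SvH}$ is a group homomorphism and $\SO^+(\Htilde(S,\Z))_v\subseteq\Im(\Phimk_{\SvH})$ by Proposition \ref{propso+v} (under the identification of this stabilizer with its restriction to $v\ort$), it is enough to produce a loop $\eta\in\Aut_{\Gmk}(\SvH)$ whose cohomological action $\Phitildemk(\eta)$ agrees with $D_S^\H$ modulo $\SO^+(\Htilde(S,\Z))_v$.

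\emph{Construction of the loop.} The geometric input is Mukai's composition formula $\FM_\PP\circ\FM_\PP\cong[-1]_S^\ast[-2]$ for an abelian surface (with the appropriate Poincaré kernels), together with the fact that both $[-1]_S^\ast$ and the shift $[-2]$ act as the identity on $\H\ev(S,\Z)$; combined with the cohomological formulas (\ref{vtilde}) and (\ref{vcap}), this gives $(\FM_\PP\dual)^\H\circ(\FM_\PP)^\H=D_S^\H$, i.e.\ the composition of $\FM_\PP\colon S\to\hat S$ with $\FM_\PP\dual\colon\hat S\to S$ realizes $D_S^\H$ on cohomology. To turn this into a loop in $\Gmk$ based at $\SvH$ I would: (i) move, along a deformation path of $(m,k)$-triples keeping the Mukai vector constantly equal to $m(1,0,-k)$ (its cohomological action is graded, hence commutes with $D_{(\cdot)}^\H$), to a triple $(S_1,v,H_1)$ with $\NS(S_1)=\Z h_1$; (ii) as the middle component of $v$ vanishes, Lemma \ref{lemFM} does not apply to $v$, so first tensor by a large power of $H_1$ --- a morphism $\L$ of $\GmkFM$ by Lemma \ref{lemtensor} --- reaching a Mukai vector of the form $(m,(md)h_1,\ast)$ with $md\gg0$; (iii) apply $\FM_\PP$ (a morphism of $\GmkFM$ by Lemma \ref{lemFM}(1)), then tensor once more on the dual side to make the middle component positive again so that Lemma \ref{lemFM}(3) applies, and apply $\FM_\PP\dual$; (iv) close the loop back to $(S_1,v,H_1)$ by a morphism of $\Gmk$, which exists by Remark \ref{rmkhomgmknonempty}; and finally conjugate the resulting loop at $(S_1,v,H_1)$ back to $\SvH$ by the deformation path of step (i).

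\emph{The main point, and the main difficulty.} It remains to check that the cohomological action of this loop differs from $D_S^\H$ only by an element of $\SO^+(\Htilde(S,\Z))_v$. The two tensorizations inserted in steps (ii)--(iii) have unipotent cohomological action, hence lie in $\SO^+$; the isometries $(\FM_\PP)^\H$ and $(\FM_\PP\dual)^\H$ differ by the special orthogonal isometry $D^\H$, so their determinants coincide and multiply to $1$; and by Corollary \ref{cororientFM} the morphism $\FM_\PP\dual$ reverses orientation while every other morphism appearing in the loop preserves it, matching $\ori(D_S^\H)=1$. Consequently the loop's action lies in $\SO^+(\Htilde(S,\Z))_v\cdot D_S^\H$, and applying $\Psimk$ together with Proposition \ref{propso+v} yields $R_s\circ R_{s_1}\in\Im(\Phimk_{\SvH})$. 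The hard part is exactly this last verification: one has to write $(\FM_\PP)^\H$, $(\FM_\PP\dual)^\H$ and the tensorizations in coordinates and use the Mukai composition formula to confirm that the unipotent corrections and the closing morphism contribute nothing outside $\SO^+(\Htilde(S,\Z))_v$; this bookkeeping parallels the computations of \cite[Section 5]{Mar18}.
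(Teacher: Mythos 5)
Your overall strategy --- reduce to realizing $D_S^\H$ as the cohomological action of a loop in $\Gmk$, after deforming to a Picard rank one surface and tensoring by a large power of the polarization so that the Fourier--Mukai lemmas apply --- is the same as the paper's, and your lattice-theoretic reduction is correct: $R_s\circ R_{s_1}=-D_S^\H$, the involution $D_S^\H$ fixes $v$ and has $\ori(D_S^\H)=1$, so $\Psimk(D_S^\H)=-(D_S^\H)_{|v\ort}=R_s\circ R_{s_1}$, and Proposition \ref{propso+v} does license working modulo $\SO^+(\Htilde(S,\Z))_v$. The gap is in the construction and closure of the loop, which you flag as ``the hard part'' without resolving. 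First, applying $\FM_\PP$ from $S_1$ and then $\FM_\PP\dual$ from the dual surface requires verifying the hypotheses of Lemma \ref{lemFM} on the dual side (genericity of the dual polarization, the shape of the transformed Mukai vector there), which your step (iii) does not establish. Second, and more seriously, step (iv) closes the loop by ``a morphism of $\Gmk$, which exists by Remark \ref{rmkhomgmknonempty}'': that remark gives only existence, with no control whatsoever on the cohomological action of the closing morphism, so there is no reason the total loop should act as $D_S^\H$ up to $\SO^+(\Htilde(S,\Z))_v$; your final paragraph asserts that the closing morphism ``contributes nothing outside $\SO^+(\Htilde(S,\Z))_v$'' but offers no argument, and for an arbitrary choice this simply need not hold.

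The paper eliminates both problems with a self-closing construction. Writing $p\HFM$ for tensorization by $pH$ with $p\gg0$, it takes $\psi := p\HFM \ast (\FM_\PP^{-1}\circ\FM_\PP\dual)\ast p\HFM$. The middle piece uses the formal inverse of $\FM_\PP$ in the groupoid, so both Poincar\'e transforms are applied from the same source triple $(S,v_{pH},H)$ and only the hypotheses of Lemma \ref{lemFM}(1),(3) on $S$ itself are needed; its cohomological action is exactly $D_S^\H$. Since $D_S^\H$ conjugates tensorization by $pH$ into tensorization by $-pH$, the two outer tensorizations cancel against each other, so the composite is automatically a loop at $\SvH$ whose action is exactly $D_S^\H$ --- no closing morphism and no ``modulo $\SO^+$'' is needed --- and the orientation count $\ori(\psi^\H)=1$ follows from Corollary \ref{cororientFM}, giving $\Phimk(\psi)=-D^\H_{S|v\ort}=R_s\circ R_{s_1}$ on the nose. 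To repair your plan, replace steps (iii)--(iv) by this symmetric conjugation.
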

\begin{proof}
    Let us start by considering the reflections $R_s$ and $R_{s_1}$ around the vectors $s=(1,0,1)$ and $s_1=(1,0,-1)$, respectively, defined on the whole Mukai lattice $\Htilde(S,\Z)$. As recalled in Remark \ref{rmkgenN2}, their composition $R_s\circ R_{s_1}$ is an involution satisfying \begin{equation}
        \label{-D_sH} R_s\circ R_{s_1}=-D_S^\H.
    \end{equation}The proof will be easily concluded as soon as we show that $D_S^\H \in \Im(\PhitildemkSvH)$. Firstly, up to conjugation with a morphism $\Phitildemkdef(\overline{\alpha})$, where $$\alpha=(f\colon \SS \to T, \OO_{\SS}, \HH, \gamma, t_1,t_2)$$ is a deformation path from $\SvH$ to $(S', v, H')$, such that $\NS(S')=\Z h'$, with $h'=\c_1(H')$, we can assume that $\NS(S)=\Z h$, where $h=\c_1(H)$. Indeed, if $\psi\in \Aut_{\Gmk}(S',v,H')$ is such that $\Phitildemk(\psi)=D_{S'}^\H$, namely, by (\ref{vduale}), for any $(r,\xi,a)\in \Htilde(S',\Z)$, \begin{equation}
        \label{dual}\Phitildemk(\psi)(r,\xi,a)=D_{S'}^\H(r,\xi,a)=(r,-\xi,a),
    \end{equation} then the latter must satisfy the following commutation relation \begin{equation*}
        \label{commut}D_{S'}^{H}\circ \Phitildemkdef(\overline{\alpha})= \Phitildemkdef(\overline{\alpha})\circ D_S^\H,
    \end{equation*} as the parallel transport $\Phitildemkdef(\overline{\alpha})$ is a graded isomorphism of $\Z-$modules. Hence, it follows that $$\Phitildemk(\overline{\alpha}\ast \psi \ast \overline{\alpha}^{-1})= \Phitildemkdef(\overline{\alpha})^{-1}\circ D_{S'}^\H\circ \Phitildemkdef(\overline{\alpha})=D_S^\H \in \Im(\PhitildemkSvH).$$ 
    Therefore, let us assume that $\NS(S)=\Z h$ and let $p\in \N\Star$ be a positive integer. We recall that, by Lemma \ref{lemtensor} (1), the Fourier-Mukai equivalence $p\HFM\colon \Db(S)\to \Db(S)$ defines a morphism $p\HFM\in \Hom_{\GmkFM}(\SvH,(S,v_{pH},H)$, where $$v_{pH}=(m,mph, \frac{rp^2h^2}{2}+mk).$$ More generally, the cohomological action $p\HFM^\H$ on the whole Mukai lattice $\Htilde(S,\Z)$ is given by the following: for any $(r,\xi,a)\in \Htilde(S,\Z)$, \begin{equation}
        \label{pHH} p\HFM^\H(r,\xi,a)=(r, rph+\xi, r\frac{p^2h^2}{2} + a +p\xi h).
    \end{equation} 
    By Lemma \ref{lemFM} (1), we can choose $p\gg0$ such that both the Fourier-Mukai equivalences $\FM_\PP$ and $\FM_{\PP}\dual$ induce isomorphisms on the corresponding moduli spaces. The composition $\FM_\PP^{-1}\circ \FM_\PP\dual$ defines a morphism $\phi \in \Aut_{\GmkFM}(S,v_{pH},H)$ such that $\PhitildemkFM(\phi)=D_S^\H$, described as in (\ref{dual}). Furthermore, combining (\ref{pHH}) and (\ref{dual}), we get $$\psi:= p\HFM \ast \phi\ast p\HFM \in \Aut_{\Gmk}((S,v,H))$$ and, for any $(r,\xi,a)\in \Htilde(S,\Z))$, \begin{align*}\PhitildemkFM(\psi)&= \PhitildemkFM(p\HFM) \circ 
        \PhitildemkFM(\FM_\PP^{-1})\circ \PhitildemkFM(\FM_\PP\dual) \circ \PhitildemkFM(p\HFM) (r,\xi,a)=\\
        &= \PhitildemkFM(p\HFM) \circ D_S^\H(r, rph+\xi, r\frac{p^2h^2}{2} + a +p\xi h)= \nonumber\\
        &= \PhitildemkFM(p\HFM)(r, -rph-\xi, r\frac{p^2h^2}{2} + a +p\xi h)=(r,-\xi,a) =D_S^\H(r,\xi,a).\nonumber
    \end{align*} In order to conclude, we observe that $$\ori(\PhitildemkFM(\psi))=\ori(p\HFM^\H)+\ori((\FM_\PP\dual)^\H)- \ori(\FM_{\PP}^\H) + \ori(p\HFM^\H)=1,$$ by Corollary \ref{cororientFM}. Consequently, $$\Phimk(\psi)=\Psimk(\PhitildemkFM(\psi))=(-1)^{\ori(\psi^\H)}\psi^\H_{|v\ort}=-D^\H_{S|v\ort}=R_s\circ R_{s_1},$$ concluding the proof.
\end{proof}

We conclude the section with the proof of Theorem \ref{thmNmon}, which has now become a straightforward application of Proposition \ref{propso+v}, Proposition \ref{propdual} and Remark \ref{rmkgenN2}.

\begin{proof}[Proof of Theorem \ref{thmNmon}] Let $\SvH$ an $(m,k)-$triple, with $m\geq 1$ and $k>2$, and let us assume that $S$ is an elliptic Abelian surface with $\NS(S)=\brakett{e,f}$, $v=m(1,0,-k)$ and $h:=\c_1(H)=e+tf$, with $t\gg 0$, as in Proposition \ref{propsurfacemon}. By Proposition \ref{propso+v}, $$\SO^+(\Htilde(S,\Z))_v\subseteq \Im(\Phimk_{\SvH})$$ and, by Proposition \ref{propdual}, the involution $R_s\circ R_{s_1}\in \Im(\PhimkSvH)$. By Remark \ref{rmkgenN2}, those are exactly the generators of the group $\Nn(v\ort)$, hence, inclusion (\ref{Nnimphi}) is proven. The isomorphism of functors $\lambda\colon \Phimk\to \ptmk$ defined in Section \ref{seclambda} (see also Corollary \ref{corlambda}) induces an identification  $$\lambda_{\SvH}^\sharp(\Im(\PhimkSvH))=\Im(\ptmk_{\SvH}),$$ as subgroups of $\monlt(\KvSH)$, by Corollary \ref{corimpt}, where $\lambda_{\SvH}$ is the Hodge isometry of Theorem \ref{pr20thm1.6}. The restriction of $\lambda_{\SvH}^\sharp$ to $\Nn(v\ort)$, together with inclusion (\ref{Nnimphi}), proves the thesis for any $(m,k)-$triple satisfying the current hypotheses. The result is easily extended to any $(m,k)-$triple $(S',v',H')$, as $$\Im(\ptmk_{(S',v',H'))})= \ptmk(\eta)^\sharp(\Im(\ptmk_{\SvH}))$$
for any non-trivial element $\eta \in \Hom_{\Gmk}(\SvH, (S',v',H'))$, whose existence is guaranteed by Remark \ref{rmkhomgmknonempty}.  
\end{proof}

\subsection{The main results}\label{secmainresults}
We conclude the discussion by proving the main result of the paper, which provides a complete description of the locally trivial monodromy group of a moduli space of sheaves on an Abelian surface, and dealing with some straightforward consequences of the latter. We start by recalling that, if $\SvH$ is an $(m,k)-$triple, with $m>1$ and $k>2$ two positive integers, then, by Corollary \ref{corinjmon}, there exists an injective morphism of groups $$i^{\sharp}_{w,m}\colon \monlt(\Kv(S,H)) \longrightarrow \mon^{2}(\Kw(S,H)),$$ induced by the closed embedding $i_{w,m}\colon \KwSH\to \KvSH$ as one of the connected components of the most singular locus of $\KvSH$. We will now complete the picture by proving surjectivity of the morphism $i^{\sharp}_{w,m}$, in which the inclusion of groups $$\Nn(\KvSH)\subseteq \monlt(\KvSH)$$ of Theorem \ref{thmNmon} will play a fundamental role. The outcome is the following, which extends Markman's and Mongardi's result (see Theorem \ref{thmmonkummer}) to the non primitive case. 
\begin{thm}\label{mainthm}
    Let $m,k$ be two positive integers, with $m>1$ and $k>2$, and let $\SvH$ be an $(m,k)-$triple. Then the isomorphism $i^\sharp_{w,m}$ restricts to an isomorphism of groups $$i^{\sharp}_{w,m}\colon \monlt(\Kv(S,H)) \longrightarrow \mon^{2}(\Kw(S,H)).$$ 
    In particular, we get an identification \begin{equation}\label{monNn}
        \monlt(\KvSH)= \Nn(\KvSH).
    \end{equation}
\end{thm}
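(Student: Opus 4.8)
The plan is to combine the injective morphism $i^{\sharp}_{w,m}$ from Corollary \ref{corinjmon} with the inclusion $\Nn(\KvSH)\subseteq \monlt(\KvSH)$ from Theorem \ref{thmNmon}, using the known computation $\mon^2(\KwSH)=\Nn(\KwSH)$ from Theorem \ref{thmmonkummer} as the target to squeeze against. The key observation is that $i^\sharp_{w,m}$, being the conjugation by the similitude $i^\Star_{w,m}=m\lambda_{\SwH}\circ\lambda_{\SvH}^{-1}$ (Proposition \ref{istar}, Lemma \ref{lemisharp}), is compatible with the lattice-theoretic descriptions of the second cohomology groups: under the chain of isometries $\H^2(\KvSH,\Z)\xrightarrow{\lambda_{\SvH}^{-1}}v\ort = w\ort\xrightarrow{\lambda_{\SwH}}\H^2(\KwSH,\Z)$, the map $i^\sharp_{w,m}$ is identified with $\lambda^\sharp_{\SwH}\circ(\lambda^\sharp_{\SvH})^{-1}$. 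First I would record that this identification sends $\Nn(\H^2(\KvSH,\Z))$ isomorphically onto $\Nn(\H^2(\KwSH,\Z))$: indeed $\lambda_{\SvH}$ conjugates $\Nn(v\ort)$ to $\Nn(\H^2(\KvSH,\Z))$ and $\lambda_{\SwH}$ conjugates $\Nn(w\ort)$ to $\Nn(\H^2(\KwSH,\Z))$, and $v\ort=w\ort$ literally identifies the two middle groups. This uses only that the definition of $\Nn$ is intrinsic to the lattice (kernel of $\det\cdot\disc$ inside the orientation-preserving isometries), hence preserved by any lattice isometry, and that a similitude acts on a lattice the same way as its underlying isometry up to rescaling — so conjugation by $i^\Star_{w,m}$ agrees with conjugation by $\lambda_{\SwH}\circ\lambda_{\SvH}^{-1}$.

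Then the argument closes by a counting/containment squeeze. On one hand, Theorem \ref{thmNmon} gives $\Nn(\KvSH)\subseteq\monlt(\KvSH)$, and applying the injective homomorphism $i^\sharp_{w,m}$ we get
\[
i^\sharp_{w,m}\big(\Nn(\KvSH)\big)\subseteq i^\sharp_{w,m}\big(\monlt(\KvSH)\big)\subseteq \mon^2(\KwSH).
\]
On the other hand, by the identification of the previous paragraph, $i^\sharp_{w,m}\big(\Nn(\KvSH)\big)=\Nn(\KwSH)$, which by Theorem \ref{thmmonkummer} equals $\mon^2(\KwSH)$. Therefore the middle containment $i^\sharp_{w,m}\big(\monlt(\KvSH)\big)\subseteq\mon^2(\KwSH)$ is squeezed between two copies of $\mon^2(\KwSH)$, forcing $i^\sharp_{w,m}\big(\monlt(\KvSH)\big)=\mon^2(\KwSH)$, i.e.\ $i^\sharp_{w,m}$ is surjective. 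Since it is already injective by Corollary \ref{corinjmon}, it is an isomorphism $\monlt(\KvSH)\xrightarrow{\sim}\mon^2(\KwSH)$.

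Finally I would deduce the displayed identity \eqref{monNn}. From $i^\sharp_{w,m}\big(\monlt(\KvSH)\big)=\mon^2(\KwSH)=\Nn(\KwSH)=i^\sharp_{w,m}\big(\Nn(\KvSH)\big)$ and injectivity of $i^\sharp_{w,m}$ we get $\monlt(\KvSH)=\Nn(\KvSH)$ as subgroups of $\O(\H^2(\KvSH,\Z))$. I expect the only genuinely delicate point to be making airtight the claim that conjugation by the similitude $i^\Star_{w,m}$ carries $\Nn$ to $\Nn$ — one must check that $\Nn$ is invariant not just under isometries but under the full conjugation action arising here, and that the scalar factor $m$ plays no role (it acts trivially by conjugation on $\GL$ and does not affect orientation, determinant, or the induced map on the discriminant group once one works with the $\Q$-linear extension as in Lemma \ref{lemisharp}). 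Everything else is a formal consequence of results already established, so the proof should be short.
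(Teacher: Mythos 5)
Your proposal is correct and follows essentially the same route as the paper: both use Lemma \ref{lemisharp} to identify $i^\sharp_{w,m}$ with $\lambda^\sharp_{\SwH}\circ(\lambda^\sharp_{\SvH})^{-1}$, deduce $i^\sharp_{w,m}(\Nn(\KvSH))=\Nn(\KwSH)$ from the lattice-theoretic (hence isometry-invariant) definition of $\Nn$, and then squeeze $i^\sharp_{w,m}(\monlt(\KvSH))$ between $\Nn(\KwSH)$ and $\mon^2(\KwSH)$ using Theorem \ref{thmNmon} and Theorem \ref{thmmonkummer}. The point you flag as delicate — that the scalar $m$ in the similitude is irrelevant for the conjugation action — is precisely what Lemma \ref{lemisharp} already settles, so no extra work is needed.
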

\begin{rmk} We wish to point out that Theorem \ref{mainthm} is not a generalization of Theorem \ref{thmmonkummer}, as the latter will be used in our proof. For this reason, the case $m=1$ has been omitted from the statement.
\end{rmk}

\begin{proof}[Proof of Theorem \ref{mainthm}] The key ingredient of the proof is the possibility to compare the two groups $\Nn(\KvSH)$ and $\Nn(\KwSH)$ under the identification $$\O(\H^{2}(\Kv(S,H),\Z)) \simeq \O(\H^{2}(\Kw(S,H),\Z))$$ given by $(\lambda_{\SvH}^{-1})^\sharp\circ \lambda_{\SwH}$ (see Theorem \ref{pr20thm1.6}), in a way that is natural with respect to $i^\sharp_{w,m}$. This naturality is precisely guaranteed by Lemma \ref{lemisharp}, which states the identity $$i^{\sharp}_{w,m}=\lambda^\sharp_{\SwH}\circ (\lambda^\sharp_{\SvH})^{-1},$$ under the identification $v\ort=w\ort$. By Theorem \ref{thmNmon}, we have an inclusion of groups $\Nn(\KvSH)\subseteq \monlt(\KvSH)$ and the injective morphism $i^{\sharp}_{w,m}\colon \monlt(\Kv(S,H)) \to \mon^{2}(\Kw(S,H))$ provides the following identification \begin{equation}
    \label{Nn} i^\sharp_{w,m}(\Nn(\KvSH))=\Nn(\KwSH),
\end{equation} as both the groups involved are only defined lattice theoretically, and as both $\lambda_{\SvH}$ and $\lambda_{\SwH}$ are both isometries. The right hand side of equality (\ref{Nn}) is precisely $\mon^2(\KwSH)$, by Theorem \ref{thmmonkummer}, which is, on the other hand, the image of the morphism $i^\sharp_{w,m}$. In other words, the commutativity of the diagram \begin{center}
      \begin{tikzcd}[ampersand replacement=\&, row sep=large]
          \monlt(\Kv(S,H))\arrow[rrr, hook, "i^{\sharp}_{w,m}", "\text{Corollary \ref{corinjmon}}" swap] \&\&\& \mon^{2}(\Kw(S,H))\arrow[d, equal, "\text{Theorem \ref{thmmonkummer} }" swap]\\
          \Nn(\Kv(S,H))\arrow[u, , " \rotatebox{90}{\(\subseteq\)}", "\text{ Theorem \ref{thmNmon}}" swap, hook]\arrow[rrr, bend right=15, "i^{\sharp}_{w,m}"]\arrow[r, "\sim" swap, "(\lambda_{\SvH}^{\sharp})^{-1}"] \& \Nn(v\ort) \arrow[r,equal]\& \Nn(w\ort)\arrow[r, "\sim" swap, "\lambda_{\SwH}^{\sharp}"] \&  \Nn(\Kw(S,H))
      \end{tikzcd}
  \end{center}
is shown, concluding the proof.
\end{proof}

\begin{rmk}
    An immediate consequence of Theorem \ref{mainthm} is that, if $\SvH$ is an $(m,k)-$triple with $m>1$ and $k>2$, whereas the locally trivial deformation class of the moduli space $\KvSH$ is determined by both $m$ and $k$ (see Theorem \ref{PR18defolt}), the isomorphism class of its locally trivial monodromy group $\monlt(\KvSH)$ is uniquely determined by $k$. Indeed, if $m\neq m'$ are two positive integers and $v=mw$ and $v'=m'w$, then $i_{w,m}$ and $i_{w,m'}$ induce a natural isomorphism of groups $$(i_{w,m'})^{-1}\circ i^\sharp_{w,m}\colon \monlt(\KvSH) \longrightarrow \monlt(K_{v'}(S,H)).$$
\end{rmk}

As the locally trivial monodromy group is a locally trivial deformation invariant, we can extend Theorem \ref{mainthm} to any irreducible symplectic variety $X$ which is locally trivial deformation equivalent to $\KvSH$, refining Corollary \ref{corinjmondefo}.

\begin{cor}\label{maincor}
    Let $X$ be an irreducible symplectic variety locally trivial deformation equivalent to $\Kv(S,H)$, where $\SvH$ is an $(m,k)-$triple with $m>1$ and $k>2$, and let $Z$ be a connected component of the most singular locus of $X$. Then $Z$ is an irreducible holomorphic symplectic manifold deformation equivalent to $\Kw(S,H)$ and its closed embedding $i_{Z,X}\colon Z \to X$ induces an isomorphism of groups $$i^\sharp_{Z,X}\colon \monlt(X) \longrightarrow \mon^2(Z).$$ Furthermore, we have an identification of groups $$\monlt(X)=\Nn(X).$$
\end{cor}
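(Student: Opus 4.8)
The plan is to obtain Corollary \ref{maincor} from Theorem \ref{mainthm} by transport of structure along a locally trivial deformation: the locally trivial monodromy group, the embedding of the most singular locus, and the lattice-theoretic group $\Nn$ are all compatible with parallel transport, so the statement for a general $X$ in the deformation class follows from the statement for the model $\Kv(S,H)$.

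First I would fix a locally trivial family $p\colon \XX \to T$ of irreducible symplectic varieties together with points $t_1, t_2 \in T$ and, using connectedness of $T$, a path $\delta$ from $t_1$ to $t_2$ with $\XX_{t_1}\simeq X$ and $\XX_{t_2}\simeq \Kv(S,H)$. The assertion that $Z$ is an irreducible holomorphic symplectic manifold deformation equivalent to $\Kw(S,H)$, and that $i_{Z,X}$ induces an injective morphism $i^\sharp_{Z,X}\colon \monlt(X)\to \mon^2(Z)$, is precisely Corollary \ref{corinjmondefo}; I would recall from its proof the commutative square (\ref{cdmonX}), which relates $i^\sharp_{Z,X}$ and $i^\sharp_{w,m}$ through the two vertical maps $\PT_p(\delta)^\sharp$ and $\PT_q(\delta)^\sharp$, where $q\colon \ZZ \to T$ is the restriction of $p$ to the connected component $\ZZ$ of $\XX^{ms}$ with $\ZZ_{t_1}\simeq Z$. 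These vertical maps are isomorphisms of groups, being conjugations by parallel transport operators (see Remark \ref{rmkcomposition}).

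Next, to upgrade injectivity of $i^\sharp_{Z,X}$ to bijectivity I would invoke Theorem \ref{mainthm}, which gives that the bottom arrow $i^\sharp_{w,m}$ of (\ref{cdmonX}) is an isomorphism; since the two vertical arrows of (\ref{cdmonX}) are isomorphisms as well, a short diagram chase forces $i^\sharp_{Z,X}$ to be surjective, hence an isomorphism. For the remaining identification $\monlt(X)=\Nn(X)$, I would note that conjugation by $g:=\PT_p(\delta)\colon \H^2(X,\Z)\to \H^2(\Kv(S,H),\Z)$ carries $\monlt(X)$ isomorphically onto $\monlt(\Kv(S,H))$ — the usual invariance of the monodromy group under parallel transport — and at the same time carries $\Nn(X)$ onto $\Nn(\Kv(S,H))$, because $\Nn(-)$ is defined purely through the lattice structure and the orientation of $\H^2(-,\Z)$ (the kernel of $\det\cdot\disc$ among orientation preserving isometries), and $g$ is an orientation preserving isometry by Lemma \ref{lemmonltorpres} and Remark \ref{exorientPSV}. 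Combining these two identifications with the equality $\monlt(\Kv(S,H))=\Nn(\Kv(S,H))$ of Theorem \ref{mainthm} yields $\monlt(X)=\Nn(X)$.

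The only step requiring any real care is checking that $g^\sharp$ genuinely identifies the two copies of $\Nn$, i.e. that $g$ respects $\det$, the induced action on the discriminant group, and the orientation; this is immediate from the definitions but is the place where I would be explicit. Apart from that, I do not expect any genuine obstacle: the corollary is essentially a repackaging of Theorem \ref{mainthm} and Corollary \ref{corinjmondefo}, and the main work has already been done in establishing the commutativity of (\ref{cdmonX}).
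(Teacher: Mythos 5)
Your proposal is correct and follows essentially the same route as the paper: the paper's proof likewise reduces everything to Corollary \ref{corinjmondefo} plus surjectivity of $i^\sharp_{Z,X}$, which it obtains from Theorem \ref{mainthm} together with the commutativity of diagram (\ref{cdmonX}). Your extra paragraph spelling out that parallel transport conjugation identifies $\Nn(X)$ with $\Nn(\Kv(S,H))$ is a detail the paper leaves implicit, but it is the intended argument.
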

\begin{proof}The only part of the statement that remains to be proved is surjectivity of the morphism $i_{Z,X}^\sharp$, which now follows from Theorem \ref{mainthm} and commutativity of diagram (\ref{cdmonX}).\end{proof}

Finally, as a consequence of Remark \ref{indexN}, we immediately deduce the following
\begin{cor}{\cite[Lemma 4.1]{Mar10}}\label{corindex}
    Let $m,k$ be two positive integers, with $m>1$ and $k>2$, and let $\SvH$ be an $(m,k)-$triple. If $X$ is an irreducible symplectic variety locally trivial deformation to $\KvSH$, then $$[\O^+(\H^2(X,\Z))\colon \monlt(X)]=2^{\rho(k)},$$ where $\rho(k)$ is the number of distinct primes occurring in the factorization of $k$.
\end{cor}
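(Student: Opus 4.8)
The plan is to reduce the statement entirely to the lattice-theoretic computation already recorded in Remark \ref{indexN}, which quotes \cite[Lemma 4.1]{Mar10}. The point is that, by Corollary \ref{maincor}, the index $[\O^+(\H^2(X,\Z))\colon\monlt(X)]$ depends only on the isometry class of the Beauville–Bogomolov–Fujiki lattice $\H^2(X,\Z)$, so once we identify that lattice the answer is immediate.

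First I would invoke Corollary \ref{maincor}: since $X$ is locally trivial deformation equivalent to $\Kv(S,H)$ with $m>1$ and $k>2$, we have $\monlt(X)=\Nn(X)$, where $\Nn(X)=\Nn(\H^2(X,\Z))$ is the index-two subgroup of the reflection group $\W(\H^2(X,\Z))$ cut out by the character $\det\cdot\disc$ (Appendix \ref{weylgroups}). I would stress that this description is purely lattice-theoretic: the groups $\W(\H^2(X,\Z))$, $\Nn(\H^2(X,\Z))$ and $\O^+(\H^2(X,\Z))$, as well as the character $\det\cdot\disc$, are all intrinsic to the abstract lattice $\H^2(X,\Z)$ and make no reference to its Hodge structure or to the geometry of $X$. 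Consequently the index $[\O^+(\H^2(X,\Z))\colon\Nn(\H^2(X,\Z))]$ is an invariant of the isometry class of $\H^2(X,\Z)$.

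Next I would identify that lattice. The BBF lattice is a locally trivial deformation invariant (Remark \ref{rmkPSV} (3), \cite[Lemma 5.7]{BL18}), so $\H^2(X,\Z)$ is isometric to $\H^2(\Kv(S,H),\Z)$; by Theorem \ref{pr20thm1.6} the latter is isometric to $v\ort\subseteq\Htilde(S,\Z)\cong U^{\oplus 4}$. Since $v=mw$ with $w$ primitive and $w^2=2k$, one has $v\ort=w\ort$, and, as recalled in Section \ref{secpt}, $w\ort\cong U^{\oplus 3}\oplus\brakett{-2k}$. Hence $\H^2(X,\Z)\cong U^{\oplus 3}\oplus\brakett{-2k}$, which is exactly the lattice appearing in Remark \ref{indexN}.

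Finally, combining the two steps, $[\O^+(\H^2(X,\Z))\colon\monlt(X)]=[\O^+(U^{\oplus 3}\oplus\brakett{-2k})\colon\Nn(U^{\oplus 3}\oplus\brakett{-2k})]$, and by Remark \ref{indexN}, i.e. by \cite[Lemma 4.1]{Mar10}, this equals $2^{\rho(k)}$, with $\rho(k)$ the number of distinct primes dividing $k$. I do not expect any genuine obstacle here; the only point deserving a line of justification is the transport of the groups and the character $\det\cdot\disc$ under an abstract lattice isometry, which is automatic because all of these objects are defined intrinsically in terms of the lattice. (For completeness one may also note that the same formula holds when $m=1$ directly from Remark \ref{indexN}, but the corollary is phrased for $m>1$ so as to rest on Corollary \ref{maincor}.)
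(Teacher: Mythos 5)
Your proposal is correct and follows exactly the route the paper intends: the corollary is stated as an immediate consequence of Corollary \ref{maincor} (giving $\monlt(X)=\Nn(X)$) together with the purely lattice-theoretic index computation of Remark \ref{indexN} (\cite[Lemma 4.1]{Mar10}), applied to $\H^2(X,\Z)\cong v\ort=w\ort\cong U^{\oplus 3}\oplus\brakett{-2k}$. Your write-up merely makes explicit the identification of the lattice and the deformation invariance, which the paper leaves implicit.
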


\section{An application: the SYZ conjecture}
\label{secSYZ}In this Section we present a geometric application of the main results of this paper, providing a proof of the SYZ conjecture for singular moduli spaces of sheaves on Abelian surfaces. The SYZ conjecture plays a significant role in the study of irreducible symplectic varieties, as it establishes a deep connection between algebraic and symplectic geometry and has important implications for their classification. In simple terms, it predicts that nef isotropic line bundles are related to the existence of Lagrangian fibrations (see Section \ref{seclagr}). In the smooth case of irreducible holomorphic symplectic manifolds, the latter has been established for any known deformation class. For singular symplectic varieties the problem is widely open, but a substantial contribution has been made in the case of primitive symplectic varieties by the work of \cite{OO25}, where the conjecture is proven for singular moduli spaces of sheaves on K3 surfaces. We devote this Section to provide an adaptation of that proof to the Abelian case (see Section \ref{secproofSYZ}), relying on the monodromy description achieved in the previous Section.

\subsection{Lagrangian fibrations} \label{seclagr}In this Section, we collect some essential notions and results concerning a special class of fibrations that is particularly relevant to the study and classification of symplectic varieties, namely Lagrangian fibrations. We then formulate the SYZ conjecture and provide an overview of the current state of the art.
\begin{defn}
    Let $n\in \N\setminus\{0\}$, let $(X,\sigma)$ be a primitive symplectic variety (see Definition \ref{defsv} (3)) of dimension $2n$ and let $X_\reg$ be its smooth locus. \begin{enumerate}
        \item A subvariety $Z\subseteq X$ of dimension $n$ is called \textit{Lagrangian} if $Z\cap X_\reg \neq \emptyset$ and $\sigma|_{X_\reg\cap Z_\reg}=0$.
        \item A \textit{Lagrangian fibration on $X$} is a surjective morphism $f\colon X \to B$ onto a normal Kähler space, with connected fibers and such that the general fiber of $f$ is a Lagrangian subvariety of $X$.
    \end{enumerate}
\end{defn}
A description of the main properties of Lagrangian fibrations on primitive symplectic varieties, together with a sufficient characterization, is given by the following Theorem, which generalizes to the singular setting several foundational results due to Matsushita. 
\begin{thm}{(\cite[Theorem 3]{Sch20}, \cite[Theorem 2.8]{KL25})} \label{thmlagr}Let $X$ be a primitive symplectic variety of dimension $2n$ and let $f\colon X \to B$ be a surjective morphism with connected fibers onto a normal Kähler space. Then $f\colon X \to B$ is a Lagrangian fibration. Furthermore: \begin{enumerate}
    \item The base $B$ is a $\Q-$factorial projective klt variety of Picard rank $1$. If, additionally, $X$ is irreducible symplectic, then $B$ is Fano and, if $B$ is smooth, then $B\simeq \proj^n$.
    \item The general fiber of $f$ is an Abelian variety of dimension $n$ completely contained in $X_\reg$.
    \item The map $f$ is equidimensional and all irreducible components of each fiber are Lagrangian subvarieties.
\end{enumerate}
\end{thm}

We recall that primitive symplectic varieties, being compact Kähler spaces, have a well defined Kähler cone, which is an open cone in $\H^{1,1}(X,\R)$ (see Section \ref{secsingularsymplectc}). A class $\alpha \in \H^{1,1}(X,\R)$ is said to be \textit{nef} if it belongs to the closure of the Kähler cone. 

Let $X$ be a primitive symplectic variety and let $q_X$ denote its Beauville-Bogomolov-Fujiki form (see Remark \ref{rmkPSV}). If $f\colon X \to B$ is a Lagrangian fibration on $X$ and $b=f\Star\OO_B(1)$ is the class of a fiber of $f$, then $b$ is nef and $q_X(b)=0$. The SYZ conjecture claims that the converse holds.

\begin{conj}{(SYZ Conjecture for primitive symplectic varieties)} Let $X$ be a primitive symplectic variety and let $L$ be a line bundle on $X$. If $L$ is nef and $q_X(L)=0$, then there exists a Lagrangian fibration $f\colon X \to B$ such that $L=f\Star\OO_B(1)$.
\end{conj}

In that case, we will say that $L$ \textit{induces a Lagrangian fibration} on $X$.

In the smooth case, the SYZ conjecture has been proven true for any known deformation class, thanks to the work of \cite{BM14}, \cite{Mar14}, \cite{Mat17}, \cite{Wie16}, \cite{Yos16}, \cite{Wie18}, \cite{MR19} and \cite{MO22}. In the singular setting, the first distinguished class of primitive symplectic varieties for which the latter has been established is the one of singular moduli spaces of sheaves on K3 surfaces, by \cite{OO25}. In the next Section, we will show that the SYZ conjecture holds also in the Abelian case, by applying the general theory developed by \cite{OO25} for primitive symplectic varieties and exploiting the monodromy description provided in Section \ref{secmonodromy}.

\subsection{The SYZ conjecture for singular moduli spaces of sheaves on Abelian surfaces} \label{secproofSYZ}
In this Section we will provide a proof for the SYZ conjecture for irreducible symplectic varieties that are locally trivial deformations of moduli spaces of sheaves $\Kv(S,H)$, for any $(m,k)-$triple $\SvH$, with $m>1$ and $k>2$. 

For those deformation classes for which the SYZ conjecture has been established, a description of the monodromy orbit of primitive isotropic elements of the Beauville-Bogomolov-Fujiki lattice has turned out to be crucial. Building on the strategy developed by \cite{OO25} for moduli spaces of sheaves on K3 surfaces, we use the relation between $\monlt(\Kv(S,H))$ and $\mon^2(\KwSH)$ to reduce this problem to the above-mentioned classification in the smooth case, provided by \cite[Section 5]{Wie18}. 

The goal of this section is the following result:
\begin{thm}\label{SYZAb}Let $m>1$ and $k>2$ be two integers, let $\SvH$ be an $(m,k)-$triple and let $X$ be an irreducible symplectic variety locally trivial deformation equivalent to $\Kv(S,H)$. If $L$ is a nef line bundle on $X$ such that $q_X(L)=0$, then there exists a Lagrangian fibration $f\colon X \to B$ such that $L=f\Star\OO_B(1)$.
\end{thm}
\subsubsection{Deformations of Lagrangian fibrations} The first key ingredient for the proof of Theorem \ref{SYZAb} is a result of \cite{OO25} concerning locally trivial deformations of Lagrangian fibrations on primitive symplectic varieties.

\begin{defn}\label{defnlteqLagrangian}\begin{enumerate}
    \item A \textit{locally trivial family of Lagrangian fibrations} is a locally trivial family $p\colon \XX \to T$ of primitive symplectic varieties (see Definition \ref{defltfamily} (3)) such that there exists a commutative diagram \begin{center}
        \begin{tikzcd}
            \XX \arrow[rr,"f"] \arrow[dr, "p" swap]&& \BB \arrow[dl,"s"]\\
            & T
        \end{tikzcd}
    \end{center} where $f$ is a $T-$morphism, $s$ is projective and, for any $t\in T$, the restriction $f_t\colon \XX_t \to \BB_t$ is a Lagrangian fibration. We will denote by $p\colon \XX/\BB \to T$ a locally trivial deformation of Lagrangian fibrations as above.
    \item If $X_1$ and $X_2$ are two primitive symplectic varieties, two Lagrangian fibrations $f_i\colon X_i \to B_i$ on $X_i$, for $i=1,2$, are said to be \textit{locally trivial deformation equivalent} if there exists a locally trivial deformation of Lagrangian fibrations $p\colon \XX/\BB \to T$ and two points $t_1,t_2\in T$ such that $f_{t_i}=f_i$ for $i=1,2$.
\end{enumerate}
\end{defn}

The following generalization to the singular setting of several results due to Matsushita will play a central role in the proof of Theorem \ref{SYZAb}.

\begin{thm}{(\cite[Theorem 3.1]{OO25})}\label{thmOO} For $i=1,2$, let $X_i$ be a $\Q-$factorial and terminal primitive symplectic variety and let $L_i\in \NS(X_i)$ be a nef divisor such that $q_{X_i}(L_i)=0$. Suppose that $L_1$ induces a Lagrangian fibration on $X_1$. If there exists a locally trivial parallel transport operator $g \in \PTlt(X_1,X_2)$ such that $g(L_1)=L_2$, then \begin{enumerate}
    \item $L_2$ induces a Lagrangian fibration on $X_2$;
    \item $X_1$ and $X_2$ are locally trivial deformation equivalent as Lagrangian fibrations.
\end{enumerate}
\end{thm}
Theorem \ref{thmOO} will be used to construct a Lagrangian fibration on any irreducible symplectic variety as in Theorem \ref{SYZAb} as locally trivial deformation of a \textit{Beauville-Mukai system}.

\subsubsection{Beauville-Mukai systems} Let $m>1$ and $k>2$ be two positive integers and let us consider an $(m,k)-$triple $\SvH$, with $S$ an Abelian surface such that $\NS(S)=\Z h$, with $h$ the class of a polarization $H$ of degree $h^2=2k$ and $v=(0, mh, 0)$ a Mukai vector. Mapping each sheaf $F \in \Kv(S,H)$ to its Fitting support (see \cite[Section V.1.3]{Eis95}) defines a surjective morphism (see \cite[Section 3.1.2]{PR18}) \begin{equation}
    \label{BMsyst}f_v\colon \Kv(S,H) \longrightarrow \abs{mH}\simeq \proj^{m^2k-1}.
\end{equation} For any smooth and irreducible curve $C \in \abs{mH}$, the fiber $f_v^{-1}(C)$ coincides with an Abelian subvariety of the Jacobian $\Jac^d(C)$, with $d=m^2k$ (see \cite[Section 6.20]{Wie18}). By Stein's factorization Theorem, the morphism $f_v$ has connected fibers and, by Theorem \ref{thmlagr}, it is a Lagrangian fibration.

\begin{defn}\label{defBMsyst} Let $\SvH$ be an $(m,k)-$triple as above.  A Lagrangian fibration $f_v\colon \KvSH\to \abs{mH}$ defined as in (\ref{BMsyst}) will be called \textit{Beauville-Mukai system of $(m,k)-$type}.
\end{defn}

If $m=1$, then $v=w=(0,h,0)$ yields a Lagrangian fibration $f_w\colon \KwSH \to \abs{H}$ on an irreducible holomorphic symplectic manifold of type $Kum^{k-1}$, known as \textit{Beauville-Mukai system of generalized Kummer type} (see \cite[Section 6]{Wie18}).

\begin{rmk}It is straightforward from Theorem \ref{PR18defolt} that, for any $(m,k)-$triple $\SvH$ with $m>1$ and $k>2$, any irreducible symplectic variety $X$ that is locally trivial deformation of a moduli space $\Kv(S,H)$ is again locally trivial deformation equivalent to a Beauville-Mukai system of $(m,k)-$type, in the sense of Definition \ref{defltfamily} (4).
\end{rmk}

If $\SvH$ is an $(m,k)-$triple as above, with $v=(0,mh,0)$, then the class $b=(0,0,1)\in\Htilde(S,\Z)$ naturally belongs to $v\ort$. Its image $\lambda_v(b)\in \H^{1,1}(\Kv(S,H),\Z) \simeq \Pic(\KvSH)$ via the Hodge isometry $\lambda_v:= \lambda_{\SvH}\colon v\ort \to \H^2(\Kv(S,H))$ (see Theorem \ref{pr20thm1.6}) is an isotropic class which defines the Lagrangian fibration $f_v$. This is the content of the following Lemma, which is an adaptation to the Abelian case of Lemma 4.3 of \cite{OO25}.

\begin{lem}\label{lemfiber}Let $f_v\colon \Kv(S,H) \to \abs{mH}\simeq \proj^n$ be a Beauville-Mukai system of $(m,k)-$type, with $n=m^2k-1$, and set $b=(0,0,1)\in v\ort$. Then $$f_v\Star\OO_{\proj^n}(1)=\lambda_v(b).$$
\end{lem}
\begin{proof}The considered Beauville-Mukai system of $(m,k)-$type fits in the following commutative diagram \begin{center}
    \begin{tikzcd}
        \KwSH \arrow[r, hook, "i_{w,m}"]\arrow[d, "f_w" swap] & \KvSH \arrow[d, "f_v"]\\
        \proj^{k-1} \simeq \abs{H} \arrow[r, "\nu_m" swap] & \abs{mH}\simeq \proj^{n},
    \end{tikzcd}
\end{center}where $i_{w,m}$ is the embedding of $\KwSH$ into $\Kv(S,H)$ defined in Section \ref{secemb}, the morphism $f_w$ is the Beauville-Mukai system of Kummer type associated to the $(1,k)-$\linebreak[4]triple $(S,w,H)$ and $\nu_m$ is the map induced by the $m$-th Veronese embedding. By commutativity of the diagram, we get $$(i_{w,m}\Star \circ f_v\Star) (\OO_{\proj^n}(1))= (f_w\Star \circ \nu_m\Star)(\OO_{\proj^n}(1)).$$ By definition, we have $\nu_w\Star\OO_{\proj^n}(1)=\OO_{\proj^{k-1}}(m)=m\OO_{\proj^{k-1}}(1)$ and, by \cite[Proposition 6.29 (iii)]{Wie18}, it holds $f_w\Star\OO_{\proj^{k-1}}(1)=\lambda_w(b)$. Putting identities together we get $$(i_{w,m}\Star \circ f_v\Star) (\OO_{\proj^n}(1)) = m\lambda_w(b).$$ By Lemma \ref{istar}, it holds $i_{w,m}\Star= m\lambda_w\circ \lambda_v^{-1}$, hence the claim follows.\end{proof}
\subsubsection{Locally trivial deformations of Beauville-Mukai systems} With the notions collected so far, we are finally in the position to prove Theorem \ref{SYZAb}, by adapting the proof of \cite[Proposition 6.2]{OO25} to our setting. 

\begin{proof}[Proof of Theorem \ref{SYZAb}]Let $m>1$ and $k>2$ be two positive integers and let $X$ be an irreducible symplectic variety deformation equivalent to $\Kv(S,H)$ for an $(m,k)-$triple $\SvH$. Let $Z$ be a connected component of its most singular locus, which, by Corollary \ref{corinjmondefo}, is an irreducible holomorphic symplectic manifold deformation equivalent to $\KwSH$. 

Let $l=\c_1(L)\in \NS(X)$ be the class of an isotropic nef line bundle $L$ on $X$. By Lemma \ref{istar} and Corollary \ref{corinjmondefo}, the pullback $$i_{Z,X}\Star\colon \H^2(X,\Z) \to \H^2(Z,\Z)$$ of the closed embedding $i_{Z,X}\colon Z \hookrightarrow X$ is $m$ times a Hodge isometry. Hence, there exists a unique class $l_0 \in \NS(Z)$ of divisibility $\div(l_0)=\div(l)$ such that $i_{Z,X}\Star(l)=ml_0$. By Proposition 6.29 and Proposition 6.32 of \cite{Wie18}, there exists a Beauville-Mukai system of Kummer type $(1,k)$ $$f_{w'}\colon Z':= K_{w'}(S',H')\to \abs{H'}\simeq \proj^{k-1}$$ and a parallel transport operator $g_0\in \PT^2(Z,Z')$ such that $g_0(h_0)=f_{w'}\Star\OO_{\proj^{k-1}}(1)$. Let us now set $v'=mw'$, $n=m^2k-1$ and let us consider the Beauville-Mukai system of $(m,k)-$type $$f_{v'}\colon X':= K_{v'}(S',H') \longrightarrow \abs{mH'}\simeq \proj^n.$$ 
    We will now construct a locally trivial parallel transport operator $g\in \PTlt(X,X')$ such that the following diagram \begin{equation}\label{cdSYZ}
        \begin{tikzcd}
            \H^2(X,\Z)\arrow[d,"i_{Z,X}\Star" swap] \arrow[r,"g"] &\H^2(X',\Z)\arrow[d, "i_{w',m}\Star"]\\
            \H^2(Z,\Z) \arrow[r,"g_0"] &\H^2(Z',\Z)
        \end{tikzcd}
    \end{equation} is commutative. Let $g' \in \PTlt(X',X)$ be a locally trivial parallel transport operator and let $g_0' \in \PT^2(Z',Z)$ be the parallel transport operator induced by restriction. Let us consider $g_0'\circ g_0 \in \mon^2(Z)$ and $(i_{Z,X}^\sharp)^{-1}(g_0'\circ g_0) \in \monlt(X)$. Then $$g:= (g')^{-1}\circ (i_{Z,X}^\sharp)^{-1}(g_0'\circ g_0) \in \PT(X,X')$$ is such that \begin{align*}
        i\Star_{w',m}\circ g &= (g_0')^{-1}\circ i_{Z,X}\Star \circ g' \circ g =  (g_0')^{-1}\circ i_{Z,X}\Star \circ g' \circ (g')^{-1} \circ (i_{Z,X}^\sharp)^{-1}(g_0'\circ g_0)= \\
        &= (g_0')^{-1}\circ g_0' \circ g_0 \circ i_{Z,X}\Star = g_0\circ i_{Z,X}\Star,
    \end{align*} where the first equality follows from (\ref{isharp2}) and the third from (\ref{isharp1}). Commutativity of diagram (\ref{cdSYZ}) is then proven, and from the latter we deduce that $$i\Star_{w',m}(g(l))= g_0(i\Star_{Z,X}(l))=g_0(m l_0) = m f_{w'}\Star \OO_{\proj^{k-1}}(1) = i_{w',m}\Star (f_{v'}\Star \OO_{\proj^n}(1)),$$ where the last equality follows from Lemma \ref{lemfiber}. By injectivity of $i_{w',m}\Star$, we obtain $$g(l)=f_{v'}\Star \OO_{\proj^n}(1).$$ By Theorem \ref{thmOO}, we conclude that $L$ induces a Lagrangian fibration $f\colon X \to B$ on $X$, that is locally trivial deformation equivalent to the Beauville-Mukai system $f_{v'}\colon X' \to \abs{mH'}$, in the sense of Definition \ref{defnlteqLagrangian}.
\end{proof}

\begin{rmk}
    Notice that the proof of Theorem \ref{SYZAb} can be applied in particular to the case in which $X$ is already a Beauville-Mukai system, to show that, for any fixed pair $(m,k)$, with $m>1$ and $k>2$, all Beauville-Mukai systems of $(m,k)-$type are locally trivial deformation equivalent as Lagrangian fibrations. In light of this, Definition \ref{defBMsyst} precisely identifies a locally trivial deformation equivalence class of Lagrangian fibrations.
\end{rmk}
\appendix
\section{Some lattice theory results}
\label{appendix}This Appendix is devoted to recall some lattice theory results that will be useful throughout the paper. In the following, we will let $(\Lambda, \cdot)$ be an \textit{even lattice}, i.e, a finitely generated free $\Z-$module $\Lambda$ equipped with a non-degenerate symmetric bilinear pairing $\cdot\colon \Lambda \times \Lambda \to \Z$, such that the associated quadratic form $q_\Lambda$ takes values in $2\Z$. 

If $\Lambda$ and $\Lambda'$ are two such lattices and the respective bilinear forms are clear from the context and if $v\in \Lambda$, we will use the following notations: \begin{itemize}
    \item $\O(\Lambda,\Lambda')$ for the group of isometries from $\Lambda$ to $\Lambda'$;
    \item $\O(\Lambda)$ for the group of isometries from $\Lambda$ to itself;
    \item $\O(\Lambda)_v$ for the subgroup of $\O(\Lambda)$ of isometries fixing the element $v$. 
\end{itemize}

\subsection{Isometries as kernels of natural group morphisms}In this work we will be interested in special groups of isometries that can be characterized as kernels of some natural group homomorphisms, which we will recall in the following section.
\subsubsection{Determinant character}\label{det} If $g \in \O(\Lambda)$ is an isometry, we define its \textit{determinant} $\det(g)$ as the determinant of the matrix associated to $g$ with respect to any $\Z-$basis of $\Lambda$. This yields a natural \textit{determinant character} $$\det\colon \O(\Lambda) \longrightarrow \{1,-1\}.$$  We will denote by $\SO(\Lambda):= \ker(\det)$ the group of isometries of $\Lambda$ of determinant $1$.

\subsubsection{Orientation character}\label{rmkorient}  The second fundamental character that can be naturally associated to any isometry is related to the notion of \textit{orientation}. For a more precise discussion on this theory, we refer the reader to \cite[Section 4]{Mar11} and \cite[Section 4.1]{Mar08}.

Let $(\Lambdatilde,\cdot)$ be a lattice of signature $sgn(\Lambdatilde)=(p,q)$, with $p,q\geq1$ and let us consider the \textit{big positive cone} $$\widetilde{\CC}:= \{\alpha \in \Lambdatilde\otimes_\Z\R \colon \alpha\cdot \alpha >0\}.$$\begin{itemize}
        \item[(a)] If $p=1$, then $\widetilde{\CC}$ has two connected components and we define an \textit{orientation character} $$\ori\colon \O(\Lambdatilde) \longrightarrow \Z/2\Z,$$ by setting $\ori(g)=0$ if $g$ is an isometry mapping each connected component to itself and $\ori(g)=1$ otherwise. The choice of any connected component of $\widetilde{\CC}$ determines an \textit{orientation on $\Lambdatilde$}.
        \item[(b)] If $p>1$, by \cite[Lemma 4.1]{Mar11}, the cone $\widetilde{\CC}$ is a deformation retract of $\R^p\setminus\{0\}$, hence homotopic to the sphere $S^{p-1}$,
        and the action of $\Lambdatilde$ on the top cohomology group of of $\widetilde{\CC}$ determines an \textit{orientation character} $$\ori\colon \O(\Lambdatilde) \longrightarrow \Z/2\Z,$$ defined as follows: as $\H^{p-1}(\widetilde{\CC},\Z)\simeq \H^{p-1}(S^{p-1},\Z)\simeq \Z$, we can choose a generator $\epsilon$ of the latter and notice that, if $g\in \O(\Lambdatilde)$, then it induces an action $(g_{\R|\widetilde{\CC}})\Star$ on $\H^{p-1}(\widetilde{\CC},\Z)$ that either preserves $\epsilon$ or maps it to its opposite. We then set 
        \begin{align*}\ori(g) = \bigg\{ \begin{array}{ll}
           0  & \text{if } (g_{\R|\widetilde{\CC}})\Star(\epsilon) = \epsilon \\
           1  & \text{if } (g_{\R|\widetilde{\CC}})\Star(\epsilon) = -\epsilon.
        \end{array}
    \end{align*} A choice of a generator $\epsilon$ as above, or, equivalently, of an ordered basis of a positive $p-$dimensional real subspace of $\Lambdatilde\otimes_{\Z}\R$, determines an \textit{orientation on $\Lambdatilde$}.
    \end{itemize}
    In any of the two cases above, we define the group of \textit{orientation preserving isometries of $\Lambdatilde$} as $$\O^+(\Lambdatilde):= \ker(\ori)$$ and, similarly, we define $\SO^+(\Lambdatilde):= \O^+(\Lambdatilde)\cap \SO(\Lambdatilde)$. Analogously, if $(\Lambdatilde_1,\epsilon_1)$ and $(\Lambdatilde_2,\epsilon_2)$ are two oriented lattices as above, we get an \textit{orientation map} \begin{equation}
        \label{orientmap}\ori\colon \O(\Lambdatilde_1,\Lambdatilde_2) \longrightarrow \Z/2\Z
    \end{equation}  and we can define the set of \textit{orientation preserving isometries} as $\O^+(\Lambdatilde_1,\Lambdatilde_2):= \ori^{-1}(0)$.

    We also point out that, if $\Lambda\subseteq\Lambdatilde$ is a sublattice of $\Lambdatilde$ of signature $(p',q')$, with $p',q'\geq 1$, inducing a natural embedding $\O(\Lambda)\subseteq \O(\Lambdatilde)$ by extending to the identity on $\Lambda \ort$, then the orientation character of $\Lambdatilde$ restricts to $\Lambda$ as the natural orientation character of $\Lambda$.

    \subsubsection{Discriminant}\label{secdiscr} The non-degeneracy of the bilinear form $\cdot$ gives a canonical embedding of $\Lambda$ in its \textit{dual lattice} $\Lambda\dual:= \Hom_\Z(\Lambda,\Z)\subset \Lambda \otimes_\Z \Q$. \begin{itemize}
    \item We define the \textit{discriminant group} of $\Lambda$ as the quotient $A_\Lambda:= \Lambda\dual/\Lambda$, which is a finite group of order $\abs{\det(\Lambda,\cdot)}$, where $\det(\Lambda,\cdot)$ is the determinant of the matrix associated to the bilinear form $\cdot$ with respect to any $\Z-$basis of $\Lambda$. 
    \item A lattice $\Lambda$ is called \textit{unimodular} if $A_\Lambda \simeq \{0\}$ or, equivalently, if $\det(\Lambda)=\pm 1$.
    \item Any isometry $g\in \O(\Lambda)$ induces an automorphism $\overline{g} \in \Aut(A_\Lambda)$ that preserves the discriminant quadratic form $\overline{q_{\Lambda}}\colon A_{\Lambda}\to \faktor{\Q}{2\Z}$ induced by $q_{\Lambda}$. This naturally defines a group morphism \begin{equation}
        \label{disc}\disc \colon \O(\Lambda)\longrightarrow \O(A_\Lambda),
    \end{equation} where $\O(A_\Lambda)$ is subgroup of $\Aut(A_\Lambda)$ of automorphisms preserving $\overline{q_{\Lambda}}$.
\end{itemize} 

\subsection{Primitive embeddings}\label{secprimitiveemb}In this Subsection we will collect basic results concerning primitive embeddings of even lattices into even unimodular lattices and some extendibility criteria for isometries that will lay the groundwork for the proof of Proposition \ref{lemrefl2}. For a more detailed discussion, we refer to \cite{Nik79} (see also \cite[Section 1.3.1]{Kon21}).

Let $L$ be an even lattice and let us consider an embedding $S \overset{i}{\hookrightarrow} L$ of lattices. \begin{itemize}
    \item The embedding $i$ (or - respectively - the sublattice $S$, identified with its image via $i$) is called \textit{primitive} if the quotient $\faktor{L}{S}$ is torsion free.
    \item The \textit{saturation} $\overline{S}$ of $S$ in $L$ is the smallest primitive sublattice of $L$ containing $S$, i.e. $\overline{S}:= (S\otimes_\Z \Q) \cap L$. Notice that we have a natural chain of embeddings $S \subseteq \overline{S} \subseteq L$ and that the index $[\overline{S}\colon S]$ is finite. Furthermore, it holds $\rk(S)=\rk(\overline{S})$ and a set of generators for $\overline{S}$ can be provided by a suitable $\Q-$linear combination of a set of generators for $S$. 
\end{itemize} 
The sublattice $S$ is primitive if and only if $S =\overline{S}$. In that case, its orthogonal complement $K:= S \ort$ in $L$ is also a primitive sublattice of $L$. Let us assume that $L$ is unimodular and let us consider the quotient $H= \faktor{L}{S\oplus K}$. Then $H$ is a finite Abelian group and an isotropic subgroup of $A_S \oplus A_K$ with respect to $\overline{q_S}\oplus \overline{q_K}$. Furthermore, it can be checked that the restrictions $p_S|_H\colon H \to A_S$ and $p_K|_H\colon H \to A_K$ of the natural projections are both isomorphisms (see \cite[Lemma 1.31]{Kon21}). In particular, set $$\gamma_{SK}:= p_K\circ (p_S|_H)^{-1}\colon A_S \to A_K,$$ we have the following results.

\begin{thm}{(\cite[Proposition 1.6.1]{Nik79}, \cite[Theorem 1.32]{Kon21})}\label{thmantiso} Let $L$ be an even, unimodular lattice, $S$ a primitive sublattice of $L$ and $K=S\ort$ its orthogonal complement in $L$. Then \begin{center}
    $A_S \underset{\gamma_{SK}}{\simeq} A_K$ \phantom{+} and \phantom{+}$\overline{q_S}=-\overline{q_K}\circ \gamma_{SK}$.
\end{center} 
\end{thm}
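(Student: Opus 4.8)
The plan is to deduce the statement almost directly from the two facts already recorded above: that $H := L/(S\oplus K)$ embeds in $A_S\oplus A_K$ as an isotropic subgroup, and that the two projections $p_S|_H\colon H\to A_S$ and $p_K|_H\colon H\to A_K$ are isomorphisms. Since $\gamma_{SK}$ is defined as $p_K\circ(p_S|_H)^{-1}$, it is a composition of isomorphisms, hence itself an isomorphism $A_S\simeq A_K$; this settles the first half of the claim.

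For the identity $\overline{q_S}=-\overline{q_K}\circ\gamma_{SK}$ I would argue pointwise. Given $a\in A_S$, pick $x\in H$ with $p_S(x)=a$ and lift $x$ to $\ell\in L$. Primitivity of $S$ gives an orthogonal decomposition $L\otimes_\Z\Q=(S\otimes_\Z\Q)\oplus(K\otimes_\Z\Q)$, so I can write $\ell=s'+k'$ with $s'\in S\dual$ and $k'\in K\dual$ (integrality of the pairing of $\ell$ with $S$, resp. with $K$, forces $s'\in S\dual$, resp. $k'\in K\dual$), and by construction $[s']=a$ and $[k']=\gamma_{SK}(a)$ in $A_S$, resp. $A_K$. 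Then $q_L(\ell)=q_S(s')+q_K(k')$ because $s'\perp k'$, and $q_L(\ell)\in2\Z$ since $L$ is even; reducing modulo $2\Z$ gives $\overline{q_S}(a)+\overline{q_K}(\gamma_{SK}(a))=0$ in $\Q/2\Z$, which is the desired relation. This is simply a restatement of the isotropy of $H$.

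The one genuinely non-formal ingredient is the assertion that $p_S|_H$ and $p_K|_H$ are isomorphisms — in particular surjectivity onto the full discriminant groups, which is exactly where unimodularity of $L$ enters. I would take this from \cite[Lemma 1.31]{Kon21} (equivalently \cite[Proposition 1.6.1]{Nik79}); were a self-contained argument wanted, it comes from identifying $L$ with $L\dual$ via the form and chasing the exact sequences $0\to S\to L\to L/S\to 0$ and $0\to K\to L/S\to H\to 0$, using that $S$ and $K=S\ort$ are both saturated in $L$. Everything else is formal, so I expect no real obstacle beyond that point.
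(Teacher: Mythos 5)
Your proposal is correct and matches the paper's treatment: the paper offers no independent proof, citing \cite[Proposition 1.6.1]{Nik79} and \cite[Theorem 1.32]{Kon21}, and the two inputs you use (isotropy of $H=L/(S\oplus K)$ in $A_S\oplus A_K$ and the fact that $p_S|_H$, $p_K|_H$ are isomorphisms, via \cite[Lemma 1.31]{Kon21}) are exactly the facts the paper records immediately before the statement. Your pointwise derivation of $\overline{q_S}=-\overline{q_K}\circ\gamma_{SK}$ from evenness of $L$ and the orthogonal decomposition $\ell=s'+k'$ is the standard argument underlying the citation and is sound.
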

A straightforward application of Theorem \ref{thmantiso} provides a criterion for the extendibility of an isometry $\pphi\colon S_1\to S_2$ between two primitive sublattices of $L$ to an isometry of the whole lattice $L$, i.e. the existence of an isometry $\Tilde{\pphi}\in \O(L)$ such that $\Tilde{\pphi}|_{S_1}=\pphi$.

\begin{prop}{(\cite[Proposition 1.6.1]{Nik79}, \cite[Corollary 1.33]{Kon21})} \label{propext} Let $S_1,S_2\subseteq L$ be two primitive sublattices and let $K_i:= S_i\ort$ for $i=1,2$. An isometry $\pphi\in \O(S_1,S_2)$ can be extended to an isometry of $L$ if and only if there exist and isometry $\psi\in \O(K_1,K_2)$ such that $$\overline{\psi}\circ \gamma_{S_1 K_1}= \gamma_{S_2 K_2}\circ \overline{\pphi}.$$
    \end{prop}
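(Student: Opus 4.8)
The plan is to deduce the equivalence from the standard Nikulin dictionary between overlattices of an even lattice and isotropic subgroups of its discriminant group, applied to the finite-index inclusion $S_i\oplus K_i\hookrightarrow L$. The first thing I would do is record the precise shape of this dictionary in the case at hand. Since $L$ is even and unimodular and $S_i$ is primitive with $K_i=S_i\ort$, we have $S_i\oplus K_i\subseteq L=L\dual\subseteq (S_i\oplus K_i)\dual=S_i\dual\oplus K_i\dual$, so $L$ is the preimage in $S_i\dual\oplus K_i\dual$ of the subgroup $H_i:=L/(S_i\oplus K_i)$ of $A_{S_i}\oplus A_{K_i}$. As recalled just before Theorem \ref{thmantiso}, the projections $p_{S_i}|_{H_i}$ and $p_{K_i}|_{H_i}$ are isomorphisms, so $H_i$ is exactly the graph $\Gamma_i:=\{(a,\gamma_{S_iK_i}(a)):a\in A_{S_i}\}$ of $\gamma_{S_iK_i}=p_{K_i}\circ(p_{S_i}|_{H_i})^{-1}$. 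The upshot to be used repeatedly is that $L$, viewed inside $S_i\dual\oplus K_i\dual$, is completely determined by the graph $\Gamma_i$.

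For the implication ($\Rightarrow$): if $\Tilde{\pphi}\in\O(L)$ extends $\pphi$, then $\Tilde{\pphi}$ carries $S_1$ isometrically onto $S_2$, hence carries $K_1=S_1\ort$ isometrically onto $K_2=S_2\ort$; set $\psi:=\Tilde{\pphi}|_{K_1}\in\O(K_1,K_2)$. Extending $\Tilde{\pphi}$ $\Q$-linearly and dualizing, the isometry $\pphi\dual\oplus\psi\dual$ of $S_1\dual\oplus K_1\dual$ onto $S_2\dual\oplus K_2\dual$ restricts to $\Tilde{\pphi}$ on $L$, so it maps the image $\Gamma_1$ of $L$ in $A_{S_1}\oplus A_{K_1}$ onto the image $\Gamma_2$ of $L$ in $A_{S_2}\oplus A_{K_2}$; but the induced map on discriminant groups is $\overline{\pphi}\oplus\overline{\psi}$, so $(\overline{\pphi}\oplus\overline{\psi})(\Gamma_1)=\Gamma_2$. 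Comparing the two graphs and using that $\overline{\pphi}$ is an isomorphism yields exactly $\overline{\psi}\circ\gamma_{S_1K_1}=\gamma_{S_2K_2}\circ\overline{\pphi}$.

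For the implication ($\Leftarrow$): given $\psi\in\O(K_1,K_2)$ with $\overline{\psi}\circ\gamma_{S_1K_1}=\gamma_{S_2K_2}\circ\overline{\pphi}$, I would consider the isometry $\pphi\oplus\psi\colon S_1\oplus K_1\to S_2\oplus K_2$ and its $\Q$-linear extension, which is an isometry sending $S_1\dual\oplus K_1\dual$ onto $S_2\dual\oplus K_2\dual$ and inducing $\overline{\pphi}\oplus\overline{\psi}$ on discriminants. The hypothesis says precisely that $\overline{\pphi}\oplus\overline{\psi}$ sends $\Gamma_1$ to $\Gamma_2$: indeed $(\overline{\pphi}\oplus\overline{\psi})(a,\gamma_{S_1K_1}(a))=(\overline{\pphi}(a),\overline{\psi}\gamma_{S_1K_1}(a))=(\overline{\pphi}(a),\gamma_{S_2K_2}\overline{\pphi}(a))\in\Gamma_2$, and surjectivity onto $\Gamma_2$ follows from bijectivity of $\overline{\pphi}$. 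Hence $(\pphi\oplus\psi)_\Q$ carries the preimage of $\Gamma_1$, namely $L$, onto the preimage of $\Gamma_2$, namely $L$ again, and its restriction to $L$ is an isometry (it preserves the $\Z$-valued form on $L\subseteq L\otimes_\Z\Q$ because $(\pphi\oplus\psi)_\Q$ does), giving the required extension $\Tilde{\pphi}\in\O(L)$ with $\Tilde{\pphi}|_{S_1}=\pphi$.

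The one delicate point is the preliminary step: identifying the glue group $H_i=L/(S_i\oplus K_i)$ with the graph of $\gamma_{S_iK_i}$ and recognizing that $L$ is reconstructed from this graph inside $S_i\dual\oplus K_i\dual$. Once that is in place, both directions are purely formal manipulations with the functoriality of the discriminant construction. This is exactly Nikulin's theory (\cite[\S\S1.5--1.6]{Nik79}), so in the paper it is legitimate simply to cite it; writing it out as above is where the (small amount of) actual content lies.
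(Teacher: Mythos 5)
Your argument is correct and is precisely the standard Nikulin proof that the paper itself does not reproduce but simply cites (\cite[Proposition 1.6.1]{Nik79}, \cite[Corollary 1.33]{Kon21}): identify the glue group $H_i=L/(S_i\oplus K_i)$ with the graph of $\gamma_{S_iK_i}$ in $A_{S_i}\oplus A_{K_i}$, note that $L$ is the preimage of that graph in $S_i\dual\oplus K_i\dual$, and check that $\pphi\oplus\psi$ preserves $L$ exactly when $\overline{\pphi}\oplus\overline{\psi}$ carries one graph to the other. Both implications are handled correctly (including the point that an extension of $\pphi$ necessarily restricts to an isometry $K_1\to K_2$), so there is nothing to compare beyond noting that you have written out the argument the paper leaves to the references.
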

    In particular, we get the following folklore result: \begin{cor}\label{corextid}Let $S\subseteq L$ a primitive sublattice of an even unimodular lattice $L$ and let $K:=S\ort$. If $\pphi \in \O(S)$ is an isometry such that $\disc(\pphi)=\pm\id_{A_S}$, then it can be extended to $\Tilde{\pphi}\in \O(L)$ such that $\Tilde{\pphi}|_{K}=\pm \id_{K}$.
    \end{cor}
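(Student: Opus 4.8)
The plan is to deduce this as an immediate special case of the extendibility criterion of Proposition~\ref{propext}, applied with $S_1=S_2=S$ and $K_1=K_2=K$. The only input beyond that proposition and Theorem~\ref{thmantiso} is the elementary remark that, for any lattice $\Lambda$, the map $-\id_\Lambda$ is an isometry and the automorphism it induces on $A_\Lambda$ is $-\id_{A_\Lambda}$ (indeed $-\id_\Lambda$ extends to $-\id_{\Lambda\dual}$ on the dual lattice, which descends to $-\id$ on $A_\Lambda=\Lambda\dual/\Lambda$), so that $\disc(\pm\id_\Lambda)=\pm\id_{A_\Lambda}$.

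Concretely, write $\epsilon\in\{+1,-1\}$ for the sign with $\disc(\pphi)=\epsilon\,\id_{A_S}$, and set $\psi:=\epsilon\,\id_K\in\O(K)$, so that $\disc(\psi)=\epsilon\,\id_{A_K}$ by the remark above. Since the anti-isometry $\gamma_{SK}\colon A_S\to A_K$ of Theorem~\ref{thmantiso} is in particular a group isomorphism, it commutes with multiplication by the integer $\epsilon$, whence
\[
\disc(\psi)\circ\gamma_{SK}=\epsilon\,\gamma_{SK}=\gamma_{SK}\circ(\epsilon\,\id_{A_S})=\gamma_{SK}\circ\disc(\pphi).
\]
Taking $\gamma_{S_1K_1}=\gamma_{S_2K_2}=\gamma_{SK}$ and $\overline{\psi}=\disc(\psi)$, this is exactly the compatibility condition of Proposition~\ref{propext}, so $\pphi$ extends to an isometry $\Tilde{\pphi}\in\O(L)$.

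It remains to check that the extension can be chosen so that $\Tilde{\pphi}|_K=\psi=\epsilon\,\id_K=\pm\id_K$; this is the only point that is not a purely formal manipulation, and hence the closest thing to an obstacle here. It is, however, built into the proof of Proposition~\ref{propext}: the displayed compatibility is precisely what guarantees that the isometry $\pphi\oplus\psi$ of $S\oplus K$ preserves the finite-index overlattice $L\supseteq S\oplus K$, thereby producing an isometry $\Tilde{\pphi}$ of $L$ with $\Tilde{\pphi}|_S=\pphi$ and $\Tilde{\pphi}|_K=\psi$. Spelling out this gluing construction (or simply invoking it) gives $\Tilde{\pphi}|_K=\pm\id_K$ with the sign matching that of $\disc(\pphi)$, which is the assertion of the Corollary.
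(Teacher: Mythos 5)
Your proof is correct and is exactly the argument the paper intends: the corollary is stated as an immediate consequence of Proposition~\ref{propext} (taking $S_1=S_2=S$, $K_1=K_2=K$, $\psi=\pm\id_K$ with matching sign, and using that $\gamma_{SK}$ is a group isomorphism so the compatibility condition reduces to $\epsilon\gamma_{SK}=\epsilon\gamma_{SK}$). Your extra remark that the extension produced by the gluing construction restricts to $\psi$ on $K$ is a worthwhile clarification of a point the paper leaves implicit, but it is not a departure from the paper's route.
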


In order to apply Proposition \ref{propext}, a description of the image of the discriminant map defined in (\ref{disc}) might be crucial. We address this problem in the case of an even indefinite lattice, in relation with the question of its uniqueness up to isomorphism.

\begin{prop}{(\cite[Theorem 1.14.2]{Nik79}, \cite[Proposition 1.37]{Kon21})}\label{propunique} Let $T$ be an indefinite even lattice of signature $(t_+,t_-)$, with discriminant quadratic form $\overline{q}=\overline{q_T}$. Suppose that $$\rk(T)\geq l(A_T)+ 2,$$ where $l(A_T)$ is the minimum number of generators of the finite Abelian group $A_T$. Then any even lattice of signature $(t_+,t_-)$ and discriminant quadratic form $\overline{q}$ is isomorphic to $T$ and the natural discriminant map $\disc\colon \O(T)\to \O(A_T)$ is surjective.\end{prop}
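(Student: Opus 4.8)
The plan is to deduce both assertions from the arithmetic theory of genera of even lattices, invoking two of Nikulin's structural results as black boxes: \emph{(i)} the genus of an even lattice is determined by its signature and its discriminant quadratic form; and \emph{(ii)} an \emph{indefinite} even lattice $T$ with $\rk(T)\geq l(A_T)+2$ forms a single isomorphism class within its genus. Given these, the first statement is immediate. If $T'$ is an even lattice with $sgn(T')=(t_+,t_-)$ and discriminant form isometric to $\overline q=\overline{q_T}$, then by \emph{(i)} it lies in the same genus as $T$, and by \emph{(ii)} — applicable since $T$ is indefinite and meets the rank bound — that genus contains only the class of $T$, so $T'\cong T$.

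For the surjectivity of $\disc\colon\O(T)\to\O(A_T)$ I would proceed along the same lines. Using uniqueness in the genus one first normalises $T$: by Nikulin's existence theorem (\cite{Nik79}) there is an even lattice $T''$ of signature $(t_+-1,t_--1)$ with discriminant form $\overline{q_T}$ — the rank bound $\rk(T)\geq l(A_T)+2$ being exactly what is required, together with the indefiniteness of $T$ — and then $U\oplus T''$, having the same signature and discriminant form as $T$, lies in the genus of $T$, so $T\cong U\oplus T''$. Surjectivity of $\disc$ for a lattice of this shape is then obtained from Nikulin's local--global analysis: for every prime $p$ one checks that $\O(T\otimes\Z_p)\to\O(A_{T,p})$ is surjective — a finite computation with $p$-adic lattices in which the hyperbolic summand and the rank bound are used — and patches the local isometries into a global one, the rank condition ensuring that the spinor-norm obstruction to patching vanishes. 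When two orthogonal hyperbolic planes can be split off, this last step becomes the elementary statement that $\O(U^{\oplus 2}\oplus M)\to\O(A_M)$ is onto, realised explicitly through Eichler-type transvections supported on the two copies of $U$.

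The step I expect to be the genuine obstacle is precisely the pair of cited inputs \emph{(i)} and \emph{(ii)}: that signature and discriminant form pin down the genus, and that the rank bound collapses the genus of an indefinite lattice to a single class. The former rests on the classification of even lattices over $\Z_2$, with its delicate $2$-adic combinatorics; the latter rests on strong approximation for the relevant spin groups together with a count showing that $\rk(T)\geq l(A_T)+2$ leaves room for only one spinor genus in the genus of $T$. Both are classical results of \cite{Nik79} (see also \cite{Kon21}), which I would simply quote; once they are in hand, the lattice-theoretic bookkeeping above — the splitting $T\cong U\oplus T''$, the transvection computation, and the $p$-adic surjectivity statement — is routine, and the Proposition follows.
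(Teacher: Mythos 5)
This Proposition is stated in the paper without proof, as a direct quotation of \cite[Theorem 1.14.2]{Nik79} (see also \cite[Proposition 1.37]{Kon21}), and your proposal ultimately rests on exactly the same classical inputs of Nikulin — genus determined by signature and discriminant form, uniqueness in the genus under the rank bound, and surjectivity of the discriminant map — so the two approaches coincide. Your additional sketch of the internal structure of Nikulin's argument (splitting off a hyperbolic plane, Eichler transvections, local--global patching) is consistent with the standard proof and raises no issues.
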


\subsection{Weyl groups of reflections}\label{weylgroups}We conclude this Appendix by introducing two special groups of reflections that will play a central role in Sections \ref{secsurj} and \ref{secmainresults}. Let $k> 2$ and let $\Lambda$ be an even lattice of signature $(3,4)$, isometric to $U^{\oplus 3}\oplus <-2k>$, where $U$ is the unimodular hyperbolic lattice of rank $2$. For any $u\in \Lambda$ such that $u\cdot u=\pm 2$, let \begin{align*}
    R_u\colon \Lambda & \longrightarrow \Lambda\\
    v & \longmapsto v-2\frac{v\cdot u}{u\cdot u}u
\end{align*} be the reflection in $u$ and define the isometry $\rho_u:= -\frac{u\cdot u}{2}R_u$. The group $$\W(\Lambda):= \brakett{\rho_u\colon u\in \Lambda, u\cdot u=\pm2}$$ is a normal subgroup of finite index of $\O(\Lambda)$, often called \textit{Weyl group of reflections}. Notice that the action of the previously defined group morphisms on the generators of $\W(\Lambda)$ is the following (see, \cite[Lemma 4.1]{Mar11}, \cite[Lemma 4.10]{Mar08} \cite[Section 1.1]{Mar18}): \begin{equation}\label{char}
    \ori(\rho_u)=0, \phantom{++} \det(\rho_u)=\frac{u\cdot u}{2}, \phantom{++} \disc(\rho_u)=-\frac{u\cdot u}{2}.
\end{equation} From the first equality of (\ref{char}) we deduce that $\W(\Lambda)\subseteq \O^+(\Lambda)$ and from the last equality we get the following discriminant character $$\disc\colon \W(\Lambda) \longrightarrow \{-1,1\}\subseteq \O(A_\Lambda),$$ where $A_\Lambda$ is a cyclic group of order $2k$. Again by (\ref{char}), we get that, for any generator $\rho_u$ of $\Lambda$, it holds $\det\cdot \disc(\rho_u)=-1$. We define the following group of isometries \begin{equation}
    \label{defNn}\Nn(\Lambda):= \ker(\det\cdot\disc),
\end{equation} which is an order $2$ subgroup of $\W(\Lambda)$, generated by compositions $\rho_{u_1}\circ \rho_{u_2}$, with $u_i\in \Lambda$ such that $u_i\cdot u_i=\pm 2$ for $i=1,2$.
\begin{rmk}\label{rmkgenN} Let $\Lambdatilde=\Htilde(S,\Z)$ be the Mukai lattice of an Abelian surface $S$ (see Section \ref{ab}), isometric to $U^{\oplus4}$, let $v=m(1,0,-k)$, with $m\geq 1$ and $k> 2$, and let $\Lambda=v\ort$. \begin{enumerate}
    \item By \cite[Lemma 4.10]{Mar08}, we get that $\W(\Lambda)$ is isomorphic to the subgroup of $\O^+(\Lambda)$ of isometries that can be extended to the whole Mukai lattice $\Lambdatilde$ or, equivalently, to the image of the stabilizer $\O(\Lambdatilde)_v$ under the homomorphism \begin{align*}
    \psi \colon \O(\Lambda) & \longrightarrow \O^+(\Lambda)\\
    g & \longmapsto (-1)^{\ori(g)}g,
\end{align*} where we used the natural inclusion $\O(\Lambdatilde)_v\subseteq \O(\Lambda)$. Notice that, as $sgn(\Lambda)=(3,4)$, the isometry $-\id_{\Lambda}$ is orientation reversing.
\item Section 5 and the proof of Theorem 1.4 of \cite{Mar18} provide a set of generators for $\Nn(\Lambda)$, namely \begin{equation}
    \label{genN} \Nn(\Lambda)=\brakett{\SO^+(\Lambdatilde)_v, R_s\circ R_{s_1}}, 
\end{equation} where $s=(1,0,1), s_1=(1,0,-1) \in \Lambda$. Indeed, by Lemma 5.3 and Corollary 5.1 of \cite{Mar18}, we have an equality $$\phantom{+++}\SO^+(\Lambdatilde)_v=\brakett{R_{u_1}\circ R_{u_2}, R_{t_1}\circ R_{t_2}\colon u_i,t_i \in \Lambda, u_i^2=2, t_i^2=-2, \text{ for } i=1,2}.$$ Consequently, for any $g \in \SO^+(\Lambdatilde)_v$, it holds $\det\cdot \disc(g)=1 \text{ and }\det(g)=1$, from which we deduce that $\SO^+(\Lambdatilde)_v$ is an index $2$ subgroup of $\Nn(\Lambda)$. On the other hand, $\det\cdot \disc(R_s\circ R_{s_1})=1 \text{ and }\det(R_s\circ R_{s_1})=-1$, showing equality (\ref{genN}).
\end{enumerate} 
\end{rmk}

\bibliographystyle{alpha}

\begin{thebibliography}{Aaaaaaaa}
\bibitem[At57]{At57} M.F. Atiyah, \textit{Vector bundles over an elliptic curve}, Proc. London Math. Soc. \textbf{7} (1957), 414-452.

\bibitem[BL18]{BL18} B. Bakker, C. Lehn, \textit{The global moduli theory of symplectic varieties}, J. Reine Angew. Math. \textbf{790} (2022), 223–265.

\bibitem[Bea83]{Bea83} A. Beauville, \textit{Variétés k\"{a}hleriennes dont la première classe de Chern est nulle}, J. Diﬀerential Geometry \textbf{18} (1983), 755–782.

\bibitem[BL04]{BL04} C. Birkenhake, H. Lange, \textit{Complex abelian varieties,} Second edition. Grundlehren der Mathematischen Wissenschaften, \textbf{302}, Springer-Verlag, Berlin, (2004).

\bibitem[BM14]{BM14} A. Bayer, E. Macrì, \textit{MMP for moduli of sheaves on K3s via wall-crossing: nef and movable cones, Lagrangian fibrations}, Invent. Math., \textbf{198} (2014), 505–590.

\bibitem[Bor72]{Bor72} A. Borel, \textit{Some metric properties of arithmetic quotients of symmetric spaces and
an extension theorem}, J. Diﬀ. Geom., \textbf{6} (1972), 543–560.

\bibitem[Bri98]{Bri98} T. Bridgeland, \textit{Fourier-Mukai transforms for elliptic surfaces}, J.Reine Angew.Math. \textbf{498} (1998), 115–133.

\bibitem[Dr18]{Dr18} S. Druel, \textit{A decomposition theorem for singular spaces with trivial canonical class of dimension at most five}, Invent. Math. \textbf{211}, vol. 1 (2018), 245–296.

\bibitem[DG18]{DG18} S. Druel, H. Guenancia, \textit{A decomposition theorem for smoothable varieties with trivial canonical class}, Journal de l’École polytechnique – Mathématiques \textbf{5} (2018), 117–147.

\bibitem[Eis95]{Eis95} D. Eisenbud, \textit{Commutative Algebra: with a View Toward Algebraic Geometry}, Graduate Texts in Mathematics \textbf{150}, Springer New-York (1995).

\bibitem[FG65]{FG65} W. Fischer, H. Grauert, \textit{Lokal-triviale Familien kompakter komplexer Mannigfaltigkeiten}, Nachr. Akad. Wiss. Göttingen Math.-Phys. Kl. II (1965), 89-94.

\bibitem[GGK19]{GGK19} D. Greb, H. Guenancia, S. Kebekus, \textit{Klt varieties with trivial canonical class –
Holonomy, diﬀerential forms, and fundamental group}, Geom. Topol. \textbf{23}, 4 (2019), 2051–2124.

\bibitem[GKP11]{GKP11} D. Greb, S. Kebekus, T. Peternell, \textit{Singular spaces with trivial canonical class}, In Minimal models and extreme rays, proceedings of the conference in honour of Shigefumi Mori’s 60th birthday, Ad. Stud. Pure Math., Kinokuniya Publishing House, Tokyo (2011).

\bibitem[GHS09]{GHS09} V. Gritsenko, K. Hulek and G.K. Sankaran, \textit{Abelianisation of orthogonal groups and the fundamental group of modular varieties}, J. of Alg. \textbf{322} (2009), 463–478.
\bibitem[GHS10]{GHS10} V. Gritsenko, K. Hulek and G.K. Sankaran, \textit{Moduli spaces of irreducible symplectic manifolds}, Compos. Math. \textbf{146} (2010), no. 2, 404–434.

\bibitem[Hig71]{Hig71} P. J. Higgins,\textit{ Categories and groupoids}, Repr. Theory Appl. Categ. \textbf{7} (2005), 1–178.

\bibitem[HP17]{HP17} A. Höring, T. Peternell, \textit{Algebraic integrability of foliations with numerically trivial canonical bundle}, Invent. math. \textbf{216} (2019), 395–419. 

\bibitem[Huy97]{Huy97} D. Huybrechts, \textit{Compact hyperkähler manifolds: basic results}, Invent math \textbf{135} (1999), 63–113.

\bibitem[Huy03a]{Huy03a} D. Huybrechts, \textit{The Kähler cone of a compact hyperkähler manifold}, Math. Ann. \textbf{326} (2003), 499–513.

\bibitem[Huy03b]{Huy03b} D. Huybrechts, \textit{Fourier-Mukai transforms in algebraic geometry}, Oxford University Press (2006).

\bibitem[HL97]{HL97} D. Huybrechts, M. Lehn, \textit{The Geometry of Moduli Spaces of Sheaves}, Aspects
of Mathematics E \textbf{31}, Vieweg Verlag (1997).

\bibitem[HS05]{HS05} D. Huybrechts, P. Stellari, \textit{Equivalences of twisted K3 surfaces}, Math. Ann. \textbf{332} (2005), no. 4, 901–936

\bibitem[Kal06]{Kal06} D. Kaledin, \textit{Symplectic singularities from the Poisson point of view}, J. Reine Angew Math. \textbf{600} (2006), 135–156.

\bibitem[KL25]{KL25} L. Kamenova, C. Lehn, \textit{Non-hyperbolicity of holomorphic symplectic varieties}, Epiga (2025)

\bibitem[KLS06]{KLS06} D. Kaledin, M. Lehn, C. Sorger, \textit{Singular symplectic moduli spaces}, Invent. Math. \textbf{164} (2006), no. 3, 591–614.

\bibitem[Kon21]{Kon21} C. S. Kondo, \textit{K3 Surfaces}, Jahresber. Dtsch. Math. Ver. \textbf{123} (2021), 293–298.

\bibitem[Mil58]{Mil58} J. Milnor, \textit{On simply connected 4-manifolds}, Symposium Internacional de Topologia Algebraica, La Universidad Nacional Aut\'onoma de M\'exico y la UNESCO (1958), 122–128.

\bibitem[Mar08]{Mar08} E. Markman, \textit{On the monodromy of moduli spaces of sheaves on K3 surfaces}, J. Alg. Geom., \textbf{17} (2008), no.3, 29–99.

\bibitem[Mar10]{Mar10} E. Markman, \textit{Integral constraints on the monodromy group of the hyperkähler resolution of a symmetric product of a K3 surface}, Int. J. Math. \textbf{21}, (2010), 169–223.

\bibitem[Mar11]{Mar11} E. Markman, \textit{A survey of Torelli and monodromy results for holomorphic-symplectic varieties}, Complex and diﬀerential geometry, Springer Proc. Math., vol. 8, Springer, Heidelberg (2011), 257–322.

\bibitem[Mar14]{Mar14} E. Markman, \textit{Lagrangian fibrations of holomorphic-symplectic varieties of $K3^{[n]}-$type}. In A. Frühbis-Krüger, R. Kloosterman, M. Schütt (eds) \textit{Algebraic and Complex Geometry}, 241-283. Springer Proc. Math., vol. 71, Springer (2019), 241-283.

\bibitem[Mar18]{Mar18} E. Markman, \textit{The monodromy of generalized Kummer varieties and algebraic cycles on their intermediate Jacobians}, J. Eur. Math. Soc. \textbf{25} (2022) 231–321.

\bibitem[Mat17]{Mat17} D. Matsushita, \textit{On isotropic divisors on irreducible symplectic manifolds}, Adv. Stud. Pure Math., \textbf{74} (2017), 291–312.

\bibitem[Mon14]{Mon14} G. Mongardi,\textit{ On the monodromy of irreducible symplectic manifolds}, Alg. Geo., \textbf{10} (2016), no.2, 259–261.

\bibitem[MO22]{MO22} G. Mongardi, C. Onorati, \textit{Birational geometry of irreducible holomorphic symplectic tenfolds of O’Grady type}, Math. Z. \textbf{300} (2022), 3497–3526.

\bibitem[MR19]{MR19} G. Mongardi, A. Rapagnetta, \textit{Monodromy and birational geometry of O'Grady's sixfolds}, J. Math. Pures Appl., \textbf{146} (2021), 31-68.

\bibitem[Mu81]{Mu81} S. Mukai, \textit{Duality between $D(X)$ and $D(\hat{X})$ with its applications to Picard sheaves}, Nagoya Math. J. \textbf{81} (1981), 153–175.

\bibitem[Mu84]{Mu84} S. Mukai, \textit{Symplectic structure of the moduli space of sheaves on an Abelian or K3 surface}, Invent. Math. \textbf{77} (1984), 101–116.

\bibitem[Nam01]{Nam01} Y. Namikawa, \textit{A note on symplectic singularities}, Preprint: arxiv:0101028.

\bibitem[Nam02a]{Nam02a} Y. Namikawa, \textit{Projectivity criterion of Moishezon spaces and density of projective symplectic varieties}, Internat. J. Math. \textbf{13} (2002), no. 2, 125–135.

\bibitem[Nam02b]{Nam02b} Y. Namikawa, \textit{Counter-example to global Torelli problem for irreducible symplectic manifolds}, Math. Ann. \textbf{324} (2002), 841–845.

\bibitem[Nik79]{Nik79} V.V. Nikulin, \textit{Integral symmetric bilinear forms and some of their applications}, Izv. Akad. Nauk SSSR Ser. Mat. \textbf{43}, 1 (1979), 111–177. English translation in Math. USSR Izv. \textbf{14}, 1 (1980), 103–167.


\bibitem[OG00]{OG00} K. O’Grady, \textit{A new six dimensional irreducible symplectic variety}, J. Algebraic Geom. \textbf{12} (2003), 435–505.

\bibitem[Ono22]{Ono22} C. Onorati, \textit{On the monodromy group of desingularised moduli spaces of sheaves on K3 surfaces}, J. Alg. Geom. \textbf{31} (2022), 425–465.

\bibitem[OO25]{OO25} C. Onorati, A. D. Rios Ortiz, \textit{The SYZ conjecture for singular moduli spaces of sheaves on K3 surfaces}, Preprint, arXiv:2510.01005.

\bibitem[OPR23]{OPR23} C. Onorati, A. Perego, A. Rapagnetta, \textit{Locally trivial monodromy of moduli spaces of sheaves on K3 surfaces}, Trans. Am. Math. Soc. \textbf{377} (2024), no. 10, 7259–7308.

\bibitem[Per09]{Per09} A. Perego, \textit{The $2-$Factoriality of the O'Grady Moduli Spaces}, Math. Ann. \textbf{346} (2009), no. 2, 367-391.

\bibitem[Per19]{Per19} A. Perego, \textit{Examples of irreducible symplectic varieties}, Preprint, arXiv:1905.11720.


\bibitem[PR10]{PR10} A. Perego, A. Rapagnetta, \textit{Deformation of the O’Grady moduli spaces}, J. Reine Angew. Math. \textbf{678} (2013), 1–34.

\bibitem[PR18]{PR18} A. Perego, A. Rapagnetta, \textit{Irreducible symplectic varieties from moduli spaces of sheaves on K3 and abelian surfaces.} Alg. Geo. \textbf{10} (2023), no. 3, 348–393.

\bibitem[PR20]{PR20} A. Perego, A. Rapagnetta, \textit{The second integral cohomology of moduli spaces of sheaves on K3 and Abelian surfaces}, Adv. Math. \textbf{440} (2024), Paper No.
109519.

\bibitem[Sch20]{Sch20} M. Schwald, \textit{Fujiki relations and fibrations of irreducible symplectic varieties}, Epijournal Géom. Algébrique \textbf{4} (2020), no. 7


\bibitem[Shi78]{Shi78} T. Shioda, \textit{The period map of abelian surfaces}, J. Fac. Sci. Univ. Tokyo, Sec. IA 25, (1978), 47–59.

\bibitem[Var89]{Var89} J. Varouchas, \textit{Kähler spaces and proper open morphisms}, Math. Ann. \textbf{283}, 1 (1989), 13–52.

\bibitem[Ver09]{Ver09} M. Verbitsky, \textit{Mapping class group and a global Torelli theorem for hyperkähler manifolds}, Duke Math. J. \textbf{162} (2013), no. 15, 2929–2986. Appendix A by Eyal Markman.

\bibitem[Wie16]{Wie16} B. Wieneck, \textit{On polarization types of Lagrangian fibrations}, Manuscripta Math., \textbf{151}, 3-4 (2016), 305–327.

\bibitem[Wie18]{Wie18} B. Wieneck, \textit{Monodromy invariants and polarization types of generalized Kummer fibrations}, Math. Z., \textbf{290}, 1-2 (2018), 347–378.

\bibitem[Yos99a]{Yos99a} K. Yoshioka, \textit{Some notes on the moduli of stable sheaves on elliptic surfaces}, Nagoya Math. J. \textbf{154} (1999), 73–102.

\bibitem[Yos99b]{Yos99b} K. Yoshioka, \textit{Albanese map of moduli of stable sheaves on abelian surfaces}, Preprint: math/9901013.

\bibitem[Yos01a]{Yos01a} K. Yoshioka, \textit{Moduli spaces of stable sheaves on Abelian surfaces}, Math. Ann.
\textbf{321} (2001), no. 4, 817–884.

\bibitem[Yos01b]{Yos01b} K. Yoshioka, \textit{A note on Fourier–Mukai transform}, Preprint, arXiv:math/0112267.

\bibitem[Yos16]{Yos16} K. Yoshioka, \textit{Bridgeland’s stability and the positive cone of the moduli spaces of stable objects on an abelian surface}, In Development of moduli theory - Kyoto 2013, Adv. Stud. Pure Math. \textbf{69} (2016), 473—537.
\end{thebibliography}


\end{document}